\numberwithin{equation}{section}
\newtheorem{theorem}{Theorem}[section]
\newtheorem{corollary}[theorem]{Corollary}
\newtheorem{lemma}[theorem]{Lemma}
\newtheorem{proposition}[theorem]{Proposition}
\newtheorem{claim}[theorem]{Claim}
\newtheorem*{question}{Question}
\theoremstyle{definition}
\newtheorem{definition}[theorem]{Definition}
\newtheorem{remark}[theorem]{Remark}
\newcommand{\setword}[2]{%
  \phantomsection
  #1\def\@currentlabel{\unexpanded{#1}}\label{#2}%
}
\newcommand{\cL}{\mbox{${\mathcal L}$}}
\newcommand{\cC}{\mbox{${\mathcal C}$}}
\newcommand{\cH}{\mbox{${\mathcal H}$}}
\newcommand{\wcF}{\mbox{${\widetilde {\mathcal F}}$}}
\newcommand{\cG}{\mbox{${\mathcal G}$}}
\newcommand{\cD}{\mbox{${\mathcal D}$}}
\newcommand{\cP}{\mbox{${\mathcal P}$}}
\newcommand{\wcG}{\mbox{$\widetilde{\mathcal G}$}}
\newcommand{\cF}{\mbox{${\mathcal F}$}}
\newcommand{\cE}{\mbox{${\mathcal E}$}}
\newcommand{\mt}{\mbox{${\widetilde M}$}}
\newcommand{\rrrr}{\mbox{${\mathbb R}$}}
\newcommand{\zzzz}{\mbox{${\mathbb Z}$}}
\newcommand{\eps}{\mbox{${\epsilon}$}}
\newcommand{\curve}{\zeta}
\title[Transverse $\rrrr$-covered foliations]{On transverse $\rrrr$-covered
minimal foliations}
\author[T. Barbot]{Thierry Barbot}
\address{Avignon Universit\'e, LMA, Campus Jean-Henri Fabre,  
301, Rue Baruch de Spinoza, F-84 916 Avignon Cedex 9}
\email{thierry.barbot@univ-avignon.fr}
\author[S.R. Fenley]{Sergio R.\ Fenley} 
\address{Florida State University, Tallahassee, FL 32306, USA}
\email{sfenley@fsu.edu}
\author[R. Potrie]{Rafael Potrie}
\address{Centro de Matem\'atica, Universidad de la Rep\'ublica, Uruguay \& IRL-IFUMI (CNRS) }
\email{rpotrie@cmat.edu.uy}
\urladdr{http://www.cmat.edu.uy/~rpotrie/}
\thanks{S. F. was partially supported by 
National Science Foundation
grant DMS-2054909. R.P. was partially supported by CSIC}
\begin{document}
 
 \begin{abstract}
We study minimal   transverse foliations which are $\rrrr$-covered.
If in addition the dimension of the ambient manifold
is 3, and the foliations are Anosov foliations we give necessary and sufficient conditions for the intersected foliation to be the orbit foliation of an Anosov flow. 

\bigskip
\noindent{\bf Keywords:} Transverse foliations, 3-manifolds, Anosov flows,
group actions.

\medskip
\noindent {\bf Mathematics Subject Classification 2020: } 
\ Primary: 57R30, 37E10, 37D20, 37C15, 37C86; 
\ Secondary: 53C12,  37D05, 37D30, 57K30.
\end{abstract}

\maketitle


\section{Introduction}

Codimension one foliations on manifolds have proven to be an important structure which allows one to gather relevant information about the topology of the manifold. This is specially true under some assumptions in the foliation, for instance, tautness. We refer the reader to \cite{Ca-Co} for a general introduction to foliations. 

Existence of pairs or tuples of pairwise transverse foliations can provide stronger topological information. This has been extensively studied, see \cite{Ma-Ts} and references therein. Such structures occur fairly often, for example, given any Anosov flow in a manifold, it provides a pair of transverse foliations intersecting in the flow foliation. In addition, in dimension 3, partially hyperbolic diffeomorphisms, under some orientability assumptions, give rise to transverse branching foliations that can be blown up to a transverse pair of foliations. We refer the reader to \cite{BFP} for more discussion. Recently, the study of bi-contact structures on 3-manifolds has seen a renewed interest, and pairs of transverse foliations also allow one to produce such bi-contact structures (see e.g. \cite{Bowden,Ka-Ro,Ho,Massoni} and references therein). 

Observe that in most of these situations, the foliations are only continuous but usually tangent to continuous distributions, and this is the regularity we shall assume in this article. In particular this simplifies the parametrization of leaves of the intersected foliation (see \S~\ref{sub.fol} for a precise definition of the regularity we will assume). 

The goal of this article is to study transverse foliations
and how it relates to dynamical systems, particularly
in dimension $3$. The following is our first result: 


\begin{theorem}\label{thm.one}
Let $\cF_1, \cF_2$ be a pair of minimal, $\rrrr$-covered foliations
in  a manifold  $M$ which are  transverse to each other. 
Suppose that at  least one of them is transversely orientable.
Let $\wcF_i$  be the respective
lifts  to  the universal cover. Then  either  every leaf  
of $\wcF_1$ intersects every leaf of $\wcF_2$ or  the actions
of $\pi_1(M)$ on the leaf   spaces of  $\wcF_1$  and $\wcF_2$
are  conjugate  to each other.
In the second case it follows that both
foliations are transversely orientable.
\end{theorem}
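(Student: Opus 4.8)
Write $\mathcal{L}_i$ for the leaf space of $\wcF_i$; since $\cF_i$ is $\rrrr$-covered, $\mathcal{L}_i\cong\rrrr$, and $\pi_1(M)$ acts on it. I use repeatedly that $\cF_i$ is minimal exactly when every $\pi_1(M)$-orbit in $\mathcal{L}_i$ is dense --- equivalently the only nonempty closed $\pi_1(M)$-invariant subset of $\mathcal{L}_i$ is $\mathcal{L}_i$ itself, so in particular the only nonempty $\pi_1(M)$-invariant \emph{convex} subset is $\mathcal{L}_i$ --- and that $\cF_i$ is transversely orientable exactly when $\pi_1(M)$ acts on $\mathcal{L}_i$ by orientation-preserving homeomorphisms. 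The central object is the linking set
\[
  R=\{(L_1,L_2)\in\mathcal{L}_1\times\mathcal{L}_2 : L_1\cap L_2\neq\emptyset\},
\]
which is invariant under the diagonal $\pi_1(M)$-action; is open, by transversality; and whose vertical slice $U(L_1)=\{L_2:(L_1,L_2)\in R\}$, being the image of the connected leaf $L_1$ under the continuous projection $\widetilde M\to\mathcal{L}_2$, is a nonempty open interval, and likewise for the horizontal slices $V(L_2)$; moreover both coordinate projections of $R$ are onto, since through each point of $\widetilde M$ pass a leaf of $\wcF_1$ and a leaf of $\wcF_2$ meeting there. The dichotomy to prove is $R=\mathcal{L}_1\times\mathcal{L}_2$ (first alternative, ``every leaf meets every leaf'') versus $R\subsetneq\mathcal{L}_1\times\mathcal{L}_2$ (second alternative).

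Suppose we are in the second alternative. Since every $V(L_2)$ is an interval, $\{L_1:U(L_1)=\mathcal{L}_2\}$ is convex and $\pi_1(M)$-invariant, hence $\emptyset$ or $\mathcal{L}_1$; the latter gives the first alternative, so every $U(L_1)$ --- and symmetrically every $V(L_2)$ --- is now a \emph{proper} interval. Now use that $\cF_1$ is transversely orientable, so that $\pi_1(M)$ preserves the orientation and hence the two ends of $\mathcal{L}_1\cup\{\pm\infty\}$: the sets $\{L_2:\sup V(L_2)=+\infty\}$ and $\{L_2:\inf V(L_2)=-\infty\}$ are convex (same interval-slice argument, now for the $U$-slices) and $\pi_1(M)$-invariant, hence each is $\emptyset$ or $\mathcal{L}_2$; since they are disjoint in the second alternative, the outcome is that either all $V(L_2)$ are bounded intervals, or all $V(L_2)$ are half-lines pointing the same way, say $V(L_2)=(b'(L_2),+\infty)$ with $b'\colon\mathcal{L}_2\to\mathcal{L}_1$ a $\pi_1(M)$-equivariant (and upper semicontinuous, as an infimum of slices of $R$) map. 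In the latter case the identity $U(L_1)=\{L_2:b'(L_2)<[L_1]\}$ exhibits the slices $U(L_1^c)$ ($c\in\mathcal{L}_1$) as the \emph{nested} sublevel sets $\{b'<c\}$; since each is a proper interval, either all of them are half-lines --- necessarily pointing the same way, being nested --- or some, hence (nesting again) all sufficiently small ones, are bounded. The decisive step, which I expect to be the main obstacle, is to rule out the ``band'' configurations --- all $V(L_2)$ bounded, or all $U(L_1)$ bounded --- and this is where I would bring minimality of \emph{both} foliations to bear, aiming to show that a band configuration produces either a global $\pi_1(M)$-fixed point in some $\mathcal{L}_i$ or a nonempty proper closed $\pi_1(M)$-invariant subset, contradicting minimality. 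Granting this, every $U(L_1)$ and every $V(L_2)$ is a half-line pointing consistently; and then no element of $\pi_1(M)$ can reverse the orientation of $\mathcal{L}_2$ --- it would turn an upward slice into a downward one --- so $\cF_2$ is transversely orientable as well.

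It remains to build the conjugacy and conclude. With all slices consistent half-lines and $V(L_2)=(b'(L_2),+\infty)$, the map $b'\colon\mathcal{L}_2\to\mathcal{L}_1$ is $\pi_1(M)$-equivariant, and its sublevel sets $\{b'<c\}=U(L_1^c)$ being nested half-lines forces $b'$ to be monotone; it is semicontinuous as noted. Minimality upgrades $b'$ to a homeomorphism: since the $\pi_1(M)$-action on $\mathcal{L}_1$ is minimal, the image of the equivariant map $b'$ is dense, which for a monotone map rules out jumps, so $b'$ is continuous and --- having infimum $-\infty$ and supremum $+\infty$, as all slices are proper --- surjective; and since the $\pi_1(M)$-action on $\mathcal{L}_2$ is minimal, the open invariant set on which $b'$ is locally constant must be empty, so $b'$ is strictly monotone. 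Thus $b'$ is a homeomorphism conjugating the $\pi_1(M)$-action on $\mathcal{L}_2$ to that on $\mathcal{L}_1$. Finally, a conjugacy between two actions on $\rrrr$ is a monotone homeomorphism, hence globally orientation-preserving or globally orientation-reversing, and in either case it transports a transverse orientation of one foliation to one of the other; since by hypothesis at least one of $\cF_1,\cF_2$ is transversely orientable, both are, which is the last assertion. To reiterate, among all these steps the one I expect to require real work is the exclusion of the bounded ``band'' configurations.
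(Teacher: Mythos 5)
Your reduction to the study of the slices $U(L_1)$, $V(L_2)$ and your use of minimality via invariant convex sets is fine as far as it goes, but the step you yourself flag as decisive --- ruling out the ``band'' configurations in which all slices are bounded intervals --- is not just the hard part, it is false. That configuration genuinely occurs under the hypotheses of the theorem: take $\cF_1,\cF_2$ to be the weak stable and weak unstable foliations of any skew $\rrrr$-covered Anosov flow (for instance the geodesic flow of a closed orientable hyperbolic surface). These are minimal, transverse, $\rrrr$-covered and transversely orientable, and in the universal cover each leaf of $\wcF_1$ intersects exactly a \emph{bounded} open interval $(\alpha(L),\beta(L))$ of leaves of $\wcF_2$, so every $U(L_1)$ and every $V(L_2)$ is a bounded interval. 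No amount of minimality will produce a fixed point or a proper closed invariant set here, so your plan collapses precisely in the main case the theorem is designed to cover (it is also the case relevant to the rest of the paper).

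The correct resolution is not to exclude the band case but to extract the conjugacy from it: one shows that the endpoint map $\alpha(L)=\inf\{U\in\cL_2: U\cap L\neq\emptyset\}$ (equivalently $\beta$, or your $b'=\alpha'$ on the other side), as soon as it takes one finite value, is finite everywhere, strictly monotone, and a homeomorphism $\cL_1\to\cL_2$, with $\beta'\circ\alpha=\mathrm{id}$; equivariance then follows from the transverse orientability of one foliation, and transverse orientability of the other follows because $\alpha^{-1}$ commutes with all deck transformations. This is exactly what the paper does (its Proposition on $\alpha,\beta,\alpha',\beta'$), and the argument there makes no case distinction between half-line and bounded slices --- the claims ``$\alpha$ is not locally constant'', ``$\alpha$ has at most two preimages per point'', ``$\alpha$ is weakly monotone'' use minimality exactly in the spirit of your half-line analysis, but applied to the boundary maps themselves. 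Your treatment of the half-line case (density of the image of an equivariant monotone map, no locally constant behaviour, hence a conjugating homeomorphism) is essentially sound and close to the paper's; what is missing is the realization that the same boundary-map argument must be run, and succeeds, when the slices are bounded, rather than that case being contradictory.
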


Here $\cF_i$ being $\rrrr$-covered means that the 
leaf space of $\wcF_i$ is homeomorphic to the reals $\rrrr$.
Some of the concepts appearing in the statement,
in addition to the precise regularity we will consider
on foliations  will be specified in \S~\ref{s.background}. Theorem \ref{thm.one} is proved in \S~\ref{sec.minimal} where more general statements are presented. The notion of having the actions of $\pi_1(M)$ on the leaf spaces of the foliations being conjugate for minimal foliations is equivalent to the notion of the foliations being \emph{uniformly equivalent} introduced in \cite{Th5},  and will be discussed together with the notion of isotopic/homotopic foliations, in \S~\ref{s.background}. 

Given a pair of transverse foliations $(\cF_1, \cF_2)$ we denote by  $\cG = \cF_1 \cap \cF_2$ the intersection foliation. When $M$ has dimension $3$ it is a one-dimensional foliation. 

In \cite{Ma-Ts}, S. Matsumoto and T. Tsuboi exhibited examples of a pair of transverse foliations
on the unit tangent bundle of a closed surface $S$ of genus $\geq 2,$ each foliation being isotopic to the weak  stable (respectively unstable) foliation 
of the geodesic flow of $S$, but where the intersection foliation $\cG$  is not homeomorphic to the foliation by orbits of the geodesic flow.
In particular this implies that the pair of transverse foliations is not isotopic
to the pair defined by the weak  foliations of the geodesic flow. It is proved in \cite{FP2}, that one of the foliations then contains a {\em Reeb annulus}. A {\em Reeb annulus} is a compact  annulus  $A$ in a leaf of 
$\cF_1$ or $\cF_2$ so that it is saturated by $\cG$, and 
given any lift to the universal  cover $\mt$, the  boundary
leaves are not separated in the leaf space
of $\wcG$ in the lift of the annulus (see figure~\ref{fig.reeb}).
A \textit{Reeb surface} is a Reeb annulus or a Mobius band in a leaf saturated by $\cG$ which is the quotient of a Reeb annulus by an involution.

\begin{figure}[ht]
\begin{center}
\includegraphics[scale=0.60]{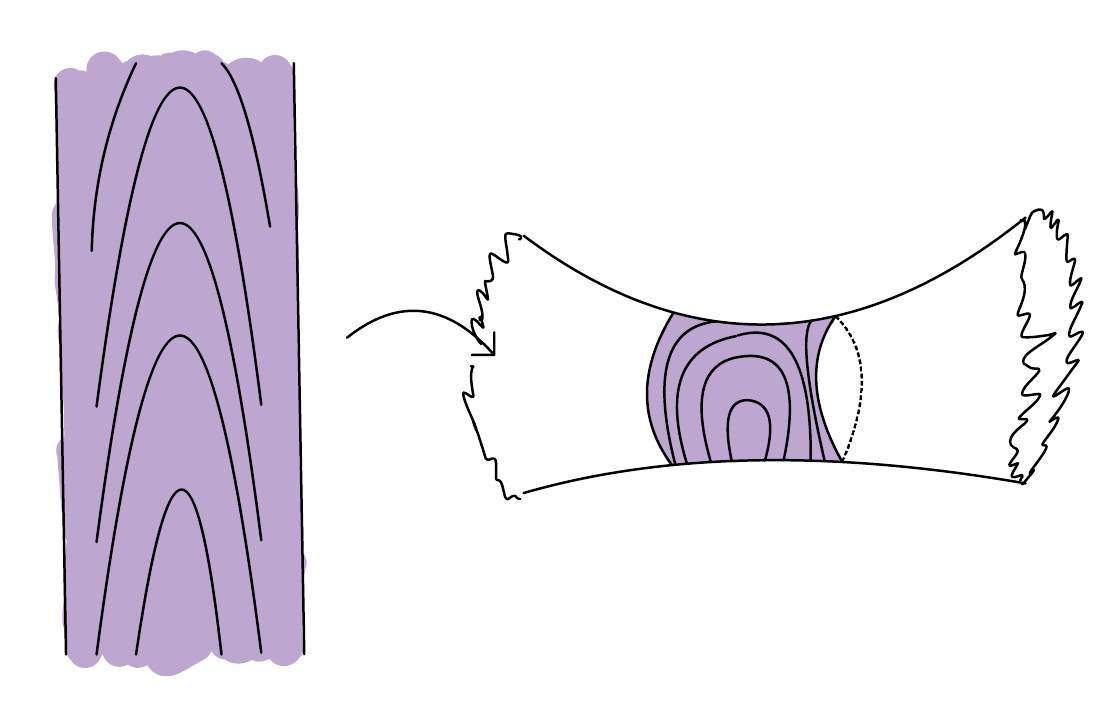}
\begin{picture}(0,0)
\end{picture}
\end{center}
\vspace{-0.5cm}
\caption{{\small A Reeb annulus in the universal cover and its projection to an annular leaf.}}\label{fig.reeb}
\end{figure}

 Let us recall the following result of  \cite{FP2}, that we will reprove here with
another proof: 

\begin{theorem} \label{thm.two}
Let $\cF_i$  be  minimal  foliations 
which are transverse to each  other in $M = T^1 S$, where
$S$ is a closed orientable,  hyperbolic surface.
Then either  there is 
a Reeb surface in a leaf  of $\cF_1$ or $\cF_2$; or
the intersection foliation is homeomorphic to the flow foliation
of the geodesic flow of $S$.
\end{theorem}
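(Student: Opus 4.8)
The plan is to derive Theorem~\ref{thm.two} from Theorem~\ref{thm.one} together with the structure of the circle bundle $M=T^1S$. Under our hypotheses $\cF_1,\cF_2$ are $\rrrr$-covered, and after passing to a suitable double cover of $M$ if necessary we may assume one of them is transversely orientable, so that Theorem~\ref{thm.one} applies. It leaves two cases: \emph{(I)} every leaf of $\wcF_1$ meets every leaf of $\wcF_2$ in $\mt$; or \emph{(II)} the $\pi_1(M)$--actions on the leaf spaces $L_1,L_2\cong\rrrr$ of $\wcF_1,\wcF_2$ are topologically conjugate (and then both $\cF_i$ are transversely orientable). It suffices to show that case (I) forces a Reeb surface in a leaf of $\cF_1$ or $\cF_2$, and that in case (II) the intersection $\cG=\cF_1\cap\cF_2$ is homeomorphic to the geodesic flow foliation. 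In both cases one exploits the central extension $1\to Z\to\pi_1(M)\to\pi_1(S)\to1$, with $Z\cong\zzzz$ generated by the fibre: a $Z$--fixed point in some $L_i$ would give (by minimality of the $\pi_1(M)$--action on $L_i$, and since $\mathrm{Fix}(Z)$ is $\pi_1(M)$--invariant as $Z$ is central) that every leaf of $\wcF_i$ is $Z$--invariant, forcing $\cF_i$ to be pulled back from a one--dimensional foliation of $S$ --- impossible as $\chi(S)\neq0$. Hence $Z$ acts on $L_i\cong\rrrr$ by a fixed-point-free homeomorphism, so conjugate to a translation, and $L_i/Z\cong S^1$ inherits a $\pi_1(S)$--action.

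In case (II), the conjugacy lets one build, in $\mt$, a $\pi_1(M)$--equivariant model for the pair: the product map $\mt\to L_1\times L_2$ has image a proper, open, connected, invariant region $R\subsetneq\rrrr^2$, and --- this is where minimality is used in an essential way, to exclude a two--dimensional ``gap'' of pairs of disjoint leaves, which would be a Reeb--type pathology --- the complement $\rrrr^2\setminus R$ is the graph of an orientation--preserving, $Z$--periodic, $\pi_1(M)$--equivariant homeomorphism $L_1\to L_2$. This is exactly the incidence structure of the weak stable and unstable foliations of the geodesic flow, and it identifies the leaf space of $\wcG$, equivariantly, with that of the orbit foliation of the geodesic flow. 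One then concludes --- either by reconstructing directly a homeomorphism of $T^1S$ carrying $\cG$ to the geodesic flow foliation, or by observing that the bifoliated structure just exhibited is that of the weak foliations of an Anosov flow and invoking Ghys's theorem that an Anosov flow on a circle bundle over a higher--genus surface is, up to orbit equivalence, a geodesic flow --- that $\cG$ is homeomorphic to the geodesic flow foliation.

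In case (I), the product map $\mt\to L_1\times L_2\cong\rrrr^2$ is surjective and equivariant for the diagonal $Z$--translation, and descends to a surjection of $\mt/Z\cong T^1\mathbb{H}^2$ onto the open annulus $\rrrr^2/Z$, onto which $\cF_1\cap\cF_2$ projects as the fibration. Taking $g\in\pi_1(S)$ hyperbolic and analysing the action of a suitable lift $\hat g\in\pi_1(M)$ on $L_1$ and $L_2$, one produces a $\hat g$--invariant leaf of $\wcG$ covering a closed leaf $c$ of $\cG$; because every leaf of $\wcF_1$ meets every leaf of $\wcF_2$, the leaf space of $\wcG$ along the leaf of $\wcF_1$ (or of $\wcF_2$) through the lift of $c$ cannot ``open up'' as it does for the geodesic flow, and one checks that the germ of $\cG$ along $c$ inside one of $\cF_1,\cF_2$ is that of a Reeb annulus; minimality and transverse orientability then promote this local picture to an embedded Reeb surface.

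I expect the main obstacles to be the two places where minimality is used non--formally: in case (II), ruling out a ``fat'' locus of non-intersecting pairs so that $\rrrr^2\setminus R$ is genuinely a graph; and in case (I), producing the closed leaf $c$ with the required holonomy and recognising the resulting local picture as a Reeb annulus. The remaining ingredients --- the reductions, the structure of $\pi_1(M)$, and the final appeal to Ghys's classification --- are either formal or classical.
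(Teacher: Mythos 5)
Your overall shape (dichotomy from Theorem \ref{thm.one}, the central fiber class $Z$, reduction of the problem to the phase space $\cL_1\times\cL_2$) matches the paper's strategy, but your Case (II) contains a genuine gap, and it is exactly the gap the bulk of the paper exists to close. Knowing that the actions on $\cL_1,\cL_2$ are conjugate and that minimality makes the image of $\cD$ a region bounded by invariant monotone graphs (this is Proposition \ref{pro.alphafini}) does \emph{not} identify the leaf space of $\wcG$ with the orbit space of the geodesic flow: the induced map $\cD_{\cG}:\cL_{\cG}\to\cL_1\times\cL_2$ can fail to be injective (a leaf of $\wcF_1$ and a leaf of $\wcF_2$ may intersect in several components, equivalently $\cL_{\cG}$ may be non-Hausdorff), and the image of $\cD$ may meet one of the infinitely many \emph{interior} invariant graphs $\eta_k$ lying between the graphs of $\alpha$ and $\beta$. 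The Matsumoto--Tsuboi examples \cite{Ma-Ts} are precisely in your Case (II) (both foliations are isotopic to the weak foliations of the geodesic flow, hence the leaf-space actions are conjugate, and both foliations are minimal), yet their intersection foliation is \emph{not} homeomorphic to the geodesic flow foliation; instead a Reeb annulus appears. So your Case (II), as argued (with minimality only excluding a ``fat gap'' in the image of $\cD$), proves a false statement: the Reeb alternative lives inside Case (II), not only in Case (I), and detecting it requires the machinery of Theorem \ref{thm.three} --- discreteness of invariant graphs, the $\cG$-saturated tori of Proposition \ref{prop.closed}, the simultaneous non-separation theorem (Theorem \ref{nonsep}), and the holonomy-expansion argument via Margulis measures (Proposition \ref{prop-expAnosov}) --- none of which is ``formal or classical.''

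Two further points. First, you never invoke Matsumoto's theorem \cite{Mat} that a minimal foliation of $T^1S$ is an Anosov foliation; without it you do not know the leaf-space actions are the (lifted) projective actions of Ghys's picture, you have no annular-leaf structure, no one-step-up maps, and no way to run the expansion/periodic-orbit arguments, so both your identification of the ``incidence structure'' with that of the geodesic flow and your final appeal to Ghys's classification are unsupported (to apply Ghys you must first know $\cG$ is the orbit foliation of a topological Anosov flow, which is the hard conclusion, obtained in the paper via the classifying-space/averaging argument of Proposition \ref{prop-nointersectionAnosov}). Second, your Case (I) (developing map surjective) is the right claim --- the paper's proof of Theorem \ref{thm.four} shows this case is incompatible with the absence of Reeb annuli --- but your sketch (find a closed leaf $c$ and ``check the germ is a Reeb annulus'') skips the actual mechanism: producing closed leaves of $\cG$ from fixed points in phase space needs discreteness (Lemma \ref{le:periodicdiscrete}) and the no-Reeb hypothesis (Lemma \ref{le.uniqueperiodic}), and the contradiction in the surjective case comes from the saddle/attracting dichotomy for fixed points (Lemma \ref{lemma.saddle2}) combined with Theorem \ref{nonsep} and the Margulis-measure expansion, not from a local germ computation along a single closed orbit.
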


In this article we generalize this result for every pair of codimension one foliations in three manifolds
satisfying the following 
assumptions: 
\begin{enumerate}
\item the  foliations are minimal (meaning that every leaf is dense in the ambient manifold), 
\item the foliations
are what is  called  $\rrrr$-covered,  that is, the  lifts
to the universal  cover $\mt$  have leaf space homeomorphic
to the real line, 
\item the foliations are Anosov foliations, and 
\item both foliations are uniformly equivalent to each other. 
\end{enumerate}

Assumption $(1)$ actually follows from $(2)$ and $(3)$ but we single it out for its importance. 
Note that in Theorem \ref{thm.two} there is no need to assume that the foliations are Anosov since a result of Matsumoto \cite{Mat} implies that any minimal foliation in $T^1 S$ is homeomorphic to an Anosov foliation. The fact that the foliations are uniformly equivalent is not needed for Theorem \ref{thm.two} due to specific properties of Seifert manifolds (see Theorem \ref{thm.four} below).



We obtain a result on more general manifolds:

\begin{theorem}\label{thm.three}
Let $\cF_i$  be  $\rrrr$-covered, skew  Anosov foliations in a closed 3-manifold $M$
which are transverse to each other.
Suppose that the
actions of $\pi_1(M)$ on the  leaf spaces of $\cF_1$
and $\cF_2$ are  conjugate to each other. 
Then either $\cF_1$ and $\cF_2$ both contain a Reeb surface of $\cG$; or
the intersection foliation is homeomorphic to the flow foliation
of an Anosov flow. 
\end{theorem}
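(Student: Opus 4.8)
The plan is to analyse the leaf space of the lifted intersection foliation $\wcG$ and to show that, in the absence of Reeb surfaces, it is a plane carrying two transverse one‑dimensional foliations and a $\pi_1(M)$‑action with exactly the structure of the orbit space of a skew $\rrrr$‑covered Anosov flow. First I would fix notation: lift everything to $\mt$, write $\cL_i$ for the leaf space of $\wcF_i$, so $\cL_i\cong\rrrr$, and let $h\colon\cL_1\to\cL_2$ realise the conjugacy between the $\pi_1(M)$‑actions. Since we are in the ``conjugate'' case of Theorem~\ref{thm.one}, both $\cF_i$ are transversely orientable, so (after passing to a finite cover if $\cG$ is not already orientable) $\cG$ is the orbit foliation of a flow $\Psi$ on $M$. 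Two transverse leaves $L_1\in\wcF_1$ and $L_2\in\wcF_2$ are planes, meeting — if at all — in a nonempty union of properly embedded lines, each a leaf of $\wcG$; a connectedness argument shows that for fixed $L_1$ the set of leaves of $\wcF_2$ meeting it is an open interval $J(L_1)\subseteq\cL_2$, and symmetrically. Using the structure theory of skew $\rrrr$‑covered foliations (going back to Thurston, and developed by Barbot and Fenley) — in particular the canonical fixed‑point‑free skewing homeomorphisms $\eta_i$ of $\cL_i$ and the associated universal‑circle and transverse regulating data — together with $h$, I would identify the resulting ``intersection region'' in $\cL_1\times\cL_2$ as a strip mirroring the diamond‑shaped orbit space of a skew $\rrrr$‑covered Anosov flow.

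Next comes the dichotomy. Let $\cL_{\cG}$ be the a priori non‑Hausdorff leaf space of $\wcG$; it carries two transverse one‑dimensional foliations $\hat\cF_1,\hat\cF_2$, the images of $\wcF_1,\wcF_2$, with leaf spaces $\cL_1,\cL_2$, together with a $\pi_1(M)$‑action. Two non‑separated leaves of $\wcG$ necessarily project to the same point of each $\cL_i$, hence lie in a common intersection $L_1\cap L_2$; a limiting argument together with a periodicity argument — producing an element of $\pi_1(M)$ that stabilises the limiting configuration, in the spirit of the Reeb‑detection arguments of \cite{FP2} — then yields a Reeb annulus or Mobius band for $\cG$ inside a leaf of $\cF_1$, and, transporting the configuration through $h$, a Reeb surface inside a leaf of $\cF_2$ as well. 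Contrapositively, if neither $\cF_1$ nor $\cF_2$ contains a Reeb surface then $\cL_{\cG}$ is Hausdorff; being simply connected and noncompact, it is homeomorphic to $\rrrr^2$.

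It then remains to recognise the flow. On $\cL_{\cG}\cong\rrrr^2$ we have two transverse foliations and a $\pi_1(M)$‑action whose features — the intersection‑region description of the previous step, properness, and the behaviour of the skewing maps — match those of the orbit space of a skew $\rrrr$‑covered Anosov flow. I would use this to verify that $\Psi$ is a transitive topological Anosov flow: expansiveness because two orbits whose lifts remain a bounded distance apart have $\wcG$‑leaves with the same image in each $\cL_i$ and hence coincide; the contracting/expanding behaviour of $\cF_1,\cF_2$ along $\Psi$ because skew $\rrrr$‑covered Anosov foliations are uniform; and transitivity from minimality of the $\cF_i$. Then $\Psi$, being a transitive topological Anosov flow on a closed $3$‑manifold, is orbit equivalent to a smooth Anosov flow, so $\cG$ is homeomorphic to the orbit foliation of an Anosov flow.

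I expect the recognition step, together with the analysis of non‑separated leaves that feeds into it, to be the main obstacle: one must upgrade the combinatorial diamond picture to genuine expansive behaviour of $\Psi$ before the rigidity of topological Anosov flows can be applied, and one must check that a non‑separated configuration really forces a Reeb surface in \emph{both} $\cF_1$ and $\cF_2$ — not merely in one, as in Theorem~\ref{thm.two} — which is precisely where the uniform‑equivalence hypothesis is essential. A secondary difficulty is making the identification of the intersection region rigorous, since it requires relating the transverse foliation $\cF_2$ to the universal circle of $\cF_1$ and knowing that $h$ intertwines the canonical skewing maps $\eta_1$ and $\eta_2$.
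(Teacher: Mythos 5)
Your overall dichotomy (``$\cL_{\cG}$ non-Hausdorff $\Rightarrow$ Reeb surface'' versus ``$\cL_{\cG}$ Hausdorff $\Rightarrow$ recognize an Anosov flow'') conceals the two hardest points of the theorem, and the specific arguments you offer for each would fail. In the Hausdorff branch, your expansiveness argument is incorrect: skew $\rrrr$-covered Anosov foliations are \emph{uniform}, so any two leaves of $\wcF_i$ stay at bounded Hausdorff distance; hence two distinct orbits of $\wcG$ whose lifts remain a bounded (even small) distance apart need \emph{not} lie in the same leaves of $\wcF_1$ and $\wcF_2$, and asserting that they do is essentially asserting the expansiveness of $\cG$ that is to be proved --- exactly what fails in the Matsumoto--Tsuboi examples. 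Moreover, Hausdorffness of $\cL_{\cG}$ alone does not give your ``diamond'' identification: one must rule out that the image of the developing map $\cD$ meets a $\pi_1(M)$-invariant monotone graph, since otherwise $\cD^{-1}$ of such a graph produces $\cG$-saturated incompressible tori whose closed leaves are attracting or repelling (Proposition~\ref{prop.closed}, Lemma~\ref{lemma.same}), which is incompatible with an Anosov orbit foliation. The paper's work in \S~\ref{sec:invariant}--\S~\ref{sec.nointersect} and \S~\ref{sec.intersect} is devoted precisely to this: discreteness of the invariant graphs, identification of the image of $\cD$ with a region $\Omega_0$ between consecutive graphs, injectivity of $\cD_{\cG}$ via periodic points (Lemma~\ref{le.periodicisperiodic}, Proposition~\ref{prop-hsdff}), and then the upgrade of a Haefliger homotopy equivalence to a conjugacy by an averaging argument (Proposition~\ref{prop-nointersectionAnosov}) --- not a direct verification of topological Anosov behaviour followed by Shannon's theorem (Shannon is used only in the final non-orientability reduction, \S~\ref{s.lastone}).

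In the other branch, the ``limiting argument together with a periodicity argument'' is precisely the missing content. From a non-separated pair one only knows that the two leaves of $\wcG$ lie in a common leaf of $\wcF_1$ and a common leaf of $\wcF_2$; to force \emph{closed} leaves of $\cG$, and from them a Reeb annulus, the paper needs (i) Theorem~\ref{nonsep}, the general simultaneous non-separation result, (ii) Proposition~\ref{prop-expAnosov}, a Margulis-measure argument showing that one-sided holonomy expansion along such a leaf implies two-sided expansion, and (iii) the fact that the relevant ray can accumulate only on the tori $\pi(\cD^{-1}(\cC))$ coming from the invariant graphs, because any other accumulation point would yield a contracting fixed point in the phase space off the graphs, contradicting Lemma~\ref{lemma.saddle} (see Claim~\ref{claim.limit} and Lemma~\ref{lemma.closed}). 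None of this follows from the leafwise coarse-geometric ``Reeb-detection'' arguments of \cite{FP2} you invoke, which are exactly what this paper replaces and which are carried out there only in the Seifert-fibered setting; nor does the conjugacy hypothesis by itself give transverse orientability (Theorem~\ref{thm.one} yields that only in its non-intersection case) --- the paper instead reduces to the orientable case by finite covers and recovers the general statement at the end.
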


Roughly by skew Anosov flow we mean an Anosov flow in dimension
$3$ which is $\rrrr$-covered, but not orbitally equivalent to
a suspension Anosov flow. See more details in \S~\ref{s.background}. We point out here that besides generalizing Theorem \ref{thm.two}, the main point of this paper is to provide a completely different approach for the proof. This new approach involves the creation of a \emph{developing map} in the product of the leaf spaces (see Definition \ref{developingmap}). 
In fact there is an associated developing map for any pair of Reebless
transverse foliations. However in general the leaf spaces are only 
simply connected possibly non Hausdorff $1$-manifolds, and the
analysis is more complicated. In our situation of $\rrrr$-covered foliations
the product of leaf spaces is the plane, and to a certain extent
this reduces part of the analysis from dimension $3$ to dimension $2$. Still one could pose the following question in general\footnote{After this paper was released, Ellis Buckminster and Sam Taylor pointed out that one can produce examples on which the image is not $\mathbb{R}^2$ by intersecting the weak stable foliation of a non-$\mathbb{R}$-covered Anosov flow with the tilt of itself. See \cite{BT}.}:

\begin{question}
Let $\cF_1$ and $\cF_2$ two transverse (not necessarily $\rrrr$-covered) minimal foliations of codimension one, and denote by $\cL_1 = \mt/_{\widetilde{\cF_1}}$ and $\cL_2 = \mt/_{\widetilde{\cF_2}}$ their leaf spaces in the universal cover. Let $\cD: \mt \to \cL_1 \times \cL_2$ be the map that sends each point $x \in \mt$ to the leaves it belong, that is, $\cD(x) = (\widetilde{\cF_1}(x), \widetilde{\cF_2}(x))$. Is it possible that the image of $\cD$ is not homeomorphic to $\rrrr^2$?
\end{question}

The map $\cD$ defined in the previous question is the developing map mentioned before. The main idea in the proof of Theorem \ref{thm.three} is to analyze this natural map $\cD$ and show that if it does not intersect any invariant graph in the product of
leaf spaces (which in our case of $\rrrr$-covered foliations
is the plane), then the flow of the intersected foliation must be Anosov.
On the other hand if the image of this natural map
intersects an invariant graph, then one can produce some embedded tori in $M$ whose existence allows us to produce Reeb surfaces. In contraposition, the approach in \cite{FP2} involves a more involved study of the intersected foliation inside each leaf of the pair of foliations using coarse geometric techniques, and requires some three dimensional arguments which are way more involved in order to produce the Reeb surfaces. We hope that these complementary approaches can in the future be combined in some ways to continue to extend these kind of results for more general pairs of transverse foliations. 

In particular we point out here that by the way the proof is structured, the proof of Theorem \ref{thm.three} in the case where $M$ is a hyperbolic 3-manifold is much simpler (see Remark \ref{rem-hyperbolic}). The general case involves proving a technical result that is valid for general transverse foliations in 3-manifolds (see \S~\ref{sec-nonseparatedleaves}) which may be of independent interest. 

Observe that Theorem \ref{thm.one} means that the hypothesis of Theorem \ref{thm.three} are satisfied if there is a leaf of $\wcF_1$ which
does not  intersect every leaf of $\wcF_2$.

Theorem \ref{thm.two} is a consequence of the following
more general  result which is proved in the last section.
The generalization consists of enlarging the  class
of manifolds from $T^1 S$ to circle bundles over 
hyperbolic surfaces.

\begin{theorem} \label{thm.four}
Suppose  that  $\cF_1, \cF_2$ are transverse foliations in
a Seifert manifold. 
Suppose that $\cF_1, \cF_2$ are Anosov  foliations
and that $\cF_1$ does not contain Reeb surfaces
of  the intersection foliation $\cG$. Then $\cG$ is 
homeomorphic to the flow foliation 
of an Anosov flow.
\end{theorem}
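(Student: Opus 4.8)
The plan is to deduce Theorem~\ref{thm.four} from Theorem~\ref{thm.three} by showing that in a circle bundle over a hyperbolic surface the hypotheses of Theorem~\ref{thm.three} are automatically met once $\cF_1$ contains no Reeb surface of $\cG$. First I would recall that an Anosov foliation on a closed 3-manifold is by definition (weakly) tangent to the weak-stable or weak-unstable bundle of an Anosov flow, so each $\cF_i$ is topologically conjugate to a weak foliation of some Anosov flow $\Phi_i$ on $M$. Since $M$ is a Seifert fibered space (a circle bundle over a hyperbolic surface), the classification of Anosov flows on Seifert manifolds — due to Ghys, and extended by Barbot and Fenley — applies: every Anosov flow on such $M$ is, up to orbit equivalence and finite covers, the geodesic flow of a hyperbolic orbifold, and in particular it is $\rrrr$-covered and skew (never a suspension, since $M$ is not a torus bundle). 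Hence each $\cF_i$ is an $\rrrr$-covered skew Anosov foliation, giving hypotheses (2) and (3) of Theorem~\ref{thm.three} for free.

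The remaining point is hypothesis: that the actions of $\pi_1(M)$ on the leaf spaces $\cL_1$ and $\cL_2$ are conjugate. Here I would invoke Theorem~\ref{thm.one}: since both $\cF_i$ are minimal and $\rrrr$-covered, the dichotomy says that either every leaf of $\wcF_1$ meets every leaf of $\wcF_2$, or the two $\pi_1(M)$-actions on the leaf spaces are conjugate. In the second case we are immediately in position to apply Theorem~\ref{thm.three}. So it suffices to rule out, or rather to directly handle, the first alternative — and this is where the specific geometry of circle bundles over hyperbolic surfaces enters. The key structural fact is that for a skew Anosov flow on a Seifert manifold the leaf space of each weak foliation, together with the $\pi_1(M)$-action, is essentially the action on a line coming from the ``universal circle'' / the boundary of the hyperbolic plane of the base orbifold; since both flows cover (finite covers of) the same geodesic flow up to orbit equivalence, these two actions are forced to be conjugate regardless of which alternative of Theorem~\ref{thm.one} holds. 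Concretely, I would argue that the $\pi_1(M)$-action on $\cL_i$ determines, via the skew structure, an action on $\mathbb{R}$ whose rotation/translation data is a conjugacy invariant pinned down by the Seifert invariants and the action on the base orbifold group, and that these coincide for $i=1,2$.

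With conjugacy of the leaf-space actions established, Theorem~\ref{thm.three} yields the dichotomy: either $\cF_1$ and $\cF_2$ both contain a Reeb surface of $\cG$, or $\cG$ is homeomorphic to the flow foliation of an Anosov flow. The hypothesis of Theorem~\ref{thm.four} excludes the first case (it assumes $\cF_1$ contains no Reeb surface), so we land in the second, which is exactly the conclusion. For Theorem~\ref{thm.two} one then specializes $M = T^1 S$ and notes, as remarked in the introduction, that by Matsumoto's theorem any minimal foliation on $T^1S$ is homeomorphic to an Anosov foliation, so the Anosov hypothesis on the $\cF_i$ is automatic and Theorem~\ref{thm.four} applies verbatim.

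The main obstacle I anticipate is the conjugacy step — verifying rigorously that on a circle bundle over a hyperbolic surface any two $\rrrr$-covered skew Anosov foliations have conjugate $\pi_1(M)$-actions on their leaf spaces, so that hypothesis of Theorem~\ref{thm.three} is genuinely free. This requires the full strength of the classification of Anosov flows on Seifert manifolds and a careful comparison of the boundary actions; the subtlety is that ``orbit equivalent up to finite cover'' must be upgraded to a statement about the actual $\pi_1(M)$-action on the leaf space of the weak foliation, accounting for the periodic-orbit (Seifert fiber) data. I would handle this by passing to the common finite cover where both flows become the geodesic flow, identifying the leaf spaces there with the space of geodesics of $\mathbb{H}^2$ with its $\pi_1$-action, and then descending; the descent is where one must check the actions match on the nose and not merely up to the finite-index subgroup.
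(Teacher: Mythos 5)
The reduction you propose works only in one half of the relevant dichotomy, and the step you flag as ``the main obstacle'' is in fact a genuine gap: the claim that on a circle bundle the $\pi_1(M)$-actions on $\cL_1$ and $\cL_2$ are \emph{automatically} conjugate is not true, and no amount of passing to common finite covers removes the problem. By Ghys, each action on $\cL_i$ is conjugate to an action on $\widetilde{\mathrm{RP}}^1$ given by a faithful discrete cocompact representation $\rho_i:\pi_1(M)\to\widetilde{\mathrm{PSL}}(2,\rrrr)$, and after conjugating so that the projected Fuchsian representations agree one only gets $\rho_2(\gamma)=\rho_1(\gamma)\,\delta^{h(\gamma)}$ for some homomorphism $h:\pi_1(M)\to\zzzz$ ($\delta$ the central generator). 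The Fuchsian data, the Seifert invariants and the fiber class only pin down $h$ on the center; on the rest of $\pi_1(M)$ the twist $h$ is an element of $\mathrm{Hom}(\pi_1(\Sigma),\zzzz)\cong\zzzz^{2g}$ which is \emph{not} determined by ``both flows cover the geodesic flow up to orbit equivalence'' (for instance, pulling back an Anosov flow by a vertical Dehn twist along an incompressible torus changes the lift exactly by such an $h$). When $h\neq 0$ the translation numbers of the two actions differ on hyperbolic elements, so the actions are not conjugate, and by Theorem~\ref{thm.one} one is then in the alternative where every leaf of $\wcF_1$ meets every leaf of $\wcF_2$. Your argument simply has nothing to say in this situation, so Theorem~\ref{thm.three} cannot be invoked and the proof does not close.

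This non-conjugate case is precisely where the paper does real work: the graphs $\eta_k$ of the powers $\delta^k$ are not individually $\pi_1(M)$-invariant, but they form a discrete invariant \emph{family} (Lemma~\ref{le.h}, $\gamma(\eta_k)=\eta_{k+h(\gamma)}$), so Proposition~\ref{prop.closed} still produces $\cG$-saturated tori from $\cD^{-1}(\cC)$; fixed points off the family are saddles (Lemma~\ref{lemma.saddle2}); and then the machinery of \S~\ref{sec.intersect} (via Theorem~\ref{nonsep} and the analogues of Lemma~\ref{lemma.closed} and Proposition~\ref{prop.closedorbits}) shows that surjectivity of $\cD$ together with the absence of Reeb annuli in $\cF_1$ leads to a contradiction. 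Only after ruling out this case does one land in your Case (conjugate actions, $\alpha$ or $\beta$ finite), where the appeal to Theorem~\ref{thm.three} is indeed correct, as is your deduction of Theorem~\ref{thm.two} from Matsumoto's result. To repair your proposal you would have to either prove that transversality forces $h=0$ (which is exactly what needs proving, not an input) or reproduce an argument of the above type for the twisted case.
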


Finally we make the following remark:
If the foliations $\cF_i$ are produced by approximating 
the  center stable and center unstable branching foliations of a partially
hyperbolic diffeomorphism, then there cannot be any
Reeb surfaces in leaves of $\cF_i$ \cite{FP2}. Hence the results
above  are relevant in the classification of partially hyperbolic diffeomorphisms\footnote{After this paper was submitted, some parts of this paper were used in the recent article \cite{FP4} to complete the classification of partially hyperbolic diffeomorphisms admitting only branching foliations with Gromov hyperbolic leaves.}.   

In \S~\ref{s.background} we provide necessary background for the rest of the paper as well as discuss some of the concepts appearing in the main statements. In \S~\ref{sec.minimal} we prove Theorem \ref{thm.one}. Section \ref{s.translationfoliations} is devoted to \textit{translation foliations} (see definition \ref{def:translation}) giving extensions of some results of \cite{Ma-Ts} as well as removing some smoothness assumptions. This section also discusses the case of transverse product Anosov foliations proving Proposition \ref{productproduct}. 
Some of the results in \S~\ref{s.translationfoliations} are valid in all dimensions as in \S~\ref{sec.minimal}. The same happens in \S~\ref{sec:invariant} which deals with group invariant monotone graphs in the phase space. Section \ref{sec-nonseparatedleaves} starts assuming that the ambient manifold is 3-dimensional, though in this section we prove a result that does not assume that the codimension one foliations are $\rrrr$-covered which we feel may be useful in other contexts. 
The goal of sections \ref{sec-groupinv} through \ref{sec.intersect}
is to prove Theorem \ref{thm.three} under
orientability conditions. 
In section \S~\ref{s.lastone} we deal with the general case
of Theorem \ref{thm.three}.
 Finally, in \S~\ref{s.circlebundle} we specialize to Seifert manifolds and prove Theorem \ref{thm.four}.  

\vskip .1in
\noindent
{\em {Acknowledgement}} $-$ We thank the referee of this article for invaluable
comments that greatly helped the presentation of this article.

\section{Background}\label{s.background}

\subsection{Foliations and transversality}\label{sub.fol}

A \emph{foliation} $\cF$ of a closed manifold $M$ is a partition of $M$ by immersed submanifolds (called \emph{leaves}) of the same dimension which locally pile up as a product. The foliation is said to be \emph{codimension one} if leaves are of dimension one less than the ambient manifold. We say that two foliations $\cF$ and $\cF'$ are \emph{homeomorphic} if there is a homeomorphism of $M$ mapping leaves of $\cF$ to leaves of $\cF'$. When the foliations are one-dimensional, we sometimes write \emph{topologically equivalent} instead of homeomorphic since it is reminiscent to the equivalence of flows in a manifold. We will discuss other notions of equivalence in \S~\ref{sub.homotopic}. 

The regularity for a pair of {\em transverse foliations} $\cF_1, \cF_2$
in this article is as follows: each $\cF_i$ is $C^{1,0}$, i.e. 
the leaves are $C^1$ immersed surfaces, the charts are assumed
only $C^0$, see further details in \cite{Ca-Co}. 
By transverse we mean that there is a combined foliated atlas
with local charts having coordinates $(x,y,z_1, \cdot, z_k)$, where
locally $\cF_1$ is given by $x = cte$ and $\cF_2$ is given by $y = cte$.
In addition we assume that the leaves of $\cG$ are $C^1$ (this kind of assumption is always met in the settings mentioned in the introduction).

We say that a foliation is \emph{minimal} if every leaf is dense in $M$. 

An important result by Haefliger on codimension one foliations provides conditions on which a foliation does not admit nulhomotopic transversals. When this is the case, leaves of $\cF$ lift to $\mt$, the universal cover of $M$, as hyperplanes and the \emph{leaf space}, which is the quotient $\mt/_{\wcF}$ is a one-dimensional, not necessarily Hausdorff, simply connected manifold. See \cite{Ca-Co} for a general introduction. We will mostly specialize to some cases that we shall now describe.

\subsection{$\rrrr$-covered foliations}\label{sub.defrrr}

As explained in the previous subsection, the leaf space $\cL$ of a foliation $\cF$ is the quotient of 
the lifted foliation  $\wcF$ of $\cF$ to the universal
cover $\mt$ by the relation  of being in the same leaf  of 
$\wcF$. We denote by $\mu: \mt \to \cL$ the projection map. We equip $\cL$ with the quotient topology. Observe that 
the fundamental group $\pi_1(M)$ acts naturally on $\cL.$ 

\begin{definition}
A foliation $\cF$ is said to be $\rrrr$-covered if its 
leaf space  $\cL$ is homeomorphic to $\rrrr$.
\end{definition}

A very easy result is the following:

\begin{lemma}
Suppose that $\cF$ is a foliation in a manifold
$M$. Suppose that $\cF$ is $\rrrr$-covered. Then $\cF$ is
codimension one, leaves
of $\wcF$ separate $\mt$, and  leaves of $\wcF$ are properly embedded.
\end{lemma}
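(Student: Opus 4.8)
The plan is to extract topological consequences from the single hypothesis that $\cL \cong \rrrr$, proceeding in three short steps. First I would establish codimension one. If $\cF$ had codimension $k \geq 2$, then a small transversal to $\cF$ in $M$ would be a $k$-disk, which lifts to $\mt$ as a small transversal to $\wcF$; its image under $\mu$ would be an open subset of $\cL$ of topological dimension $k$. But $\cL \cong \rrrr$ has dimension $1$, a contradiction. (If $\cF$ had codimension $0$ there is a single leaf and $\cL$ is a point, again not $\rrrr$.) Hence $\cF$ is codimension one, and in particular Haefliger's theorem machinery quoted in \S~\ref{sub.fol} applies: we may assume leaves of $\wcF$ are embedded hyperplanes and $\cL = \mt/_{\wcF}$ is a simply connected (a priori non-Hausdorff) $1$-manifold — which here is actually $\rrrr$ by hypothesis.

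Second, I would prove that leaves of $\wcF$ separate $\mt$. Let $\wF$ be a leaf of $\wcF$ and let $L = \mu(\wF) \in \cL$ be the corresponding point. Since $\cL \cong \rrrr$, the point $L$ separates $\cL$ into two components $\cL^{+}$ and $\cL^{-}$. Because $\mu$ is continuous, the sets $\mu^{-1}(\cL^{+})$ and $\mu^{-1}(\cL^{-})$ are disjoint open subsets of $\mt$, and $\mt \setminus \wF = \mu^{-1}(\cL^{+}) \sqcup \mu^{-1}(\cL^{-})$. It remains to see each piece is nonempty, but this is clear since $\mu$ is surjective; hence $\mt \setminus \wF$ is disconnected and $\wF$ separates $\mt$.

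Third, I would show leaves of $\wcF$ are properly embedded. Embeddedness is already available from the Haefliger setup; what needs checking is properness. Suppose $\wF$ is not properly embedded: then there is a sequence $x_n \in \wF$ escaping to infinity in $\wF$ but with $x_n \to x_\infty \in \mt$. Passing to the leaf through $x_\infty$, in a foliated product chart around $x_\infty$ the leaf $\wF$ would return infinitely often, forcing $\mu$ to take a value accumulating on $\mu(x_\infty) = L$ from one side while $x_n$ all lie on $\wF$ so $\mu(x_n) = L$; more directly, a foliation chart near $x_\infty$ is a product $U \times V$ with $V \subset \cL$ an open interval, and the plaques of $\wF$ meeting $U \times V$ all project to the single point $L$, yet infinitely many distinct plaques of $\wF$ accumulate at the plaque of $x_\infty$ — impossible since the plaques of a single leaf in a product chart are indexed by distinct points of $V$ unless they coincide, and a $C^{1,0}$ foliation chart has each plaque meeting a local transversal exactly once. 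So each leaf meets any such chart in finitely many (indeed one) plaques, contradicting accumulation; hence $\wF$ is properly embedded.

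The only delicate point is the last step: ruling out infinitely many plaques of one leaf accumulating in a single foliated chart. The clean way is to invoke the Haefliger-type structure already granted in \S~\ref{sub.fol} (no nulhomotopic transversals $\Rightarrow$ leaves lift to embedded hyperplanes meeting local transversals in single points), so that "leaf of $\wcF$ through $x$" coincides with $\mu^{-1}(\mu(x))$ and properness of $\wF$ becomes equivalent to properness of the point $L$ in $\cL \cong \rrrr$ — which is automatic. I expect this identification $\mu^{-1}(L) = \wF$ to be the crux, with everything else being a one-line consequence of $\cL$ being a genuine line.
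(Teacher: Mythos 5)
Your step 2 is correct and coincides with the paper's argument: since fibers of $\mu$ are by definition single leaves, the preimages of the two rays of $\cL\setminus\{L\}$ are disjoint, open, nonempty (by surjectivity of $\mu$) and cover $\mt\setminus\wF$. The problem is in step 3, and it is a genuine gap rather than a presentational one. The crux of the lemma is not the identity $\mu^{-1}(L)=\wF$ (that is true by definition of the quotient, and only yields that each leaf is a \emph{closed subset} of $\mt$); the crux is that a leaf meets a foliation chart in at most one plaque, equivalently that local transversals inject into $\cL$, so that $\mu$ is a topological submersion and its closed fibers are properly \emph{embedded}. A closed, injectively immersed leaf is not automatically embedded, and nothing you wrote rules out a leaf returning to a single chart in infinitely many plaques. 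You assume exactly the missing statement when you describe a chart near $x_\infty$ as $U\times V$ with ``$V\subset\cL$ an open interval'': for a general foliation the transversal factor of a chart does not embed in the leaf space. Your proposed repair via the ``Haefliger-type structure already granted in \S 2.1'' is not available either: Haefliger's theorem is invoked in the paper only under a hypothesis (no null-homotopic closed transversals) that is not among the assumptions of this lemma; the lemma's purpose is precisely to derive that structure from the single hypothesis $\cL\cong\rrrr$, so appealing to it here is circular.

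The paper closes exactly this gap with its separation step: because $\mu$ is onto $\rrrr$, the two complementary components of a leaf $F$ must map into the two \emph{different} components of $\cL\setminus\{\mu(F)\}$; consequently $\mu$ composed with a transversal arc has no interior extremum, hence is injective, so two distinct plaques of one chart cannot lie on the same leaf (``the map between the leaf space of $\wcF|_B$ and the leaf space of $\wcF$ is injective''), and leaves are then fibers of a topological submersion onto $\rrrr$, hence properly embedded. Your argument never establishes this local injectivity, which is the actual content of the lemma. The same omission also undercuts step 1: the image of a $k$-dimensional transversal under $\mu$ is a subset of the $1$-manifold $\cL$, so it cannot ``have topological dimension $k$''; to get codimension one you would need to know that $\mu$ is injective on transversals (the missing point again), or replace the dimension count by a genuine argument (e.g.\ leaves meet a chart in countably many plaques, so the restriction of $\mu$ to a closed transversal disk has zero-dimensional fibers, and a dimension-lowering theorem then forces $k\le 1$).
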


\begin{proof}
Leaves of $\wcF$  disconnect $\mt$ 
since the real line minus a point is disconnected.
This  also  shows  that in a foliation box  $B$ of  $\wcF$  the
map between  the leaf  space of $\wcF|_B$ and the leaf space  of
$\wcF$ is injective.
It follows  that leaves of  $\wcF$  
are fibers of the topological submersion $\mu: \mt \to \cL \approx \rrrr$,
and are therefore properly embedded hypersurfaces  in  $\mt$.
\end{proof}

Once $\wcF$ is equipped with a transverse orientation, the leaf space $\cL$ admits a corresponding total order $<$. Every element of $\pi_1(M)$ either preserves this order or reverses it.

For every leaf $L$ of $\wcF$
(sometimes we will denote by $x$), we denote by $L^+$ (or $x^+$) the connected component of $\mt \setminus L$ (respectively $\mt \setminus x$) such that for every leaf $L'$ (resp. $x'$) of $\wcF$, contained in this connected component, and  intersecting a transversal that contains $L$ (or $x$) we have $\mu(L') > \mu(L)$ (or $\mu(x') > \mu(x)$). We denote by $L^-$ or $x^-$ the other connected component.

An $\rrrr$-covered foliation $\cF$ is said to be \emph{uniform} (see \cite[Definition 2.1]{Th5}) if in the universal cover any pair of leaves $L, L' \in \wcF$ verify that their Hausdorff distance is bounded. See \S~\ref{uniform} below.

\subsection{Topological equivalence versus homotopic equivalence}\label{sub.homotopic}

Let us recall that two foliations (of any codimension) $(M, \cF)$ and $(M, \cF')$ are {\em homeomorphic} or \emph{topologically equivalent} if there is a homeomorphism $h: M \to M$ mapping every leaf of $\cF$ to a leaf of $\cF'$. Note that in the case of oriented foliations of dimension $1$, a topological equivalence is also called {\em orbital equivalence.}

We will say that two foliations $(M,\cF)$ and $(M,\cF')$ are {\em isotopic} if they are topologically equivalent and the topological equivalence $h: M \to M$ defined above is isotopic to the identity. We warn the reader that this is a different notion than the classical notion of isotopy of foliations which requires the tangent distributions to be homotopic through integrable distributions. 

There is a weaker version: $\cF$ and $\cF'$ are {\em homotopically equivalent} if there are continuous maps $h: M \to M$ and $h': M \to M$ such that $h$ maps every leaf of $\cF$ into a leaf of $\cF'$, and that $h'$ maps every leaf of $\cF'$ into a a leaf of $\cF$, and such that $h \circ h'$ and $h' \circ h$ are both homotopic to the identity through homotopies preserving the foliations (for more details, see for example section $5$ of \cite{Ma-Ts}). 

In particular, $h$ is transversely injective: it maps different leaves of $\cF$ on different leaves of $\cF'$. But a homotopical equivalence is not necessarily a topological equivalence -  the maps $h$ and $h'$ may fail to be injective along leaves.

However, homotopical equivalence is easier to detect: according to \cite{Hae}, if $\cF$ and $\cF'$ have the same holonomy groupo\"{\i}d, and if they are both {\em classifying spaces} of this holonomy groupo\"{\i}d, then they are homotopically equivalent.

It is therefore interesting to detect when a foliation $\cF$ is a classifying space of its holonomy groupo\"{\i}d: it means that the holonomy covering of any leaf $F$ is contractible, i.e. that the covering space associated to the holonomy morphism of $F$ is contractible as a topological space (see page $85$ of \cite{Hae}). 

\vskip .1in
\noindent
{\bf {The case of dimension $3$ and $\cF$ is $\rrrr$-covered}}

In the rest of this subsection, we only consider the specific case where $M$ has dimension $3$ and $\cF$ is $\rrrr$-covered. In this case, the foliation is a classifying space of its holonomy groupo\"{\i}d if and only if it has no spherical leaves and that the only element of $\pi_1(M)$ admitting an open set of fixed points on $\cL$ is the trivial element.

Using this, one can show that if $\cF_1$ and $\cF_2$ are $\rrrr$-covered foliations and the actions of $\pi_1(M)$ on the leaf  spaces of  $\wcF_1$  and $\wcF_2$
are  conjugate  to each other then, the foliations are homotopically equivalent. 

By conjugate to each other we mean the following:
if we denote by $\cL_i$ the leaf spaces of $\wcF_i$ ($i=1$ or $2$), then there is a homeomorphism 
$f: \cL_1 \to \cL_2$ such that for every $\gamma$ in $\pi_1(M)$ we have $f(\gamma.x) = \gamma.f(x).$ In other words, the automorphism $\varphi$ of $\pi_1(M)$
for which $f$ is $\varphi$-equivariant is trivial. It means that if $f$ is induced by some homeomorphism $F$ of $M$ mapping $\cF_1$ to $\cF_2$ then $F$ is homotopic to the identity.

We also note that in \cite{Th5}, still in dimension $3$, it is shown that  for minimal $\rrrr$-covered uniform foliations (see \S~\ref{uniform}), these notions coincide. In particular, in \cite{Th5} the notion of foliations $\cF_1$ and $\cF_2$ being \emph{uniformly equivalent} is presented. It means that for every leaf $L$ of $\wcF_1$ there is a leaf $E \in \wcF_2$ at bounded Hausdorff distance from $L$ and viceversa.  It is shown (see \cite[Proposition 2.2]{Th5}) that two minimal $\rrrr$-covered uniform foliations which are uniformly equivalent must be homeomorphic by a homeomorphism at bounded distance from the identity (in particular,  isotopic in the sense discussed above).

\subsection{Translation foliations}

\begin{definition}\label{def:translation}
A codimension one foliation $\cF$ is a \textit{translation foliation} if it is $\rrrr$-covered and the action of
the fundamental group on $\cL$ is topologically conjugated to an action by translations.
\end{definition}

In this section, we explain that an alternative definition of translation foliations is that they are the codimension one foliations that are transversely oriented and admitting a transverse Riemannian structure (equivalently, that they are the ones transversely modeled on the Lie group $\rrrr$). It is well known by classical results that such a foliation is $\rrrr$-covered (see \cite{Car}). However, the only proofs we know in the literature assumes differentiability of the foliation, hence we provide here a proof, whose arguments will moreover be used later.

\begin{lemma}\label{le:translationR}
Let $(M, \cF)$ be a codimension one foliation locally modeled on the Lie group $\rrrr.$ Then, it is a translation foliation.
\end{lemma}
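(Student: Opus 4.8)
The plan is to show that a codimension one foliation $\cF$ transversely modeled on $(\rrrr, \rrrr)$ — i.e. equipped with a $\rrrr$-valued transverse structure whose transition maps are (restrictions of) translations — is $\rrrr$-covered and that the holonomy action on the leaf space is conjugate to an action by translations. The key object is the \emph{developing map} of the transverse $(\rrrr,\rrrr)$-structure: from the foliated atlas with charts $U_\alpha \to \rrrr$ and locally constant transverse coordinates, one builds, on the universal cover $\mt$, a globally defined submersion $\cD \colon \mt \to \rrrr$ that is constant on each leaf of $\wcF$ and that is equivariant for a holonomy representation $\rho \colon \pi_1(M) \to \mathrm{Isom}^+(\rrrr) = \rrrr$ acting by translations; locally $\cD$ agrees with a chart up to a translation. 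First I would carefully assemble this: patch the local charts over a simply connected chain, check the cocycle condition forces consistency, and obtain $\cD$ together with $\rho$.

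Next I would prove that $\cD$ is surjective and that its fibers are exactly the leaves of $\wcF$ — equivalently, that $\cD$ descends to a homeomorphism $\cL \to \rrrr$. Surjectivity: the image of $\cD$ is open (it is a submersion) and I claim it is closed. Here is where the hypotheses that $M$ is closed (compact) is essential, giving uniform control: the transverse structure has bounded "plaque size," so the image cannot have a boundary point (a limit point of the image is covered by a chart, and nearby plaques of that chart are already in the image). Injectivity of the induced map $\cL \to \rrrr$: two points of $\mt$ with the same $\cD$-value lie in the same leaf of $\wcF$. This is the standard "no holonomy transverse to a Riemannian transverse structure" phenomenon — if they were on different leaves one could find a transversal arc along which $\cD$ is monotone but returns to the same value, contradicting that $\cD$ restricted to any transversal is a local homeomorphism (injective on each chart). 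One must be a bit careful that the leaf space a priori is only a simply connected, possibly non-Hausdorff $1$-manifold (Haefliger, as recalled in \S\ref{sub.fol}, once we know $\cF$ has no nullhomotopic transversal — which also needs an argument: a nullhomotopic closed transversal would, via $\cD$, lift to a path in $\rrrr$ both of whose endpoints map to the same point yet which is transverse-monotone, impossible). Then a continuous open injection from such a $1$-manifold onto $\rrrr$ forces the $1$-manifold to be $\rrrr$ itself.

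Finally, transferring the $\rho$-equivariance of $\cD$ through the homeomorphism $\cL \cong \rrrr$ shows the $\pi_1(M)$-action on $\cL$ is conjugate to the translation action $\rho$, which is precisely Definition \ref{def:translation}. So $\cF$ is a translation foliation.

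I expect the main obstacle to be carrying out the "developing map exists and is a surjective submersion with leaf fibers" argument with only $C^0$ transverse regularity: one cannot invoke smooth submersion theorems, so surjectivity/closedness of the image and the properness-type statement that fibers are full leaves must be extracted purely from the foliated-chart combinatorics plus compactness of $M$ (a Lebesgue-number / finite-subcover argument giving uniform plaque sizes). The paper flags exactly this point ("the only proofs we know in the literature assume differentiability"), so this is the crux, and it is also the part whose arguments are reused later, so it should be done cleanly and in the $C^{1,0}$ category.
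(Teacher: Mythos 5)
Your construction of the developing map $\cD$ and the holonomy morphism is fine (the paper simply takes the existence of such an equivariant map $d\colon \mt\to\rrrr$ as the meaning of ``locally modeled on $\rrrr$''), and your surjectivity argument is essentially the paper's: compactness gives a uniform constant $C>0$ so that the image of $\cD$ contains $[\cD(\tilde p)-C,\cD(\tilde p)+C]$ for every $\tilde p$, hence is all of $\rrrr$. The genuine gap is at the injectivity step. You argue that two points with the same $\cD$-value on different leaves of $\wcF$ could be joined by a transversal arc, along which $\cD$ is monotone yet returns to the same value. But the existence of a transversal arc between two given distinct leaves is exactly what may fail: if the leaf space $\cL$ were non-Hausdorff, two non-separated leaves are never joined by a transversal, and they automatically have the same $\cD$-value (take points $q_n,q_n'$ on a common leaf converging to the two leaves; $\cD(q_n)=\cD(q_n')$ passes to the limit). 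So the only potential counterexamples to injectivity are precisely the pairs your argument cannot reach, and the step is circular: injectivity of the induced map $\cL\to\rrrr$ is equivalent to Hausdorffness of $\cL$, which is the real content of the lemma. Your closing remark that a continuous open injection from a simply connected $1$-manifold onto $\rrrr$ forces it to be $\rrrr$ is true but consumes injectivity as an input; a local homeomorphism from a non-Hausdorff simply connected $1$-manifold onto $\rrrr$ need not be injective (the line with two origins maps onto $\rrrr$). Note also that compactness must enter at this point and not only for surjectivity: on a simply connected open surface one can foliate so that the leaf space is the line with two origins and compose with the obvious local homeomorphism onto $\rrrr$, obtaining a transverse $\rrrr$-structure with non-Hausdorff leaf space; so an injectivity argument that does not use compactness cannot be correct.

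What is missing is the paper's compactness argument ruling out non-separated leaves. From a finite foliated atlas one gets uniform constants $\epsilon_+$ and $C$, and for $|\varepsilon|\le C$ one defines a map $\tilde p\mapsto F^{\varepsilon}(\tilde p)$ sending $\tilde p$ to the unique leaf reached by transversal paths of length $\le\epsilon_+$ starting at $\tilde p$ with $d$-increment $\varepsilon$; this map is locally constant, hence constant, along leaves. If $F_1,F_2$ were non-separated, approached by points $\tilde q^1_n,\tilde q^2_n$ of a common leaf $L_n$, then for large $n$ one gets $F_1=F^{-\varepsilon_n}(\tilde q^1_n)=F^{-\varepsilon_n}(\tilde q^2_n)=F_2$, a contradiction. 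Only after Hausdorffness is established does one conclude $\cL\cong\rrrr$ (the circle being excluded because $\bar d$ is a local homeomorphism into $\rrrr$), injectivity of $\bar d$, and finally surjectivity via $C$. So your outline correctly identifies the crux (working in the $C^{1,0}$ category with only compactness available), but the argument you offer at that crux does not close; you need an argument of the $F^{\varepsilon}$ type, or an equivalent uniform holonomy-translation argument, to exclude a non-Hausdorff leaf space.
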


\begin{proof}
The hypothesis mean that there is a continuous map $d: \mt \to \rrrr$ and a morphism $r: \pi_1(M) \to \rrrr$ such that
$d$ is constant along $\wcF$ and locally injective along any small transversals to $\wcF$, and such that, for all $\tilde p$ in $\mt$ and 
any $\gamma$ in $\pi_1(M)$ we have $d(\gamma \tilde{p}) = d(\tilde{p}) + r(\gamma).$ The Lemma will be proved if we show that $d$ induces a homeomorphism between the leaf space $\cL$ and the real line $\rrrr$. 

First, it follows from the existence of $d$ that a leaf of $\wcF$ cannot intersect a given transversal in two different points, since these points have the same image under $d$ and that the restriction of $d$ to such a transversal is injective (since any local homeomorphism from an interval into $\rrrr$ is injective). It follows that as a topological space, $\cL$ is a $1$-dimensional manifold. The point is that $\cL$ 
may be not Hausdorff.

We choose an auxiliary metric on $M.$ For every $p$ in $M$ consider a foliated chart $U_p$ homeomorphic to $F \times (-1, 1)$ such that for every $t$ in $(-1, 1)$ the set $F \times \{ t \}$ is a small disk in a leaf of $\cF$ and such that $p$ lies in $F \times (-1/4,1/4)$. By compactness of $M$, we can assume that there exists $\epsilon_- > 0$, $\epsilon_+ > 0$
such that for every $p$ the chart $U_p$ contains the ball of radius $\epsilon_-$ centered at $p$, and that $p$ is at distance at most $\epsilon_+$ from $F \times \{ 1 \}$ and from $F \times \{ -1 \}$ in $U_p$.

Moreover, we can assume that there is $C > 0$ such that for any lift $U_{\tilde p} \approx \widetilde{F} \times (-1, 1)$ in $\mt$ the value of $d$ on $\widetilde{F} \times \{1 \}$ (respectively  $\widetilde{F} \times \{-1 \}$) is bigger than $d(\tilde p) + C$ (respectively smaller than $d(\tilde p) - C$).

Then, for every $\tilde p$ in $\mt$ and every real number $\varepsilon$ of absolute value $\leq C$ consider every path in $\mt$ of length $\leq \epsilon_+$
transverse to $\wcF$ starting from $\tilde p$ and ending at a point $\tilde q$ such that $d(\tilde q) = d(\tilde p) + \varepsilon$. If $\epsilon_+$ is selected small enough, all the final points of all these curves are all in the same leaf of $\wcF$, that we call $F^{\varepsilon}(\tilde p)$. The map $\Tilde{p} \mapsto F^{\varepsilon}(\Tilde{p})$ is clearly locally constant along $\wcF$, hence constant on the leaf $\wcF(\tilde{p})$. 

Assume that there are leaves $F_1$ and $F_2$ of $\wcF$ that are not separated in $\cL$. 
Let $\Tilde{p}_1$ an element of $F_1$ and $\Tilde{p}_2$ an element of $F_2.$
Then, there are sequences of points $\Tilde{q}^1_n$ and $\Tilde{q}^2_n$ belonging to the same leaves $L_n$ of $\wcF$ (for each  $n$),
and converging to $\Tilde{p}_1$ and $\Tilde{p}_1$, respectively. For $n$ sufficiently big, $\Tilde{q}^i_n$ $(i=1,2)$ is very close to $\Tilde{p}_i$, and  therefore 
$L_n = F^{\varepsilon_n}(\Tilde{p}_i)$, for some $\varepsilon_n$.
This is a key point here: the $\varepsilon_n$ is the same for poth 
$\Tilde{p}_1$ and $\Tilde{p}_2$ because $d(\Tilde{p}_1) = d(\Tilde{p}_2)$,
as their leaves are non separated from each other.

Hence, $F_1 =  F^{-\varepsilon_n}(\Tilde{q}^1_n) =  F^{-\varepsilon_n}(\Tilde{q}^2_n) = F_2$ since $F^{-\varepsilon_n}$ is constant along $F_n.$ 

It follows that $\cL$ is a Hausdorff $1$-manifold, hence homeomorphic to the real line or the circle. Since $d$ induces a local homeomorphism $\Bar{d}$ from $\cL$ into $\rrrr$, the only possibility is $\cL \approx \rrrr.$ 
Then, $\Bar{d}$ is a homeomorphism from $\cL$ to an interval $I$ of $\rrrr.$
For every $x$ in $\cL$ the interval $[x - C, x+C]$ is contained in $I$ (since the image of $\Bar{d}$ contains 
$d(\tilde p) \pm C$ where $\tilde p$ is a point in the leaf $x$). Therefore $I = \rrrr.$ It finishes the proof of Lemma \ref{le:translationR}.
\end{proof}

Typical examples of translation foliations are the ones defined by closed $1$-forms. Actually, if the conjugacy to the action by translations is $C^1$, a translation foliation
is defined by a closed $1$-form: if $\eta: \widetilde{M} \to \cL \approx \rrrr$ is $C^1$, then the differential $d\eta$ defines a closed $1$-form on $\widetilde{M}$ 
(even exact), which is preserved by the action of $\pi_1(M)$ and descends to a closed $1$-form on $M$ that defines the foliation.

Tischler's argument (see \cite{Ca-Co}) shows that, when $C^1,$ every translation foliation can be approximated by foliations whose leaves are fibers of some fibration of $M$ over the circle.

\subsection{The phase space of a pair of transverse foliations}\label{ss.developing}

We now consider a pair $(\cF_1, \cF_2)$ of transverse $\rrrr$-covered foliations. We denote by $\mu_i: \mt \to \cL_i$ $(i=1,2)$ the projections to the leaf spaces.

We denote by  
$\cG:= \cF_1 \cap \cF_2$ the intersection foliation. Let $\wcG$ be its  lift to $\mt$. We denote by $\cL_{\cG}$ the leaf space of $\wcG.$ Be aware that $\cL_{\cG}$ may not be Hausdorff.

\begin{definition}[Developing map]\label{developingmap}
    The {\em phase space} is the product  $\cL_1 \times \cL_2$. The {\em projection map} (or \emph{developing map}) $\cD: \mt \to \cL_1 \times \cL_2$ is the map
    associating to every point in the universal covering $\mt$ the leaves of $\wcF_1$ and $\wcF_2$ it belongs to. The projection map $\cD$ induces a local homeomorphism $\cD_{\cG}$ between $\cL_{\cG}$ and $\cL_1 \times  \cL_2 \cong \rrrr^2.$ 
\end{definition}

Each $L$ in $\wcF_1$ is path connected and  therefore the
set  of  leaves of $\wcF_2$ which is intersected by
$L$ is a path connected set in $\cL_2$.

On the other hand, observe that the intersection between one leaf of $\wcF_1$ and one leaf of $\wcF_2$ might be not connected, implying that fibers of $\cD$ are not in general
leaves of the intersection foliation $\wcF_1 \cap \wcF_2$, but rather 
unions of such leaves. The following appears also in \cite[\S 2.2]{FP3} in the 3-dimensional case:

\begin{proposition}\label{pro.OK}
    Let $L$ be a leaf of $\wcF_1$. The following assertions are equivalent:
    \begin{enumerate}
        \item The restriction $\wcG_L$ of $\wcG$ to $L$ is $\rrrr$-covered,
        \item The intersection between $L$ and every leaf of $\wcF_2$ is connected, i.e. either empty, or a single leaf of $\wcG$.
    \end{enumerate}
\end{proposition}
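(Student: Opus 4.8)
The plan is to encode both conditions through a single auxiliary map and then exploit the elementary fact that a local homeomorphism $\rrrr\to\rrrr$ is automatically injective. Fix a leaf $L$ of $\wcF_1$. Since a leaf of $\wcG=\wcF_1\cap\wcF_2$ through $x$ is a connected component of $\wcF_1(x)\cap\wcF_2(x)$, for $x\in L$ it is a component of $L\cap\wcF_2(x)$, hence contained in $L$; thus $\wcG_L$ is a codimension-one foliation of $L$ whose leaves are exactly the connected components of the sets $L\cap F$, with $F$ a leaf of $\wcF_2$. Write $\cL_{\wcG_L}$ for its leaf space and $\mu_L\colon L\to\cL_{\wcG_L}$ for the projection. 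Since every leaf of $\wcG_L$ lies inside a single leaf of $\wcF_2$, the restriction $\mu_2|_L$ is constant along $\wcG_L$ and descends to a map $\bar\beta\colon\cL_{\wcG_L}\to\cL_2$; the first step is to check that $\bar\beta$ is a local homeomorphism, which is just the restriction to $L$ of the statement recorded in Definition~\ref{developingmap}. Concretely, a short curve $\tau\subset L$ transverse to $\wcG_L$ is also transverse to $\wcF_2$ in $\mt$ (if $T\tau\subset T\wcF_2$ then $T\tau\subset T\wcF_1\cap T\wcF_2=T\wcG$, contradicting transversality of $\tau$ to $\wcG_L$ inside $L$), so, $\cF_2$ being $\rrrr$-covered, each leaf of $\wcF_2$ meets $\tau$ at most once; hence $\mu_2$ embeds $\tau$ as an open interval of $\cL_2\cong\rrrr$, and likewise $\mu_L$ embeds $\tau$ as an open neighbourhood in $\cL_{\wcG_L}$, so $\bar\beta$ is a homeomorphism between these. (The same argument shows $\cL_{\wcG_L}$ is a, possibly non-Hausdorff, $1$-manifold, and it is connected as the continuous image of the connected set $L$.)

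The heart of the matter is the observation that $\bar\beta$ is injective if and only if (2) holds. Indeed, for $x,y\in L$ one has $\bar\beta(\mu_L(x))=\bar\beta(\mu_L(y))$ exactly when $\wcF_2(x)=\wcF_2(y)$, so injectivity of $\bar\beta$ says precisely that two points of $L$ lying on a common leaf of $\wcF_2$ always lie on a common leaf of $\wcG_L$. If (2) holds and $x,y\in L\cap F$ for some leaf $F$ of $\wcF_2$, then $L\cap F$ is connected; being a union of leaves of $\wcG_L$, it is then a single leaf of $\wcG_L$ containing both $x$ and $y$, so $\bar\beta$ is injective. Conversely, suppose $\bar\beta$ is injective and let $F$ be a leaf of $\wcF_2$ with $L\cap F\neq\emptyset$; pick $x_0\in L\cap F$. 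Every $x\in L\cap F$ has $\wcF_2(x)=F=\wcF_2(x_0)$, hence lies on $\wcG_L(x_0)$, so $L\cap F\subseteq\wcG_L(x_0)$; since $\wcG_L(x_0)$ is by definition a connected component of $L\cap F$, this forces $L\cap F=\wcG_L(x_0)$, a single leaf of $\wcG$, which is (2).

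With these two steps in hand the implications are immediate. For (1)$\Rightarrow$(2): if $\wcG_L$ is $\rrrr$-covered then $\bar\beta$ is a local homeomorphism from $\cL_{\wcG_L}\cong\rrrr$ to $\cL_2\cong\rrrr$; any such map is injective (the sets of points near which it is increasing, resp.\ decreasing, are open, disjoint, and cover $\rrrr$, so by connectedness it is monotone, hence injective), and so (2) follows from the equivalence above. For (2)$\Rightarrow$(1): (2) makes $\bar\beta$ injective, and an injective local homeomorphism is an open embedding onto an open subset of $\cL_2\cong\rrrr$; as $\cL_{\wcG_L}$ is connected, that open subset is an interval, so $\cL_{\wcG_L}\cong\rrrr$, i.e.\ $\wcG_L$ is $\rrrr$-covered.

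I expect the first step — showing $\bar\beta$ is a genuine local homeomorphism — to be the one requiring care, precisely because $L\cap F$ may a priori be disconnected (so fibres of the developing map are unions of leaves of $\wcG$, not leaves) and because $\cL_{\wcG_L}$ need not be Hausdorff until one of the two conditions is imposed; keeping track of both while reducing everything to transversals is the only real bookkeeping. After that, the two implications rest solely on the fact that a local homeomorphism $\rrrr\to\rrrr$ is injective.
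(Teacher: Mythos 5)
Your proof is correct. The ingredients are the ones the paper's own proof relies on --- local injectivity of $\mu_2$ along transversals contained in $L$, and the fact that $\cL_2$ is Hausdorff and homeomorphic to $\rrrr$ --- but you organize them differently: you factor both implications through the single induced map $\bar\beta$ from the leaf space of $\wcG_L$ to $\cL_2$, checking that $\bar\beta$ is a local homeomorphism and that its injectivity is equivalent to condition (2). The paper treats the two implications by separate hands-on arguments: for (1)$\Rightarrow$(2) it joins the two putative leaves of $L\cap L'$ by a single transversal in $L$ (using that the leaf space of $\wcG_L$ is $\rrrr$) and applies injectivity of $\mu_2$ along that transversal; for (2)$\Rightarrow$(1) it shows leaves of $\wcG_L$ are properly embedded, rules out non-separated leaves because they would have the same image under the continuous map $\cD$ into the Hausdorff phase space, and excludes the circle by a separation argument. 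Your packaging buys a small economy: the injective local homeomorphism into $\cL_2\cong\rrrr$ yields Hausdorffness of the leaf space and excludes the circle in one stroke, and the direction (1)$\Rightarrow$(2) needs only the elementary fact that a locally injective continuous map $\rrrr\to\rrrr$ is injective, with no need to produce a transversal through two prescribed leaves. What the paper's formulation buys in exchange is that the geometric mechanism (non-separated leaves of $\wcG_L$ forcing two leaves of $\wcG$ in the same leaf of $\wcF_2$) is made explicit, and that is precisely the form in which the argument is reused later, e.g.\ in Lemma \ref{le:lambdaL}.
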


\begin{proof}
    Assume that $\wcG_L$ is $\rrrr$-covered, and let $L'$ be a leaf of $\wcF_2$. Assume that $L \cap L'$ contains two different leaves $\theta_1$ and $\theta_2$ of $\wcG.$ Then, there is a curve $c$ in $L$, transverse to $\wcG_L$ containing $\theta_1$ and $\theta_2$. The restriction of $\mu_2: \mt \to \cL_2 \approx \rrrr$ to $c$ is locally injective, hence injective. It is a contradiction since $\mu_2(\theta_1) = L' = \mu_2(\theta_2)$. Hence item $(2)$ is true.

    Assume now item $(2)$. The same argument that  shows that leaves
of $\wcF_2$ are properly embedded in $\mt$ shows that leaves
of $\wcG_L$ are properly embedded  in $L$.
In particular this implies that the leaf space of $\wcG_L$ is
a $1$-manifold. Assume that $\wcG_L$ is not Hausdorff. Then, there are two leaves $\theta_1$ and $\theta_2$ of $\wcG_L$ that are not separated. They must have the same image under $\cD$, because  $\cD$ is continuous
and the  phase space  is Hausdorff.  Hence $\theta_1, \theta_2$
are contained in the same leaf of $\wcF_2$, contradiction.
Since every leaf of $\wcF_2$ disconnects $\mt$, every leaf of $\wcG_L$ disconnects $L.$ Hence, the leaf space $\wcG_L$ is not a circle, and it is homeomorphic to $\rrrr$. Item $(1)$ is true.  
\end{proof}

\begin{lemma}\label{le:lambdaL}
    Let $L$ be a leaf of $\wcF_1$. Let $\Lambda_L$ be the set of leaves $L'$ of $\wcF_2$ such that the intersection $L \cap L'$ is not connected. Then either $\Lambda_L$ is empty, or the interior of $\Lambda_L$ is not empty.
\end{lemma}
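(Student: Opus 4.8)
The plan is to argue by contradiction: suppose $\Lambda_L$ is nonempty but has empty interior. The key object is the set $\Lambda_L^c$ of leaves $L'$ of $\wcF_2$ meeting $L$ in a connected set (possibly empty intersection), together with the leaves of $\wcF_2$ disjoint from $L$; this is the complement of $\Lambda_L$ in $\cL_2$. Since $L$ is path connected, the set $\mu_2(L)$ of leaves of $\wcF_2$ meeting $L$ is a connected subset of $\cL_2\approx\rrrr$, hence an interval $J$. The claim is really about the structure of $\Lambda_L\subset J$. First I would record the basic local fact, proved exactly as in Proposition \ref{pro.OK}: if $L'\in\Lambda_L$, then $L\cap L'$ has at least two components $\theta_1,\theta_2$, which can be joined by an arc $c\subset L$ transverse to $\wcG_L$; by continuity of $\cD$ and the openness of the local homeomorphism $\cD_{\cG}$, nearby leaves of $\wcG_L$ along $c$ map to leaves of $\wcF_2$ near $L'$, but these need not themselves lie in $\Lambda_L$, which is precisely why some work is needed.

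**The main step: non-separation in $\cL_{\cG_L}$.** The heart of the argument is to relate $\Lambda_L\neq\varnothing$ to non-Hausdorffness of the leaf space $\cL_{\cG_L}$ of $\wcG_L$ (equivalently, by Proposition \ref{pro.OK}, to $\wcG_L$ failing to be $\rrrr$-covered). If $L'\in\Lambda_L$ with distinct components $\theta_1,\theta_2$ of $L\cap L'$, I would look at the interval of leaves of $\wcG_L$ separating $\theta_1$ from $\theta_2$ in $L$ and push $L'$ monotonically in $\cL_2$; since $\mu_2$ restricted to a transversal is monotone, the components of $L\cap L''$ for $L''$ near $L'$ vary, and tracking where components appear, merge, or disappear forces the existence of a pair of leaves of $\wcG_L$ that are non-separated in $\cL_{\cG_L}$. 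Concretely: consider the supremum $L'_0$ of those $L''$ between $\theta_1$'s side and $L'$ for which the relevant two arcs of $L\cap L''$ are still distinct; at $L'_0$ these two families of leaves of $\wcG_L$ limit onto leaves that are not separated. This is the step I expect to be the main obstacle — making the monotonicity/limiting argument precise in a possibly non-Hausdorff 1-manifold, and ensuring the two limiting leaves are genuinely distinct and non-separated rather than coinciding.

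**Producing an open subset of $\Lambda_L$.** Once we have leaves $\eta_1\neq\eta_2$ of $\wcG_L$ non-separated in $\cL_{\cG_L}$, non-separation is an open condition in the following sense: points just above $\eta_1$ and just above $\eta_2$ (in the transverse order on $L$) are again non-separated, or more precisely there is a whole interval's worth of such non-separated pairs, because the local product structure near a non-separation point persists. Each non-separated pair $(\eta_1',\eta_2')$ must have $\cD_{\cG}(\eta_1')=\cD_{\cG}(\eta_2')$ since $\cD_{\cG}$ is continuous into the Hausdorff space $\cL_1\times\cL_2$; hence $\eta_1'$ and $\eta_2'$ lie in the same leaf $L''$ of $\wcF_2$, so $L\cap L''$ is disconnected, i.e. $L''\in\Lambda_L$. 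Varying $(\eta_1',\eta_2')$ over the interval of non-separated pairs, the corresponding leaves $L''$ sweep out an interval in $\cL_2$ (they are distinct because $\cD_{\cG}$ is a local homeomorphism, so distinct non-separated pairs give distinct images). This interval lies in $\Lambda_L$, contradicting that $\Lambda_L$ has empty interior, and completes the proof.

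**Remark on an alternative.** An alternative packaging, if the direct limiting argument proves cumbersome, is to invoke Proposition \ref{pro.OK} directly: $\Lambda_L\neq\varnothing$ says $\wcG_L$ is not $\rrrr$-covered; since leaves of $\wcG_L$ are properly embedded and separate $L$, the leaf space $\cL_{\cG_L}$ is a non-Hausdorff simply connected $1$-manifold, and its set of non-separated points is a nonempty open subset (this is a general feature of such $1$-manifolds — the branch locus, when nonempty, contains intervals). Transporting this open set through $\cD_{\cG}$ into $\cL_2$ as above yields the open subset of $\Lambda_L$. I would use whichever of these two routes reads more cleanly, but the substance — turning a single disconnection into an interval of disconnections via the branching structure of $\cL_{\cG_L}$ — is the same in both.
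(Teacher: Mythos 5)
Your first step — passing from $\Lambda_L\neq\emptyset$ to a pair of distinct leaves $\theta_1,\theta_2$ of $\wcG_L$ that are non-separated in the leaf space of $\wcG_L$ and, by continuity of $\cD$ into the Hausdorff phase space, contained in a common leaf $L'$ of $\wcF_2$ — is fine, and (via your ``alternative'' remark invoking Proposition \ref{pro.OK}) it is exactly how the paper begins. The genuine gap is in your second step. The claim that non-separation is an open condition, i.e.\ that leaves just above $\theta_1$ and just above $\theta_2$ are again non-separated, and likewise the claim that the branch locus of a non-Hausdorff simply connected $1$-manifold, when nonempty, contains intervals, are both false. The line with two origins is a simply connected non-Hausdorff $1$-manifold whose set of non-separated points consists of exactly two points; correspondingly, for a foliation of a plane leaf containing a single lifted Reeb strip, the two boundary leaves are non-separated while the leaves on their far sides are pairwise separated. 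So your mechanism for producing an interval's worth of non-separated pairs, and hence (via $\cD_{\cG}$) an interval inside $\Lambda_L$, fails at its first move: nothing forces any non-separated pair other than $(\theta_1,\theta_2)$ to exist.

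The paper's proof needs no new non-separated pairs. Normalize $\theta_1\subset\theta_2^-$ and $\theta_2\subset\theta_1^-$, and take transversals $c_1,c_2$ to $\wcG_L$ through $\theta_1,\theta_2$. Non-separation gives components $c_1^-$ of $c_1\setminus\theta_1$ and $c_2^-$ of $c_2\setminus\theta_2$, on the sides containing the common limiting leaves, with $\mu_2(c_1^-)=\mu_2(c_2^-)$; choose the order on $\cL_2$ so that these images are $<_2\mu_2(L')$. Then every leaf $F$ of $\wcF_2$ sufficiently close to $L'$ with $\mu_2(F)>_2\mu_2(L')$ still crosses both transversals, necessarily in $c_1^+$ and $c_2^+$, so $F\cap L$ has one component in $\theta_1^+$ and another in $\theta_2^+$; these regions are disjoint, so $F\in\Lambda_L$. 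This produces a one-sided interval of $\cL_2$ inside $\Lambda_L$ directly: the two new components of $F\cap L$ lie in the same leaf $F$ by construction, and they are typically separated in $\wcG_L$ — which is precisely why your route cannot be repaired as stated, whereas replacing the persistence claim by this transversal argument turns your outline into the paper's proof.
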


\begin{proof}
    Assume that $\Lambda_L$ is not empty. According to Proposition \ref{pro.OK}  the foliation $\wcG_L$ is not Hausdorff: there is a leaf $L'$ of $\wcF_2$ such that $L \cap L'$ contains two leaves $\theta_1$ and $\theta_2$ of $\wcG_L$ that are not separated. Let  $\theta_i^{\pm}$ be the connected
components of $L \setminus  \theta_i$. We choose them so that $\theta_1 \subset \theta_2^-$ and $\theta_2 \subset  \theta_1^-$. Let $c_1$, $c_2$ be two curves transverse to $\wcG_L$, the first crossing $\theta_1$, the second $\theta_2.$ Since $\theta_1, \theta_2$ are not separated  in $\wcG_L$, there are leaves of $\wcG_L$ intersecting $c_1$ and $c_2.$ More precisely, there is one connected component $c_1^-$ of $c_1 \setminus \theta_1$ and one connected component $c^-_2$ of $c_2 \setminus \theta_2$ such that $\mu_2(c_1^-) = \mu_2(c_2^-)$. We can choose the order $<_2$ (see the conventions at the end of section \ref{sub.defrrr}) of $\cL_2$ so that all elements $x$ in $\mu_2(c_1^-) = \mu_2(c_2^-)$ satisfy $x <_2 \mu_2(L').$ 
    Then $c_1^-$ is contained in $\theta_1^-$. 
%

    Let now $F$ be a leaf of $\wcF_2$, close to $L'$, and such that $\mu_2(F) > \mu_2(L').$ Since it is close to $L'$, $F$ must intersect both $c_1$ and $c_2.$ Since $\mu_2(F) > \mu_2(L')$, these intersections must be in $c^+_1$ and $c^+_2$, respectively. Hence $F \cap L$ contains a leaf of $\wcG$ in $\theta_1^+$, and another in $\theta_2^+.$ Since $\theta_1^+$ and $\theta_2^+$ are disjoint, these leaves are different: $F$ belongs to $\Lambda_L$. Actually, this is true for every leaf in the interval $(L', F]$ of $\cL_2:$ the Lemma is proved.    
\end{proof}

\begin{corollary}\label{cor.F1F2}
    Assume that $\cF_1$ is minimal. Assume that one leaf of $\wcF_1$ satisfies the condition stated in Proposition \ref{pro.OK}.
    Then the leaf space $\cL_{\cG}$ of $\wcG$ is Hausdorff, and the map $\cD_{\cG}: \cL_{\cG} \to \cL_1 \times \cL_2$ is a homeomorphism onto its image. 
    \end{corollary}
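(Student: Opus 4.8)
\textbf{Proof proposal for Corollary \ref{cor.F1F2}.}

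The plan is to bootstrap from the single leaf $L_0$ satisfying condition $(2)$ of Proposition \ref{pro.OK} to \emph{all} leaves of $\wcF_1$, using minimality of $\cF_1$ together with Lemma \ref{le:lambdaL}. The key observation is that Lemma \ref{le:lambdaL} is a dichotomy with teeth: for any leaf $L$ of $\wcF_1$, either $\Lambda_L = \emptyset$ (so $L$ satisfies $(2)$, equivalently $(1)$), or $\Lambda_L$ has nonempty interior in $\cL_2$. First I would argue that the set of leaves $L$ of $\wcF_1$ for which $\Lambda_L \neq \emptyset$ is open in the leaf space $\cL_1$: if $\Lambda_L$ contains an open interval $(L', L'') \subset \cL_2$, then nearby leaves of $\wcF_1$ still have non-connected intersection with leaves of $\wcF_2$ close enough to that interval, by transversality and properness of the leaves (the two ``strands'' of $L \cap F$ persist under small perturbation of $L$ inside $\wcF_1$). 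This set is clearly $\pi_1(M)$-invariant, and it does not contain $L_0$, so it is a proper open invariant subset of $\cL_1$.

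Next I would invoke minimality of $\cF_1$: a minimal foliation has no proper nonempty closed $\pi_1(M)$-invariant subset of leaves other than what comes from the closure being everything — more precisely, if $\cF_1$ is minimal then the $\pi_1(M)$-action on $\cL_1$ has every orbit dense, so the only $\pi_1(M)$-invariant open subsets of $\cL_1$ are $\emptyset$ and $\cL_1$. Since the ``bad'' set is open, invariant, and omits $L_0$, it must be empty. Hence \emph{every} leaf of $\wcF_1$ satisfies condition $(2)$, equivalently $\Lambda_L = \emptyset$ for all $L$, equivalently the intersection of any leaf of $\wcF_1$ with any leaf of $\wcF_2$ is connected.

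With that in hand, the fibers of $\cD$ are exactly the leaves of $\wcG$: $\cD(x) = \cD(y)$ means $x,y$ lie in the same leaf $L$ of $\wcF_1$ and the same leaf $L'$ of $\wcF_2$, hence in the single leaf of $\wcG$ that is the connected component $L \cap L'$. Therefore $\cD_{\cG}: \cL_{\cG} \to \cL_1 \times \cL_2$ is injective. It is continuous by Definition \ref{developingmap}, and it is an open map onto its image because $\cD$ is a topological submersion (being, locally in a combined foliated chart with coordinates $(x,y,z_1,\dots)$, the projection $(x,y,z_1,\dots)\mapsto(x,y)$). An injective continuous open map is a homeomorphism onto its image, so $\cD_{\cG}$ is a homeomorphism onto its image. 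Finally, since $\cL_1 \times \cL_2$ is Hausdorff and $\cD_{\cG}$ is a homeomorphism onto a subspace of it, $\cL_{\cG}$ is Hausdorff. Alternatively, Hausdorffness of $\cL_{\cG}$ follows directly from the now-established condition $(2)$ for a leaf satisfying $(1)$ together with the argument already given inside the proof of Proposition \ref{pro.OK} (two non-separated leaves of $\wcG$ would share a $\cD$-image, forcing them into a common leaf of $\wcF_2$, contradiction).

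The main obstacle I anticipate is the openness step: showing that $\Lambda_L \neq \emptyset$ is an open condition on $L \in \cL_1$. One must be careful that the two non-separated leaves $\theta_1, \theta_2$ of $\wcG_L$ inside $L \cap L'$ genuinely deform to a pair of disjoint ``strands'' inside $L_{\mathrm{near}} \cap F$ for $L_{\mathrm{near}}$ near $L$ and $F$ near $L'$; the cleanest route is to use the half-open interval conclusion $(L', L] \subset \Lambda_L$ from Lemma \ref{le:lambdaL} and the local product structure along a transversal to $\wcF_1$ meeting $L$, $L_{\mathrm{near}}$, tracking the curves $c_1, c_2$ of that lemma's proof as they move with the leaf. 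Everything else is soft topology, and the reduction of the problem from dimension $3$ to the $2$-dimensional phase space is exactly what makes it routine once the invariant-set argument is in place.
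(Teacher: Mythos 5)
Your overall strategy (a $\pi_1(M)$-invariant ``bad'' set in $\cL_1$ killed by minimality, then soft topology to get injectivity, the homeomorphism onto the image, and Hausdorffness) is the same as the paper's, and the final soft-topology part is fine: once every intersection of a leaf of $\wcF_1$ with a leaf of $\wcF_2$ is connected, fibers of $\cD$ are single leaves of $\wcG$, and an injective local homeomorphism into the Hausdorff space $\cL_1\times\cL_2$ is a homeomorphism onto its image with Hausdorff source. The problem is the step you yourself flag: you need the set of bad leaves of $\wcF_1$ to be open (or at least to have non-empty interior) \emph{in $\cL_1$}, because minimality is only assumed for $\cF_1$. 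Applying Lemma \ref{le:lambdaL} in its stated orientation only gives you an open set in $\cL_2$ of leaves of $\wcF_2$ meeting the fixed bad leaf $L$ in a disconnected set, which by itself is useless here; to convert it you propose a persistence argument (``the two strands of $L\cap F$ persist under small perturbation of $L$ inside $\wcF_1$'') that is only sketched and is genuinely delicate. Closeness of $L_{\mathrm{near}}$ to $L$ in the leaf space does not give closeness along the whole non-compact strands, and the real difficulty is not that the two nearby intersection curves exist but that they remain in \emph{distinct} connected components of $L_{\mathrm{near}}\cap F$; ruling out that they get joined far away requires tracking a compact path joining the strands and a careful separation argument. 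This is essentially the content of Proposition \ref{prop-hsdff} later in the paper, whose proof of exactly this kind of ``one bad pair implies an open set of bad pairs'' statement takes about a page of work with transversals and pushed curves, so as written your openness step is a genuine gap rather than routine transversality.

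The missing idea that makes the corollary short is to apply Lemma \ref{le:lambdaL} with the roles of $\cF_1$ and $\cF_2$ switched, which is what the paper does. If $F\in\wcF_1$ is bad, pick a leaf $L\in\wcF_2$ meeting $F$ in two distinct leaves of $\wcG$; the switched lemma (with $L$ as the fixed leaf and $\Lambda_L\subset\cL_1$ the set of $\wcF_1$-leaves meeting $L$ in a disconnected set) produces a whole interval of leaves of $\wcF_1$, adjacent to $F$, all meeting $L$ in at least two components. This interval sits inside your bad set $\Lambda\subset\cL_1$, so $\Lambda$ has non-empty interior, is $\pi_1(M)$-invariant, and omits the good leaf $L_0$; minimality of $\cF_1$ then forces $\Lambda=\emptyset$, with no perturbation of leaves of $\wcF_1$ needed at all. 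So your outline can be repaired either by this one-line switch of roles, or by carrying out in full the persistence argument you sketch (essentially redoing Proposition \ref{prop-hsdff}), but as it stands the openness claim is not justified.
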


\begin{proof}
Let $\Lambda$ be the subset of $\cL_1$ comprising leaves of $\wcF_1$ that does not satisfy the condition stated in Proposition \ref{pro.OK}. By hypothesis, it is not the entire $\cL_1.$

Assume that $\Lambda$ is not empty. Let $F$ be an element of $\Lambda$. Let $L$ be a leaf of $\wcF_2$ intersecting $F$ at two different leaves of $\wcG.$

Then, $F$ belongs to $\Lambda_L$ (we use the notation in statement of Lemma \ref{le:lambdaL}, and we have switched the roles of $\cF_1$ and $\cF_2$). According to Lemma \ref{le:lambdaL}, the set $\Lambda_L$ has non-empty interior. Since $\Lambda_L$ is contained in $\Lambda$, the interior of $\Lambda$ is not empty.

But $\Lambda$ is $\pi_1(M)$-invariant, not empty, and $\wcF_1$ is minimal.

We conclude that $\Lambda$ is the entire $\cL_1$. 

But by hypothesis $\Lambda$ is not the entire $\cL_1.$ Therefore we have proved by a way of contradiction that $\Lambda$ is empty.

We deduce that the intersection between a leaf of $\wcF_1$ and a leaf of $\wcF_2$ is either empty, or a single leaf of $\wcG.$ It means that the continuous map $\cD_{\cG}: \cL_{\cG} \to \cL_1 \times \cL_2$ is injective. 
\end{proof}

Note that under the assumptions of the previous corollary, in dimension 3, we could apply the main results of \cite{FP3} to deduce that the intersected foliation is by quasigeodesics. Here, since we will look at specific foliations, we will give some more direct arguments (see \S~\ref{sec.nointersect}).

\begin{figure}[ht]
\begin{center}
\includegraphics[scale=0.60]{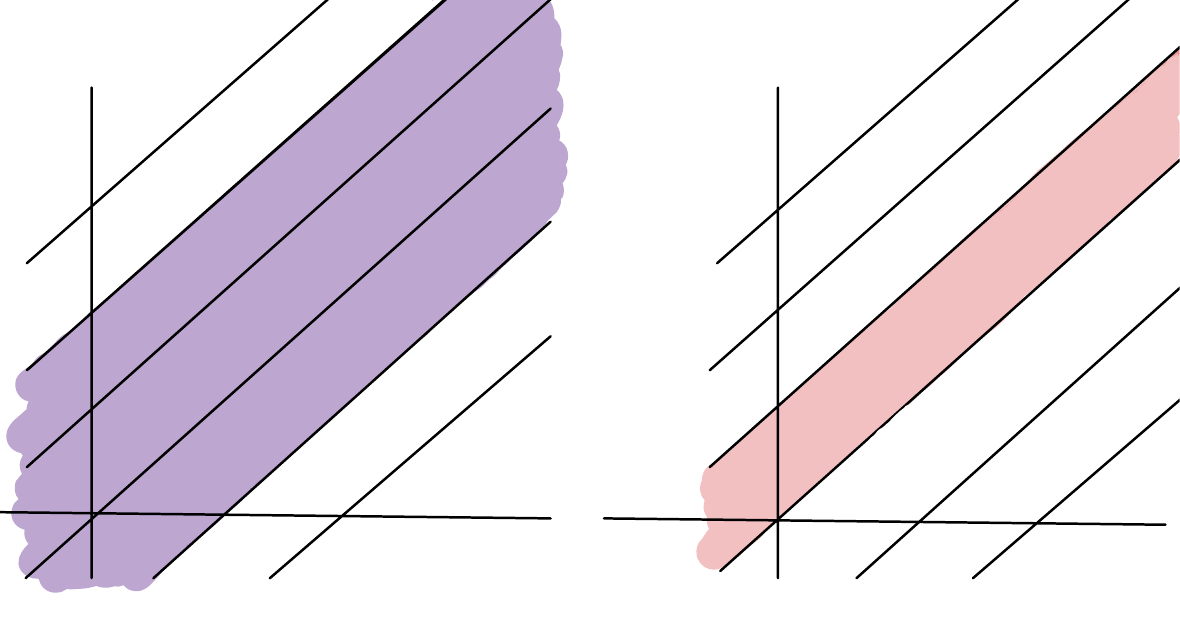}
\begin{picture}(0,0)
\end{picture}
\end{center}
\vspace{-0.5cm}
\caption{{\small Possibilities for the image of the developing map. When the developing map is not surjective, the image is bounded by graphs corresponding to conjugacies between the $\pi_1(M)$-actions on the leaf spaces (see Proposition \ref{pro.alphafini}). In the first case (see \S~\ref{sec.intersect}), we will find Reeb surfaces in the preimage of the invariant graph, while in the second (see \S~\ref{sec.nointersect}), assuming that there are no Reeb surfaces, we will show that the intersected foliation is homeomorphic to an Anosov foliation.}}\label{fig.developing}
\end{figure}

\subsection{Anosov foliations}\label{sub.defanosov}

Anosov flows provide interesting pairs of transverse foliations: their weak  stable and unstable foliations $\cF^s$, $\cF^u$.

In this section we recall the structure of \textit{$\rrrr$-covered Anosov flows}, i.e. Anosov flows $(M, \Phi)$ on a closed $3$-manifold such that one of the weak  foliations $\cF^s$, $\cF^u$ is $\rrrr$-covered. For details, we refer to \cite[\S 2.2]{Barb-Fe2}.

The first important fact is that if one of the weak  foliations is $\rrrr$-covered, then they are both $\rrrr$-covered \cite{Ba1,Fe1}. We denote by $\cL^s$, $\cL^u$ the leaf spaces of the lifted foliations $\wcF^s$, $\wcF^u$, which are both homeomorphic to $\rrrr.$ We only consider the case where $\cF^s$ and $\cF^u$ are transversely orientable, i.e. the case where the actions of $\pi_1(M)$ on $\cL^s$ and $\cL^u$ are orientation preserving.

\subsubsection{The product case}\label{sub.product}
The first case to consider is the {\em product case}, i.e. the case where every leaf of $\wcF^s$ intersects every leaf of $\wcF^u.$ It happens if and only if $\Phi$ is orbitally equivalent to the suspension of a linear hyperbolic automorphism $A$ of the $2$-dimensional torus. In this case, we denote by $(M(A), \Phi_A)$ the Anosov flow, where we take a constant first return time equal to $1$.

We have a split exact sequence:
$$0 \to H \to \pi_1(M) \to \zzzz \to 0$$
where $H$ is a normal subgroup isomorphic to $\zzzz^2$ corresponding to the fundamental group of the fibers.

The fundamental group is a semi-direct product, and the action of the quotient $\zzzz$ on $H$ is given by $A.$

Let $\lambda$, $\mu$ be the two eigenvalues of the monodromy $A$. We
describe here the transversely oriented case, in the
last section of the article we deal with the general
case. In the transversely orientable case,  $A$ has determinant $1$ and $\lambda$ and $\mu$ are both positive. We can take the convention that $\lambda  > 1$ and $ \mu  = 1/\lambda < 1$. The leaf spaces $\cL^s$ and $\cL^u$ are both identified with $\rrrr$, and for every element $\gamma$ of $\pi_1(M)$ there is an integer $k(\gamma)$ and real numbers $a(\gamma)$, $b(\gamma)$ such that the action of $\gamma$ on $\cL^s$ is $x \mapsto \lambda^{k(\gamma)}x + a(\gamma)$ and the action on $\cL^u$ is given by $y \mapsto \mu^{k(\gamma)}y + b(\gamma)$. Then, $k$ defines a morphism from $\pi_1(M)$ to $\zzzz$, with kernel $H$, and the set of all pairs $(a(\gamma), b(\gamma))$ forms a lattice in $\rrrr^2.$ Moreover, the integer $k(\gamma)$ is, up to sign, the projection of $\gamma$ in $\pi_1(M)/H\approx \zzzz.$

As a last remark, we point out that $\cF^s$ can be topologically conjugate to $\cF^u$, but not always: it depends on whether $A$ is conjugate to its inverse $A^{-1}$ in SL$(2, \zzzz)$ or not.

\subsubsection{The skew $\rrrr$-covered case }
The skew $\rrrr$-covered is the case where the Anosov flow $(M, \Phi)$ is $\rrrr$-covered but not product.

In the rest of this subsection we assume that the foliations
are transversely orientable.

The transverse foliations $\cF^s$ and $\cF^u$ are then isotopic one to the other. 
In particular, the actions of $\pi_1(M)$ on $\cL^s$ and $\cL^u$ are topologically conjugated. 

More precisely, there are two homeomorphisms  $\alpha_0, \beta_0$ from $\cL^s$ to $\cL^u$ that both commute with the action of $\pi_1(M)$
(the commutation property uses the transverse orientability
of the foliations). They are defined as follows: for every 
leaf $x$ of $\wcF^s$, the set of leaves of $\wcF^u$ intersecting $x$ is an interval $(\alpha_0(x), \beta_0(x) )$ in $\cL^u.$ In particular, 
the orbit space of the lift of $\Phi$ to $\mt$ is canonically identified with the domain $\Omega_0$ of $\cL^s \times \cL^u$
which is between  the  graphs of $\alpha_0$ and $\beta_0$. Observe that we can select the transverse orientations such that $\alpha_0$, $\beta_0$ are both increasing maps.

The maps $\tau_s = \alpha_0^{-1} \circ \beta_0$ and $\tau_u = \beta_0 \circ \alpha_0^{-1}$ are homeomorphisms of respectively $\cL^s$ and $\cL^u$ commuting with the $\pi_1(M)$ actions.
These two maps are  called the one step up  maps.

The following proposition shows that they are essentially the unique maps satisfying this property.

\begin{proposition}\label{pro:alphaLL}
The homeomorphisms of $\cL^s$ (respectively $\cL^u$) to itself commuting with the action of $\pi_1(M)$ are precisely the iterates $\tau^k_s$ (respectively $\tau_u^k$) with $k \in \zzzz$.

In particular, the only maps from $\cL^s$ to $\cL^u$ commuting with the actions of $\pi_1(M)$ are the maps of the form $\tau_u^k \circ \alpha_0 = \beta_0 \circ \tau_s^k.$   
\end{proposition}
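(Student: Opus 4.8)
The plan is to prove the two claims of Proposition~\ref{pro:alphaLL} together, since the second is a direct consequence of the first once we observe that $\beta_0 \circ \alpha_0^{-1} = \tau_u$ and $\alpha_0 \circ \beta_0^{-1} = \tau_s^{-1}$ intertwine the two statements. Concretely, if $\phi: \cL^s \to \cL^u$ commutes with the $\pi_1(M)$-actions, then $\alpha_0^{-1} \circ \phi$ is a homeomorphism of $\cL^s$ commuting with the action of $\pi_1(M)$, so by the first claim it equals $\tau_s^k$ for some $k \in \zzzz$, whence $\phi = \alpha_0 \circ \tau_s^k = \tau_u^k \circ \alpha_0$ (the last equality being the standard intertwining relation $\alpha_0 \circ \tau_s = \tau_u \circ \alpha_0$, which follows from the definitions $\tau_s = \alpha_0^{-1}\circ\beta_0$, $\tau_u = \beta_0\circ\alpha_0^{-1}$). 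So the content is entirely in the first claim.

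For the first claim, let $h: \cL^s \to \cL^s$ be a homeomorphism commuting with the $\pi_1(M)$-action. First I would note that, since in the skew case the action of $\pi_1(M)$ on $\cL^s$ has no global fixed point and is \emph{minimal} (every orbit is dense — this uses that $\cF^s$ is minimal, equivalently that the Anosov flow is not a suspension, so there are no closed leaves of $\cF^s$; density of orbits on the leaf space is the standard fact for skew $\rrrr$-covered Anosov flows), the commuting homeomorphism $h$ is determined by its value at a single point by a density/continuity argument. The key structural input is the one-step-up map $\tau_s$: it generates a $\zzzz$-action on $\cL^s$ that commutes with $\pi_1(M)$, it is increasing and fixed-point-free, and its orbits are the fibers of the quotient $\cL^s \to \cL^s/\tau_s$, which is the circle on which $\pi_1(M)$ acts (this is the circle at infinity / universal circle picture). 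So the plan is: show $h$ descends to a homeomorphism $\bar h$ of the circle $\cL^s/\langle\tau_s\rangle$ commuting with the induced $\pi_1(M)$-action, argue $\bar h$ must be the identity because the $\pi_1(M)$-action on that circle has a dense orbit with trivial centralizer in $\mathrm{Homeo}^+(S^1)$ (a minimal non-abelian action — here one invokes that the centralizer of such an action is trivial, which follows from minimality plus the existence of a hyperbolic-type element with a unique attracting/repelling pair), and then conclude that $h$ preserves each $\tau_s$-orbit, hence on each orbit acts as some power of $\tau_s$; continuity and connectedness of $\cL^s$ force that power to be globally constant, giving $h = \tau_s^k$.

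An alternative, perhaps cleaner route that avoids passing to the circle: observe directly that $h$ and $\tau_s$ both commute with $\pi_1(M)$, so $g := \tau_s^{-k}\circ h$ is, for a suitable choice of $k$, a homeomorphism commuting with $\pi_1(M)$ that fixes some point $x_0 \in \cL^s$ and satisfies $x_0 \le g(x_0') < \tau_s(x_0)$ type normalization on a fundamental domain; then using that the $\pi_1(M)$-orbit of $x_0$ is dense and $g$ fixes $x_0$ while commuting with the action, $g$ fixes the entire dense orbit $\pi_1(M)\cdot x_0$, hence by continuity $g = \mathrm{id}$, so $h = \tau_s^k$. The only subtlety is choosing $k$ so that $g$ genuinely has a fixed point; this is where one uses that $\{\tau_s^k\}$ "spans" the gaps — more precisely that for any $x$, the point $h(x)$ lies in $[\tau_s^{k}(x), \tau_s^{k+1}(x))$ for some integer $k(x)$ (by discreteness of the $\tau_s$-orbit and the fact that $h$ commutes with an element of $\pi_1(M)$ translating along that orbit direction), and that $k(x)$ is locally constant hence constant.

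The main obstacle I anticipate is justifying the rigidity step — that a homeomorphism commuting with the full $\pi_1(M)$-action and fixing one point of a dense orbit must be the identity. This is really the statement that the $\pi_1(M)$-action on $\cL^s$ (or on its circle quotient) has trivial centralizer, and making that precise requires either (i) citing the standard structure theory of skew $\rrrr$-covered Anosov flows from \cite{Barb-Fe2} (existence of elements acting with North--South-type dynamics on the circle, together with minimality, which together kill any centralizing homeomorphism), or (ii) a direct argument: if a centralizing $g$ fixes $x_0$, then it fixes $\gamma \cdot x_0$ for all $\gamma$, and density plus monotonicity (every homeomorphism of $\rrrr$ is monotone) pin down $g$ on all of $\cL^s$. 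I expect option (ii) to be the intended self-contained argument, with the monotonicity of homeomorphisms of $\rrrr$ doing most of the work once the dense-orbit input is in hand; the remaining care is only in the bookkeeping of which $\tau_s$-power to subtract off, which is routine.
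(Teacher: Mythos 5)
First, a point of comparison: the paper does not prove this proposition at all --- its ``proof'' is the citation \cite[Corollary 2.8]{Barb-Fe2} --- so any self-contained argument is necessarily a different route. Your reduction of the second claim to the first (compose with $\alpha_0^{-1}$ and use $\alpha_0\circ\tau_s=\tau_u\circ\alpha_0$) is correct, and your overall strategy for the first claim --- normalize $h$ by a power of $\tau_s$ so that the resulting map fixes a point, then note that a homeomorphism commuting with the $\pi_1(M)$-action and fixing one point fixes the whole (dense, by minimality) orbit of that point and hence is the identity --- is sound; it even disposes of a possible orientation-reversing $h$, since such a homeomorphism of $\rrrr$ automatically has a fixed point.

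The genuine gap is precisely at the step you flag as ``the only subtlety'': producing the fixed point. Knowing $h(x)\in[\tau_s^{k}(x),\tau_s^{k+1}(x))$ with $k$ independent of $x$ does \emph{not} give a fixed point of $g=\tau_s^{-k}\circ h$; it only gives $x\le g(x)<\tau_s(x)$ for all $x$, and such a $g$ could a priori be fixed-point free (a ``sub-$\tau_s$'' translation-like map, exactly as $\tau_s$ itself is fixed-point free). Moreover $k(x)$ need not be locally constant: it can jump at points where $h(x)=\tau_s^{k}(x)$ (though such a point would hand you the fixed point directly, so what is really needed is a dichotomy, not local constancy). The missing ingredient, available in the paper, is Lemma \ref{le.fixedpoints}: take $\gamma\in\pi_1(M)$ nontrivial with $\mathrm{Fix}(\gamma)\neq\emptyset$ in $\cL^s$; then $\mathrm{Fix}(\gamma)=\{x_i\}_{i\in\zzzz}$ is discrete, satisfies $x_{i+2}=\tau_s(x_i)$, and alternates attracting/repelling. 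Since $h$ commutes with $\gamma$, it permutes $\mathrm{Fix}(\gamma)$ preserving order and dynamical type, so in the orientation-preserving case $h(x_i)=x_{i+2m}=\tau_s^{m}(x_i)$ for a fixed $m$; now $g=\tau_s^{-m}\circ h$ genuinely fixes $x_0$ and your density argument finishes. (Equivalently: if $x<g(x)<\tau_s(x)$ everywhere, then $g(x_0)$ would be a fixed point of $\gamma$ strictly between $x_0$ and $x_2=\tau_s(x_0)$, hence equal to $x_1$, which has the wrong attracting/repelling type --- contradiction.) Your alternative route (i) has a further gap at its very first step: to descend $h$ to the circle $\cL^s/\langle\tau_s\rangle$ you need $h\tau_s h^{-1}\in\langle\tau_s\rangle$, which is not known a priori and is essentially a piece of the statement being proved; so route (ii), repaired as above via Lemma \ref{le.fixedpoints}, is the one to keep.
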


\begin{proof}
See \cite[Corollary 2.8]{Barb-Fe2}.
\end{proof}

\begin{lemma}\label{le.fixedpoints}
    Let $\gamma$ be a non trivial  element of $\pi_1(M)$ admitting a fixed point in $\cL^s.$ Then, $\gamma$ admits infinitely many fixed points. Moreover, the fixed points of $\gamma$ can be indexed by $\zzzz$ so that, for every integer $i$:
    \begin{itemize}
        \item $x_i < x_{i+1}$,
        \item $x_{i+2}=\tau_x(x_i)$,
        \item $x_i$ is an attracting fixed point if $i$ is even, and a repelling one if $i$ is odd.
    \end{itemize}
\end{lemma}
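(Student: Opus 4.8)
The plan is to analyze the action of $\gamma$ on $\cL^s$ directly, using the one step up map $\tau_s$ and Proposition \ref{pro:alphaLL}. First I would observe that since $\gamma$ acts on $\cL^s$ preserving the order (transverse orientability), its fixed point set $\mathrm{Fix}(\gamma)$ is a closed subset of $\cL^s \approx \rrrr$. The key structural input is that $\tau_s$ commutes with the $\pi_1(M)$-action, hence $\tau_s$ maps $\mathrm{Fix}(\gamma)$ to $\mathrm{Fix}(\gamma)$; since $\tau_s$ is a fixed-point-free homeomorphism of $\rrrr$ commuting with $\gamma$ (it is conjugate to a nontrivial translation because it is the deck-type ``one step up'' map, and it has no fixed points as the flow is skew, not product), every $\tau_s$-orbit in $\mathrm{Fix}(\gamma)$ is infinite and properly embedded, so $\mathrm{Fix}(\gamma)$ is already infinite once it is nonempty. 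This handles the first assertion.

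Next I would pin down the structure of $\mathrm{Fix}(\gamma)$ more precisely. I expect the main point is to rule out that $\mathrm{Fix}(\gamma)$ contains a nondegenerate interval: if it did, then $\gamma$ would have an open set of fixed points on $\cL^s$, and I would invoke the discussion in \S~\ref{sub.homotopic} — for $\rrrr$-covered foliations in dimension $3$ with no spherical leaves, the only element of $\pi_1(M)$ with an open fixed set on the leaf space is the identity (this is the classifying-space condition, which holds for skew Anosov foliations since leaves of $\wcF^s$ are planes and holonomy coverings of leaves are contractible). Hence $\mathrm{Fix}(\gamma)$ is a discrete closed subset of $\rrrr$, invariant under $\tau_s$. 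On each complementary interval $(x,x')$ of consecutive fixed points, $\gamma$ is a fixed-point-free orientation-preserving homeomorphism, hence strictly monotone, so it pushes towards one endpoint: thus the fixed points alternate between attracting and repelling (a point cannot be attracting from one side and indifferent from the other, nor attracting from both sides with repelling neighbors — the alternation is forced by the intermediate value behavior of $\gamma(t)-t$ changing sign at each fixed point, which it must since the zeros are isolated and $\gamma \neq \mathrm{id}$). This gives an indexing by $\zzzz$ with $x_i < x_{i+1}$ and alternating dynamical type.

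Then I would establish $x_{i+2} = \tau_s(x_i)$ and fix the parity convention. Since $\tau_s$ preserves $\mathrm{Fix}(\gamma)$, preserves the order, and preserves the attracting/repelling dichotomy (it is a topological conjugacy of the $\gamma$-action with itself), $\tau_s$ sends the discrete ordered set $\{x_i\}$ to itself by an order-preserving bijection, i.e. $\tau_s(x_i) = x_{i+m}$ for a fixed $m \geq 1$ independent of $i$; and since $\tau_s$ preserves dynamical type, $m$ is even. It remains to show $m = 2$, which I expect to be the genuine obstacle. The argument I would use is that $\tau_s$ is the ``minimal'' such homeomorphism: by Proposition \ref{pro:alphaLL}, every homeomorphism of $\cL^s$ commuting with $\pi_1(M)$ is a power $\tau_s^k$, and if $\tau_s(x_i) = x_{i+m}$ with $m > 2$ then one can build, from the restriction of $\gamma$'s dynamics, a $\pi_1(M)$-equivariant homeomorphism strictly between $\mathrm{id}$ and $\tau_s$ in the ordering of commuting maps — contradicting that $\tau_s$ generates the (infinite cyclic) group of such maps, unless $m=2$. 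Concretely, mapping each $x_i$ to $x_{i+2}$ and interpolating equivariantly gives a commuting homeomorphism which is a positive power of $\tau_s$ but sends $x_i$ to $x_{i+2}$ while $\tau_s$ sends $x_i$ to $x_{i+m}$; comparing shows this map is $\tau_s^{2/m}$, forcing $m \mid 2$ and $m$ even, hence $m = 2$. Finally one chooses the indexing so that even-indexed fixed points are the attracting ones (which is consistent since $\tau_s = $ ``two steps'' preserves parity), completing the proof. The delicate part throughout is the interplay between the order structure, the dynamical (attracting/repelling) structure, and the rigidity coming from Proposition \ref{pro:alphaLL}; everything else is standard one-dimensional dynamics.
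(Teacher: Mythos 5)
The paper does not actually prove this lemma: it is stated in the background section as a recalled classical fact about skew $\rrrr$-covered Anosov flows (going back to the work of Fenley and Barbot, cf.\ \cite{Ba1}, \cite{Fe1}, \cite{Barb-Fe2}), so your proposal has to be judged on its own. Your first step is fine: $\tau_s$ commutes with $\gamma$, is fixed-point free, hence pushes any fixed point to infinitely many. But the middle of the argument has a genuine gap, and it is exactly where the Anosov dynamics must enter. Ruling out a nondegenerate interval of fixed points does \emph{not} make $\mathrm{Fix}(\gamma)$ discrete (a closed set with empty interior can be a Cantor set), and for a general orientation-preserving homeomorphism of $\rrrr$ an isolated fixed point need not be attracting or repelling: $\gamma(t)-t$ can vanish without changing sign (a semi-stable fixed point), so the claimed ``intermediate value'' alternation is not forced by one-dimensional topology. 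The missing idea is that every $\gamma$-fixed leaf $x\in\cL^s$ projects to an annular leaf containing a (unique) periodic orbit of $\Phi$, and hyperbolicity of that orbit makes the local action of $\gamma$ on $\cL^s$ near $x$ topologically expanding or contracting. This is what gives both isolation of fixed points and the attracting/repelling dichotomy; the alternation then follows from your consecutive-interval argument.

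The step $x_{i+2}=\tau_s(x_i)$ is also not established. You correctly get $\tau_s(x_i)=x_{i+m}$ for a fixed $m$, but the reduction to $m=2$ by ``interpolating equivariantly'' is unjustified: a map defined on $\mathrm{Fix}(\gamma)$ would have to be extended to a homeomorphism of $\cL^s$ commuting with \emph{all} of $\pi_1(M)$ (whose periodic elements have dense families of fixed sets that the extension must respect simultaneously) before Proposition \ref{pro:alphaLL} can be invoked, and no such extension is produced; also ``$\tau_s^{2/m}$'' has no meaning in that cyclic group. The standard argument is local and again uses the periodic orbits: $\gamma$ fixes $\beta_0(x_i)\in\cL^u$, the stable leaves crossing $\beta_0(x_i)$ form exactly the interval $(x_i,\tau_s(x_i))$, and the unique lifted periodic orbit in the leaf $\beta_0(x_i)$ yields a $\gamma$-fixed stable leaf strictly inside this interval; uniqueness of the periodic orbit in a weak leaf together with hyperbolicity shows it is the only one, so there is exactly one fixed point strictly between $x_i$ and $\tau_s(x_i)$, i.e.\ $m=2$. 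In short, the skeleton of your argument (commutation with $\tau_s$, order considerations) is the right frame, but the proof cannot avoid the correspondence between fixed leaves and hyperbolic periodic orbits, which is absent from your proposal.
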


In other words, the homeomorphism induced by $\gamma$ on the circle, quotient of $\cL^s$ by $\tau_s$, admits two fixed points, one attracting and the other repelling.

Finally, the following classical lemma will be useful 
(for a proof, see \cite[Lemma 3.1 and 3.2]{Ma-Ts}):

\begin{lemma}\label{le:periodicdiscrete}
    An element of the orbit space $\Omega_0$ is the lift of a periodic orbit if and only if its orbit under $\pi_1(M)$ is discrete in $\Omega_0.$ 
\end{lemma}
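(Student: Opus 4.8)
Let me think about what's being claimed. We have a skew $\mathbb{R}$-covered Anosov flow $\Phi$ on a closed 3-manifold $M$. Its lift $\widetilde\Phi$ to the universal cover $\widetilde M$ has orbit space $\Omega_0 \subset \cL^s \times \cL^u$, which is the region between the graphs of $\alpha_0$ and $\beta_0$. The group $\pi_1(M)$ acts on $\Omega_0$. An element $p \in \Omega_0$ corresponds to a lift $\widetilde\gamma$ of an orbit $\gamma$ of $\Phi$ in $M$. The claim: $\gamma$ is periodic iff the orbit $\pi_1(M)\cdot p$ is discrete in $\Omega_0$.

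Let me sketch the proof.

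**Direction 1: periodic ⟹ discrete orbit.** If $\gamma$ is a periodic orbit of $\Phi$, then there's a nontrivial element $g \in \pi_1(M)$ (represented by the closed orbit, or rather corresponding to the period) that fixes the point $p \in \Omega_0$ corresponding to the lift $\widetilde\gamma$. The stabilizer of $p$ in $\pi_1(M)$ is then the infinite cyclic group $\langle g\rangle$ (generated by the "primitive" period — this uses that the flow has no fixed points and the orbit space action is nice). So the orbit $\pi_1(M)\cdot p$ is in bijection with $\pi_1(M)/\langle g\rangle$. Now I need: this orbit is discrete in $\Omega_0$.

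Hmm. Think of it via the quotient. The orbit space of $\Phi$ itself (downstairs, i.e. $M/\Phi$) is $\Omega_0/\pi_1(M)$ — but that's not Hausdorff in general. A periodic orbit maps to a point in this quotient whose preimage is the $\pi_1(M)$-orbit of $p$. Discreteness should follow from properness-type arguments: a sequence $g_n \cdot p \to q$ in $\Omega_0$ with $g_n$ distinct. Projecting to $M$: the orbits $g_n \cdot \widetilde\gamma$ of $\widetilde\Phi$ all project to the same periodic orbit $\gamma$ of $\Phi$ in $M$, and $g_n\cdot p \to q$ means $q$ is also a point of the orbit space whose orbit accumulates... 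Actually the cleanest route: the periodic orbit $\gamma$ in $M$ is a compact set; its preimage in $\widetilde M$ is a discrete union of lines (the orbits $g\cdot\widetilde\gamma$, $g \in \pi_1(M)/\langle\text{stab}\rangle$), discrete because $\gamma$ is compact and $\pi_1$ acts properly discontinuously on $\widetilde M$. Pushing to the orbit space $\Omega_0$, if $g_n \cdot p \to q$, lift to points in $\widetilde M$ on the corresponding orbits, staying in a compact fundamental domain... and derive a contradiction with proper discontinuity. I'll cite Matsumoto–Tsuboi for the details since the excerpt tells me to.

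**Direction 2: discrete orbit ⟹ periodic.** Suppose $p \in \Omega_0$ has discrete $\pi_1(M)$-orbit but the corresponding orbit $\gamma$ of $\Phi$ in $M$ is not periodic. Then the stabilizer of $p$ is trivial, so $\pi_1(M)\cdot p \cong \pi_1(M)$ as a set. Since $M$ is a closed 3-manifold supporting an Anosov flow, $\pi_1(M)$ is infinite. A discrete, infinite, $\pi_1(M)$-invariant subset $S = \pi_1(M)\cdot p$ of $\Omega_0$: since $\Omega_0 \subset \mathbb{R}^2$ and $S$ is discrete and infinite, $S$ must be unbounded or "escape". But $\Omega_0/\pi_1(M)$ — think of the unit tangent-bundle-like structure: the quotient should be compact-ish. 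More precisely: the orbit $\gamma$, being non-periodic, has its closure containing a whole minimal set, and by recurrence there exist $g_n \in \pi_1(M)$, all distinct, with $g_n \cdot p \to p$ (or $\to$ some accumulation point). This uses that the non-wandering set of an Anosov flow is everything (here, transitivity of the flow since it's $\mathbb{R}$-covered — actually any $\mathbb{R}$-covered Anosov flow on a closed 3-manifold is transitive). Recurrence of $\gamma$ (or density, since $\cF^s$ is $\mathbb{R}$-covered hence $\Phi$ transitive) produces times $t_n \to \infty$ with $\Phi_{t_n}(x) \to x$; lifting appropriately gives $g_n \cdot \widetilde x \to \widetilde x$ along the orbit, hence $g_n \cdot p \to p$ in $\Omega_0$ with $g_n$ distinct, contradicting discreteness. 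So $\gamma$ is periodic.

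**Main obstacle.** The hard part is making Direction 2 rigorous — in particular pinning down that a non-periodic orbit has a non-discrete $\pi_1(M)$-orbit in $\Omega_0$. One must use transitivity/minimality of the Anosov flow correctly and carefully track lifts: recurrence $\Phi_{t_n}(x)\to x$ in $M$ gives deck transformations $g_n$ with $g_n$ moving a lift $\widetilde x$ of $x$ close to the $\widetilde\Phi$-orbit of another lift, and one needs the $g_n$ to be genuinely distinct (which follows if $x$ is non-periodic, since otherwise finitely many $g_n$ would already force $x$ periodic) and to genuinely accumulate in $\Omega_0$ (the orbit-space points $g_n\cdot p$ converge because flowing along an orbit doesn't change the orbit-space point). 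Since the statement is classical, I will present the structure above and defer the technical lift-tracking to \cite[Lemma 3.1 and 3.2]{Ma-Ts}.
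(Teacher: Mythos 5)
The paper does not actually prove this lemma: it is stated as classical and the proof is delegated entirely to \cite[Lemma 3.1 and 3.2]{Ma-Ts}. Your outline reconstructs the standard argument behind that citation, and the first direction is fine as you describe it (a convergent sequence $g_n\cdot p\to q$ in $\Omega_0$ forces infinitely many distinct lifts of the compact closed orbit to meet a fixed compact ball in $\mt$, contradicting proper discontinuity).

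In the second direction there is one step that fails as literally written: you invoke transitivity/non-wandering to get times $t_n\to\infty$ with $\Phi_{t_n}(x)\to x$. Even for a transitive Anosov flow, an individual non-periodic orbit need not be recurrent (e.g.\ a non-periodic orbit on the weak stable leaf of a periodic orbit is forward asymptotic to that periodic orbit and never returns near $x$), so you cannot in general arrange $g_n\cdot p\to p$. The standard repair, which your parenthetical ``or $\to$ some accumulation point'' gestures at, is to use only compactness of $M$: pick a point $z$ in the $\omega$-limit set of the orbit, a small transversal disk $D$ at $z$, and the infinitely many returns of the orbit to $D$; translating the corresponding lifted intersection points back into one fixed lift $\wD$ by deck transformations $h_n$ gives points $h_n^{-1}\cdot p$ of the $\pi_1(M)$-orbit converging in $\Omega_0$. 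One then checks the $h_n$ are distinct (a small lifted transversal meets each lifted orbit at most once) and that distinct $h_n$ give distinct orbit-space points (the stabilizer of a non-periodic lifted orbit is trivial). Note this argument produces an accumulation point that need not lie in the orbit itself, so ``discrete'' in the lemma must be read as ``no accumulation point in $\Omega_0$'' (closed and discrete), not merely ``every orbit point is isolated''; with that reading, and the repair above, your proof goes through and coincides with the cited one.
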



\subsection{Uniform foliations and slitherings} \label{uniform}

For this section we refer to the  seminal article of Thurston
\cite{Th5}.

\begin{definition} (uniform foliations) A codimension one foliation $\cF$ in
a manifold  is uniform, if given any two leaves $E, F$ of $\wcF$
in $\mt$, then the Hausdorff distance between $E$ and $F$
is finite.
\end{definition}

If $M$ has dimension $3$ and $\cF$ is uniform, and
Reebless, then $\cF$  is $\rrrr$-covered\footnote{A similar result is proved in \cite{Th5} but it implicitly assumes $\rrrr$-covered as an assumption. The goal in \cite{Th5} is to show that a uniform $\rrrr$-covered foliation is necessarily a blow up of a slithering.} \cite{FP1}.
The notion of uniform foliations is closely connected with
the notion  of  slitherings:

\begin{definition} (slithering) A slithering is a submersion $f: \mt \to  
{\mathbb{S}^1}$ which is equivariant: there is  a representation
$\rho: \pi_1(M) \to Homeo(\mathbb{S}^1)$ so that
$f(\gamma(p)) = \rho(\gamma)(f(p))$  for all  $\gamma$ in $\pi_1(M)$
and $p$  in $\mt$.
\end{definition}

A slithering defines a  codimension one foliation, first
in $\mt$ by taking connected components of inverse images
of points. This projects to a foliation in $M$ by the equivariance.
Thurston \cite{Th5} (see  also  \cite{Fe2,Cal1}) proved that
if $\cF$ is uniform, codimension one, and minimal then it
is defined by a slithering. 
A slithering defines a homeomorphism $\tau$ from the leaf
space $\cL$ of $\cF$: given a transverse orientation on $\cF$,
and a leaf $L$ of $\wcF$, let $\tau(F)$ be the next leaf 
of $\wcF$ on the  positive side of  $F$
which has the same value as $F$ under $f: \mt 
\to {\mathbb{S}^1}$.
This is called the step  one map on the level of the
leaf spaces.

If $\cF$ is a skew $\rrrr$-covered Anosov foliation,
then the maps $\tau_s, \tau_u$ (from $\cL_s$ to $\cL_s$
and $\cL_u$ to $\cL_u$)  defined previously
are the one step up slithering maps that we are alluding
to here.

An easy but important property will  be  used  here:

\vskip .1in
\noindent
{\bf {Property}} Let $\cF$ be defined by  a slithering
with a slithering map $\tau: \cL \to \cL$.
There is  $a_0 > 0$,  so that for any $L$ leaf of $\wcF$,
$p  \in L$ and
$q \in \tau(L)$, then  $d(p,q) > a_0$.
\vskip .1in

In other words the leaves $L, \tau(L)$ cannot get  arbitrarily
close to each other.

\subsection{On the orientability assumption}\label{ss.orientable} 

In the proof of Theorems \ref{thm.three} and \ref{thm.four} we
first assume that $\cF_1, \cF_2$ are transversely orientable
and $\cG$ is orientable. The general case of Theorem 
\ref{thm.three} is done in section \S~\ref{s.lastone}.
The proof of Theorem \ref{thm.one} is done
in \S~\ref{sec.minimal} under general conditions.


\section{Intersection of minimal foliations}\label{sec.minimal}

In this  section $\cF_1, \cF_2$ will be $\rrrr$-covered, transverse
to each other, and minimal, meaning that every leaf of $\cF_1$ (and of $\cF_2$ as well) is dense in $M$. We fix orders $<_1$, $<_2$ in each of $\cL_1, \cL_2$. One goal of this section is to prove Theorem \ref{thm.one}. 

\begin{definition}\label{def.orient}
We denote by $\pi_1(M)_0$ the subgroup of $\pi_1(M)$ comprising elements preserving the order $<_2.$
\end{definition}

Observe that $\pi_1(M)_0$ is a normal subgroup of $\pi_1(M)$ of index at most $2.$ Therefore, the actions of $\pi_1(M)_0$ on $\cL_1$ and $\cL_2$ are still minimal.

\begin{definition} (the maps  $\alpha, \beta, \alpha', \beta'$)\label{def.alphabeta}
For  each $L$ in $\cL_1$, define 
$\alpha(L)$ as follows:

$$\alpha(L) \ \ = \ \ {\rm inf} \{ U \in  \cL_2, \ \ U \cap L \not
= \emptyset \},$$

\noindent
if  the infimum exists, in  which case it is a leaf of $\cL_2$.
Otherwise define  $\alpha(L)$ to be $-\infty$.
Analogously, to define $\beta(L)$ use the supremum instead
of the infimum, and $\beta(L)$ has  the possible value $+\infty$.
Similarly, for each E in $\cL_2$,  define 

$$\alpha'(E) \ \ = \ \ {\rm inf} \{ V \in \cL_1, \ \ V \cap E
\not = \emptyset \}$$

\noindent
with the same convention as for $\alpha$. Analogously define
$\beta'$.
\end{definition}

Notice $\alpha: \cL_1 \to \cL_2 \cup \{ -\infty \}$,
$\beta: \cL_1 \to \cL_2 \cup \{ +\infty \}$,
$\alpha': \cL_2 \to \cL_1 \cup \{ -\infty \}$,
$\beta': \cL_2 \to \cL_1 \cup \{ +\infty \}$.

Given  $x, y$ in $\cL_i$ let $[x,y]$ be the closed interval
in $\cL_i$ with endpoints $[x,y]$.
Similarly we define $(x,y), (x,y]$ and $[x,y)$.

Notice the following fact: given $\gamma$ in $\pi_1(M)$ then
$\gamma$ preserves the transversal orientation to $\wcF_1$
if and only if $\gamma$ preserves the orientation of $\cL_1$.
Obviously the same holds for $\cF_2$.

The first property is very easy:

\begin{lemma} \label{lem.transorient}
Let $\gamma$ in $\pi_1(M)$. Then $\gamma$
preserves the transversal orientation to $\wcF_2$ if and
only if $\gamma$ commutes with $\alpha$.
Similarly for $\beta$. In addition $\gamma$ preserves
the transversal orientation of $\wcF_1$ if and only if
$\gamma$ commutes with $\alpha'$, or equivalently if and only 
if $\gamma$ commutes with $\beta'$.
\end{lemma}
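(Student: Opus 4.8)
The idea is that the maps $\alpha,\beta,\alpha',\beta'$ are built purely out of the order structures $<_1,<_2$ and the geometric relation ``$L$ intersects $U$'', both of which transform naturally under $\pi_1(M)$. So equivariance of $\alpha$ is a formal consequence as soon as $\gamma$ preserves the order $<_2$, and the converse is forced because an order-reversing $\gamma$ would turn infima into suprema. First I would record the key compatibility: for any $\gamma\in\pi_1(M)$ and leaves $L\in\cL_1$, $U\in\cL_2$, we have $\gamma L\cap\gamma U\neq\emptyset$ if and only if $L\cap U\neq\emptyset$, since $\gamma$ is a homeomorphism of $\mt$ permuting leaves of $\wcF_1$ and of $\wcF_2$. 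Hence $\gamma\cdot\{U\in\cL_2:U\cap L\neq\emptyset\}=\{U'\in\cL_2:U'\cap\gamma L\neq\emptyset\}$.

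Next, suppose $\gamma$ preserves the transverse orientation of $\wcF_2$, equivalently (by the fact recorded just before the lemma) $\gamma$ preserves the order $<_2$. An order-preserving homeomorphism of $\cL_2\approx\rrrr$ commutes with taking infima and suprema of subsets (and sends $\pm\infty$ to $\pm\infty$). Applying this to the set $\{U\in\cL_2:U\cap L\neq\emptyset\}$ and combining with the displayed identity gives $\gamma\alpha(L)=\alpha(\gamma L)$, i.e. $\gamma$ commutes with $\alpha$; the same computation with $\sup$ in place of $\inf$ gives commutation with $\beta$. Conversely, if $\gamma$ reverses the order of $\cL_2$, then $\gamma$ sends the infimum of a bounded-below set to the supremum of its image, so $\gamma\alpha(L)=\beta(\gamma L)$ for all $L$; since the foliations are transverse, $\alpha(L)<\beta(L)$ strictly for every $L$ (the set of leaves of $\wcF_2$ meeting a fixed leaf $L$ of $\wcF_1$ contains a nondegenerate interval around any leaf of $\wcG$ in $L$), so $\alpha(\gamma L)\neq\beta(\gamma L)$ and $\gamma$ cannot commute with $\alpha$. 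This establishes the equivalence for $\alpha$, and symmetrically for $\beta$.

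The statement for $\alpha',\beta'$ is the same argument with the roles of $\cF_1$ and $\cF_2$ interchanged, using that $\gamma$ preserves the transverse orientation of $\wcF_1$ iff it preserves the order $<_1$, and that $\gamma$ preserving $<_1$ commutes with $\alpha'$ and $\beta'$ while $\gamma$ reversing $<_1$ swaps them (again using $\alpha'(E)<\beta'(E)$ strictly by transversality). In particular $\gamma$ commutes with $\alpha'$ iff it commutes with $\beta'$, both being equivalent to $\gamma$ preserving the transverse orientation of $\wcF_1$.

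I do not expect any serious obstacle here: the only point requiring a word of care is the strict inequality $\alpha(L)<\beta(L)$ (and $\alpha'(E)<\beta'(E)$), needed so that ``commutes with $\alpha$'' genuinely fails in the orientation-reversing case rather than degenerating; this is immediate from transversality of $\cF_1$ and $\cF_2$, since near any point of $L$ the leaves of $\wcF_2$ sweep out an open interval of $\cL_2$. Everything else is the bookkeeping that order-preserving homeomorphisms of $\rrrr$ commute with $\inf$ and $\sup$ while order-reversing ones interchange them.
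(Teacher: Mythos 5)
Your proposal is correct and follows essentially the same route as the paper: identify "preserves transverse orientation" with "preserves the order on $\cL_2$", note that $\gamma$ permutes the set of leaves of $\wcF_2$ meeting a given leaf of $\wcF_1$, and conclude that order-preserving $\gamma$ intertwines infima (so commutes with $\alpha$) while order-reversing $\gamma$ sends $\alpha$ to $\beta$. Your explicit remark that $\alpha(L)<\beta(L)$ strictly (by transversality) is a small point the paper leaves implicit, but it is the same argument.
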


\begin{proof}
We only prove the first statement.
First think of $\gamma$ acting on 
$\cL_2 \cup \{ -\infty, +\infty \}$. If $\gamma$ preserves the transversal
orientation to $\wcF_2$ then $\gamma(-\infty) = -\infty$, 
otherwise $\gamma(-\infty) = +\infty$.

Suppose that $\gamma$ preserves the transversal orientation
to $\wcF_2$ and let $I$ be the interval of leaves of $\wcF_2$
intersected by $x$ in $\cL_1$, $I$ is an interval in $\cL_2$.
Since $\gamma$ preserves
the orientation from $I$ to $\gamma(I)$ by hypothesis, this implies 
that $\gamma(\alpha(x)) = \alpha(\gamma(x))$, which includes
the case that $\alpha(x) = -\infty$.

Conversely if $\gamma$ reverses the transversal orientation
to $\wcF_2$, then $\gamma$ is orientation reversing
from $I$ to $\gamma(I)$. This implies that
$\gamma(\alpha(x)) = \beta(\gamma(x))$, which includes
the case that $\alpha(x) = -\infty$, implying $\beta(\gamma(x))
= +\infty$.
\end{proof}

The main property we prove in this section is the following Proposition, which is merely an adaptation of the arguments in section $4$ of \cite{Ba1} extended to the case of minimal foliations, not necessarily Anosov. 

\begin{proposition}\label{pro.alphafini}
Let $\cF_1, \cF_2$ be minimal, $\rrrr$-covered, transverse
foliations. Then  either $\alpha$ is constant equal to $-\infty$,
or $\alpha$ is a homeomorphism from $\cL_1$ to $\cL_2$.
The analogous result works for $\beta, \alpha', \beta'$.
\end{proposition}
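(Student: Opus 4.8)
\textbf{Proof plan for Proposition \ref{pro.alphafini}.}

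The plan is to analyze $\alpha: \cL_1 \to \cL_2 \cup \{-\infty\}$ directly, exploiting the minimality of both foliations together with the near-monotonicity forced by transversality. First I would record the basic monotonicity: if $L <_1 L'$ in $\cL_1$, then the interval of $\wcF_2$-leaves met by $L'$ cannot be strictly above that met by $L$ (since a short transversal from $L$ to $L'$ inside a foliation box gives overlapping intervals), so $\alpha(L) \leq_2 \alpha(L')$; that is, $\alpha$ is (weakly) monotone nondecreasing on the ordered set $\cL_1$, with values in $\cL_2 \cup \{-\infty\}$. The same computation shows that on any leaf $L$ the set of $\wcF_2$-leaves intersecting $L$ is a nonempty open-or-half-open interval $(\alpha(L), \beta(L))$ or similar, so $\alpha(L) = -\infty$ exactly means $L$ meets leaves of $\wcF_2$ arbitrarily far down.

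Next I would set $A = \{L \in \cL_1 : \alpha(L) = -\infty\}$ and show $A$ is both open and closed in $\cL_1$, hence (by connectedness) either empty or all of $\cL_1$. Closedness: if $L_n \to L$ with $\alpha(L_n) = -\infty$, then by monotonicity $\alpha(L) \leq_2 \alpha(L_n) = -\infty$ for all $n$ on the appropriate side (using that $\alpha$ is monotone and a subsequence of the $L_n$ lies on one side of $L$), so $\alpha(L) = -\infty$. Openness: if $\alpha(L) = -\infty$, I want a neighborhood of $L$ in $\cL_1$ on which $\alpha$ is still $-\infty$; here I would use a foliation box around a point of $L$ to get nearby leaves $L'$ whose intersection-intervals contain $L$'s down to arbitrarily low levels — more carefully, one argues that the \emph{lower} endpoint function $\alpha$ is itself continuous wherever it is finite-valued, using the openness of the intersection condition (if $U \cap L \neq \emptyset$ then $U' \cap L' \neq \emptyset$ for $U'$ near $U$ and $L'$ near $L$), and $-\infty$ behaves as a continuous value in $\cL_2 \cup \{-\infty\}$. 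In the case $A = \cL_1$ we are in the first alternative. Note I must be slightly careful that $\alpha$ could a priori fail to be $\pi_1(M)$-equivariant if an element reverses the transverse orientation of $\wcF_2$; but by Lemma \ref{lem.transorient}, $\alpha$ commutes with the index-$\leq 2$ subgroup $\pi_1(M)_0$, and since the $\pi_1(M)_0$-action on $\cL_1$ is still minimal (as remarked after Definition \ref{def.orient}), the set $A$ being $\pi_1(M)_0$-invariant, closed and a proper subset forces $A = \emptyset$ as well — so in fact openness alone is not enough, but the combination open-or-minimality-argument does it. (I'd phrase it as: $A$ is open, $\pi_1(M)_0$-invariant; its complement is therefore $\pi_1(M)_0$-invariant and closed; if $A$ is neither empty nor everything, minimality of a dense leaf's closure in $\cL_1$ is contradicted.)

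Now assume $A = \emptyset$, so $\alpha$ is a monotone nondecreasing map $\cL_1 \to \cL_2$ commuting with $\pi_1(M)_0$. It remains to show $\alpha$ is a homeomorphism, i.e. strictly increasing, continuous, and onto. For \emph{strict monotonicity}: if $\alpha$ were constant equal to some $U_0 \in \cL_2$ on a nondegenerate interval $J \subset \cL_1$, then by $\pi_1(M)_0$-equivariance $\alpha$ is constant on every translate $\gamma J$; but the translates of the open interval $J$ under the minimal action of $\pi_1(M)_0$ cover an open dense subset of $\cL_1$, and monotonicity then forces $\alpha$ to be locally constant on a dense open set, hence the image of $\alpha$ is countable — contradicting that $\alpha(\cL_1)$ must be a $\pi_1(M)_0$-invariant subset of $\cL_2$ whose closure, by minimality of the target action, is all of $\cL_2$ (a countable invariant set cannot be dense unless... it is dense and countable, which is fine topologically, but then monotone $\alpha$ has a dense set of plateaus and its range omits an uncountable set, while invariance + minimality forces the range to be uncountable). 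More cleanly: a monotone map with a plateau, conjugated by a minimal group action, has plateaus everywhere densely, so it factors through a non-injective quotient that cannot support the minimal action faithfully; the precise statement I'd invoke is that a $\pi_1(M)_0$-equivariant monotone surjection-onto-its-image between two minimal $\rrrr$-actions is automatically a bijection. For \emph{continuity/surjectivity}: a monotone map between copies of $\rrrr$ is continuous iff its image is an interval with no gaps; a gap in $\alpha(\cL_1)$ (a jump discontinuity of $\alpha$) would be an invariant family of gaps, whose union is a proper invariant open subset of $\cL_2$, contradicting minimality; and if $\alpha(\cL_1)$ were a bounded or half-bounded interval, its closure is a proper closed invariant subset, again contradicting minimality. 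Hence $\alpha$ is a monotone continuous bijection $\cL_1 \to \cL_2$, i.e. a homeomorphism. The identical argument applies verbatim to $\beta$ (using the supremum/upper endpoint), and to $\alpha', \beta'$ after exchanging the roles of $\cF_1$ and $\cF_2$ (here one uses $\pi_1(M)$-equivariance modulo the subgroup preserving the transverse orientation of $\wcF_1$, via the second half of Lemma \ref{lem.transorient}).

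\textbf{Main obstacle.} The delicate point is the rigidity step in the third paragraph — upgrading ``monotone and equivariant'' to ``homeomorphism.'' One must rule out plateaus and jumps of $\alpha$ purely from minimality, and the cleanest route is a small lemma: \emph{if a group $\Gamma$ acts minimally on two lines $\cL, \cL'$ and $h: \cL \to \cL'$ is monotone, nondecreasing, and $\Gamma$-equivariant, then $h$ is a homeomorphism.} I expect this lemma (plateaus form an invariant family of intervals whose interior, if nonempty, projects to a proper invariant open set; jump-gaps in the image likewise form a proper invariant open set; boundedness of the image contradicts minimality of the target) to be where the real work sits, and it is presumably the content the authors extract — it is exactly the dichotomy pictured in Figure \ref{fig.developing}.
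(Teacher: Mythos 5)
Your first step --- that $\alpha$ is weakly monotone ``since a short transversal from $L$ to $L'$ inside a foliation box gives overlapping intervals'' --- is a non sequitur, and it is precisely the hard point of Proposition \ref{pro.alphafini}. That the intervals of $\wcF_2$-leaves met by two nearby leaves of $\wcF_1$ overlap says nothing about which of the two infima $\alpha(L)$, $\alpha(L')$ is lower: nothing local forbids a triple $x <_1 y <_1 z$ with a leaf $t$ of $\wcF_2$ meeting $y$ while staying in the region between $x$ and $z$ and meeting neither, i.e.\ $\alpha(y) <_2 \alpha(x)$ and $\alpha(y) <_2 \alpha(z)$. Ruling this out is a global statement; in the paper it is Claim \ref{claim-monotone}, proved by choosing such a $t$, using minimality of the action of an order-preserving finite-index subgroup to find $\gamma$ with $\gamma(x) >_1 z$, and then playing the separation properties of the properly embedded leaves $z$ and $\gamma(x)$ in $\mt$ against each other to obtain the contradiction $\gamma(t) >_2 t$ and $\gamma(t) <_2 t$. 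Since everything downstream in your plan (closedness of $A=\{\alpha=-\infty\}$, the rigidity lemma) is built on monotonicity, this gap is fatal as written.

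The dichotomy step has a second gap: openness of $A$. From ``if $U \cap L \neq \emptyset$ then $U' \cap L' \neq \emptyset$ for nearby $U', L'$'' you only get that, for each fixed low leaf $E$ met by $L$, all leaves in some neighborhood $N_E$ of $L$ meet $E$; but $N_E$ depends on $E$ and may shrink as $E \to -\infty$. In other words $\alpha$ is upper semicontinuous for free, while lower semicontinuity --- your assertion that ``$-\infty$ behaves as a continuous value'' --- is exactly what must be proved, since intersections can escape to infinity; closedness of $A$ fails for the same reason, so the open-closed/minimality argument does not get off the ground. The paper sidesteps this entirely: if $\alpha(z)$ is finite and $\alpha(x) = -\infty$, take $\gamma$ in the order-preserving subgroup with $w=\gamma(x) >_1 z$ (minimality), note $\alpha(w)=-\infty$ by equivariance, pick a single leaf $t <_2 \alpha(z)$ meeting both $x$ and $w$, and observe that the connected leaf $t$ must then cross the separating leaf $z$, a contradiction. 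By contrast, your concluding ``rigidity lemma'' (an equivariant monotone map between two minimal actions on lines is a homeomorphism) is correct and essentially routine once monotonicity and finiteness are in hand --- the paper gets injectivity from the no-plateau and at-most-two-preimages claims and surjectivity by proving $\beta' \circ \alpha = \mathrm{id}$ and $\alpha \circ \beta' = \mathrm{id}$ --- so the real difficulty sits in the two steps you treated as immediate, not in the one you flagged.
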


\begin{proof}
The proofs for $\alpha, \beta, \alpha', \beta'$ are analogous, we do the
proof for  $\alpha$.

If $\alpha$ is identically $-\infty$ the result is achieved.
Assume there is $z$ in $\cL_1$ so that $\alpha(z)$ is not
$-\infty$ and we will prove that $\alpha$
is always finite and a homeomorphism from $\cL_1$ to $\cL_2$.
To start  the analysis  we assume that there is
also $x$ in $\cL_1$ so  that $\alpha(x) = -\infty$.
We can assume  $x <_1 z$ without loss of generality. Since the action of $\pi_1(M)_0$ on $\cL_1$ is minimal there is 
a deck translate $w$ of $x$ so that $w >_1 z$.
Since  $\alpha(x) = -\infty$, let $t$ in $\cL_2$  with
$t \cap  x  \not = \emptyset$ and $t  <_2 \alpha(z)$.
Since $\alpha(w) = -\infty$, we may assume, by taking
$t$ smaller (in $\cL_2$) if necessary, that $t$ also
intersects  $w$. 
Now notice that  $x <_1 z <_1 w$ so $z$ separates $x$ from $w$.
Since $t$ intersects both $x, w$ it now follows that $t$
intersects $z$  as well. This contradicts the fact
that $\alpha(z) >_2 t$.

We have proved that if $\alpha$ is not identically $-\infty$,
then $\alpha$ is never $-\infty$, and hence   it is always finite.

Suppose from now on that $\alpha$ is always finite.

\begin{claim} If $\alpha$ is  not identically $-\infty$, then
$\alpha$ cannot  be constant in any non  trivial interval.
\end{claim}

\begin{proof}
Suppose that $\alpha$ is constant in the non empty open interval $I$ in $\cL_1$.
For any $x$ in $\cL_1$, since the action is minimal, there is  $\gamma$ in 
$\pi_1(M)_0$ so that $\gamma(x)$ is in $I$. Since the map $\alpha$ is
equivariant (because 
$\pi_1(M)_0$ preserves orders), it follows there is an open set containing  $x$ so that
$\alpha$ is constant. Hence every point in $\cL_1$ has a neighborhood
where $\alpha$ is constant. For  any $x, y$ in $\cL_1$, the
interval $[x,y]$ is compact, therefore it follows  that $\alpha$
is constant in $[x,y]$. This would imply that $\alpha$ is
constant and not $-\infty$, which is impossible, again by equivariance of $\alpha$.
The claim follows.
\end{proof}

We  remark that minimality is used  to prove this  claim. 
In fact it is very easy to produce counterexamples if
minimality is not assumed, for example by blowing up
a leaf of (say) $\cF_1$ into an interval of leaves.

\begin{claim}
For any $t$ in $\cL_2$  there are at most two points in $\cL_1$
with image  $t$ under $\alpha$.
\end{claim}

\begin{proof}
Suppose there are $x, y, z$ in $\cL_1$ with $x <_1 y <_1 z$ and
all with image $t$ under $\alpha$.
Any $w >_2 t$ and sufficiently close to $t$ will intersect
$x, y$ and $z$.  It follows that any $v$ in $(x,y)$
intersects $w$ $-$   again because $v$ separates $x$  from $y$
in  $\mt$.  As a consequence  $\alpha(v) \leq_1 \alpha(x) = t$.
The previous claim shows that $\alpha$ cannot be constant in 
$[x,y]$ so there is $v$ in $(x,y)$ so that $\alpha(v) <_2 t$.
Similarly there is $u$ in $(y,z)$ so that $\alpha(u) <_2 t$.

Notice  that the arguments above imply that $v, u$ both intersect
$w$ in  $\cL_2$ with $w >_2 \alpha(x)$ and sufficiently close
to $\alpha(x)$. Since $\alpha(v), \alpha(u)$ are both $<_2 \alpha(x)$,
it follows that $u, v$ both intersect some $q <_2 \alpha(x)$.
Since $y$ separates  $u$ from $v$, then $y$ also intersect $q$,
which is $<_2  \alpha(x) = t$. It follows that $\alpha(y) <_2 t$,
contradiction to assumption.
\end{proof}

\begin{claim}\label{claim-monotone} $\alpha$ is weakly monotone.
\end{claim}

\begin{proof}
Suppose by way of contradiction  that $\alpha$ is not weakly
monotone. Then there are $x, y, z$ in $\cL_1$ with
$x <_1 y <_1 z$ and either  both $\alpha(y) <_2 \alpha(x),
\alpha(y) <_2 \alpha(z)$ hold, or  both  $\alpha(y)  >_2 \alpha(x),
\alpha(y) >_2 \alpha(z)$. (See figure~\ref{fig.claim}.)

We consider the first  possibility, and show it leads to a contradiction.
One  can  deal with the  second possibility in the same way.
Assuming $\alpha(y) <_2 \alpha(x), \alpha(y) <_2 \alpha(z)$, choose
$t$ in $\cL_2$ intersecting $y$ and with $t <_2 \alpha(x),
t <_2 \alpha(z)$.

Let $\pi_1(M)_1$ be a subgroup of order at most $2$ of $\pi_1(M)_0$
so that elements in $\pi_1(M)_1$ preserve the order $<_1$.
Again $\pi_1(M)_1$ acts minimally on $\cL_1$.

Let $\gamma$ be an element of $\pi_1(M)_1$ such that $\gamma(x) >_1 z$. It follows that
$x <_1 y <_1 z <_1 \gamma(x) <_1 \gamma(y) <_1 \gamma(z)$ (this uses
that $\gamma$ preserves the order in $\cL_1$).
In addition $t$ intersects $y$ and $\gamma(t)$ intersects
$\gamma(y)$.

\begin{figure}[ht]
\begin{center}
\includegraphics[scale=0.65]{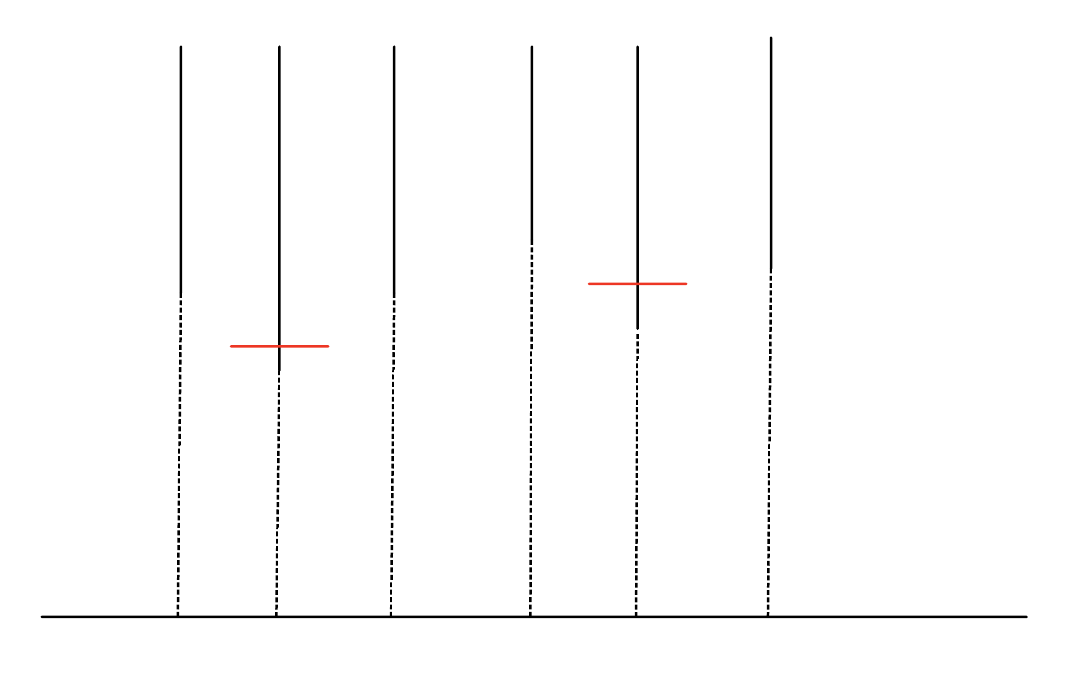}
\begin{picture}(0,0)
\put(-290,15){{\small $x$}}
\put(-255,15){{\small $y$}}
\put(-220,15){{\small $z$}}
\put(-180,15){{\small $\gamma x$}}
\put(-145,15){{\small $\gamma y$}}
\put(-105,15){{\small $\gamma z$}}
\put(-287,120){{\small $\alpha(x)$}}
\put(-255,95){{\small $\alpha(y)$}}
\put(-220,125){{\small $\alpha(z)$}}
\put(-180,135){{\small $\alpha(\gamma x)$}}
\put(-145,105){{\small $\alpha(\gamma y)$}}
\put(-105,135){{\small $\alpha(\gamma z)$}}
\put(-235,105){{\small $t$}}
\put(-125,125){{\small $\gamma t$}}
\end{picture}
\end{center}
\vspace{-0.5cm}
\caption{{\small Depiction of the proof of Claim \ref{claim-monotone}.}}\label{fig.claim}
\end{figure}

Notice that $t$ does not intersect $z$ and $\gamma(t)$ does
not intersect $\gamma(x)$. It follows that $z, \gamma(x)$
separate $t$ from $\gamma(t)$ and hence
$t, \gamma(t)$ are distinct.
There is $w >_2 t$ intersecting $z$.
In particular if $v <_2 t$ then $t$ separates $v$ from $z$.
Since $\gamma(t)$ is separated from $t$ by $z$, the above
fact implies that $t$ separates $v$ from $\gamma(t)$.
It follows that $\gamma(t) >_2 t$.

Now switch the roles of $t, \gamma(t)$. The
same argument as above implies that $\gamma(t) <_2 t$.
This is impossible.
\end{proof}

Now we can finish the proof of Proposition \ref{pro.alphafini}.
Since   $\alpha$ is weakly monotone  and cannot be constant
on any non trivial interval, then $\alpha$ is (strictly) increasing
or decreasing. If necessary, switch the orientation on $\cL_2$ so
that $\alpha$ is strictly increasing.

The last property to prove is that $\alpha$ is surjective to $\cL_2$. 
Now we use the map $\beta': \cL_2 \to \cL_1 \cup \{ +\infty \}$
previously defined.
The same proof up to this point applied to $\beta'$ shows
that $\beta'$ is weakly monotone.

By the choice of orientation of $\cL_2$ it follows that
$\beta'(\alpha(x))  \leq_1 x$.
To see this first notice that 
$\alpha(x) \cap x = \emptyset$. If $\alpha(x)$ intersects
$z >_1 x$ then $\alpha(z) <_2 \alpha(x)$ contradicting the
choice of orientation in $\cL_2$.
Therefore $\beta'$ is not
always equal to $\infty$, and so by the first part of the 
proof applied to $\beta'$, it follows
that  $\beta'$ is always finite. 

We want to show that for any $x$ in $\cL_1$, then $\beta'(\alpha(x)) = x$.
Suppose by way of contradiction that there is $x$ in $\cL_1$ so that 
$v := \beta'(\alpha(x)) \not = x$.
This leads to a contradiction as follows. 
We have already proved that if $y \in (v,x)$, then $\alpha(y) <_2 \alpha(x)$. Let $t \in \cL_2$ with $\alpha(y) <_2 t <_2 \alpha(x)$,
and $t$ intersects $y$. The last property implies that 
$\beta'(t) >_1 y$. But
$\alpha(x) >_2 t$ and $\beta'(\alpha(x)) = v <_1 y$.
This contradicts that $\beta'$ is weakly monotone increasing.

This contradictions implies that $\beta' \circ \alpha = id_{\cL_1}$.
We already know that $\beta'$ is never $+\infty$, so now switching
the roles of $\alpha$ and $\beta'$ we obtain that
$\alpha \circ \beta' = id_{\cL_2}$.

It follows that if $\alpha$ attains a finite value then
$\beta' = \alpha^{-1}$ and $\alpha$ is a homemorphism from
$\cL_1$ to $\cL_2$.
The same happens if $\beta'$ assumes a finite value.
In an analogous way if $\beta$ assumes a finite value then
$\beta, \alpha'$ are homeomorphisms and $\alpha' = \beta^{-1}$.

This finishes the proof of the proposition.
\end{proof}

\begin{proof}[Proof of Theorem \ref{thm.one}] Let  $\cF_1, \cF_2$ be two foliations satisfying the hypothesis of Theorem \ref{thm.one}. 
Up to switching the foliations we assume that $\cF_2$ is 
transversely orientable, in other words the action of $\pi_1(M)$
on $\cL_2$ preserves orientation. 
Now suppose that 
one leaf of $\wcF_1$ does not intersect every leaf of $\wcF_2$. Then, up to changing the order $<_2$, we can assume without loss of 
generality that the map $\alpha$ defined in Definition \ref{def.alphabeta} has finite value.  According to Proposition \ref{pro.alphafini}, $\alpha$ is a homeomorphism from $\cL_1$ to
$\cL_2$.
In addition, by hypothesis, for any $\gamma$ in $\pi_1(M)$ then $\gamma$ 
preserves the order in $\cL_2$, so Lemma \ref{lem.transorient} 
implies that for any 
$\gamma$ in $\pi_1(M)$, then
$$\gamma \circ \alpha \  = \ \alpha \circ \gamma.$$
\noindent
In other words $\alpha$ is a conjugacy between the actions of
$\pi_1(M)$ on $\cL_1$ and $\cL_2$.

The only property left to prove is that $\cF_1$ is also transversely
orientable. Again without loss of generality assume that $\alpha$
is finite. Then $\beta' = \alpha^{-1}$. Since $\alpha$ commutes
with every $\gamma$ in $\pi_1(M)$, then $\beta' = \alpha^{-1}$
implies that $\beta'$ commutes with every $\gamma$ in $\pi_1(M)$
as well. Now using Lemma \ref{lem.transorient} in the other direction,
this implies that $\cF_1$ is transversely orientable.

This finishes the proof of the theorem.
\end{proof}

As already explained, if $M$ has dimension $3$ and if the foliations are uniform, by \cite[Proposition 2.2]{Th5}, this implies that the foliations are isotopic as defined in \S~\ref{sub.homotopic}. Note that even in hyperbolic 3-manifolds there exist $\rrrr$-covered foliations which are not uniform \cite[Ex. 9.18]{Cal2}.



\section{The case of translation foliations and product $\rrrr$-covered Anosov foliations}\label{s.translationfoliations}

In this section, we consider two special cases: the case (in dimension $3$) where the two foliations are product Anosov, and the case (in any dimension) where one of the foliation is a translation foliation. These cases have already been considered in \cite{Ma-Ts}, but sometimes under the hypothesis that both foliations are sufficiently differentiable. Here we see how to adapt their arguments, when needed, in the case where $\cF_1$ and $\cF_2$ are merely continuous.

\subsection{The case where one foliation is a translation foliation}

Here we do not restrict the dimension of  $M$ or the type of the
second foliation.

Notice that a translation foliation is necessarily transversely
orientable.

Recall that we have a local homeomorphism (the developing map) $\cD_{\cG}: \cL_{\cG} \to \cL_1 \times \cL_2$ that commutes with the action of $\pi_1(M)$ (see Definition \ref{developingmap}).

The proposition below is a slight generalization of \cite[Proposition 2.3]{Ma-Ts}.

\begin{proposition}\label{pro;trans+foli}
    Let $(\cF_1, \cF_2)$ a pair of transverse $\rrrr$-covered foliations on a closed manifold $M$ with intersection foliation $\cG = \cF_1 \cap \cF_2.$ Assume that $\cF_2$ is a translation foliation. Then, $\cD_{\cG}: \cL_{\cG} \to \cL_1 \times \cL_2$ is a homeomorphism onto the entire $\cL_1 \times \cL_2$. Moreover, $\cD_{\cG}$ conjugates the actions of $\pi_1(M)$ on $\mathcal L_{\mathcal G}$ and $\cL_1 \times \cL_2$.  
\end{proposition}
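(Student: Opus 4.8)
The key input is Lemma~\ref{le:translationR} and its proof, which provides an equivariant developing map $d \colon \mt \to \rrrr$ for $\cF_2$: it is constant along leaves of $\wcF_2$, locally injective along transversals, and $d(\gamma \tilde p) = d(\tilde p) + r(\gamma)$ for a homomorphism $r \colon \pi_1(M) \to \rrrr$. Since $d$ induces a homeomorphism $\bar d \colon \cL_2 \to \rrrr$, we may as well identify $\cL_2$ with $\rrrr$ via $\bar d$, and the $\pi_1(M)$-action on $\cL_2$ becomes the translation action by $r$. The plan is to use the ``flow box'' structure from the proof of Lemma~\ref{le:translationR}, together with the transverse Riemannian structure, to show that from any point one can move a definite amount in either direction transverse to $\wcF_2$ while staying inside a single leaf of $\wcF_1$, and hence that every leaf of $\wcF_1$ meets every leaf of $\wcF_2$.

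First I would establish that the intersection of any leaf $L$ of $\wcF_1$ with any leaf of $\wcF_2$ is connected, i.e. a single leaf of $\wcG$ (or empty). Recall from the proof of Lemma~\ref{le:translationR} the maps $F^\varepsilon$: for $|\varepsilon| \le C$ and $\tilde p \in \mt$, following any short transversal to $\wcF_2$ from $\tilde p$ to a point where $d$ has increased by $\varepsilon$ lands in a well-defined leaf $F^\varepsilon(\tilde p)$ of $\wcF_2$, and $F^\varepsilon$ is constant along leaves of $\wcF_2$. The point is that $\wcF_2$ being a translation foliation means $\cL_2$ is globally a line with translation action, which is exactly the situation where ``going up by $\varepsilon$'' is globally unambiguous. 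I would then argue as in Proposition~\ref{pro.OK}: if $L \cap L'$ contained two leaves $\theta_1 \ne \theta_2$ of $\wcG$, a transversal in $L$ joining them would fail injectivity of $\mu_2$ (equivalently of $d$ restricted to that transversal), a contradiction. So condition~(2) of Proposition~\ref{pro.OK} holds for every leaf of $\wcF_1$; consequently $\cL_{\cG}$ is Hausdorff and $\cD_{\cG}$ is injective, i.e. a homeomorphism onto its image, by the argument of Corollary~\ref{cor.F1F2} (note: minimality of $\cF_1$ is not even needed here since we get condition~(2) directly for \emph{every} leaf).

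It remains to show $\cD_{\cG}$ is onto all of $\cL_1 \times \cL_2$, equivalently that every leaf $L$ of $\wcF_1$ meets every leaf of $\wcF_2$. Fix $L \in \wcF_1$ and a point $\tilde p \in L$. The set $\Lambda(L) := \{ d(\tilde q) : \tilde q \in L \} \subset \rrrr$ is the set of values of $\cL_2 \cong \rrrr$ corresponding to leaves of $\wcF_2$ that $L$ meets; it is an interval since $L$ is connected and $d$ is continuous, and it is nonempty. I claim $\Lambda(L)$ is open and closed in $\rrrr$, hence all of $\rrrr$. Openness: given $\tilde q \in L$ with $d(\tilde q) = s$, use a foliated chart of $\cF_1$ around $\tilde q$ — inside a leaf of $\wcF_1$ one can move transverse to $\wcF_2$ (since $\cF_1, \cF_2$ are transverse, $\wcG$ is $1$-codimensional in $L$ and $L$ carries transversals to $\wcG$) and, by the uniform flow-box estimate from Lemma~\ref{le:translationR}'s proof (the constant $C$, valid because the chart cover is finite and $M$ compact), the value of $d$ changes by a full interval around $s$ while staying in $L$; so $(s - \delta, s + \delta) \subset \Lambda(L)$ for a uniform $\delta > 0$. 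Closedness: if $s_n \in \Lambda(L)$, $s_n \to s$, pick $\tilde q_n \in L$ with $d(\tilde q_n) = s_n$; for $n$ large $|s - s_n| < \delta$, so applying the openness move at $\tilde q_n$ we reach a point of $L$ with $d$-value exactly $s$. Thus $s \in \Lambda(L)$. Since $\rrrr$ is connected, $\Lambda(L) = \rrrr$, so $L$ meets every leaf of $\wcF_2$. Because $L$ was arbitrary, $\cD_{\cG}$ is surjective, completing the proof.

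\textbf{Main obstacle.} The delicate point is the \emph{uniformity} of the step $\delta$ in the openness argument: one must be sure that from every point of every leaf of $\wcF_1$ one can move a definite $d$-amount transverse to $\wcF_2$ while staying in that leaf of $\wcF_1$. This is where compactness of $M$, the finite foliated atlas adapted to the transverse pair $(\cF_1,\cF_2)$, and the uniform constant $C$ of Lemma~\ref{le:translationR} enter; without uniformity the interval $\Lambda(L)$ might fail to be closed. Everything else — connectedness of intersections, Hausdorffness of $\cL_{\cG}$, injectivity of $\cD_{\cG}$ — follows formally from Proposition~\ref{pro.OK}, Lemma~\ref{le:translationR}, and the fact that the $\pi_1(M)$-action on $\cL_2$ is by translations.
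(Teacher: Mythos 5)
Your surjectivity argument (openness and closedness of the set $\Lambda(L)$ of $d$-values attained on a leaf $L$ of $\wcF_1$, with a uniform step $\delta$ coming from compactness and the combined foliated atlas) is sound and is essentially the paper's argument, which builds a map $F^\varepsilon_{\cG}$ analogous to $F^\varepsilon$ but constrained to stay in the $\wcF_1$-leaf. The problem is in your first step, the connectedness of $L\cap L'$. You argue: ``if $L \cap L'$ contained two leaves $\theta_1\neq\theta_2$ of $\wcG$, a transversal in $L$ joining them would fail injectivity of $\mu_2$.'' That only shows that no such transversal exists; it does not show $\theta_1=\theta_2$. In Proposition \ref{pro.OK} the existence of the transversal is exactly where hypothesis (1) (that $\wcG_L$ is $\rrrr$-covered) is used, and that hypothesis is what you have not yet established. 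Two distinct leaves of $\wcG_L$ lying in the same leaf of $\wcF_2$ but joined by no transversal in $L$ is precisely the non-Hausdorff (Reeb-annulus-type) phenomenon the proposition must exclude, so your ``direct'' proof of condition (2) is circular at the decisive point. Note also that the mere existence of a transverse first integral $d|_L$ constant on $\wcG_L$-leaves and locally injective on transversals does not by itself force the leaf space of $\wcG_L$ to be Hausdorff (the horizontal foliation of $\rrrr^2\setminus\{0\}$ with $d=y$ is a counterexample), so this cannot be waved away: some use of the uniform flow-box structure is unavoidable here.

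The fix is exactly the ingredient you mention but never deploy for this step: run the non-separation argument from the proof of Lemma \ref{le:translationR} inside the leaf $L$, using the map $F^\varepsilon_{\cG}$ (defined for $|\varepsilon|\le C$ with $C$ uniform, via the atlas adapted to the pair, and constant along leaves of $\wcG_L$). If $\theta_1,\theta_2$ were non-separated in the leaf space of $\wcG_L$, take leaves of $\wcG_L$ accumulating on both; for such a nearby leaf $g_n$ one has $g_n=F^{\varepsilon_n}_{\cG}(\theta_1)=F^{\varepsilon_n}_{\cG}(\theta_2)$ and then $\theta_1=F^{-\varepsilon_n}_{\cG}(g_n)=\theta_2$, a contradiction. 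This gives that the leaf space of $\wcG_L$ is Hausdorff, hence homeomorphic to $\rrrr$ with $d$ inducing an injective (and, by your own argument, surjective) map onto $\rrrr$; connectedness of $L\cap L'$ then follows from Proposition \ref{pro.OK} as you intended, and injectivity plus surjectivity of $\cD_{\cG}$ follow. With that insertion your proof matches the paper's; your parenthetical remark that minimality of $\cF_1$ is not needed is consistent with the paper, since Corollary \ref{cor.F1F2} is not invoked in its proof either.
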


By conjugation we mean a topological conjugation, which is a homeomorphism between the spaces that conjugates the actions.

\begin{proof}
    It is simply an adaptation of the proof of 
Lemma \ref{le:translationR} in which we have constructed a map $F^\varepsilon: \mt \to \cL_2$ for $\varepsilon$ sufficiently small. 
Let $d: \mt \to \rrrr$ be the developing map of the translation foliation $\cF_2$ (see Definition \ref{def:translation}). The same techniques allow one to construct a map $F_{\cG}^\varepsilon: \mt \to \cL_{\cG}$ associating to every element $\tilde p$ of $\mt$ the unique leaf $g$ of $\wcG$ in the leaf $F_{\tilde p}$ of $\wcF_1$ containing $\tilde p$ passing $\epsilon_+$-near to $\tilde p$ and satisfying $d(g) = d(\tilde p) + \varepsilon.$ The same proof works and allow to show that the restriction of $\wcG$ to the leaf $F_{\tilde p}$ has a leaf space homeomorphic to $\rrrr$ and such that the restriction of $d$ to this leaf space is a homeomorphism onto $\rrrr$. 
The same arguments as in Lemma \ref{le:translationR} show that each leaf
of $\wcF_1$ intersects every leaf of $\wcF_2$. 
It follows that the intersection between any leaf of $\cF_1$  and any leaf of $\cF_2$ is a single leaf of $\cG,$ which is what we wanted to show. The map $\cD_{\cG}$ commutes with the actions of $\pi_1(M)$ by definition. 
\end{proof}

An interesting corollary of this proposition is that one can prove that in some cases the intersection foliation $\cG$ does not depend on the way we isotope one foliation to make it transverse to the other, at least up to homotopic equivalence.

\begin{corollary}
     Let $\cF_1$ be a $\rrrr$-covered codimension one foliation on $M$ which is a classifying space, and let $\cF_2$, $\cF'_2$ be a pair of translation foliations on $M$, isotopic one to the other (in the sense of \S~\ref{sub.homotopic}), and both transverse to $\cF_1.$ Then, the intersection foliations $\cF_1 \cap \cF_2$ and $\cF_1 \cap \cF'_2$ are homotopically equivalent.
\end{corollary}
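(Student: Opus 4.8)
The plan is to exploit Proposition \ref{pro;trans+foli}, which tells us that for a translation foliation $\cF_2$ transverse to $\cF_1$, the developing map gives a homeomorphism $\cD_{\cG}\colon \cL_{\cG}\to\cL_1\times\cL_2$ onto the whole product. Applying this to both pairs $(\cF_1,\cF_2)$ and $(\cF_1,\cF_2')$, we get identifications $\cL_{\cF_1\cap\cF_2}\cong\cL_1\times\cL_2$ and $\cL_{\cF_1\cap\cF_2'}\cong\cL_1\times\cL_2'$. Since $\cF_2$ and $\cF_2'$ are isotopic translation foliations, the actions of $\pi_1(M)$ on $\cL_2$ and $\cL_2'$ are conjugate by an equivariant homeomorphism $f\colon\cL_2\to\cL_2'$ (in the trivial-automorphism sense of \S~\ref{sub.homotopic}, since the topological equivalence is isotopic to the identity). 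Taking the product with the identity on $\cL_1$ yields a $\pi_1(M)$-equivariant homeomorphism $\mathrm{id}\times f\colon \cL_1\times\cL_2\to\cL_1\times\cL_2'$, hence a $\pi_1(M)$-equivariant homeomorphism $\Phi\colon \cL_{\cF_1\cap\cF_2}\to\cL_{\cF_1\cap\cF_2'}$ between the leaf spaces of the two intersection foliations.

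The first step, then, is to verify that these two one-dimensional foliations each satisfy the hypotheses needed to detect homotopical equivalence via the criterion of \cite{Hae} recalled in \S~\ref{sub.homotopic}: namely that they have the same holonomy groupoid and are both classifying spaces of it. Here the role of the hypothesis that $\cF_1$ is itself a classifying space enters: the leaves of $\cG=\cF_1\cap\cF_2$ are the one-dimensional leaves sitting inside leaves of $\cF_1$, and since $\cL_{\cG}\cong\cL_1\times\cL_2$ with the product $\pi_1(M)$-action, one reads off that the holonomy covering of each leaf of $\cG$ is contractible precisely when the corresponding statement holds for $\cF_1$ (the $\cL_2$-direction contributes nothing since $\cF_2$ is a translation foliation, with trivial holonomy). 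Thus both $\cF_1\cap\cF_2$ and $\cF_1\cap\cF_2'$ are classifying spaces of their holonomy groupoids.

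The second step is to check that the equivariant homeomorphism $\Phi$ of leaf spaces actually identifies the holonomy groupoids, i.e. produces the maps $h,h'$ between $M$ and itself mapping leaves of one intersection foliation into leaves of the other and with $h\circ h'$, $h'\circ h$ homotopic to the identity through foliation-preserving homotopies. This is the point where one uses that $\mt\to\cL_{\cG}$ is a topological submersion whose fibers are the leaves of $\wcG$ (here genuinely leaves, not unions of leaves, by Proposition \ref{pro.OK} together with Proposition \ref{pro;trans+foli}), so that an equivariant homeomorphism of leaf spaces can be realized — after choosing a continuous equivariant section compatible with the fibrations — by a leaf-preserving continuous map of $\mt$ descending to $M$; the homotopy to the identity comes from the contractibility of fibers and of $\cL_{\cG}$.

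The main obstacle I expect is precisely this realization step: turning the combinatorial/topological data of an equivariant homeomorphism of leaf spaces into honest maps $h,h'$ on $M$ with the required homotopy properties, without appealing to any smoothness. The cleanest route is to avoid constructing $h,h'$ by hand and instead to invoke directly the Haefliger classifying-space machinery: once one knows (i) both foliations are classifying spaces of their holonomy groupoids, and (ii) the two holonomy groupoids are isomorphic — which follows from $\Phi$ together with the fact that in both cases the groupoid is the transformation groupoid of the $\pi_1(M)$-action on $\cL_1\times\cL_2$ (resp. $\cL_1\times\cL_2'$) restricted along the submersion — the conclusion that the two foliations are homotopically equivalent is exactly the content of \cite{Hae}, page $85$. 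So the real work is item (ii), identifying both holonomy groupoids with the same transformation groupoid up to the equivariant homeomorphism $\mathrm{id}\times f$, and this is where the product structure furnished by Proposition \ref{pro;trans+foli} and the triviality of the holonomy of a translation foliation do all the heavy lifting.
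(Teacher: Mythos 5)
Your proposal follows essentially the same route as the paper: apply Proposition \ref{pro;trans+foli} to both pairs to identify the leaf spaces of the intersection foliations equivariantly with $\cL_1\times\cL_2$ (hence isomorphic holonomy groupo\"{\i}ds), check that both intersection foliations are classifying spaces of their holonomy groupo\"{\i}ds using the triviality of the translation-foliation holonomy and the hypothesis on $\cF_1$, and conclude directly by Haefliger rather than building $h,h'$ by hand. The only step you state more loosely than the paper is the contractibility of the holonomy covering of a leaf $G$ of $\cG$: the paper makes this precise by noting that the restriction of $\cD$ to the leaf $F$ of $\cF_1$ containing $G$ is a fibration over $\rrrr$ with fiber $G$, that the holonomy kernels of $G$ and $F$ coincide, and then running the homotopy exact sequence of the fibration $F_K\to\rrrr$ with fiber the holonomy cover $G_K$.
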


\begin{proof}
    According to Proposition \ref{pro;trans+foli} the action of $\pi_1(M)$ on the orbit spaces of $\cG = \cF_1 \cap \cF_2$ and $\cG' = \cF_1 \cap \cF_2$ are both topologically conjugate to the action of $\pi_1(M)$ on the product $\cL_1 \times \cL_2$.  It implies that their holonomy groupo\"{\i}ds are isomorphic.

    Let $g$ be a leaf of $\wcG$ and $F$ the leaf of $\wcF_1$ containing $g$. According to Proposition \ref{pro;trans+foli}, the restriction of $\cD$ to $F$   
    is a fibration over $\rrrr$, with fibers homeomorphic to $g.$ Hence the injection $\pi_1(g) \hookrightarrow \pi_1(F)$ of the fundamental groups is an isomorphism.
     Then, the holonomy for $\wcF_1$ along such a loop is trivial if and only if the holonomy along this loop for $\wcG$ is trivial (because the $\cL_2$ component of this holonomy is already trivial).

     Therefore, the kernel $K$ of the holonomy morphism of $g$ is also the kernel of the holonomy morphism of $F.$ 
     The covering $F_K$ of $F$ associated to $K$ is therefore a $g_K$ bundle over $\rrrr$, where $g_K$ is the holonomy covering of $g.$ Since $F_K$ is contractible, the exact homotopy sequence of the fibration shows that all homotopy groups of $g_K$ vanish. Therefore, $g_K$ is contractible, and $\cG$ is a classifying space for its holonomy groupo\"{\i}d. 

    The same holds for $\cG'$. They are both classifying foliated spaces and have the same holonomy groupo\"{\i}d: they are homotopically equivalent.    
\end{proof}
   
This corollary generalizes \cite[Propositions 2.2 and 2.3]{Ma-Ts} removing the assumption that $M$ is 3-dimensional and the transverse regularity of the foliations. Observe that in dimension 3, the ambient manifold is, up to finite covering, a torus bundle over the circle, and the homotopy equivalence can be shown to be a topological equivalence. 

We end up this section with a remark that will be useful later. Observe that translation foliations in (orientable) $2$-dimensional manifolds are topologically conjugate to linear foliations on the torus, and it is quite well-known that such a linear foliation is not isotopic (in the sense of \S~\ref{sub.homotopic}) to a foliation transverse to itself. The following lemma generalizes this fact.

\begin{lemma}\label{le:2transv}
    Let $(\cF_1, \cF_2)$ be pair of translation foliations transverse one to the other.  Then $\cF_1$ is not isotopic to $\cF_2$.
\end{lemma}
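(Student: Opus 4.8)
The plan is to argue by contradiction, exploiting the structure forced by Proposition \ref{pro;trans+foli}. Suppose $\cF_1$ and $\cF_2$ are translation foliations, transverse to each other, and that there is a homeomorphism $h\colon M\to M$ isotopic to the identity with $h(\cF_1)=\cF_2$. Lift everything to $\mt$ and pass to the leaf spaces $\cL_1,\cL_2$, each homeomorphic to $\rrrr$ and each carrying a $\pi_1(M)$-action conjugate to an action by translations via morphisms $r_i\colon\pi_1(M)\to\rrrr$. Since $h$ is isotopic to the identity, the induced conjugacy between the actions on $\cL_1$ and $\cL_2$ is $\pi_1(M)$-equivariant for the \emph{identity} automorphism, so in particular $r_1$ and $r_2$ have the same kernel $N=\ker r_1=\ker r_2$, and after rescaling we may take $r_1=r_2=:r$ on the appropriate index-two subgroup (the translation structure forces transverse orientability, so there is no index issue). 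Thus $N$ acts trivially on both leaf spaces.

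Next I would use transversality together with Proposition \ref{pro;trans+foli}. By that proposition the developing map gives a homeomorphism $\cD_{\cG}\colon\cL_{\cG}\to\cL_1\times\cL_2$, and every leaf of $\wcF_1$ meets every leaf of $\wcF_2$ in exactly one leaf of $\wcG$; in particular, for any leaf $L$ of $\wcF_1$ the intersection $L\cap L'$ with any leaf $L'$ of $\wcF_2$ is a single leaf $\theta$ of $\wcG$, and $L$ is foliated by $\wcG$ with leaf space homeomorphic to $\cL_2\approx\rrrr$. The key point is to track an element $\gamma\in N$. Since $\gamma\in\ker r_2$, it fixes every leaf of $\wcF_2$; since $\gamma\in\ker r_1$, it fixes every leaf of $\wcF_1$. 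Therefore $\gamma$ fixes every leaf of $\wcG$ (each being the intersection of a fixed $\wcF_1$-leaf with a fixed $\wcF_2$-leaf), i.e.\ $\gamma$ acts trivially on $\cL_{\cG}\cong\rrrr^2$. Now pick any leaf $L$ of $\wcF_1$; $\gamma$ preserves $L$, acts trivially on its $\wcG$-leaf space, and $L/N$ (or the quotient by the cyclic group generated by $\gamma$) is a leaf of $\cF_1$ in $M$. On the other hand, because $\cF_2$ is a translation foliation its leaves are exactly the level sets of a map with no holonomy, so a leaf of $\cF_2$ has fundamental group injecting into $\pi_1(M)$ with image contained in $N$; but transversality with $\cF_1$ and the product structure of the developing image forces $N$ itself to act both as a group of translations (via $r_1$, which is zero on $N$ — so trivially on $\cL_1$) and to contain enough "sideways" motions along the $\cF_1$-leaves to realize the $\cL_2$-direction. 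This is the contradiction I would extract: $N$ must act trivially on $\cL_{\cG}\cong\rrrr^2$, yet $N$ must also be a nontrivial lattice acting on (at least) a $2$-plane transversally inside each leaf.

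More concretely, here is the clean contradiction I expect to run. Fix a leaf $F$ of $\wcF_1$. By Proposition \ref{pro;trans+foli} the restriction $\cD|_F$ is a fibration $F\to\cL_2\approx\rrrr$ whose fibers are the $\wcG$-leaves in $F$, so $F$ is homeomorphic to $(\text{a }\wcG\text{-leaf})\times\rrrr$; running the same argument with roles swapped, each $\wcG$-leaf is itself homeomorphic to $\rrrr^{\dim M -2}$, and in fact $F$ is simply connected (a fibration of the contractible $\mt$ restricted to a leaf, with the transverse structure). The stabilizer of $F$ in $\pi_1(M)$ is $\ker r_1=N$ (all of $N$ fixes $F$, and nothing outside does since $r_1$ is injective on a transversal), so $F/N$ is the leaf of $\cF_1$ through the image of $F$, and $N=\pi_1(F/N)$ since $F$ is simply connected. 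The identical analysis for $\cF_2$ shows each leaf of $\cF_2$ has fundamental group $\ker r_2 = N$ as well. Now use the equivariant homeomorphism $g\colon\cL_1\to\cL_2$ coming from $h$: on $\cL_1$ the action of $N$ is trivial (as $N=\ker r_1$), hence on $\cL_2$ the action of $N$ is trivial too — consistent — but now consider the action of $\pi_1(M)$ on $\cL_{\cG}\cong\cL_1\times\cL_2$: an element $\gamma$ with $r_1(\gamma)=r_2(\gamma)=0$, i.e.\ $\gamma\in N$, acts trivially on \emph{both} factors, hence trivially on all of $\cL_{\cG}$. So every nontrivial $\gamma\in N$ acts on $\mt$ fixing every leaf of $\wcG$ setwise while $\mt/\wcG = \cL_{\cG}\cong\rrrr^2$; thus $\gamma$ is a nontrivial deck transformation fixing (setwise) the contractible leaf $F$ and acting on each $\wcG$-leaf in $F$. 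Taking $\gamma$ of infinite order (possible since $N$ is infinite — it contains $\pi_1$ of a leaf of a codimension-one foliation on a closed manifold, which is infinite) gives a leaf $F/\langle\gamma\rangle$ of $\cF_1$ whose fundamental group is $\zzzz$ at least, but more importantly the \emph{same} $\gamma$ gives a leaf of $\cG$ in $M$ on which $\gamma$ acts trivially upstairs — meaning this $\cG$-leaf is a closed leaf, a circle, compact. But $\cG$ is carried by the translation foliation $\cF_2$ whose leaves are noncompact (translation foliations on closed manifolds are minimal or at least leafwise-noncompact since the action on $\cL_2$ is by translations, hence free away from the trivial element, so no leaf of $\cF_2$ is compact), and a compact leaf of $\cG$ would lie in a single leaf of $\cF_2$; contradiction.

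The main obstacle, and the step I would be most careful about, is the passage ``$\gamma\in N$ acts trivially on $\cL_{\cG}$ $\Rightarrow$ there is a compact leaf of $\cG$ (or some other visible impossibility)'': acting trivially on the leaf space is weaker than acting trivially on $\mt$, so I cannot immediately conclude a fixed $\wcG$-leaf descends to a \emph{compact} leaf. The honest fix is to note that since $\gamma$ fixes every $\wcG$-leaf in $F$ and $F$ is simply connected with $F/N$ a leaf of $\cF_1$, the quotient map $F\to F/N$ realizes $N$ as $\pi_1(F/N)$; meanwhile $\gamma\in N$ also fixes every leaf of $\wcF_2$, so $\gamma$ descends to a loop in the leaf $L_2/N$ of $\cF_2$ along which the $\cF_1$-holonomy is trivial; but the $\cF_1$-holonomy of $L_2/N$, being a translation foliation with no holonomy, has $\pi_1(L_2/N)$ injecting into $\mathrm{Homeo}^+(\cL_1)$ via $r_1$, which is \emph{injective} on $N$ — forcing $\gamma$ trivial, i.e.\ $N=1$. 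Then $\pi_1$ of a leaf of $\cF_1$ is trivial, so $\cF_1$ is a fibration by simply connected leaves over $S^1$; but a codimension-one translation foliation on a closed manifold has $r_1\colon\pi_1(M)\to\rrrr$ with infinite image and kernel $N$, so $N$ infinite — contradiction with $N=1$. That last observation (injectivity of $r_i$ on the leaf stabilizer, which is precisely the no-holonomy property of translation foliations established in Lemma \ref{le:translationR}) is the real engine, and I would make sure to isolate it cleanly before running the contradiction.
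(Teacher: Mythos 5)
Your reduction to the leaf spaces is fine: with the isotopy hypothesis the actions on $\cL_1$ and $\cL_2$ are equivariantly conjugate, $\cL_{\cG}\cong\cL_1\times\cL_2$ by Proposition \ref{pro;trans+foli}, and the common kernel $N=\ker r$ acts trivially on both factors, hence fixes every leaf of $\wcG$. But neither of your two attempted contradictions actually closes, and the decisive step of your ``honest fix'' is false. You claim that the transverse action of $\pi_1(L_2/N)\cong N$ on $\cL_1$, i.e.\ $r_1|_N$, is \emph{injective}; but $N$ is by definition $\ker r_1$, so this action is trivial --- exactly as you yourself correctly observe two paragraphs earlier (``$r_1$, which is zero on $N$''). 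There is no ``no holonomy'' principle that rescues this: translation foliations have trivial leaf holonomy, and the kernel of the action of $\pi_1(M)$ on the leaf space is typically infinite (for a fibration over the circle, which is a translation foliation, the kernel is the entire fundamental group of the fiber). So the conclusion $N=1$ does not follow, and your final contradiction ($N=1$ versus $N$ infinite) evaporates. Your earlier attempted contradiction is equally unsound: a translation foliation can perfectly well have compact leaves (again, fibrations over $S^1$), so ``no leaf of $\cF_2$ is compact'' is wrong; moreover in the general (higher-dimensional) setting of this lemma a $\cG$-leaf is not a curve, and the simple connectivity of leaves of $\wcF_1$, which you need to identify $\pi_1$ of a leaf with $N$, is asserted rather than proved.

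For comparison, the paper's proof never looks at the kernel at all. After identifying $\cL_{\cG}$ with $\rrrr^2$ so that every $\gamma$ acts by the diagonal translation $(x,y)\mapsto(x+r(\gamma),y+r(\gamma))$, and using Proposition \ref{pro;trans+foli} to know that the developing map $\mt\to\cL_1\times\cL_2$ is onto the whole product, one composes with $(x,y)\mapsto y-x$: this gives a $\pi_1(M)$-invariant topological submersion $\mt\to\rrrr$, which descends to a submersion $M\to\rrrr$, impossible because $M$ is compact (the map attains a maximum). You had both ingredients in hand --- the diagonal translation action and the surjectivity of $\cD$ --- but never used them; as written, the kernel-$N$ route does not yield the lemma.
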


\begin{proof}
    According to the Proposition \ref{pro;trans+foli} the action of $\pi_1(M)$ on the orbit space $\cL_{\cG}$ is topologically conjugate to the action of $\pi_1(M)$ on $\cL_1 \times \cL_2$. Suppose that $\cF_1$ is isotopic to $\cF_2$. Then the actions of $\pi_1(M)$ on $\cL_1$ and $\cL_2$ are topologically conjugate one to the other, and both conjugate to an action by translations on the real line. More precisely, there is an identification of
    $\cL_{\cG}$ with $\rrrr^2$ and a morphism $r: \pi_1(M) \to \rrrr$ such that the action of an element $\gamma$ of $\pi_1(M)$ corresponds to the diagonal translation     
    $(x,y) \mapsto (x + r(\gamma), y + r(\gamma)).$ Then, the composition of $\eta: \mt \to \cL_{\cG} \approx \rrrr^2$ and the map $(x,y) \mapsto y-x$ induces a 
    $\pi_1(M)$-invariant topological submersion from $\mt$ to $\rrrr,$ hence a submersion from $M$ to $\rrrr.$ It is a contradiction since $M$ is compact: such a map admits a maximum, and therefore cannot be a submersion.
\end{proof}

\subsection{Product Anosov foliations}
\label{s.prod}

In this subsection $(\cF_1, \cF_2)$ is a bifoliation made of minimal foliations on the $3$-manifold  $M(A)$ for some hyperbolic element $A \in$ SL$(2, \zzzz)$ (cf. section \ref{sub.product}). We will assume that the foliations are orientable and transversally orientable. This manifold is the ambient manifold of the suspension of $A,$ considered as a linear diffeomorphism of the torus $\rrrr^2/\zzzz^2$. We denote by $\Phi_A$ the suspended Anosov flow with constant return time, by $\cF^s$, $\cF^u$ the weak stable and unstable foliations of $\Phi_A,$ and by $\cF^{ss}_A$, $\cF^{uu}_A$ the strong stable and unstable foliations. Observe that the closures of the leaves of $\cF^{ss}_A$ and $\cF^{uu}_A$ are fibers of the fibration over the circle. (We emphasize also that the strong stable and unstable foliations are sensible to reparametrizations of the flow, here, we are considering the ones associated to the constant time suspension, and it is well known that for small reparametrizations of the flow, the strong stable and strong unstable foliations become minimal.)

According to a result of E. Ghys and V. Sergiescu (\cite{Gh-Se}) any codimension one minimal foliation $\cF$ on $M(A)$ is topologically conjugated to either $\cF^s$ or $\cF^u$ of $\Phi_A$ (actually, they just need that there is no compact leaves).

Actually, Ghys and Sergiescu assume some regularity of the foliation, and prove that the conjugacy is $C^2$, but this hypothesis can be removed. More precisely, S. Matsumoto proves in \cite[Theorem 3]{Mat2} that the leaf space $\cL$ is homeomorphic to the real line, on which $\pi_1(M(A))$ acts by affine transformations of the form $x \mapsto \lambda(\gamma).x + a(\gamma)$ (removing the regularity does need the minimality assumption). 

Recall that We have a split exact sequence:
$$0 \to H \to \pi_1(M(A)) \to \zzzz \to 0 $$
where $H$ is a normal subgroup isomorphic to $\zzzz^2$ corresponding to the fundamental group of the fibers.

It also follows from \cite{Mat2} that 
the action of $H$ on $\cL$ is a faithfull and free action by translations. It means that the morphism $\lambda: \pi_1(M(A)) \to \rrrr^*$ is trivial when restricted to $H$ (with value $1$), and in addition that the restriction of $a: \pi_1(M(A)) \to \rrrr$ to $H$ is a non-vanishing morphism.

Let $\gamma_0$ be an element of $\pi_1(M)$ projecting to a generator of $\pi_1(M)/H \approx \zzzz$. The action of $\gamma_0$ on $H$  by conjugacy is conjugate to the action of $A$ or its inverse $A^{-1}$ on $\zzzz^2$. This action is not trivial, meaning that the action of $\gamma_0$ on the affine line $\cL$ is not an action by translation, but is of the form $x \mapsto \nu x + a$ with $\nu$ is a positive real number different from $1$. Actually, the relation $\gamma_0 h \gamma_0^{-1} = A(h)$ for every $h \in H$ implies that $\nu$ must be one of the eigenvalues $\lambda$ or $\mu$ of $A$ (here, we identify $H$ with $\zzzz^2$, and $A(h)$ denotes the action of the matrix on vectors with integer coefficients).

Here we have a pair $(\cF_1, \cF_2)$ of transverse foliations, each without compact leaves. For each of them, the linear part of the action of $\gamma_0$ on the leaf space $\cL_i$ is a multiplication by a factor $\nu_i$ which is equal to $\lambda$ or to $\mu.$

There are two cases to consider:
\begin{enumerate}
     \item Either $\nu_1 \neq \nu_2$,
    \item Or $\nu_1 = \nu_2$.
\end{enumerate}

Case $(1)$ is of course possible; it is realized by the pair $(\cF^s, \cF^u)$ of the weak foliations of $\Phi_A.$

Let us show that case $(2)$ can indeed be realized, and that the intersection foliation $\cF_1 \cap \cF_2$ is topologically conjugate to a strong stable foliation $\cF^{ss}$ or $\cF^{uu}$ of the suspension Anosov flow $\Phi_A$ (with constant return time). For that purpose, we need to be more precise than in subsection \ref{sub.defanosov}. (Note that this example is also explained in \cite[\S 6]{FP3} with other consequences in mind.)

Recall from \S~\ref{sub.product} that the leaf spaces $\cL^s$ and $\cL^u$ are both identified with $\rrrr$, and for every element $\gamma$ of $\pi_1(M(A))$ there is an integer $k(\gamma)$ and real numbers $a(\gamma)$, $b(\gamma)$ such that the action of $\gamma$ on $\cL^s$ is $x \mapsto \lambda^{k(\gamma)}x + a(\gamma)$ and the action on $\cL^u$ is given by $y \mapsto \mu^{k(\gamma)}y + b(\gamma)$. Then, $k$ defines a morphism from $\pi_1(M(A))$ to $\zzzz$, with kernel $H$, and the set of all pairs $(a(\gamma), b(\gamma))$ form a lattice in $\rrrr^2.$ Moreover, the integer $k(\gamma)$ is, up to the sign, the projection of $\gamma$ in $\pi_1(M(A))/H\approx \zzzz.$

\begin{proposition}\label{pro.discontinu}
    Let $\Omega$ the open domain in $\cL^s \times \cL^s \times \cL^u$ made of triples $(x,y,z)$ where $y \neq x.$ The action of $\pi_1(M(A))$ is free and properly discontinuous.
\end{proposition}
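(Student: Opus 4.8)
\textbf{Proof plan for Proposition \ref{pro.discontinu}.}

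The plan is to combine the affine description of the $\pi_1(M)$-actions on $\cL^s$ and $\cL^u$ recalled just before the statement with the discreteness of the lattice of translation parts. First I would note that freeness is essentially immediate: if $\gamma$ fixes a triple $(x,y,z)$ with $y \neq x$, then it fixes two distinct points $x$ and $y$ of the affine line $\cL^s$, on which $\gamma$ acts by $t \mapsto \lambda^{k(\gamma)} t + a(\gamma)$; a non-trivial affine map with two fixed points is the identity, so $\lambda^{k(\gamma)} = 1$ and $a(\gamma) = 0$, hence $k(\gamma) = 0$ and then $b(\gamma) = 0$ as well (since $(a(\gamma),b(\gamma))$ lies in the lattice and the action on $\cL^u$ is $t \mapsto \mu^{k(\gamma)}t + b(\gamma) = t + b(\gamma)$, which has a fixed point $z$ only if $b(\gamma) = 0$). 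Thus $\gamma$ acts trivially on both leaf spaces, and since $\cF^s$ (say) is taut with Hausdorff leaf space and $M$ is aspherical, $\gamma$ is trivial in $\pi_1(M)$ — or more directly, one invokes that the only element acting trivially on $\cL^s$ is the identity, which is standard for Anosov foliations.

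For proper discontinuity, I would argue by contradiction: suppose $K \subset \Omega$ is compact and there is an infinite sequence of distinct $\gamma_n \in \pi_1(M)$ with $\gamma_n K \cap K \neq \emptyset$. Pick $(x_n, y_n, z_n) \in K$ with $\gamma_n (x_n, y_n, z_n) \in K$. Since $K$ is compact, after passing to a subsequence we may assume $x_n, y_n, z_n$ and their images all converge, and in particular $|x_n - y_n|$ stays bounded below by some $\delta > 0$ (this is where $\Omega$ excludes the diagonal $y = x$: the separation $\delta$ is exactly what survives in the limit). Now the action of $\gamma_n$ on $\cL^s$ multiplies the distance $|x_n - y_n|$ by $\lambda^{k(\gamma_n)}$; since both $|x_n - y_n|$ and $|\gamma_n x_n - \gamma_n y_n|$ lie in a fixed compact subinterval of $(0,\infty)$, the integers $k(\gamma_n)$ are bounded, hence take only finitely many values, and after a further subsequence $k(\gamma_n) = k$ is constant. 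Once $k(\gamma_n) = k$ is fixed, the linear part of $\gamma_n$ on each of $\cL^s$, $\cL^u$ is fixed ($\lambda^k$ and $\mu^k$), so $\gamma_n$ is determined by its translation part $(a(\gamma_n), b(\gamma_n))$ together with $k$. Boundedness of $\gamma_n x_n = \lambda^k x_n + a(\gamma_n)$ (with $x_n$ bounded) forces $a(\gamma_n)$ bounded, and likewise $b(\gamma_n)$ bounded; but $(a(\gamma_n), b(\gamma_n))$ ranges in a discrete lattice of $\rrrr^2$, so only finitely many values occur, contradicting that the $\gamma_n$ are pairwise distinct. Hence the action is properly discontinuous.

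The main obstacle, and the only place any real care is needed, is the step that converts "$K$ avoids the diagonal $y=x$" into a uniform lower bound on $|x_n - y_n|$ that persists through the limit, and then leveraging this together with the upper bound coming from $K$ to pin down $k(\gamma_n)$. I should make sure to state explicitly that $K$ compact in $\Omega$ means both a positive distance to $\{y = x\}$ \emph{and} a bound on $|x - y|$ from above on $K$, so that after applying $\gamma_n$ one gets $\lambda^{k(\gamma_n)} \in [c, C]$ for fixed $0 < c \le C < \infty$; since $\lambda > 1$ this bounds $k(\gamma_n)$. Everything after that is the routine lattice-discreteness argument. It is worth remarking that the conclusion is exactly what is needed to view the total space of the relevant object as a manifold, paralleling the role of $\Omega_0$ for the orbit space of a single flow; but the quotient here is a $3$-manifold fibering the extra $\cL^s$-factor, and this is presumably what the subsequent discussion exploits.
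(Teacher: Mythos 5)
Your proposal is correct and follows essentially the same route as the paper: freeness via the fact that $x\neq y$ forces $k(\gamma)=0$ and then $a(\gamma)=b(\gamma)=0$, and proper discontinuity by using the dilation of the difference $y-x$ to pin down $k(\gamma_n)$ and then the discreteness of the lattice of translation parts $(a(\gamma),b(\gamma))$. The only difference is cosmetic — you phrase proper discontinuity with compact sets and a contradiction, while the paper shows directly that along convergent sequences the $\gamma_n$ are eventually constant — so no further comment is needed.
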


\begin{proof}
Assume that some element $\gamma$ of $\pi_1(M(A))$ preserves an element $(x, y, z)$ of $\Omega.$ Then we have $x =\lambda^{k(\gamma)}x + a(\gamma)$, $y = \lambda^{k(\gamma)}y + a(\gamma)$, and $z = \mu^{k(\gamma)}z + b(\gamma)$. Hence we have $x-y = \lambda^{k(\gamma)}(x-y),$ therefore $k(\gamma) = 0$ since $x \neq y.$ It follows that $a(\gamma) = b(\gamma) = 0:$ the element $\gamma$ is trivial. The action of $\pi_1(M(A))$ is free.

We consider now a sequence $(x_n, y_n, z_n)$ of elements of $\Omega$ and a sequence of elements of $\pi_1(M(A))$ such that $(x_n, y_n, z_n)$ converges to some element $(x, y, z)$ of $\Omega$ and such that $(x'_n, y'_n, z'_n) = \gamma_n(x_n, y_n, z_n)$ converges to an element $(x', y', z')$ of $\Omega$. Then, $y'_n - x'_n$ converges to $y' -x'$, meaning that $\lambda^{k(\gamma_n)}(y_n - x_n)$ admits a non-zero limit. Since $y_n - x_n$ converges to $y-x$ which is not $0$, it shows that the integers $k(\gamma_n)$ must have the same value $k$ for $n$ sufficiently big.

Now $\lambda^{k}x_n + a(\gamma_n)$ converges to $x'$: it implies that $a(\gamma_n)$ converges to $x'- \lambda^{k}x$. Similarly, one shows  that $b(\gamma_n)$ converges to $z'- \mu^{k}z$. Since the set of pairs $(a(\gamma), b(\gamma))$ is discrete, then $a(\gamma_n),  b(\gamma_n)$ are fixed for $n$ big.
Since the same  is true for $k(\gamma_n)$, it implies that all the $\gamma_n$ have the same value $\gamma$ for $n$ sufficiently big, and that $(x', y', z') = \gamma (x, y, z).$ It finishes the proof of the proposition.    
\end{proof}

Hence, the quotient of $\Omega$ by $\pi_1(M(A))$ is a $3$-manifold. Since we are assuming that $A$ has determinant $1$ and positive eigenvalues, this quotient has two connected components, one corresponding to $y > x$ and the other to $y < x,$ and that we denote by $M^+(A).$

Observe that $M^+(A)$ is a $K(\pi_1(M(A)), 1)$, as $M(A)$, and therefore compact. More precisely, 
$M^+(A)$ and $M(A)$ are homeomorphic since they are both irreducible closed $3$-manifolds with an incompressible surface (see \cite{Wald}). 
The fibration over the circle is actually quite easy to construct: it is the map induced by $(x, y, z) \mapsto \ln(|y-x|).$ Indeed, the level sets of this map are preserved by the abelian normal subgroup $H.$ Further details are left to the reader.

More importantly, every factor defines a codimension $1$ foliation on $M^+(A).$ In other words, the fibers of the map $(x,y,z) \mapsto x$ are the leaves of a foliation $\wcF_x$ in $\Omega$ which is preserved by $\pi_1(M(A))$, hence defines a foliation $\cF_x$ in $M(A).$ One defines similarly foliations $\cF_y$ and $\cF_z.$ These three foliations are transverse one to the other. The leaf spaces $\cL_x$ and $\cL_y$ of $\cF_x$ and $\cF_y$ are both canonically identified with $\cL^s$. It is not hard to check that $\cF_x, \cF_y$ are both isotopic (in the sense of \S~\ref{sub.homotopic}) to $\cF^s.$ 

In summary, $\cF_x$ and $\cF_y$ are  two foliations, each isotopic to $\cF^s$, and transverse one to the other. The intersection foliation $\cG_z = \cF_x \cap \cF_y$ is tangent to the fibers of the fibration over the circle defined above. More specifically they define in each of these tori a linear foliation without compact leaves - actually, 
$\cG_z$ is the strong stable foliation of the suspension Anosov flow. In particular $\cG_z$ has no closed leaves.

The orbit space of $\wcG_z$ is the region of $\rrrr^2$ defined by $y > x.$ Observe that the action of $\pi_1(M(A))$ preserves the hyperbolic metric $\frac{dx^2 + dy^2}{(y-x)^2}$. It follows that $\cG_z$ is transversely hyperbolic. It is the example 6 in the classification of Y. Carri\`ere of Riemannian flows in dimension $3.$ (section I. D of \cite{Car}). 

Last but not least, let $\ell$ be any direction in the plane $\cL^s \times \cL^s \approx \rrrr^2$. Then lines of direction $\ell$ are leaves of some $\pi_1(M(A))$-invariant foliation of $\rrrr^2$, and therefore define a foliation $\cF_\ell$ in $M(A) \approx M.$ 
We thus obtain a $1$-parameter family of foliations in $M(A)$, parameterized by the space $\rrrr\mathbb{P}^1$ of directions, all tangent to $\cG_z,$ and all transverse one to the other (in other words, a \textit{tissu feuillet\'e} in the terminology of \cite{Ghfeuillete}).

When $\ell$ is the direction $\ell_0$ of the diagonal $\Delta = \{ x=y \}$, the associated foliation $\cF_{\ell_0}$ is the one
whose leaves are the fibers of the fibration over the circle. For $\ell \neq \ell_0$, the intersection of the line with the diagonal $\Delta$ provides an equivariant identification between the leaf space of $\cF_\ell$ and $\cL^s.$ Hence $\cF_\ell$ is isotopic to $\cF^s.$

In other words, the foliation $\cF_{\ell_0}$, whose leaves are all tori, is continuously approximated by foliations isotopic to $\cF^s.$

As a final remark, we underline that the $1$-dimensional foliations $\cG_x = \cF_y \cap \cF_z$ and $\cG_y = \cF_x \cap \cF_z$ are both isotopic to the $1$-dimensional foliation defined by the Anosov flow.

And of course, if $\mu$ is positive, one similar construction can be made starting from $\cL^u \times \cL^u \times \cL^s$ instead of $\cL^s \times \cL^s \times \cL^u$, leading to a transverse intersection of $\cF^u$ with a foliation isotopic to it.
The intersection foliation is now isotopic to the {\em unstable} strong foliation of the Anosov flow. 
\medskip

We end this section with the following proposition, establishing that the examples above are unique up to topological equivalence.

\begin{proposition}\label{productproduct}
    Let $(\cF_1, \cF_2)$ be a pair of minimal transverse foliations on $M(A)$. Then, the intersection foliation $\cG = \cF_1 \cap \cF_2$ is orbitally equivalent to 
    one of the following $1$-dimensional foliations:
    \begin{enumerate}
        \item the Anosov foliation $\Phi_A$, 
        \item the strong stable foliation $\cF^{ss}_A$,
        \item or the strong unstable foliation $\cF^{uu}_A$
    \end{enumerate}
\end{proposition}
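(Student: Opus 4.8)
The idea is to extract the leaf-space data of $\cF_1,\cF_2$ from the classification of minimal codimension one foliations on $M(A)$, to use Theorem \ref{thm.one} to identify the developing map, and then to recognize $\cG$ as one of the three models. By Ghys--Sergiescu, with the regularity hypothesis removed by Matsumoto \cite[Theorem 3]{Mat2}, each $\cF_i$ equals, up to a homeomorphism of $M(A)$, one of $\cF^s$, $\cF^u$; equivalently $\cL_i\cong\rrrr$ and $\pi_1(M)$ acts on $\cL_i$ affinely by $\gamma\cdot x=\nu_i^{k(\gamma)}x+a_i(\gamma)$, where $k\colon\pi_1(M)\to\pi_1(M)/H\cong\zzzz$ is the projection, $\nu_i\in\{\lambda,\mu\}$ is the multiplier of a fixed generator $\gamma_0$ (chosen with $k(\gamma_0)=1$), and $a_i|_H$ is injective with dense, $\nu_i$-invariant image in $\rrrr$. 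Up to a homeomorphism of $M(A)$ (and, at the cost of exchanging the stable and unstable models, hence conclusions $(2)$ and $(3)$), we may assume $\cF_1=\cF^s$, so $\nu_1=\lambda$. Multiplication by a positive scalar commutes with $x\mapsto\nu_i x$; comparing the two $\nu_i$-invariant rank-two subgroups $a_i(H)$ then shows that the $\pi_1(M)$-action on $\cL_i$ is equivariantly conjugate to the action on $\cL^s$ if $\nu_i=\lambda$ and on $\cL^u$ if $\nu_i=\mu$. The same density argument shows that a $\pi_1(M)$-equivariant homeomorphism between two of these leaf spaces is unique if it exists, and that the only equivariant self-homeomorphism of $\cL^s$ (or $\cL^u$) is the identity. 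We then split into the cases $\nu_1\neq\nu_2$ and $\nu_1=\nu_2$.

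\emph{Case $\nu_1\neq\nu_2$ (conclusion $(1)$).} Now $\{\nu_1,\nu_2\}=\{\lambda,\mu\}$, and since the actions are orientation-preserving, $\gamma_0$ has a repelling fixed point on one of $\cL_1,\cL_2$ and an attracting one on the other; hence no $\pi_1(M)$-equivariant homeomorphism $\cL_1\to\cL_2$ exists. By Theorem \ref{thm.one}, every leaf of $\wcF_1$ meets every leaf of $\wcF_2$. As in Corollary \ref{cor.F1F2}, minimality together with Lemma \ref{le:lambdaL} forces every such intersection to be a single leaf of $\wcG$: the set of leaves of $\wcF_1$ meeting some leaf of $\wcF_2$ in a disconnected set is open and $\pi_1(M)$-invariant, hence empty or all of $\cL_1$, and the latter is impossible since then $\cL_\cG$ would not be Hausdorff although the full-intersection property makes $\mt\to\cL_\cG$ factor through the Hausdorff product $\cL_1\times\cL_2$. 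Therefore $\cD_\cG$ is a homeomorphism of $\cL_\cG$ onto $\cL_1\times\cL_2$, conjugating the $\pi_1(M)$-action on $\cL_\cG$ to the action on $\cL^s\times\cL^u$, which is exactly the orbit space of the lift of $\Phi_A$. One then upgrades this identification of orbit spaces to an orbital equivalence $\cG\simeq\Phi_A$ (see the last paragraph).

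\emph{Case $\nu_1=\nu_2$ (conclusions $(2)$/$(3)$).} Say $\nu_1=\nu_2=\lambda$. By Proposition \ref{pro.alphafini}, each of $\alpha,\beta\colon\cL_1\to\cL_2$ is constant $\pm\infty$ or a homeomorphism; by Lemma \ref{lem.transorient} it is $\pi_1(M)$-equivariant when finite, hence equals the unique equivariant homeomorphism of the first paragraph, so $\alpha$ and $\beta$ (which are distinct where both finite, as each leaf of $\wcF_1$ meets a nondegenerate interval of leaves of $\wcF_2$) cannot both be finite. If both were $\pm\infty$, i.e.\ every leaf of $\wcF_1$ met every leaf of $\wcF_2$, then as in the previous case $\cL_\cG\cong\cL_1\times\cL_2\cong\rrrr^2$; composing the developing map with the unique equivariant identification $\cL_1\cong\cL_2$ and with $(x,y)\mapsto y-x$ produces a topological submersion $\psi\colon\mt\to\rrrr$ constant on the leaves of $\wcG$, invariant under $H$, and satisfying $\psi\circ\gamma=\lambda^{k(\gamma)}\psi$. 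A short argument in the spirit of Lemma \ref{le:2transv}, using compactness of $M(A)$ and the classification of its incompressible tori (which in that situation forces $\cG=\cF^{ss}_A$, hence a half-plane rather than $\rrrr^2$ as orbit space), rules this out. Therefore exactly one of $\alpha,\beta$ is finite; choosing the transverse orientation of $\cF_2$ so that $\alpha$ is the equivariant homeomorphism and $\beta\equiv+\infty$, the map $\cD_\cG$ identifies $\cL_\cG$, equivariantly, with the half-plane lying above the graph of $\alpha$, which is precisely the orbit space of $\cF^{ss}_A$ described in \S\ref{s.prod}. Upgrading (see below) gives $\cG\simeq\cF^{ss}_A$, and $\cG\simeq\cF^{uu}_A$ if instead $\nu_1=\nu_2=\mu$.

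\emph{The main obstacle.} In all cases we have a $\pi_1(M)$-equivariant homeomorphism from the orbit space of $\wcG$ onto that of the relevant model one-dimensional foliation, and the crux is to promote it to an orbital equivalence on the closed manifold $M(A)$. Since all leaves of $\wcG$ and of the model are lines and the orbit spaces are contractible, the bundles $\mt\to\cL_\cG$ are trivial; one transports a trivialization $\pi_1(M)$-equivariantly, the compatibility being forced first over the discrete set of lifts of closed leaves (identified by the discreteness criterion of Lemma \ref{le:periodicdiscrete} and its product-case analogue) and then by density together with the product structure of $\wcG$ along the leaves of $\wcF_1$. This rigidity step, rather than the leaf-space bookkeeping above, is where the real work lies; the exclusion of the full-intersection configuration in the case $\nu_1=\nu_2$, and the connectedness of leaf intersections in the case $\nu_1\neq\nu_2$, are the other technical points, both handled through minimality.
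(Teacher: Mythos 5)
Your leaf-space bookkeeping (Matsumoto's affine structure on $\cL_i$, the eigenvalue dichotomy $\nu_1=\nu_2$ versus $\nu_1\neq\nu_2$, uniqueness of the equivariant identification) is essentially sound and parallels the paper's setup, but the proof has a genuine gap exactly at the point you yourself flag as ``the main obstacle'', and in the case $\nu_1=\nu_2$ your proposed fix cannot work. Passing from a $\pi_1(M)$-equivariant homeomorphism of orbit spaces to an orbital equivalence on the closed manifold is the whole content of the rigidity here: the paper emphasizes that $\cF^{ss}_A$ and $\cF^{uu}_A$ are \emph{not} structurally stable, so this step cannot be waved through. Your mechanism --- trivialize $\mt\to\cL_{\cG}$ and force compatibility ``first over the discrete set of lifts of closed leaves'' and then by density --- is vacuous in the $\nu_1=\nu_2$ case, because $\cF^{ss}_A$ (and, as one must show, $\cG$ itself in that case) has \emph{no} closed leaves whatsoever: every leaf closure is a torus fiber. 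The paper's proof avoids this entirely by a different device: when $\nu_1=\nu_2$ every direction in $\cL_1\times\cL_2\cong\rrrr^2$ is holonomy-invariant, giving a \emph{tissu feuillet\'e}; by Ghys this makes $\cG$ transversely Riemannian, and Carri\`ere's classification of Riemannian flows on $3$-manifolds then yields directly that $\cG$ is conjugate to $\cF^{ss}$ or $\cF^{uu}$. You have no working substitute for this step. Relatedly, your exclusion of the full-intersection subcase when $\nu_1=\nu_2$ is circular as written: you invoke ``the classification of incompressible tori (which in that situation forces $\cG=\cF^{ss}_A$, hence a half-plane \dots)'' --- i.e.\ you use the conclusion to rule out the configuration --- and the equivariance $\psi\circ\gamma=\lambda^{k(\gamma)}\psi$ does not let $\psi$ descend to $M$, so the compactness argument of Lemma \ref{le:2transv} does not apply verbatim.

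There is also a flaw in your case $\nu_1\neq\nu_2$: to get connectedness of intersections you argue that if \emph{every} leaf of $\wcF_1$ had a disconnected intersection then ``$\cL_{\cG}$ would not be Hausdorff although $\mt\to\cL_{\cG}$ factors through the Hausdorff product''. A non-Hausdorff space can perfectly well map continuously (non-injectively) to a Hausdorff one, so this is not a contradiction; Corollary \ref{cor.F1F2} needs the hypothesis that at least one leaf has connected intersections, which you have not supplied. The paper handles this case differently: it first shows that $H$ acts as a lattice of translations on $\cL_1\times\cL_2$ (using that the line invariant under the linear part of $\gamma_0$ would contradict $\lambda$-dilation on one factor and $\mu$-contraction on the other), identifies the action with that on $\cL^s\times\cL^u$, and then imports the argument of \cite[\S 3]{Ma-Ts} verbatim to get that $\cD$ is a trivial $\rrrr$-fibration, whence the orbital equivalence with $\Phi_A$. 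So while your decomposition into cases matches the paper, the two load-bearing steps --- connectedness/fibration in case (1) and the rigidity identifying $\cG$ with $\cF^{ss}_A$ or $\cF^{uu}_A$ in cases (2)--(3) --- are missing or rest on arguments that fail.
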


We recall again that here, the strong stable and strong unstable foliations denote the ones of the constant time suspension. While the Anosov foliation is structurally stable (i.e. if one considers $\Phi' \sim \Phi_A$ then the orbit foliation is orbit equivalent to the one of $\Phi_A$), this is not the case for $\cF^{ss}_A$ and $\cF^{uu}_A$ so these latter cases involve a stronger rigidity. Note in particular that neither $\cF^{ss}_A$ nor $\cF^{uu}_A$ have dense leaves (the closure of each leaf is a two dimensional tori) while $\Phi_A$ has many such leaves as well as periodic ones.

\begin{proof}
As we already observed, it follows from \cite{Mat2} that the leaf spaces $\cL_1$ and $\cL_2$ are both copies of the affine line $\rrrr$ on which $\pi_1(M(A))$ acts affinely. Moreover the abelian subgroup $H$ acts by translations, whereas other elements of $\pi_1(M(A))$ act by $x \mapsto \nu_i^{k(\gamma)}x + a(\gamma)$, where $k(\gamma)$ is not null.

Observe that $a(\gamma)$ is not necessarily the one describing the action on $\cL^s$ or $\cL^u,$ it could be twisted by some automorphism of $\pi_1(M(A)).$

The phase space $\cL_1 \times \cL_2$ is identified with $\rrrr^2$ so that $\pi_1(M(A))$ acts affinely and diagonally.  
It follows that $\cG$ is transversely affine. 
Observe that $H$ acts by translations, but it is not clear yet that it forms a lattice of translations.

There are two possibilities: the first one is the case where $\nu_1 = \nu_2$. In this case, every direction $\ell$ in the plane $\cL_1 \times \cL_2 \approx \rrrr^2$ is preserved by the holonomy, that therefore preserves the foliation of $\cL_1 \times \cL_2$ whose leaves are lines of direction $\ell.$ The pull-back of this foliation is a foliation $\cF_\ell$ in $M.$ These foliations form a \textit{tissu feuillet\'e}: according to \cite[page 149]{Ghfeuillete}, the flow $\cG$ is transversely Riemannian since $M$ is not homeomorphic to $\mathbb{S}^2 \times \mathbb{S}^1.$ In this case, according to the classification of Riemannian flows by Y. Carri\`ere (see \cite[page 132]{Ghfeuillete}, \cite{Car}),  
the foliation $\cG$ is topologically conjugate to either the strong stable foliation $\cF^{ss}$ or to the strong unstable foliation $\cF^{uu}$. 
We emphasize that these results do not use regularity 
of the foliation. These are cases $(2)$ and $(3)$ in the statement.

The other possibility is that $\nu_1 \neq \nu_2$. Let us say without loss of generality $\nu_1 = \lambda$ and $\nu_2 = \mu$. Since $\lambda\mu = 1$ it follows that the action of $\pi_1(M(A))$ preserves a volume form on $\cL_1 \times \cL_2.$ 

Moreover, in this case, the actions of $\cL_1$ and $\cL_2$ are not conjugate one to the other. Recall the maps $\alpha$ and $\beta$ introduced in Definition \ref{def.alphabeta}. If one of them were finite,
then with the transverse orientability hypothesis, Theorem \ref{thm.one} 
would imply that the two actions are conjugate. It follows that 
$\alpha = -\infty$, $\beta = +\infty$ always, that is,
every leaf of $\wcF_1$ intersects every leaf of $\wcF_2.$

Consider now the diagonal action of $H$ on $\cL_1 \times \cL_2:$ it is an action by translations. This action is free, and defines some abelian group of translations $T$ isomorphic to $\zzzz^2.$ 

Let $h_1$, $h_2$ be two generators of $T$. If they are colinear, then they are contained in a unique line $\Delta$ of $\rrrr^2.$ The line $\Delta$ in $\rrrr^2$ must be invariant by the linear part of the affine action of $\gamma_0$ (recall that $\gamma_0$ is an element of $\pi_1(M(A))$ projecting on a generator of $\pi_1(M(A))/H$). It is a contradiction since $\gamma_0$ is a dilatation of factor $\lambda$ on $\cL_1$ and a contraction of factor $\mu$ on $\cL_2$ (observe also that $\Delta$ cannot be vertical or horizontal since $H$ acts freely on $\cL_1$ and $\cL_2$:  if say $\Delta$
is $\{ x \} \times \rrrr$, then $H$ preserves $x$).

Therefore, $h_1$ and $h_2$ are not colinear: they form a basis of $\rrrr^2.$
It means that $H$ is a lattice in the space of translations on 
$\cL_1 \times \cL_2 \approx \rrrr^2$. It follows that the action of $\pi_1(M(A))$ on $\cL_1 \times \cL_2$ is topologically conjugate to its action on the orbit space $\cL^s \times \cL^u$ of the Anosov flow $\Phi_A$. 

We then can use the argument in \cite[\S 3]{Ma-Ts} verbatim, that leads to the conclusion that the map $\cD_{\cG}: \cL_{\cG} \to \cL_1 \times \cL_2 \approx \cL^s \times \cL^u$ is a trivial fibration with fiber $\rrrr,$ hence that $\cG$ is orbitally equivalent to the suspension Anosov flow $\Phi_A.$ This is case $(1)$ of our statement, whose proof is now complete.
\end{proof}

\section{Group invariant monotone graphs in the phase space}
\label{sec:invariant}

In this section we do preparatory  material for the
analysis of Theorem \ref{thm.three}. 
We consider a pair $(\cF_1, \cF_2)$ of transverse, $\rrrr$-covered, minimal codimension one foliations. We do not restrict to $M$ of dimension $3$
and we do not restrict to 
$\cF_i$ being Anosov foliations.

We will assume that all foliations are orientable and transversally orientable. This assumption will be lifted in \S~\ref{s.lastone}. 
Finally, we assume that there is  a homeomorphism $f: \cL_1 \to \cL_2$
conjugating the actions of $\pi_1(M)$ on $\cL_1$
and  $\cL_2$: for any $x \in \cL_1$ and $\gamma \in \pi_1(M)$,
then $f(\gamma(x)) = \gamma(f(x))$.

Such a conjugacy might be non-unique, and the aim of this section is to study the space $\cH_0$ of conjugacies:

$$\cH_0  = \{  f: \cL_1 \to \cL_2, \ \ \forall \gamma \in \pi_1(M), \forall x \in \cL_1, f(\gamma(x)) = \gamma(f(x))\}$$

We select on element $f_0$ in $\cH_0$. 
Put orders in $\cL_1, \cL_2$   so that $f_0$ is  an
increasing  homeomorphism.

Actually, sometimes we prefer to consider elements of $\cH_0$ through their graphs, that we call 
\textit{$\pi_1(M)$ invariant  monotone  graphs}.
Indeed, a homeomorphism $h$ is a topological conjugacy between the actions of $\pi_1(M)$ 
on $\cL_1$ and $\cL_2$ if and only if the graph of $h$ is invariant under the diagonal  action $\gamma((x,y))
= (\gamma(x), \gamma(y))$ on
$\cL_1 \times \cL_2$.

We denote by $\cC$ the set of $\pi_1(M)$-invariant monotone graphs:

$$\cC  = \{  \delta, \ \ {\rm which \ is \ a \ monotone \ graph \ 
and  \ } \pi_1(M)
\ {\rm  invariant} \}$$

Recall that we have selected orders on $\cL_1$ and $\cL_2$ so that $f_0$ is increasing. The first Lemma shows that it will be the case for any other conjugacy.

\begin{lemma}
    Let $h$ be an element of $\cH_0$. Then, $h$ is increasing. 
\end{lemma}

\begin{proof}
    Assume that $h$ is decreasing. Then, the graphs of $h$ and $f_0$ intersect. Moreover, the intersection is a single  point. 
    This point must be fixed by the diagonal action of $\pi_1(M)$, and this is impossible since the action of $\pi_1(M)$ on $\cL_1$ is minimal.
\end{proof}

\begin{lemma}\label{lemma52} Suppose that  two invariant monotone graphs $\delta, \theta$  in $\cC$
intersect. Then $\delta = \theta$.
\end{lemma}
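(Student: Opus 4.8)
The statement asserts that two $\pi_1(M)$-invariant monotone graphs in $\cL_1 \times \cL_2$ that intersect must coincide. Write $\delta$ as the graph of an increasing homeomorphism $f \colon \cL_1 \to \cL_2$ and $\theta$ as the graph of $g \colon \cL_1 \to \cL_2$, both $\pi_1(M)$-equivariant. The plan is to consider the set $A = \{ x \in \cL_1 : f(x) = g(x) \}$, which is exactly the set of $\cL_1$-coordinates of points of $\delta \cap \theta$. By hypothesis $A$ is nonempty. It is obviously closed in $\cL_1$ (difference of two continuous maps). It is also $\pi_1(M)_0$-invariant: if $\gamma$ preserves the orders on $\cL_1$ and $\cL_2$ (this is the index-at-most-$2$ subgroup $\pi_1(M)_0$ of Definition~\ref{def.orient}), then from $f(\gamma x) = \gamma f(x)$ and $g(\gamma x) = \gamma g(x)$ we get $f(x)=g(x) \iff f(\gamma x) = g(\gamma x)$, so $\gamma A = A$.

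The heart of the argument is to show that $A$ is also open, from which minimality of the $\pi_1(M)_0$-action on $\cL_1$ forces $A = \cL_1$, i.e.\ $f = g$ and $\delta = \theta$. To prove openness, fix $x_0 \in A$, so $y_0 := f(x_0) = g(x_0)$. Pick any point $x_1 < x_0$; I claim there is $x_2 \in A$ with $x_2 < x_0$ arbitrarily close to $x_0$ (and symmetrically on the other side), which already gives that $x_0$ is not isolated; but more is needed, so instead I would argue as follows. Consider $x$ slightly less than $x_0$. Since $f,g$ are increasing homeomorphisms fixing the condition at $x_0$, both $f(x)$ and $g(x)$ lie slightly below $y_0$. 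If $f(x) \neq g(x)$ for all $x$ in a punctured left-neighborhood of $x_0$, say $f(x) < g(x)$ there (the set where $f<g$ and the set where $f>g$ are open, and near $x_0$ one of them is dense, hence by connectedness of a small interval one inequality holds throughout the punctured interval). Now use equivariance and minimality: choose $\gamma \in \pi_1(M)_0$ with $\gamma x_0$ inside this small left-punctured interval $(x_1, x_0)$ — possible by minimality since the $\pi_1(M)_0$-orbit of $x_0$ is dense. Then $f(\gamma x_0) = \gamma f(x_0) = \gamma y_0 = \gamma g(x_0) = g(\gamma x_0)$, so $\gamma x_0 \in A$, contradicting that $f \neq g$ on the punctured interval. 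The same argument works on the right. Hence $A$ contains a two-sided neighborhood of $x_0$, so $A$ is open.

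Combining: $A$ is a nonempty, open, closed, $\pi_1(M)_0$-invariant subset of $\cL_1 \cong \rrrr$, and the $\pi_1(M)_0$-action on $\cL_1$ is minimal (as noted after Definition~\ref{def.orient}, since $\pi_1(M)_0$ has index at most $2$). A nonempty open invariant set under a minimal action has dense orbit of any of its points, but being also closed it must be all of $\cL_1$. Therefore $f = g$ and $\delta = \theta$.

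\textbf{Main obstacle.} The only delicate point is the openness step: one must be careful that near $x_0$ the two graphs cannot oscillate across each other infinitely often in a way that evades the equivariance trick. The clean way is exactly the one above — the sets $\{f < g\}$ and $\{f > g\}$ are open and disjoint, their union is $\cL_1 \setminus A$; if $x_0$ were a boundary point of, say, $\{f<g\}$ from the left, one finds a left-punctured interval $(x_1,x_0)$ contained in $\{f<g\}$ (shrink until it misses $A$ except possibly at $x_0$; since $A$ is closed, if $x_0$ is not interior to $A$ there are points of $\cL_1\setminus A$ accumulating at $x_0$, and near $x_0$ one of the two open sets $\{f<g\},\{f>g\}$ must contain a whole one-sided punctured interval because $\cL_1 \setminus A$ is open and locally the complement of a closed set near a non-interior point of it is, on at least one side, a punctured interval or has such). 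Then the minimality-plus-equivariance contradiction applies. I expect this topological bookkeeping to be the bulk of the write-up, while the rest is immediate.
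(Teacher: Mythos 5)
Your final conclusion is the right one, and your key computation — equivariance gives $f(\gamma x_0)=\gamma f(x_0)=\gamma g(x_0)=g(\gamma x_0)$, so the whole orbit of $x_0$ lies in $A=\{x: f(x)=g(x)\}$ — is exactly what is needed. However, as written your argument routes everything through openness of $A$, and the step you use to get openness is false: it is not true that near a non-interior point $x_0$ of the closed set $A$ one of the open sets $\{f<g\}$, $\{f>g\}$ must contain a one-sided punctured interval $(x_1,x_0)$. As a purely topological claim about increasing homeomorphisms (and you invoke nothing more than that at this point), it fails: take $g=\mathrm{id}$ and $f=\mathrm{id}+h$ where $h\geq 0$ is continuous, has Lipschitz constant $<1$, and vanishes exactly on a Cantor set $K$; then $f$ is an increasing homeomorphism, $A=K$ has empty interior, and at no point of $K$ does $\{f<g\}\cup\{f>g\}=\cL_1\setminus K$ contain a one-sided punctured interval. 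So the "bookkeeping" you identify as the main obstacle cannot be carried out as described, and openness of $A$ is not established.

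The good news is that openness is not needed at all, and the repair is already contained in your own text: the maps $f,g$ are $\pi_1(M)$-equivariant (the graphs in $\cC$ are $\pi_1(M)$-invariant by definition, so you need not even pass to $\pi_1(M)_0$; in this section the foliations are assumed transversely orientable), hence every point of the $\pi_1(M)$-orbit of $x_0$ lies in $A$; by minimality of $\cF_1$ this orbit is dense in $\cL_1$; and $A$ is closed since $f,g$ are continuous. Therefore $A=\cL_1$, i.e. $f=g$ and $\delta=\theta$. This density-plus-closedness argument is precisely the paper's (three-line) proof; your detour through connectedness and openness chases a difficulty that the direct argument avoids entirely.
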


\begin{proof} Let $f, g$ be the functions whose graphs are
$\delta, \theta$. Since $\delta, \theta$ intersect, there is $x$ in $\cL_1$
so that $f(x) = g(x)$. 
Since $f, g$ are group equivariant,
then

$$f(\gamma(x)) = \gamma(f(x)) = \gamma(g(x)) = g(\gamma(x)).$$ 

\noindent
The set of such $\gamma(x)$ is
dense in  $\cL_1$ since $\cF_1$  is minimal. Since $f, g$ are
continuous, it follows that  $f = g$.
\end{proof}

We now produce a group acting on $\cC$. 
We use our selected element $f_0$ of $\cH_0.$ 
Let $\cH$ be the set of homeomorphisms of $\cL_1$ onto itself of the 
form $(f_0)^{-1} \circ f: \cL_1 \to \cL_1$ where $f$ is any element of $\cH_0.$

\begin{lemma} \label{graphgroup}
The  set $\cH$ is a subgroup of the group of homeomorphisms of
$\cL_1$. It is an abelian group,
which acts freely on $\cL_1$.
\end{lemma}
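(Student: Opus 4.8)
The plan is to work entirely with the set $\cH_0$ of increasing homeomorphisms $f\colon \cL_1 \to \cL_2$ whose graphs lie in $\cC$, and transport the structure to $\cH$ via the fixed basepoint $f_0$. First I would show $\cH$ is closed under composition: given $f, g \in \cH_0$, the elements of $\cH$ they determine are $u = f_0^{-1}\circ f$ and $v = f_0^{-1}\circ g$. To see $u\circ v \in \cH$, note $f_0 \circ (u\circ v) = f\circ f_0^{-1}\circ g$, and I claim this is again an element of $\cH_0$; indeed it is an increasing homeomorphism $\cL_1\to\cL_2$, and it is $\pi_1(M)$-equivariant because each of $f$, $f_0^{-1}$, $g$ is equivariant (equivariance of $f_0^{-1}$ follows from that of $f_0$). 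Since every $\pi_1(M)$-equivariant increasing homeomorphism $\cL_1 \to \cL_2$ has graph in $\cC$, this shows $f\circ f_0^{-1}\circ g \in \cH_0$, hence $u\circ v \in \cH$. The identity of $\cH$ is the element coming from $f_0$, and inverses are handled the same way: if $u = f_0^{-1}\circ f$ then $f_0\circ u^{-1} = f \circ \big(f_0^{-1}\big)$ composed appropriately; more precisely $u^{-1} = f^{-1}\circ f_0$, and $f_0\circ u^{-1} = f_0\circ f^{-1}\circ f_0$, which is an equivariant increasing homeomorphism $\cL_1\to\cL_2$, hence lies in $\cH_0$. So $\cH$ is a group.

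Next I would prove that $\cH$ acts freely on $\cL_1$. Suppose $u \in \cH$ fixes a point $x \in \cL_1$, where $u = f_0^{-1}\circ f$ with $f\in\cH_0$. Then $f(x) = f_0(x)$. Both $f$ and $f_0$ are $\pi_1(M)$-equivariant, so for every $\gamma \in \pi_1(M)$ we get $f(\gamma x) = \gamma f(x) = \gamma f_0(x) = f_0(\gamma x)$. Since $\cF_1$ is minimal, the orbit $\pi_1(M)\cdot x$ is dense in $\cL_1$; since $f$ and $f_0$ are continuous and agree on a dense set, $f = f_0$, so $u = \mathrm{id}$. This is essentially the content of Lemma~\ref{lemma52} rephrased: two graphs in $\cC$ meeting at a point coincide.

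Finally, for commutativity: take $u, v \in \cH$ and fix any $x_0 \in \cL_1$; I want $u\circ v(x_0) = v\circ u(x_0)$, which combined with freeness of the action (applied to $u\circ v\circ u^{-1}\circ v^{-1}$, or directly to the comparison of two elements of $\cH$) will force $u\circ v = v \circ u$. The cleanest route: $\cH$ acts freely on $\cL_1$, and any group acting freely on a simply connected $1$-manifold — here $\cL_1 \cong \rrrr$ — by orientation-preserving homeomorphisms is abelian. Concretely, since all elements of $\cH_0$ are increasing, all elements of $\cH$ are orientation-preserving homeomorphisms of $\rrrr$; a standard Hölder-type argument (a group of fixed-point-free orientation-preserving homeomorphisms of $\rrrr$ is conjugate to a group of translations, in particular abelian) gives commutativity. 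Alternatively one argues directly: if $u(x_0) < v(x_0)$ say, one compares $u\circ v$ and $v\circ u$ at $x_0$ using monotonicity and the fact that $u\circ v\circ u^{-1}\circ v^{-1}$ is an element of $\cH$ with a fixed point, hence trivial.

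\textbf{Main obstacle.} The only genuinely non-formal point is commutativity. Everything about group structure and freeness is a direct consequence of equivariance plus minimality (via Lemma~\ref{lemma52}); the abelian property instead requires knowing that a group of fixed-point-free orientation-preserving homeomorphisms of the line is abelian (Hölder's theorem), or an equivalent hands-on argument exploiting that two graphs in $\cC$ cannot cross. I would present the Hölder-theorem route as the main step, since it is the one piece not already packaged in the preceding lemmas.
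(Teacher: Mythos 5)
Your proof is correct and follows essentially the same route as the paper: closure, identity and inverses via equivariance of compositions such as $f\circ f_0^{-1}\circ g$ and $f_0\circ f^{-1}\circ f_0$, freeness via the fact that two distinct invariant graphs cannot meet (Lemma~\ref{lemma52}, i.e. equivariance plus minimality), and commutativity via H\"older's theorem for a group acting freely on $\cL_1\cong\rrrr$. No gaps to flag.
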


\begin{proof}
Obviously, $\cH$ contains the identity map $(f_0)^{-1} \circ f_0$,
as $f_0$ is in $\cH_0$.
Consider compositions: let $(f_0)^{-1} \circ f$ and
$(f_0)^{-1} \circ g$ be two elements of $\cH$, where $f, g$ are in $\cH_0$.
The composition is $(f_0)^{-1} \circ f \circ (f_0)^{-1} \circ g$.
So one property to prove is that $f \circ (f_0)^{-1} \circ g$
is in $\cH_0$. Notice that $f_0, g, f$ are $\pi_1(M)$
equivariant. Given  $y$ in $\cL_2$ let $x = (f_0)^{-1}(y)$.
Then 

$$f_0(\gamma x) = \gamma (f_0(x)), \  {\rm which \ implies}  
\ \ f_0^{-1}(\gamma (y)) = \gamma(f_0^{-1}(y)).$$ 

\noindent
Obviously the  same holds for $f$ and $g$ in the place of $f^{-1}_0$.
Now compute with $f \circ (f_0^{-1})  \circ g$ applied to
$\gamma(x)$. First $g(\gamma x) = \gamma(g(x))$.
Then

$$(f_0)^{-1}(\gamma (g(x))) \ = \ 
\gamma(f_0^{-1} \circ g(x)).$$

\noindent
Finally 
$$f(\gamma(f_0^{-1} \circ g)(x) \ = \ \gamma(f \circ f_0^{-1} \circ
 g)(x),$$

\noindent
so $f \circ f_0^{-1} \circ g$ is also in $\cH_0$
$-$ notice that $f \circ f^{-1}_0 \circ g$ is an
increasing homeomorphism from $\cL_1$   to $\cL_2$.
It follows that $\cH$ is closed under composition.
As for inverses, if $(f_0)^{-1} \circ  f$ is in $\cH$, then
its inverse is $f^{-1} \circ f_0$, and this is
$(f_0)^{-1} \circ h$, where $h = f_0 \circ f^{-1} \circ f_0$.
As was done above one shows that $h$ is in $\cH_0$, so
$\cH$ is also closed under inverses.
This shows that $\cH$ is a group of homeomorphisms of $\cL_1$.

Given any $x$ in $\cL_1$ and $h$ in $\cH$, then  $h = (f_0)^{-1}  \circ f$
for  some $f$ in $\cH_0$. If $f, f_0$ are distinct, then the
graphs of $f, f_0$ are disjoint. In that  case it follows that
$h(x) \not = x$. In other words, $\cH$ acts freely on $\cL_1$
which is homeomorphic to $\rrrr$. It follows from H\"older's Theorem (\cite{hector}) that  $\cH$ is
abelian.
\end{proof}

H\"older's Theorem actually gives a more complete answer. A group of homeomorphisms acting freely on the real line is not only abelian, but semi conjugated 
to an action by translations. This allows us to prove the following
stronger result:

\begin{proposition}\label{pro:notdiscrete}
The set of  $\pi_1(M)$
invariant monotone graphs forms a discrete set of curves in $\cL_1 \times \cL_2$.
\end{proposition}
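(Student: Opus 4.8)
The plan is to use H\"older's Theorem in the refined form just quoted: since $\cH$ acts freely on $\cL_1 \approx \rrrr$, it is not only abelian but semiconjugate to a group of translations; that is, there is a monotone (non-strictly increasing, possibly non-injective) surjective map $\varphi \colon \cL_1 \to \rrrr$ and a morphism $t \colon \cH \to \rrrr$ such that $\varphi(h(x)) = \varphi(x) + t(h)$ for all $h \in \cH$, $x \in \cL_1$. The key point to exploit is that the semiconjugacy $\varphi$, once composed with $\mu_1 \colon \mt \to \cL_1$, produces a $\pi_1(M)$-related object on $\mt$, and compactness of $M$ will force discreteness of $t(\cH)$.

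First I would record the basic structure: by Lemma \ref{lemma52} distinct curves in $\cC$ are disjoint, and by Lemma \ref{graphgroup} the group $\cH$ acts freely on $\cL_1$, with the correspondence $\delta \leftrightarrow f \leftrightarrow (f_0)^{-1}\circ f$ identifying $\cC$ with $\cH$ as a set. So it suffices to show the orbit $\cH(x_0) \subset \cL_1$ is discrete for one (equivalently every, by minimality and equivariance) point $x_0$. Next I would invoke the H\"older semiconjugacy $\varphi$ and the translation morphism $t \colon \cH \to \rrrr$; since $\cH$ acts freely, $t$ is injective, so it is enough to show $t(\cH)$ is a discrete subgroup of $\rrrr$. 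Suppose not: then $t(\cH)$ is dense, and I would derive a contradiction with compactness of $M$. The mechanism: each $h = (f_0)^{-1}\circ f \in \cH$ corresponds to the graph $\delta_f$ of $f \colon \cL_1 \to \cL_2$, and the preimage $\cD^{-1}(\delta_f) \subset \mt$ (under the developing map $\cD = (\mu_1,\mu_2)$) is a $\pi_1(M)$-invariant family; but more usefully, I would pull back the monotone function $\varphi$ to a $\pi_1(M)$-related submersion-like map on $\mt$.

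The cleanest route to the contradiction, which I would carry out, is this. Fix $p \in \mt$ and look at the points $h(\mu_1(p))$ for $h \in \cH$; equivalently, look at how the curves $\delta_f$ stack up over a fixed vertical line $\{\mu_1(p)\}\times\cL_2$ in the phase space. By the argument already used to build $\eta$, each $\delta_f$ meets this vertical line in exactly one point, at height $f(\mu_1(p)) \in \cL_2$, and the values $f(\mu_1(p))$ as $f$ ranges over $\cH_0$ are exactly the $\cH$-orbit (after the identification by $(f_0)^{-1}$). Now I would use the \emph{Reebless / $\rrrr$-covered} geometry together with compactness: the leaves $\wcF_2(q)$ for $q$ on the segment of the $\cF_1$-leaf $L = \mu_1^{-1}(\mu_1(p))$ realize these heights, and if the orbit accumulates then one finds leaves $L' \in \wcF_1$, $E, E' \in \wcF_2$ with $E \ne E'$ both within a fundamental-domain-bounded distance of $L$, forcing (after projecting to $M$ and using minimality) two distinct curves of $\cC$ to be arbitrarily $C^0$-close; but two distinct elements of $\cC$ have disjoint graphs, and by equivariance and minimality the "gap" between consecutive graphs over a compact interval of $\cL_1$ descends to a uniform positive lower bound on $M$ — contradicting accumulation. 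Concretely, I would phrase the uniform-gap statement as: the function $x \mapsto \inf\{\,|f(x) - g(x)| : f \ne g \text{ in } \cH_0\,\}$ is $\pi_1(M)$-equivariant, hence (being the pullback of a function on the compact quotient once one checks continuity via the $C^0$ structure of the bifoliation) bounded below by a positive constant, so $t(\cH)$ is discrete, hence cyclic, hence $\cC$ is a discrete set of curves.

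\textbf{Main obstacle.} The hard part will be the last step — turning "distinct graphs in $\cC$ are disjoint" into a \emph{uniform} separation that rules out accumulation. Disjointness of graphs is purely set-theoretic and says nothing metric a priori; one must use compactness of $M$ and the precise $C^{1,0}$ transversality of the bifoliation (so that the developing map $\cD$ is a genuine open map with locally uniform behavior) to convert the $\pi_1(M)$-equivariance of the "consecutive gap" function into a positive lower bound. Equivalently, one must rule out a sequence $h_n \in \cH$ with $t(h_n) \to 0$, $h_n \ne \mathrm{id}$; the danger is a "shrinking Reeb-type" configuration near infinity in $\cL_1$, and the equivariance under the minimal $\pi_1(M)$-action is exactly what transports any such configuration into a compact part of $M$ where it cannot persist. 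I expect the actual write-up to go through the H\"older semiconjugacy $\varphi$ and argue discreteness of $t(\cH) \subset \rrrr$ directly from a compactness argument on $M$, which is the technically delicate point.
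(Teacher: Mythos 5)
Your overall strategy (H\"older semiconjugacy, reduce to discreteness of the translation-number group $t(\cH)\subset\rrrr$, contradiction with compactness of $M$ if it is dense) starts out parallel to the paper, but the step you yourself flag as the hard one is where the argument genuinely breaks, and it is not a technicality. Your proposed uniform-gap function $x \mapsto \inf\{\,|f(x)-g(x)| : f\neq g \in \cH_0\,\}$ requires a parametrization (or metric) on $\cL_2$, and it is \emph{not} $\pi_1(M)$-invariant in any such parametrization: a deck transformation $\gamma$ sends the value $|f(x)-g(x)|$ to $|\gamma(f(x))-\gamma(g(x))|$, and $\gamma$ acts on $\cL_2$ by a homeomorphism that is in no way an isometry (for Anosov-type foliations it has attracting/repelling fixed leaves, so it distorts gaps arbitrarily). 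Equivariance of the individual maps $f$ does not make this infimum descend to a function on the compact quotient, so the compactness conclusion does not follow; "equivariant'' has been conflated with "invariant''. A second, independent problem is your geometric mechanism: you want the heights $f(\mu_1(p))$ to be "realized'' by leaves $\wcF_2(q)$ for $q$ on the $\cF_1$-leaf $L$, i.e.\ you implicitly assume the invariant graphs meet the image of $\cD$. That can fail: in the motivating skew Anosov situation the graphs of $\alpha$ and $\beta$ bound the image of $\cD$ and are not attained, and indeed the whole later dichotomy of the paper (\S\ref{sec.nointersect} versus \S\ref{sec.intersect}) rests on the fact that intersection with an invariant graph is an extra hypothesis, not automatic. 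So neither half of your accumulation-versus-gap contradiction is currently justified.

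The paper's proof circumvents exactly this obstruction by first upgrading the structure before any invariant function is written down: assuming non-discreteness, it uses the unique minimal set of the $\cH$-action together with minimality of the $\pi_1(M)$-action (the two actions commute) to promote the H\"older semiconjugacy to a genuine conjugacy; then $\cH$ becomes a \emph{dense} group of translations, and since every $\gamma\in\pi_1(M)$ commutes with it, every $\gamma$ acts as a translation, so $\cF_1$ (and symmetrically $\cF_2$) is a translation foliation. Only now is there a parametrization in which the $\pi_1(M)$-action is by translations, i.e.\ by isometries, so the difference function $\xi(p)=h(f^{-1}(y))-h(x)$, with $(x,y)=\cD(p)$, is honestly $\pi_1(M)$-invariant and descends to the compact manifold $M$, hence is bounded; and Proposition \ref{pro;trans+foli} (both foliations being translation foliations) guarantees that $\cD$ is onto the whole plane, which makes $\xi$ unbounded — the contradiction. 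If you want to salvage your write-up, you should replace the uniform-gap step by this two-stage argument: first deduce the translation structure on both leaf spaces from density of $t(\cH)$ and commutation, and only then invoke compactness through an invariant function, using the surjectivity of $\cD$ supplied by Proposition \ref{pro;trans+foli} rather than an assumed realization of the graphs by leaf intersections.
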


\begin{proof}
Assume that the collection of $\pi_1(M)$
invariant monotone graphs forms a non-discrete set of curves in $\cL_1 \times \cL_2$.
According to H\"older's Theorem, the action of $\cH$ on $\cL_1$ is semi conjugated to an action by translations. 

This action of $\cH$ on $\cL_1$ has a unique minimal set in $\cL_1$
(see \cite{hector}), which we denote by $\cE$.
We show that $\cE$ 
 is preserved by the action of $\pi_1(M)$.
Let $\gamma$ be an element of $\pi_1(M)$. 
Then $\gamma(\cE)$ is certainly closed. 
Given $f_0^{-1} \circ f$ and $z$ in $\cE$,
then 

$$f_0^{-1} \circ f(\gamma z) \ = \ f_0^{-1}(\gamma(f(z))) \ = \ 
\gamma(f_0^{-1} \circ f(z)).$$

\noindent
In particular this shows that the actions of $\pi_1(M)$
and $\cH$ on  $\cL_1$ commute.
Now $f_0^{-1} \circ f(z)$ is in  $\cE$ (as $\cE$ is $\cH$ invariant),
so $\gamma(f_0^{-1} \circ f(z))$ is in $\gamma(\cE)$. This implies that $\gamma(\cE)$ is
$\cH$ invariant as we wanted to prove.
If $\gamma(\cE)$ is not minimal, then applying $\gamma^{-1}$ 
and the argument above implies that $\cE$ is not minimal, contradiction.
We conclude that by unique minimal set we have that $\gamma(\cE) = \cE$.

 Since the $\pi_1(M)$ action on $\cL_1$ is minimal, the minimal set $\cE$ is the entire $\cL_1$ and
the semiconjugacy is actually a conjugacy. 

After conjugacy we can  assume that $\cH$ is an action by
translations. In addition since  the set of invariant monotone graphs
is not discrete,  then the set of translations is not discrete,
and hence it is a dense set of translations.
For any $\gamma$ in $\pi_1(M)$, then $\gamma$  commutes
with every element of $\cH$ and $\cH$ is a dense set of
translations in the reals $-$ which immediately implies that
$\gamma$ acts as a translation. This is true for any $\gamma$, implying that
$\cF_1$ is a translation foliation (as defined in Definition \ref{def:translation}).
Switching the roles roles of $\cF_1, \cF_2$ this also implies
that $\cF_2$ is a translation foliation.
Since both  $\cF_1$ and $\cF_2$ are translation  foliations,
Proposition \ref{pro;trans+foli} implies that the image
of $\cD$ is the  whole phase space $\cL_1 \times \cL_2$.

The rest of the proof uses ideas in  the proof of Lemma \ref{le:2transv}.
We  have a parametrization of $\cL_1$ as the reals so that
$\pi_1(M)$ acts as a translation. 
We express 
this parametrization by a homeomorphism
$h: \cL_1 \to \rrrr$.  Now we choose a parametrization 
of $\cL_2$ as follows:
fix  a $\pi_1(M)$ invariant  monotone graph $\zeta$ with
corresponding homeomorphism  $f: \cL_1 \to \cL_2$.
Notice that $f$ commutes with every deck translation $\gamma$.
Now define  the parametrization 
of $\cL_2$ as $h \circ f^{-1}$.

This gives a parametrization of the phase space as $\rrrr^2$,
given by $\lambda((x,y))  =  (h(x),  h(f^{-1}(y))$.
Now   use the fact that for any $\gamma$  in $\pi_1(M)$,
then $\gamma \circ f = f \circ \gamma$.
This implies that if $\gamma$ acts on $\cL_1$ as
a translation by $r$ (in the $\rrrr$ parametrization above),
then $\gamma$ also acts on  $\cL_2$ as a translation by $r$.

Consider now the map $\xi$ from $\mt$  to the reals given by:
if the coordinates of $\cD(p)$ are $(a,b)$ (in the $\rrrr^2$
parametrization above) let $\xi(p) = b - a$.
This  map  is continuous. Also if  $\gamma$ is in $\pi_1(M)$
and $p$ in $\mt$, with $\cD(p) = (x,y)$, then
$\xi(p) = h(f^{-1}(y)) - h(x)$.
Then $\gamma(p) = (\gamma(x), \gamma(y))$.
Also  $h(\gamma(x)) = h(x) + r$, and likewise 
$h(\gamma(y)) =  h(f^{-1}(y)) + r$, as we explained above.
Hence $\xi(\gamma(p)) = \xi(p)$.

Hence $\xi$ is  a $\pi_1(M)$ invariant map from $\mt$ to
$\rrrr$, which then induces a continuous map  from $M$ to
$\rrrr$. This implies that $\xi$ is bounded
and this is a contradiction as  follows:
fix a $x$ in  $\wcF_1$ and vary $y$ in $\wcF_2$ going to
infinity in either direction. This shows  that  $\xi$
is not bounded, contradiction.
We are using here that image  $\cD$ is  all  of $\cL_1 \times \cL_2$
to be able to fix $x$ and vary $y$   arbitrarily.
This contradiction proves  the lemma.
\end{proof}

\section{Group invariant collections of monotone graphs}\label{sec-groupinv}

In this section, as in the previous section, we assume that $\cF_1$ and $\cF_2$ are two minimal $\rrrr$-covered foliations, 
such that the actions of $\pi_1(M)$ on the leaf spaces $\cL_1$, $\cL_2$ are conjugated. But in this section we also assume the following:
\ 1) $M$ is a (closed) 3-manifold, \ 2)
Both $\cF_1$ and $\cF_2$ are Anosov foliations, and correspond to skew $\rrrr$-covered Anosov flows (equivalently, we assume that $\pi_1(M)$ is not solvable),
\ 3) $\cF_1$ and $\cF_2$ are and transversally orientable,
and $\cG$ is orientable (this assumption will be discussed in  \S~\ref{s.lastone}). 

According to Proposition \ref{pro:notdiscrete}, the union 
of $\pi_1(M)$-invariant monotone graphs is a closed subset of $\cL_1 \times \cL_2$, that we still slightly abusively denote by $\cC$,  satisfying the following:
\begin{enumerate}
\item $\cC$ is discrete: any compact set in the phase space
intersects at most finitely many elements in $\cC$, 
\item  $\cG$ is invariant under the diagonal action of $\pi_1(M)$.
\end{enumerate}

By $\pi_1(M)$ invariant we mean that for every $\gamma$ in
$\pi_1(M)$ and $\eta$ in $\cC$,  then
$\gamma(\eta)$ is in $\cC$, even though it may be a different
element of $\cC$.
In particular the conditions imply that $\cC$ is at most 
a countable union of monotone graphs.

\begin{proposition} \label{prop.closed}
Let $V$ be the projection to $M$ of a connected component
of $\cD^{-1}(\cC)$. Then $V$ is a closed embedded surface tangent to the foliation $\cG$ (in particular, it is a torus by our orientability assumptions), and contains
a closed  leaf of $\cG$. 
\end{proposition}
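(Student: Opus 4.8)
The plan is to analyze the set $\cD^{-1}(\cC)$ in $\mt$, show that each connected component is a properly embedded surface saturated by $\wcG$, that it is $\gamma$-invariant for a nontrivial subgroup $\Gamma_\eta \subset \pi_1(M)$ (the stabilizer of the relevant graph $\eta$), and that the quotient is a closed surface — necessarily a torus under the orientability hypotheses. First I would fix a connected component $W$ of $\cD^{-1}(\cC)$; since $\cC$ is a disjoint union of monotone graphs and $\cD$ is continuous, $W$ maps into a single graph $\eta \in \cC$. Because $\eta$ is a monotone graph, i.e. the graph of an increasing homeomorphism $f:\cL_1\to\cL_2$, any point $p\in W$ satisfies $\mu_2(p)=f(\mu_1(p))$, and it follows immediately from the definitions that $\wcG(p)\subset W$ (the intersection leaf through $p$ lies in a single leaf of each $\wcF_i$, hence maps to a single point of $\eta$). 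So $W$ is $\wcG$-saturated. To see $W$ is an embedded surface I would use that $\cD$ restricted to a leaf $L$ of $\wcF_1$ parametrizes the leaves of $\wcF_2$ meeting $L$: the preimage in $L$ of the point $\eta\cap(\{L\}\times\cL_2)$ is either empty or a leaf (or union of leaves) of $\wcG_L$; using minimality and the earlier results of \S\ref{ss.developing} (Corollary \ref{cor.F1F2} type arguments, or the direct structure of $\cD$) one gets that locally $W$ looks like the product of the intersection leaf with a transverse arc in the "graph direction", hence is a $C^1$ (in the leaf direction) embedded surface.

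Next I would identify the stabilizer. Let $\Gamma_\eta=\{\gamma\in\pi_1(M):\gamma(\eta)=\eta\}$. Since $\cC$ is discrete and $\pi_1(M)$-invariant, the orbit of $\eta$ is discrete, so $\Gamma_\eta$ has finite index in the stabilizer of the full orbit's "slot", and more importantly $\Gamma_\eta$ acts on $W$ and $V=W/\Gamma_\eta$ is the projection of $W$ to $M$. The key claim is that $\Gamma_\eta$ is nontrivial and that $V$ is compact. For nontriviality: the graph $\eta$ is a copy of $\rrrr$ on which $\pi_1(M)$ acts via its (conjugate) actions on $\cL_1$ and $\cL_2$; by the skew $\rrrr$-covered Anosov structure, elements with fixed points on $\cL^s$ (hence on $\cL_1\cong\cL^s$) exist in abundance (Lemma \ref{le.fixedpoints}), and such a $\gamma$ fixing a point of $\cL_1$ with $f(\text{that point})$ also fixed — which holds because $f$ is $\pi_1(M)$-equivariant along $\Gamma_\eta$-type elements — gives an element preserving $\eta$. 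More carefully: pick $\gamma$ with a fixed point $x_0\in\cL_1$; since $\eta$ is a graph of an equivariant-up-to-the-relevant-automorphism map, $\gamma$ either fixes $\eta$ or moves it; using discreteness of $\cC$ and that $\gamma$ has fixed points forcing recurrence, one concludes $\gamma(\eta)=\eta$. Then $W$ is invariant under this infinite cyclic (at least) group.

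For compactness of $V$: here is where I expect the main obstacle. One must show $W/\Gamma_\eta$ is compact, equivalently that $W$ is "cocompact" under $\Gamma_\eta$. The idea is that $W$ contains the lift of a periodic orbit: by Lemma \ref{le:periodicdiscrete}, a point of the orbit space whose $\pi_1(M)$-orbit is discrete is the lift of a periodic orbit; a fixed point of $\gamma\in\Gamma_\eta$ on $\eta$ (which exists since $\gamma$ acts on $\eta\cong\rrrr$ with fixed points, by the structure from Lemma \ref{le.fixedpoints} transported through the conjugacy) corresponds to a $\wcG$-leaf in $W$ whose $\pi_1(M)$-orbit is discrete — discreteness coming from the discreteness of $\cC$ plus the dynamics of $\gamma$ near its fixed point. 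This produces a closed leaf of $\cG$ inside $V$. Finally, a closed surface saturated by a one-dimensional foliation with a closed leaf, inside a closed $3$-manifold, and which is $\pi_1$-injective (it is a quotient of the plane $W$, or one argues via the leaf space being $\rrrr$), must be a torus or Klein bottle; the orientability assumptions on $\cF_1,\cF_2,\cG$ rule out the Klein bottle. The compactness itself I would get by showing $W$ is a plane (simply connected, from being a union of lines $\wcG$ over an interval of the graph) on which $\Gamma_\eta\cong\zzzz^2$ (or $\zzzz$, then a further argument) acts properly discontinuously and cocompactly — the cocompactness being the delicate point, handled by comparing the $\Gamma_\eta$-action on $W$ with its action on the $\wcG$-leaf space along $W$, which is a properly embedded copy of $\rrrr$, together with the slithering "Property" of \S\ref{uniform} that leaves $L$ and $\tau(L)$ stay a definite distance apart, giving the needed discreteness/cocompactness.
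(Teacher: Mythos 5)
Your plan has two genuine gaps, and they sit exactly at the points you flag as delicate. First, compactness of $V$: you try to obtain it by exhibiting a cocompact action of the stabilizer $\Gamma_\eta$ (claimed to be $\zzzz^2$, or $\zzzz$ plus ``a further argument'') on the component $W$, but nothing in your sketch establishes that $\Gamma_\eta$ has rank two, nor that its action on $W$ is cocompact; in the paper's argument the $\zzzz^2$ appears only \emph{after} compactness, as $\pi_1(V)$ of the already-compact incompressible torus, so your route is essentially circular at the key step. The actual proof of compactness is much more elementary and does not mention stabilizers at all: one shows that every point of $\cD^{-1}(\cC)$ has a neighborhood meeting only its own component (using a small transversal disk to $\wcG$ whose $\cD$-image is a rectangle meeting a single graph of $\cC$), so $\cD^{-1}(\cC)$ is a discrete, $\pi_1(M)$-invariant union of $\wcG$-saturated surfaces; its projection is therefore a closed subset of the compact manifold $M$ which is locally a single surface, hence a finite union of closed surfaces, each a torus by orientability. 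Relatedly, your parenthetical claim that $W$ is a plane is false in general: the paper explicitly notes that components of $\cD^{-1}(\cC)$ need not be planes, and that this already happens in the example of \cite{Ma-Ts}.

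Second, the closed leaf. You assert that elements of $\Gamma_\eta$ have fixed points on $\eta$ ``by Lemma \ref{le.fixedpoints} transported through the conjugacy,'' but that lemma only describes the fixed-point structure of those elements that \emph{already} have a fixed point; a generic element of $\pi_1(M)$, or of the stabilizer of $\widetilde V$, acts freely on $\cL_1$, so this step is unjustified. The paper gets the fixed point from two inputs you omit: (i) $\cD(\widetilde V)$ surjects onto the graph $\eta$ --- otherwise an endpoint of the image interval in $\cL_1$ would be fixed by all of $\pi_1(V)\cong\zzzz^2$, giving a closed leaf of the minimal foliation $\cF_1$, a contradiction; and (ii) the fact (from \cite{Fe1}) that $\zzzz^2$ cannot act freely on the leaf space of a skew $\rrrr$-covered Anosov foliation, which produces a nontrivial $\gamma\in\pi_1(V)$ fixing some $x\in\cL_1$, hence fixing the point $(x,y)\in\eta$ and the corresponding $\wcG$-leaf in $\widetilde V$. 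Here $\pi_1(V)\cong\zzzz^2$ is obtained from incompressibility of $V$, which in turn uses a closed transversal to $\cG$ in $V$ (after disposing of the Reeb-annulus case, where a closed leaf exists trivially); your proposal never secures surjectivity onto $\eta$ nor the non-free $\zzzz^2$ action, so the existence of the closed leaf is not proved. You also conflate the stabilizer of the graph $\eta$ with the stabilizer of the component $\widetilde V$; these may differ, and it is the latter that matters.
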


\begin{proof}
Let $\widetilde V$ be a connected component of $\cD^{-1}(\cC)$ and 
let $p \in \widetilde V$.
Notice first that $\widetilde V$ is $\wcG$ saturated.
Let $V = \pi(\widetilde V)$ and let 
$\eta$ be the element of $\cC$ containing $\cD(p)$.
Since $\cD$ is a submersion we can and choose
a compact transversal disk $D$ to $\wcG$,
having $p$ in its interior, so that $D$ projects homeomorphically
into its image by $\cD$, and the projection is a rectangle
in $\cL_1 \times \cL_2$ with two corners points in $\eta$,
having $\cD(p)$ in its interior.

We claim that there is neighborhood of $p$ so that
the only component of $\cD^{-1}(\cC)$ intersecting it is $\widetilde V$.
Choose $D$ small enough so that $\cD(D)$ only intersects the
element $\eta$ of $\cC$.
The image $\cD(D)$ contains a segment in $\eta$ with $\cD(p)$
in the interior, therefore $\widetilde V$ contains an
interval of $\wcG$ leaves with the leaf through $p$ in the interior.
Let now $p_n$ a sequence in $\cD^{-1}(\cC)$ converging to $p$.
For $n$ big then $\wcG(p_n)$ $-$ the leaf of $\wcG$ through $p_n$ $-$ 
intersects $D$. The previous paragraph shows that these leaves
of $\wcG$ are contained in $\widetilde V$. 
This proves the claim.


The above also shows that  $\cD^{-1}(\cC)$ is a discrete union of surfaces.
In addition it is 
$\pi_1(M)$ invariant, because $\cC$ is.
It follows that $\pi(\cD^{-1}(\cC))$ is a union of compact surfaces in $M$,
each component of which is foliated by $\cG$.
We stress that a priori some of the surfaces in $\cD^{-1}(\cC)$
may not be planes $-$ this in fact happens in the 
example from \cite{Ma-Ts}.


The only statement left to prove is that $V$ contains a closed 
leaf of $\cG$.
Since $V$ has a one dimensional foliation, by the orientability assumption,
it is a torus. 

Note first that if $V$ has a Reeb annulus, then, it must have a closed leaf, so we can assume without loss of generality that there is a global $S^1$-incompressible section to the foliation $\cG$ in $V$.


This section is a transversal to $\cG$ in $V$ and lifts to a $1$-dimensional line in $\widetilde{V}$, 
intersecting every leaf of the restriction of $\wcG$ to $\widetilde{V}$. The restriction
of $\cD$ to this transversal is a local homeomorphism into the graph $\eta$.
It follows that the restriction of $\cD$ to it is injective.  
Moreover, since no transversal to $\cG$ in $V$ cannot be nulhomotopic in $M$ (because it is transverse to $\cF_1$ and $\cF_2$) we deduce that $V$ is also incompressible. This implies that the stabilizer $\pi_1(V) \subset \pi_1(M)$ of $\widetilde V$ is isomorphic to $\zzzz^2.$  

Assume that $\cD(\widetilde  V)$ is not equal to 
the entire graph $\eta$. Then, its projection into the first component $\cL_1$ is not onto. 
This projection is either an interval $(a, b)$, or a half line $(a, +\infty)$ or $(-\infty,b)$ of $\cL_1.$ Every
extremity $a$ (or $b$) of this interval is then fixed by $\pi_1(V) \cong \zzzz^2.$
Since $\cF_1$ has no closed leaves (it is minimal) this is impossible. 

Therefore the image is all of $\cL_1$. Now we have a $\mathbb{Z}^2$
group acting on $\cL_1$. Since the flow associated with $\cL_1$
is a  skew $\rrrr$-covered Anosov flow, it follows
that $\mathbb{Z}^2$ does not act freely on $\cL_1$ (see \cite{Fe1}),
and therefore  there is $\gamma \in \pi_1(V) \setminus \{\mathrm{id}\}$ so that
$\gamma$ fixes  a point $x$ in $\cL_1$. It  follows that
$\gamma$ fixes $(x,y)$ where $(x,y)$ is the intersection
of $\{ x \} \times \cL_2$ with $\eta$. If $\widetilde \theta$ is the
unique leaf of $\wcG$ in  $\widetilde V$ which projects
to $(x, y)$ by $\cD$, it follows that  $\gamma(\widetilde  
\theta) = \widetilde \theta$. Therefore  $\pi(\widetilde \theta)$
is a closed curve of $\cG$ in $V$. 

This finishes the proof of Proposition \ref{prop.closed}.
\end{proof}

Note that we have used the fact that $\cF_1$ comes from a skew $\rrrr$-covered Anosov flow only to ensure that $\zzzz^2$ cannot act freely in the leaf space, but this is something that holds in general for instance if $M$ is hyperbolic.

The following lemma is extremely useful, and
it is one of the few results where 
the hypothesis that $\cF_1$ has no Reeb annulus is used explicitly. The argument appeared implicitly in the previous proof and we now make it explicit in a more general context. 

\begin{lemma}\label{le.uniqueperiodic}
Assume that $\cG$ has no Reeb annulus in $\cF_1.$
Let $\widetilde \theta$ and $\widetilde \theta'$ be two orbits of $\wcG$ 
so that $\theta := \pi(\widetilde \theta)$ and $\theta' := \pi(\widetilde \theta')$
are closed leaves of $\cG$.
Then if $\cD(\widetilde \theta) = \cD(\widetilde  \theta')$
it follows that $\widetilde \theta = \widetilde \theta'$.
\end{lemma}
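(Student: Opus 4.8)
The plan is to argue by contradiction, assuming $\widetilde\theta \neq \widetilde\theta'$ while $\cD(\widetilde\theta) = \cD(\widetilde\theta') =: (x,y) \in \cL_1 \times \cL_2$. Since the two lifts are different leaves of $\wcG$ but have the same image under $\cD$, they must in particular lie in the same leaf $L := \wcF_1(x)$ of $\wcF_1$ and in the same leaf $E := \wcF_2(y)$ of $\wcF_2$. So $\widetilde\theta$ and $\widetilde\theta'$ are two distinct connected components of $L \cap E$. The first thing I would record is that, by Proposition \ref{pro.OK} applied to $L$, this means $\wcG_L$ is \emph{not} $\rrrr$-covered; moreover, since $L\cap E$ has (at least) two components, the leaf space of $\wcG_L$ is non-Hausdorff, and in fact $\widetilde\theta$ and $\widetilde\theta'$ themselves are non-separated in $\wcG_L$ — this uses the same kind of reasoning as in the proof of Proposition \ref{pro.OK}: if they were separated one could find a transversal arc in $L$ meeting both, but $\mu_2$ restricted to such an arc is injective and both $\widetilde\theta,\widetilde\theta'$ map to $y$, a contradiction.

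Next I would bring in the periodicity. Let $\gamma$ be the generator (well, a generator) of the stabilizer $\mathrm{Stab}(\widetilde\theta) \subset \pi_1(M)$ corresponding to the closed leaf $\theta = \pi(\widetilde\theta)$, and $\gamma'$ similarly for $\widetilde\theta'$. Since $\cD$ is $\pi_1(M)$-equivariant and $\cD(\widetilde\theta) = (x,y)$, the element $\gamma$ fixes $(x,y)$, hence fixes $x$ in $\cL_1$ and $y$ in $\cL_2$; likewise $\gamma'$ fixes $x$ and $y$. So both $\gamma$ and $\gamma'$ lie in $\mathrm{Stab}(x) \cap \mathrm{Stab}(y)$, the stabilizer of the point $(x,y)$ in the phase space. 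Now in a skew $\rrrr$-covered Anosov foliation, the stabilizer in $\pi_1(M)$ of a point of $\cL_1$ is infinite cyclic (by Lemma \ref{le.fixedpoints} the fixed-point set of a nontrivial element fixing a point of $\cL^s$ is a discrete $\zzzz$-indexed set, and the centralizer structure forces the stabilizer to be cyclic — this is standard from \cite{Fe1,Barb-Fe2}). Hence $\mathrm{Stab}(x)$ is infinite cyclic, so $\gamma$ and $\gamma'$ are commuting powers of a common generator $g$; after replacing by powers we may as well take $\gamma = \gamma' = g$ (or a common power fixing both $\widetilde\theta$ and $\widetilde\theta'$). The point is then that a single element $g$ of $\pi_1(M)$ fixes both $\widetilde\theta$ and $\widetilde\theta'$, and these are two distinct $g$-fixed, non-separated leaves of $\wcG_L$ inside the leaf $L$.

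The final step is to extract a Reeb annulus in $\cF_1$ from this configuration, contradicting the hypothesis. Consider the leaf $L$ of $\wcF_1$ (equivalently its image $\ell = \pi(L)$, a leaf of $\cF_1$) together with the cyclic group generated by $g$ acting on it; since $g$ fixes both $\widetilde\theta$ and $\widetilde\theta'$, it preserves a region of $L$ bounded by these two leaves (take $A$ = the closure of the component of $L \setminus (\widetilde\theta \cup \widetilde\theta')$ lying "between" them — here I'd use the transverse orientation of $\cG$ in $L$ and the non-separation to see that such a band exists and is $g$-invariant). Then $A/\langle g\rangle$ is a compact annulus (or Möbius band, in the non-orientable case — but we have assumed $\cG$ orientable) embedded in the leaf $\ell$ of $\cF_1$, it is saturated by $\cG$, and its two boundary leaves are $\pi(\widetilde\theta)=\theta$ and $\pi(\widetilde\theta')=\theta'$, which lift to the non-separated leaves $\widetilde\theta,\widetilde\theta'$ of $\wcG$ in the lifted band. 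By the very definition of a Reeb annulus recalled in the introduction (a compact annulus in a leaf, $\cG$-saturated, whose boundary leaves are non-separated in the leaf space of $\wcG$ restricted to its lift), $A/\langle g\rangle$ is a Reeb annulus in $\cF_1$. This contradicts the hypothesis that $\cG$ has no Reeb annulus in $\cF_1$, so we must have had $\widetilde\theta = \widetilde\theta'$.

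\textbf{Main obstacle.} The delicate point is the last step: from "$\widetilde\theta$ and $\widetilde\theta'$ are non-separated $g$-invariant leaves of $\wcG_L$" one must correctly identify the $g$-invariant compact band between them whose quotient is an \emph{embedded} annulus in $\ell$ with the non-separation property holding \emph{in the lift of the annulus} (not merely in all of $L$). One has to be careful that the band one picks is exactly the one realizing the non-separation — i.e. that leaves of $\wcG_L$ accumulate onto both $\widetilde\theta$ and $\widetilde\theta'$ from its interior side — and that $g$ acts on it cocompactly with quotient an annulus rather than, say, a half-open cylinder; establishing cocompactness is where one uses that $\theta,\theta'$ are genuinely closed leaves (so $g\neq \mathrm{id}$ acts as a genuine translation on each of $\widetilde\theta,\widetilde\theta'$) together with $\rrrr$-coveredness of $L$'s ambient leaf data. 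I expect this to be the bulk of the honest work; the cyclic-stabilizer input and the equivariance bookkeeping are routine given the earlier sections.
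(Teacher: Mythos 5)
Your overall setup (argue by contradiction, note that $\widetilde\theta$ and $\widetilde\theta'$ lie in the same leaf $L$ of $\wcF_1$ and the same leaf $E$ of $\wcF_2$, and try to extract a Reeb annulus in the annular leaf $F=\pi(L)$) is in the right spirit, but the key step has a genuine gap. You claim that the two components $\widetilde\theta,\widetilde\theta'$ of $L\cap E$ are non-separated in $\wcG_L$, on the grounds that ``if they were separated one could find a transversal arc in $L$ meeting both.'' That implication is false: in a simply connected non-Hausdorff leaf space, two leaves that cannot be joined by a transversal need not be non-separated \emph{from each other}. The path $[\widetilde\theta,\widetilde\theta']$ in the leaf space of $\wcG_L$ may consist of several intervals, with non-separation occurring only between intermediate leaves --- this is exactly the phenomenon that forces the chain/path analysis of Theorem \ref{nonsep} in \S~\ref{sec-nonseparatedleaves}. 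Concretely, the band in $F$ between $\theta$ and $\theta'$ could contain a further closed leaf $\theta''$, with Reeb bands between $\theta$ and $\theta''$ and between $\theta''$ and $\theta'$; then $\mu_2(\widetilde\theta)=\mu_2(\widetilde\theta'')=\mu_2(\widetilde\theta')$, which is perfectly consistent with $\cD(\widetilde\theta)=\cD(\widetilde\theta')$, and yet $\widetilde\theta$ and $\widetilde\theta'$ are separated in $\wcG_L$ (by $\widetilde\theta''$). In that configuration the compact saturated $g$-invariant band $A$ you build between $\theta$ and $\theta'$ is \emph{not} a Reeb annulus --- its boundary leaves are not non-separated in the lift --- and the actual Reeb annuli are proper sub-bands that your argument never exhibits. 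So the conclusion of your last step does not follow as written.

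The paper's proof avoids this by running the dichotomy in the quotient: since $\cF_1$ is an Anosov foliation, $F$ is an annular leaf containing the two distinct closed curves $\theta,\theta'$, and the hypothesis that $F$ contains no Reeb annulus of $\cG$ yields an arc $c$ in $F$ transverse to $\cG$ joining $\theta$ to $\theta'$. This arc lifts (the preimage of $\theta'$ in $L$ is $\widetilde\theta'$, since $\pi_1(F)$ is cyclic generated by $\gamma$) to a transversal in $L$ from $\widetilde\theta$ to $\widetilde\theta'$; being transverse to $\wcF_2$, the map $\mu_2$ is injective along it, contradicting $\cD(\widetilde\theta)=\cD(\widetilde\theta')$ directly. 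If you want to salvage your version, replace the unjustified non-separation claim by the correct dichotomy ``either there is a transversal in $F$ from $\theta$ to $\theta'$, or some sub-band of $A$ is a Reeb annulus,'' which is precisely the content your shortcut skips; the cyclic-stabilizer and equivariance bookkeeping in your middle step is fine but is not where the difficulty lies.
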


\begin{proof}
Suppose by way of contradiction that  $\widetilde \theta$ is
not $\widetilde \theta'$. 
    Since $\cD(\widetilde \theta) = \cD(\widetilde \theta')$
it is equal to $(x,y) \in \cL_1 \times \cL_2$, a point in the image of  $\cD$.     
    Hence $\widetilde \theta$ and $\widetilde \theta'$ belong to the same leaf $x$ of $\wcF_1$. 
    
    Let $F$ be the projection of $x$ in $M$. Since $\cF_1$ is an Anosov foliation, it follows that $F$ is an annular leaf of $\cF_1$ containing the closed
leaves $\theta$ and $\theta'$ of $\cG$. 
    
Denote by $\gamma$ a generator of the (cyclic) fundamental group of $F$.
It follows  that $\gamma(\widetilde  \theta) = \widetilde  \theta$.
In particular the projection  of $\widetilde \theta'$  to  $F$ (that
is the closed curve  $\theta'$)  is  not equal  to $\theta$ $-$ this
is where the $\widetilde \theta \not = \widetilde \theta'$ is used.
Let $A$ be the annulus in $F$ bounded by $\theta$ and $\theta'.$ Since by hypothesis $F$ contains no Reeb annulus of $\cG$ there is a curve $c$ in $F$ transverse to $\cG$ and intersecting $\theta$ and $\theta'$. It lifts in $x$ as a curve $\tilde c$ transverse to $\wcG$ and intersecting $\widetilde \theta$ and $\widetilde \theta'.$ But this curve is transverse to $\wcF_2$. Hence the restriction of $\mu_2$ to $\tilde c$ is a locally injective map between two $1$-dimensional manifolds, hence injective. Therefore $\cD(\widetilde \theta) \neq \cD(\widetilde \theta), $ which is a contradiction and completes the proof. 
\end{proof}

Obviously the same result holds replacing $\cF_1$ by $\cF_2$. 

\begin{remark} The reader may notice that a lot of the difficulty
revolves around the fact that the induced map $\cD_{\cG}$ from the
leaf space $\cL_{\cG}$ of  $\wcG$ to the phase space may  not be
injective. Indeed in the examples of Matsumoto and  Tsuboi
\cite{Ma-Ts} this map is not injective. This is one of
the main complications  involved  in our study.
\end{remark}


The next lemma goes further in the description of the foliation $\cG$ restricted to the tori $V$ saturated by $\cG$ leaves that is found in Proposition \ref{prop.closed}. 

\begin{lemma}\label{lem-hsdff}
Assume that $\cG$ does not have a Reeb annulus in a leaf of $\cF_1$.
    The restriction of $\cG$ to $V$ is $\rrrr$-covered.
\end{lemma}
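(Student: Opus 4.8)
The plan is to show that the leaf space of $\cG|_V$ is a simply connected, Hausdorff, non-compact $1$-manifold, hence homeomorphic to $\rrrr$. We already know from Proposition \ref{prop.closed} that $V$ is a torus foliated by $\cG$, which is incompressible, so its fundamental group $\pi_1(V)\cong\zzzz^2$ sits inside $\pi_1(M)$ and $\widetilde V$, a connected component of $\cD^{-1}(\cC)$, is the universal cover of $V$ (since $\widetilde V$ is connected, $\pi_1(M)$-invariant under $\pi_1(V)$, and simply connected being a component of a preimage of a union of graphs under the developing map). Thus the leaf space of $\cG|_V$ is the quotient of $\widetilde V$ by $\wcG$, which is a simply connected $1$-manifold by Haefliger-type arguments (a transversal to $\wcG$ in $\widetilde V$ embeds via $\cD$ into the monotone graph $\eta$ containing $\cD(\widetilde V)$, hence is injective along transversals, so the leaf space is a genuine, possibly non-Hausdorff, $1$-manifold). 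It cannot be a circle because $\wcG|_{\widetilde V}$ disconnects $\widetilde V$ (each leaf of $\wcG$ disconnects the leaf of $\wcF_1$ containing it, since leaves of $\wcF_2$ disconnect $\mt$), so the leaf space is either $\rrrr$ or a non-Hausdorff $1$-manifold.

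The heart of the argument, and the step where the no-Reeb-annulus hypothesis enters, is ruling out non-Hausdorffness. First I would observe that if the leaf space of $\cG|_V$ were non-Hausdorff, then since $\pi_1(V)\cong\zzzz^2$ acts on it and $V$ is compact, there would have to be a pair of non-separated leaves $\widetilde\theta_1,\widetilde\theta_2$ of $\wcG$ in $\widetilde V$; and one can arrange (as in the proof of Lemma \ref{le:lambdaL}, or by a direct limiting argument using compactness of $V$) that some nontrivial element of $\pi_1(V)$ fixes both of them, so that $\pi(\widetilde\theta_1)=\theta_1$ and $\pi(\widetilde\theta_2)=\theta_2$ are two distinct closed leaves of $\cG$ in $V$. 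Now here is the key point: non-separated leaves in the leaf space of $\cG|_V$ must have the same image under $\cD$, because $\cD$ is continuous and the phase space $\cL_1\times\cL_2$ is Hausdorff. So $\cD(\widetilde\theta_1)=\cD(\widetilde\theta_2)$ with $\theta_1,\theta_2$ closed leaves of $\cG$, and $\widetilde\theta_1\neq\widetilde\theta_2$, which directly contradicts Lemma \ref{le.uniqueperiodic}. (One should double check that a pair of non-separated leaves inside $\widetilde V$ are genuinely a pair of non-separated leaves of $\wcG$ in $\mt$ in the sense needed, but since $\widetilde V$ is $\wcG$-saturated and a connected component of a closed $\wcG$-saturated set, a separating open set in $\mt$ would restrict to one in $\widetilde V$, so non-separation in $\widetilde V$ is the relevant statement.)

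The one subtlety I would be careful about is producing the fixed element of $\pi_1(V)$: a priori two non-separated leaves of $\cG|_V$ need not be individually periodic. The clean way around this is to use that $V$ is a closed surface, so the projected foliation $\cG|_V$ on the torus $V$ is a foliation of $T^2$; a one-dimensional foliation of $T^2$ with no closed leaf is, by Denjoy/Kneser theory, either minimal or has a nontrivial minimal set, but in either case if two leaves of the lift to $\rrrr^2$ were non-separated one runs into the standard fact that foliations of $T^2$ have Hausdorff leaf space unless there is a Reeb component — and a Reeb component of $\cG|_V$ in the torus $V\subset$ leaf of $\cF_1$ is precisely a Reeb annulus of $\cG$ in $\cF_1$, which is excluded by hypothesis. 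Alternatively, and more in the spirit of the surrounding arguments, if $\cG|_V$ has a closed leaf then it bounds either a Reeb annulus (excluded) or the non-separated leaves can be taken periodic and we apply Lemma \ref{le.uniqueperiodic} as above; and if $\cG|_V$ has no closed leaf at all, then the torus $V$ is fibered by the non-closed leaves of $\cG$ giving a foliation of $T^2$ without closed leaves, whose leaf space in the universal cover $\rrrr^2=\widetilde V$ is automatically Hausdorff (Sacksteder/Kneser), completing the proof. I expect assembling this case analysis cleanly — matching the general torus-foliation dichotomy with the Reeb-annulus hypothesis and Lemma \ref{le.uniqueperiodic} — to be the main obstacle; everything else is routine Haefliger bookkeeping already carried out in analogous form in Proposition \ref{pro.OK}.
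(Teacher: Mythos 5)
Your core mechanism is the same as the paper's: use the structure theory of foliations of the torus to reduce non-Hausdorffness of the leaf space of $\cG|_V$ to a Reeb component of $\cG|_V$ bounded by two \emph{distinct closed} leaves, lift these to two distinct non-separated leaves of $\wcG$ in $\widetilde V$, observe that non-separated leaves have the same image under the continuous map $\cD$ (the phase space being Hausdorff), and contradict Lemma \ref{le.uniqueperiodic}. That chain of steps is exactly the published proof, and it is where the no-Reeb-annulus hypothesis enters (through Lemma \ref{le.uniqueperiodic}).

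However, one step you lean on is wrong as stated: you write ``$V\subset$ leaf of $\cF_1$'' and conclude twice that a Reeb component of $\cG|_V$ ``is precisely a Reeb annulus of $\cG$ in $\cF_1$, which is excluded by hypothesis.'' The torus $V$ produced in Proposition \ref{prop.closed} is \emph{not} contained in a leaf of $\cF_1$ (or of $\cF_2$): $\widetilde V$ is a component of $\cD^{-1}(\eta)$ for a monotone graph $\eta$, so it meets each leaf of $\wcF_1$ only in a union of $\wcG$-curves and is a surface transverse to both foliations, saturated by $\cG$. Consequently a Reeb component of the torus foliation $\cG|_V$ is not a Reeb annulus in the sense of the hypothesis of the lemma, and it cannot be ``excluded by hypothesis'' directly; the exclusion must be routed through Lemma \ref{le.uniqueperiodic}, as in your alternative branch (the two boundary leaves of the Reeb component are distinct closed leaves of $\cG$ whose lifts are non-separated, hence have equal $\cD$-image, contradicting that lemma). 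If you delete the ``(excluded)'' branches and keep only that alternative, your argument is correct and coincides with the paper's. A minor further remark: the claims in your first paragraph that $\widetilde V$ is simply connected and is the universal cover of $V$ are not justified by ``being a component of a preimage of a graph'' (that fact is only established later, in Proposition \ref{prop-toriprops}), but they are also not needed for the argument.
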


\begin{proof}
    Assume not. Then, this foliation contains a Reeb annulus $R$ bounded by two different closed leaves $\theta$ and $\theta'$ of $\cG$. They lift to  leaves $\widetilde \theta$ and $\widetilde \theta'$ of $\wcG$ in $\widetilde V$, invariant by some non-trivial element $\gamma$ of $\pi_1(M)$, and are not separated. It implies that $\widetilde \theta$ and $\widetilde \theta'$ have the same image under $\cD$. It contradicts Lemma \ref{le.uniqueperiodic} completing the proof. 
\end{proof}

Finally we gather all the properties of the tori in the next proposition. 

\begin{proposition}\label{prop-toriprops}
Assume that $\cG$ does not have Reeb annuli in leaves of $\cF_1$.
        For any invariant monotone graph $\eta$ intersecting the image of $\cD$, and every connected component $\widetilde V$ of $\cD^{-1}(\eta)$, the restriction of $\cD$ to $\widetilde V$ is a locally trivial fibration over the entire $\eta$, with fibers the leaves of $\wcG$ contained in $\widetilde V.$ Moreover, the stabilizer of $\widetilde V$ is a subgroup $H$ of $\pi_1(M)$ isomorphic to $\zzzz^2$. Elements of $H$ admitting fixed points in $\eta$ form a cyclic subgroup $H_0$ generated by an element $\gamma_0$. 
\end{proposition}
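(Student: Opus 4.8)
\textbf{Proof proposal for Proposition \ref{prop-toriprops}.}

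The plan is to assemble the statement from the earlier results, upgrading the qualitative picture of Proposition \ref{prop.closed} to the precise fibration/group-theoretic description. First I would fix an invariant monotone graph $\eta$ meeting the image of $\cD$, and a connected component $\widetilde V$ of $\cD^{-1}(\eta)$. By the argument in the proof of Proposition \ref{prop.closed}, $\widetilde V$ is $\wcG$-saturated, embedded, and the restriction of $\cD$ to $\widetilde V$ is a local homeomorphism onto an open subset of $\eta$; its fibers are precisely the leaves of $\wcG$ contained in $\widetilde V$ (here one uses that, by Lemma \ref{lem-hsdff}, the restriction of $\cG$ to $V=\pi(\widetilde V)$ is $\rrrr$-covered, so distinct leaves of $\wcG$ in $\widetilde V$ have distinct images under $\cD$ — this is exactly Proposition \ref{pro.OK} applied inside $\widetilde V$, or a direct transversality argument as in Lemma \ref{le.uniqueperiodic}). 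Since $\eta$ is a line and the map $\cD|_{\widetilde V}$ is an open injective local homeomorphism onto a subinterval of $\eta$, to get that it is onto $\eta$ — and hence a locally trivial fibration with fiber a leaf of $\wcG$ — it suffices to rule out that the image is a proper subinterval. This is the step already carried out in Proposition \ref{prop.closed}: an endpoint $a$ of the image interval, projected to $\cL_1$, would be a point of $\cL_1$ fixed by the stabilizer of $\widetilde V$; but the stabilizer is infinite (it is the fundamental group of the compact surface $V$) and $\cF_1$ is minimal with no closed leaves, so no nontrivial element of $\pi_1(M)$ has such a fixed point configuration — contradiction. Hence $\cD|_{\widetilde V}: \widetilde V \to \eta$ is a locally trivial fibration over all of $\eta$ with fibers the leaves of $\wcG$ in $\widetilde V$.

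Next I would identify the stabilizer. Let $H = \mathrm{Stab}(\widetilde V) \subset \pi_1(M)$, so that $H \cong \pi_1(V)$. Since $V$ is a closed surface carrying a one–dimensional foliation, and by the orientability assumptions it is a torus (as in Proposition \ref{prop.closed}), we get $H \cong \zzzz^2$; one should also note, as in Proposition \ref{prop.closed}, that $V$ is incompressible in $M$ (the transversal to $\cG$ inside $V$ is transverse to both $\cF_1$ and $\cF_2$, hence not nullhomotopic in $M$), so the inclusion $H \hookrightarrow \pi_1(M)$ is injective and $H$ is genuinely a $\zzzz^2$ subgroup. The fibration $\cD|_{\widetilde V}: \widetilde V \to \eta$ is $H$-equivariant, where $H$ acts on $\eta$ through the diagonal action of $\pi_1(M)$ on $\cL_1 \times \cL_2$ restricted to the invariant graph $\eta$; since $\eta$ is a monotone graph, this action of $H$ on $\eta$ is conjugate to the action of $H$ on $\cL_1$ (via the projection to the first coordinate, which is a homeomorphism $\eta \to \cL_1$).

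It remains to analyze the action of $H \cong \zzzz^2$ on $\cL_1$. Here I would invoke the structure of skew $\rrrr$-covered Anosov foliations from \S~\ref{sub.defanosov}: by \cite{Fe1}, a $\zzzz^2$ subgroup of $\pi_1(M)$ cannot act freely on $\cL_1 = \cL^s$, so some nontrivial $\gamma_0 \in H$ has a fixed point in $\cL_1$ (equivalently, in $\eta$). Let $H_0 \subset H$ be the subset of elements admitting a fixed point in $\eta$; I claim it is a subgroup, and cyclic. By Lemma \ref{le.fixedpoints}, any nontrivial element of $\pi_1(M)$ with a fixed point in $\cL^s$ has a discrete bi-infinite set of fixed points, indexed by $\zzzz$, alternating attracting/repelling, with $x_{i+2} = \tau_s(x_i)$. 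Since $H$ is abelian, all elements of $H_0$ share their fixed point sets (an element of $H$ commuting with $\gamma_0$ permutes the fixed point set of $\gamma_0$ and, being orientation preserving on $\cL_1$, must fix it pointwise, hence has these among its fixed points; and conversely an element of $H$ fixing one such point fixes all of them by the same commutation argument). Thus the fixed set $\mathrm{Fix}$ is a single discrete $\zzzz$-indexed set common to all of $H_0$, and $H_0$ is exactly the subgroup of $H$ fixing $\mathrm{Fix}$ pointwise; it is a subgroup. It is cyclic because $H \cong \zzzz^2$ and $H_0$ is a proper subgroup (if $H_0 = H$ then all of $H$ would fix a point of $\cL_1$, but the action of $H$ on $\cL_1$ is that of a $\zzzz^2$ in the fundamental group of a skew Anosov flow restricted to a $\zzzz^2$, which — by the analysis via the one-step-up map, cf. the proof of Proposition \ref{prop.closed} and \cite{Fe1} — cannot fix a point globally while being infinite and acting properly); so $H_0$ has rank $\le 1$, and since it contains $\gamma_0 \ne \mathrm{id}$ it is infinite cyclic, $H_0 = \langle \gamma_0 \rangle$ for a suitable generator $\gamma_0$.

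The main obstacle I anticipate is the last step: cleanly arguing that $H_0$ is a \emph{proper} cyclic subgroup of $H \cong \zzzz^2$, i.e. that the whole $\zzzz^2$ cannot consist of elements each fixing a point of $\eta$. The clean way is to recall from the skew $\rrrr$-covered structure that if every element of $H$ fixed a point of $\cL^s$, then (by commutativity and Lemma \ref{le.fixedpoints}) they would all fix a common point, contradicting that $H$ is infinite and acts properly discontinuously on $\mt$ (an element fixing a point of $\cL^s$ and a point of $\cL^u$ fixes an orbit of the Anosov flow, and only one $\zzzz$'s worth of such elements stabilize a given periodic orbit). Everything else — the fibration statement and $H \cong \zzzz^2$ — is a repackaging of Proposition \ref{prop.closed} together with Lemmas \ref{le.uniqueperiodic} and \ref{lem-hsdff}, so the write-up should be short once this point is settled.
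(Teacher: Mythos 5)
Your treatment of the fibration statement and of $H\cong\zzzz^2$ follows the paper's own route: the paper's proof of Proposition \ref{prop-toriprops} is a two-line reduction to the arguments of Proposition \ref{prop.closed} (leaf space of $\wcG|_{\widetilde V}$ is a line, surjectivity onto $\eta$ via the impossibility of an endpoint fixed by $\pi_1(V)\cong\zzzz^2$, incompressibility of the torus), and your write-up of these points is essentially the same, up to a sloppy paraphrase (it is not true that ``no nontrivial element of $\pi_1(M)$'' can fix a leaf of $\cL_1$ --- every periodic orbit provides one; what is impossible is that the whole non-cyclic group $\pi_1(V)\cong\zzzz^2$ fixes a leaf, since leaves of $\wcF_1$ are planes and $\cF_1$ has no compact leaves).

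The genuine problems are in your analysis of $H_0$, precisely the part the paper leaves to the reader. First, the parenthetical claim that ``an element of $H$ commuting with $\gamma_0$ permutes the fixed point set of $\gamma_0$ and, being orientation preserving on $\cL_1$, must fix it pointwise'' is false: an orientation-preserving homeomorphism commuting with $\gamma_0$ can translate the discrete set $\mathrm{Fix}(\gamma_0)=\{x_i\}_{i\in\zzzz}$ along itself (in the geodesic-flow example the fiber class does exactly this and has no fixed point at all), and as stated your claim would force $H_0=H$, contradicting the properness you need two lines later. The correct statement is that an element $\gamma\in H$ \emph{which itself has a fixed point} cannot shift $\mathrm{Fix}(\gamma_0)$: if $\gamma(x_i)=x_{i+k}$ with $k>0$ and $\gamma(y)=y$ with $y\in[x_i,x_{i+1})$, then $y<x_{i+1}\le x_{i+k}=\gamma(x_i)\le\gamma(y)=y$, a contradiction (equivalently, argue with translation numbers relative to $\tau_1$). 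With that repair, every nontrivial element of $H_0$ fixes $\mathrm{Fix}(\gamma_0)$ pointwise and $H_0$ is the stabilizer in $H$ of a point $x_0$, hence a subgroup.

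Second, ``$H_0$ is a proper subgroup of $H\cong\zzzz^2$, so $H_0$ has rank $\le 1$'' is not a valid deduction: proper subgroups of $\zzzz^2$ such as $2\zzzz\times\zzzz$ have rank $2$. Properness does not give cyclicity; what does is the fact you mention only in passing at the end, namely that the stabilizer of a leaf of $\wcF_1$ (equivalently, of a lifted periodic orbit) is infinite cyclic, because in the orientable case leaves of an Anosov foliation are planes or annuli. Once $H_0=\mathrm{Stab}_H(x_0)\subset\mathrm{Stab}_{\pi_1(M)}(x_0)$, cyclicity is immediate, and nontriviality of $H_0$ comes from \cite{Fe1} as you say; the same cyclic-stabilizer fact also gives the properness $H_0\neq H$ directly (if all of $H$ had fixed points, the repaired commutation argument would put $\zzzz^2$ inside the stabilizer of a single leaf). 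So the ingredients you need are all present in your text, but the two steps as written are incorrect and must be replaced by the fixed-point/cyclic-stabilizer arguments above.
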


\begin{proof}
It is just an application of arguments that we have already applied several times. The restriction of $\cD$ to $\widetilde V$ induces a local submersion from the leaf space of $\wcG$ into $\eta$.  As in the proof of Proposition \ref{prop.closed} we infer that this leaf space is homeomorphic to the real line and not the circle, and  that the torus $V$ is incompressible. Hence its fundamental group $H$ is isomorphic to $\zzzz^2.$ The rest of the proposition follows.    
\end{proof}

\begin{remark}[Hyperbolic 3-manifolds]\label{rem-hyperbolic}
We note here that the previous proposition implies that in the case where $M$ is a hyperbolic 3-manifold, the developing map $\cD$ cannot intersect any invariant graph. Thus, Proposition \ref{prop-nointersectionAnosov} below together with Proposition \ref{prop-toriprops} completes the proof of Theorem \ref{thm.three} for the case where $M$ is a hyperbolic 3-manifold. To complete the proof in the general case, we will need further arguments developed in \S~\ref{sec-nonseparatedleaves} and \S~\ref{sec.intersect}. 

\end{remark}

\section{Disjointness with invariant graphs implies Anosov foliation}
\label{sec.nointersect}

In this and the next  two sections
we intend to prove Theorem \ref{thm.three} under some orientability assumptions.
Hence we assume that $M$ has dimension $3,$ that $\cF_1, \cF_2$ are each isotopic to the weak stable and weak unstable foliation of a skew $\rrrr$-covered Anosov flow $\Phi$ (cf. section \ref{sub.defanosov}). In particular, the actions of $\pi_1(M)$ on $\cL_1$ and $\cL_2$ are conjugate to each other and the foliations isotopic. We further assume that $\cF_1$ and $\cF_2$ are orientable and transversally oriented. 

We will moreover assume that $\cF_1$ does not admit a Reeb annulus for $\cG$, and the goal is to prove that $\cG$ is topologically equivalent to the orbit foliation of an Anosov flow. Note that if $\cF_1$ has a Reeb annulus, so does  $\cF_2$. Indeed, if $A$ is a Reeb annulus in a leaf of $\cF_1$ and $L$ is the lift of such a leaf, then, the boundaries of lift of the Reeb annulus must be contained in the same leaf $E \in \wcF_2$ because $\cF_2$ is $\rrrr$-covered. Thus, $E$ projects to an annulus leaf containing two homotopic circles of $\cG$. There is no transversal to $\cG$ in $\pi(E)$ between these homotopic circles,
because the circles lift to the same leaf of $\wcF_1$. This implies that $\pi(E)$ also contains a Reeb annulus.

Recall from \S~\ref{sec:invariant} that the two actions are conjugated if and only if there exists an invariant graph of a homeomorphism $h: \cL_1 \to \cL_2$ conjugating the actions of $\pi_1(M)$ on each leaf space. 


In this section we deal with the case where the image of $\cD$ does not intersect any $\pi_1(M)$ invariant monotone graph.  In section 
\S~\ref{sec.intersect} we will show that the other case, i.e. when
the image of $\cD$ intersects some invariant monotone graph, leads to a contradiction (more precisely, we show that if $\cD$ intersects an invariant monotone graph, then $\cF_1$ has a Reeb annulus).
In the next section we prove a couple of technical results that
are needed later on.

\begin{remark}
We shall use the assumption that $\cF_1$ (and thus $\cF_2$) does not have Reeb annuli. In this section, this is used in Lemma \ref{le.periodicisperiodic} below. Note that in principle, it could be that this is not needed, namely, it is conceivable that the non intersection of the image of $\cD$ with the invariant graphs is enough to show that the intersected foliation is an Anosov foliation. This holds in unit tangent bundles (see \cite{FP2}).
\end{remark}

Remember that $\cF_1, \cF_2$ are transversally oriented
skew Anosov foliations, in particular $\cF_i$ are $\rrrr$-covered. For each $i=1,2$ there is a homeomorphism $\tau_i: \cL_i \to \cL_i$
commuting with the action of $\pi_1(M)$, and without fixed points: for every $x$ in $\cL_i$ we have $x <_i \tau_i(x).$ 
In addition the leaves or $\cF_i$ are only planes and  annuli.

Since $\cF_1$ and $\cF_2$ are conjugated to the weak  stable (or weak  unstable foliation)
of the same skew $\rrrr$-covered Anosov flow $\Phi$  by assumption,  there are homeomorphisms $g_1, g_2$  which are
isotopic  to each other, and  $g_1, g_2$  send  $\cF_1,  \cF_2$ respectively
to the weak stable foliation and the weak unstable foliations of $\Phi$. Note that under the transverse orientability assumption, the weak stable and weak unstable foliations of $\Phi$ are isotopic in the sense of \S~\ref{sub.homotopic}, so, one can adopt the point of view that $\cF_1$ is isotopic to the weak  stable foliation $\cF^s$ of $\Phi$, and $\cF_2$ is isotopic  to the weak  unstable foliation $\cF^u.$ We will thus identify $\cL_1$ with $\cL^s$ (the leaf space of $\widetilde{\cF^s}$) 
and $\cL_2$ with $\cL^u$ (the leaf space of $\widetilde{\cF^u}$) as
follows:

Since the actions of $\pi_1(M)$ on $\cL_1$ and $\cL_2$
are  conjugate, there is a $\pi_1(M)$ invariant monotone graph.
Under the hypothesis  of this section, this implies that
the image of $\cD$ is not all of the phase space, and
hence either
$\alpha$ or $\beta$ is finite. 
Then because $\cF_1$ is isotopic to $\cF^s$ (of $\Phi$) and
$\cF_2$ is isotopic to $\cF^u$ (of $\Phi$), it follows that both
$\alpha$ and $\beta$ are finite (recall Definition~\ref{def.alphabeta}).
In  addition  $\alpha$ is a $\pi_1(M)$ equivariant homeomorphism
from $\cL_1$ to $\cL_2$: 
for any $L$ in $\cL_1$ and  $\gamma$ in $\pi_1(M)$ then
$\gamma \circ \alpha(L) = \alpha \circ \gamma(L)$.

%


Let $\nu_1: \cL_1 \to \cL^s$ be the map that sends a leaf 
of $\cF_1$ to itself (thought of as a leaf of $\cL^s$).

Notice that all foliations are in $M$, so any $\gamma$ in $\pi_1(M)$
acts on the objects in $\mt$, in the phase space $\cL_1 \times
\cL_2$ or $\cL^s \times \cL^u$.

Now since $\Phi$ is a skew  Anosov flow, it also has
associated increasing homeomorphisms $\alpha_0, \beta_0: \cL^s
\to \cL^u$. We define a map $\nu_2: \cL_2 \to \cL^u$ by

$$\nu_2(E) = \alpha_0 \circ \nu_1 \circ \alpha^{-1}(E)$$

\noindent
Finally define $\nu: \cL_1 \times \cL_2 \to \cL^s  \times \cL^u$
by 

$$\nu(x,y) \ = \ (\nu_1(x), \nu_2(y))$$

\noindent
Then $\nu$ is a homeomorphism, which is $\pi_1(M)$ equivariant.
Therefore  any $\pi_1(M)$ invariant  graph in $\cL_1 \times \cL_2$
is sent to an invariant graph in 
$\cL^s \times \cL^u$.

\begin{remark}\label{rem-omegakl} According to Proposition \ref{pro:alphaLL}, the invariant monotone graphs in $\cL^s \times \cL^u$ (associated
with the Anosov flow  $\Phi$) are precisely the graphs $\zeta_k$ of the maps $\beta_0 \circ \tau_1^k$, where $k$ is an integer (again
we stress that the maps $\alpha_0, \beta_0$ are different from
$\alpha, \beta$. $\alpha_0$, $\beta_0$ indicate how the weak  stable and unstable leaves of the Anosov flow $\Phi$ intersect each
other, and the maps $\alpha$, $\beta$ do the same but for the new foliations $\cF_1$, $\cF_2$). Let $\Omega_k$ be the region between the graphs of two successive invariant monotone graphs $\zeta_k$ and $\zeta_{k+1}$. Then
$\Omega_k$  is a copy of the orbit space of the Anosov flow $\Phi$. 
\end{remark}

The graphs of $\alpha$ and $\beta$ are $\pi_1(M)$ invariant monotone
graphs. 
It follows that the  image of $\nu \circ \cD$ is the region between two invariant monotone graphs $\zeta_p$ and $\zeta_q$ in $\cL^s \times \cL^u$ which are consecutive (i.e. $q=p+1$) by the remark above and the fact that we have assumed that the image of $\cD$ does not intersect any invariant graph. 
Without  loss of generality we can assume $p=0$, $q=1$, so that the image of $\cD \circ  \nu$ is the region $\Omega_0$. Recall that this region is canonically identified with the orbit space of $\Phi$.

\begin{lemma}\label{le.periodicisperiodic}
    Let $(x,y)$ be an element of $\Omega_0$ fixed by some non trivial element $\gamma$ of $\pi_1(M).$ Then $\cD^{-1}(\nu^{-1}((x,y))$ is connected: it is a single leaf of $\wcG$ preserved by $\gamma$. 
    \end{lemma}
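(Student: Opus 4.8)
The plan is as follows. Set $(x', y') := \nu^{-1}((x,y)) \in \cL_1 \times \cL_2$. Since $\nu$ is $\pi_1(M)$-equivariant, $\gamma$ fixes $(x',y')$, hence fixes the leaf $x'$ of $\wcF_1$ and the leaf $y'$ of $\wcF_2$. As $\gamma$ is nontrivial and the $\cF_i$ are transversely orientable Anosov foliations, $x'$ and $y'$ are annular leaves with infinite cyclic stabilizers in $\pi_1(M)$. Moreover $(x,y)\in\Omega_0$, which is identified with the orbit space of $\Phi$; being fixed by $\gamma\neq\mathrm{id}$, it is the lift of a periodic orbit $o$ of $\Phi$ and $\gamma$ is a nonzero power of $g:=[o]$. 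Since the centralizer of $\gamma$ in $\pi_1(M)$ is cyclic (as $\gamma$ is a power of a periodic element) and $g$ fixes both $x'$ and $y'$, one gets $\mathrm{Stab}(x')=\mathrm{Stab}(y')=\langle g\rangle$ and $\gamma=g^k$ with $k\neq 0$. Finally, $\cD^{-1}(\nu^{-1}((x,y)))=x'\cap y'$ (the intersection of these two leaves in $\mt$) is nonempty, because $(x,y)=\nu(x',y')$ lies in $\Omega_0=\nu(\cD(\mt))$; it is $\wcG$-saturated and $g$-invariant. It remains to show it is a single leaf.

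Suppose, for contradiction, that $x'\cap y'$ contains two distinct leaves $\widetilde\theta_1\neq\widetilde\theta_2$ of $\wcG$. Inside the plane $x'$ they cannot be separated in the leaf space of the restriction $\wcG_{x'}$: any transversal in $x'$ through both would map injectively into $\cL_2$ by local injectivity of $\mu_2$, yet $\mu_2(\widetilde\theta_1)=y'=\mu_2(\widetilde\theta_2)$. Thus the leaf space of $\wcG_{x'}$ is non-Hausdorff, i.e. the restriction of $\cG$ to the annular leaf $F:=\pi(x')$ is non-Hausdorff. The key step is then to use the hypothesis that $\cG$ has no Reeb annulus in $\cF_1$, together with the $\langle g\rangle$-invariance of $x'\cap y'$, to conclude that every leaf of $\wcG$ contained in $x'\cap y'$ projects to a closed leaf of $\cG$ --- equivalently, that $\langle g\rangle$ acts on the discrete set $x'\cap y'$ (discrete in the leaf space of $\wcG_{x'}$, being a fibre of the local homeomorphism induced by $\mu_2$) with finitely many orbits. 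An infinite $\langle g\rangle$-orbit would project to a properly embedded noncompact leaf of $\cG$ inside the open annulus $F$; combining this with the accumulation of leaves furnished by the non-separated pair and with the annular topology of $F$, one extracts a Reeb annulus in a leaf of $\cF_1$, contradicting the hypothesis. Granting this, both $\widetilde\theta_1$ and $\widetilde\theta_2$ project to closed leaves of $\cG$ and satisfy $\cD(\widetilde\theta_1)=(x',y')=\cD(\widetilde\theta_2)$, so Lemma~\ref{le.uniqueperiodic} gives $\widetilde\theta_1=\widetilde\theta_2$, a contradiction.

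Hence $x'\cap y'=\cD^{-1}(\nu^{-1}((x,y)))$ is a single leaf $\widetilde\theta$ of $\wcG$; since $\gamma$ fixes both $x'$ and $y'$ it fixes $\widetilde\theta$, which completes the proof. The main obstacle is the key step above: controlling $\wcG$ on the annular leaf $x'$ and ruling out ``escaping'' noncompact leaves inside $x'\cap y'$ --- precisely where the no-Reeb-annulus hypothesis enters. Everything else is a routine application of the structure of skew $\rrrr$-covered Anosov flows, the equivariance of $\nu$, and Lemma~\ref{le.uniqueperiodic}.
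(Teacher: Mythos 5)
There is a genuine gap at what you yourself call the ``key step'': the claim that every leaf of $\wcG$ contained in $\cD^{-1}(\nu^{-1}(x,y))$ projects to a \emph{closed} leaf of $\cG$. You never prove it. The sketch --- ``an infinite $\langle g\rangle$-orbit would project to a properly embedded noncompact leaf of $\cG$ inside the open annulus $F$; combining this with the accumulation of leaves furnished by the non-separated pair and with the annular topology of $F$, one extracts a Reeb annulus'' --- is exactly the hard content of the lemma, and no argument is given for how a noncompact leaf in $F$, together with non-separation, produces a Reeb annulus (it is not even clear that such a leaf accumulates inside $F$ in a way that bounds an annulus; a priori it could leave every compact part of $F$). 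There are also two smaller slips feeding into that step: ``every leaf projects to a closed leaf'' is equivalent to every $\langle g\rangle$-orbit in the fibre being \emph{finite}, not to there being \emph{finitely many} orbits; and the absence of a transversal in $x'$ through $\widetilde\theta_1$ and $\widetilde\theta_2$ shows that the leaf space of $\wcG_{x'}$ is non-Hausdorff, but not that $\widetilde\theta_1,\widetilde\theta_2$ themselves form the non-separated pair.

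The paper closes this hole by a different mechanism, and notably the no-Reeb-annulus hypothesis is \emph{not} what makes the components closed curves. One first invokes Lemma \ref{le:periodicdiscrete}: since $(x,y)\in\Omega_0$ is fixed by a nontrivial deck transformation, its whole $\pi_1(M)$-orbit is discrete in $\Omega_0$. Because $\cD_{\cG}:\cL_{\cG}\to\cL_1\times\cL_2$ is a local homeomorphism (\S~\ref{ss.developing}), the preimage of this discrete orbit is discrete in $\cL_{\cG}$, so the $\pi_1(M)$-orbit of $\cD^{-1}(\nu^{-1}(x,y))$ is closed in $\mt$; its projection to the compact manifold $M$ is then closed, hence each connected component of $\cD^{-1}(\nu^{-1}(x,y))$ projects to a closed curve of $\cG$. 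Only after that does the no-Reeb hypothesis enter, exactly as in your last step, via Lemma \ref{le.uniqueperiodic}, to rule out two distinct components over the same point. Your final application of Lemma \ref{le.uniqueperiodic} is fine, but without an actual proof of the ``all components are closed curves'' step --- for instance by the discreteness argument above, which uses the discreteness of the full $\pi_1(M)$-orbit rather than just of the single fibre in $\wcG_{x'}$ --- the proof is incomplete.
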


\begin{proof}
    According to Lemma \ref{le:periodicdiscrete} the $\pi_1(M)$-orbit of $(x,y)$ is discrete in $\Omega_0$. 
Therefore the $\pi_1(M)$-orbit  of $\nu^{-1}(x,y)$ is
also discrete in the image of $\cD$. We want to prove a sort of converse to this: the preimage by $\cD$ of $\nu^{-1}(x,y)$ is a discrete union of curves, in particular, projects to closed curves in $M$. For this we will use the properties of $\cD$ and $\cD_{\cG}$. 

In fact, since $\cD$ is locally a trivial fibration with fibers being leaves of $\wcG$ (i.e. the map $\cD_{\cG}$ is a local homeomorphism, recall \S~\ref{ss.developing}), it follows that the preimage by $\cD_{\cG}$ of a discrete set is also discrete in $\cL_{\cG}$. In particular, if we consider the preimage of the $\pi_1(M)$-orbit of $\nu^{-1}(x,y)$ by $\cD_{\cG}$ is discrete in $\cL_{\cG}$ and thus, the $\pi_1(M)$-orbit of $\cD^{-1}(\nu^{-1}(x,y))$ is a closed set in $\mt$. This implies that any connected component of $\cD^{-1}(\nu^{-1}(x,y))$ projects to a closed set in $M$ and thus is a closed curve. In conclusion, each connected component of $\cD^{-1}(\nu^{-1}(x,y))$ is the lift of a closed orbit of $\cG$. 

Suppose that $c_1, c_2$ are different connected components of $\cD^{-1}(\nu^{-1}(x,y))$.
Let $L$ be the leaf of $\wcF_1$ containing both $c_1, c_2$, and these curves project to closed curves $\pi(c_1), \pi(c_2)$  in $M$. Lemma \ref{le.uniqueperiodic} shows that this is impossible and thus we get that there is a unique connected component of $\cD^{-1}(\nu^{-1}(x,y))$ as desired.    
\end{proof}

We now pass the information from periodic orbits to all orbits to be able to apply Corollary \ref{cor.F1F2}. 

\begin{proposition}\label{prop-hsdff}
    The orbit space $\cL_{\cG}$ is Hausdorff, and the map $\cD_{\cG}: \cL_{\cG} \to \Omega_0$ is a homeomorphism.
\end{proposition}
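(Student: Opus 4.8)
The plan is to verify the hypotheses of Corollary \ref{cor.F1F2}, which will immediately give that $\cL_{\cG}$ is Hausdorff and that $\cD_{\cG}$ is a homeomorphism onto its image; then separately argue that the image is all of $\Omega_0$. Recall that $\cF_1$ is minimal, so Corollary \ref{cor.F1F2} applies once we exhibit a single leaf $L$ of $\wcF_1$ satisfying the equivalent conditions of Proposition \ref{pro.OK}, namely that $L$ meets every leaf of $\wcF_2$ in at most one leaf of $\wcG$. So the first step is to produce such a leaf $L$.

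To find $L$, I would use Lemma \ref{le.periodicisperiodic} together with the rich supply of periodic orbits of the skew Anosov flow $\Phi$. Since $\Omega_0$ is canonically identified with the orbit space of $\Phi$, and $\Phi$ (being a skew $\rrrr$-covered Anosov flow) has periodic orbits, pick a point $(x,y) \in \Omega_0$ fixed by a non-trivial $\gamma \in \pi_1(M)$; by Lemma \ref{le:periodicdiscrete} its $\pi_1(M)$-orbit is discrete, and by Lemma \ref{le.periodicisperiodic} the fiber $\cD^{-1}(\nu^{-1}(x,y))$ is a single leaf $\widetilde\theta$ of $\wcG$. Now let $L$ be the leaf of $\wcF_1$ through $\widetilde\theta$; it is a leaf over the closed orbit $\theta$, which for an Anosov foliation is an annular leaf with cyclic fundamental group generated by some $\gamma_0$. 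I then want to show $L$ meets each leaf of $\wcF_2$ in at most one leaf of $\wcG$. Suppose not: then by Proposition \ref{pro.OK} the restriction $\wcG_L$ is not $\rrrr$-covered, and by Lemma \ref{le:lambdaL} the set $\Lambda_L$ of leaves $L'$ of $\wcF_2$ meeting $L$ non-connectedly has non-empty interior. I would argue, exactly as in the proof of Corollary \ref{cor.F1F2} (switching roles), that the $\pi_1(M)$-translates of $\Lambda_L$ together with minimality of $\cF_1$ would force the non-Hausdorff behaviour to be pervasive; but the explicit periodic picture — the leaf $L$ over a closed orbit, with the transversal argument of Lemma \ref{le.uniqueperiodic} forbidding two closed leaves of $\wcG$ in $L$ with the same $\cD$-image — together with the no-Reeb-annulus hypothesis contradicts this. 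More directly: the absence of a Reeb annulus in leaves of $\cF_1$, via Lemma \ref{lem-hsdff}-type reasoning, rules out any leaf of $\wcF_1$ over a closed orbit having non-Hausdorff induced foliation, and minimality propagates the property. So $L$ satisfies the condition of Proposition \ref{pro.OK}, and Corollary \ref{cor.F1F2} gives that $\cL_{\cG}$ is Hausdorff and $\cD_{\cG}$ is a homeomorphism onto its image, which is a subset of $\Omega_0$ (since the image of $\nu\circ\cD$ is $\Omega_0$).

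It remains to show $\cD_{\cG}$ is onto $\Omega_0$, i.e. every point of $\Omega_0$ is the $\cD$-image (after $\nu$) of some leaf of $\wcG$. By construction the image of $\nu \circ \cD$ is exactly $\Omega_0$, so every point of $\Omega_0$ is hit by $\cD$; the only thing to check is that the fiber is a single leaf of $\wcG$ rather than a union, but that is precisely the content of Proposition \ref{pro.OK}(2) applied to the leaf of $\wcF_1$ through that point — and we have just shown (one leaf, hence by minimality all leaves of $\wcF_1$, via the $\Lambda$-argument of Corollary \ref{cor.F1F2}) that every leaf of $\wcF_1$ meets every leaf of $\wcF_2$ in at most one leaf of $\wcG$. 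Hence each fiber of $\cD$ over a point of $\Omega_0$ is a single leaf of $\wcG$, $\cD_{\cG}$ is injective, continuous, open (it is a local homeomorphism by Definition \ref{developingmap}), and surjective onto $\Omega_0$, so it is a homeomorphism.

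The main obstacle I anticipate is the step producing the good leaf $L$: one must combine the discreteness of periodic orbits (Lemma \ref{le:periodicdiscrete}), the uniqueness statement of Lemma \ref{le.uniqueperiodic} (which genuinely uses the no-Reeb-annulus hypothesis), and the propagation-by-minimality argument of Corollary \ref{cor.F1F2} in the right order, being careful that $\Lambda_L$ having non-empty interior is exactly what lets minimality of $\cF_1$ conclude globally. Everything after that — Hausdorffness, injectivity, openness, surjectivity onto $\Omega_0$ — is formal bookkeeping already packaged in the cited results.
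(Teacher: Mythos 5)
Your overall scaffolding (reduce to Corollary \ref{cor.F1F2}, then get surjectivity from the fact that the image of $\nu\circ\cD$ is exactly $\Omega_0$) is consistent with what the paper does, but the step that carries all the weight --- producing one leaf $L$ of $\wcF_1$ that meets every leaf of $\wcF_2$ connectedly --- is exactly where your argument has a genuine gap. Lemma \ref{le.periodicisperiodic} only controls the fiber of $\cD$ over the chosen periodic point $(x,y)$: it says nothing about how the leaf $L$ through $\widetilde\theta$ meets the \emph{other} leaves of $\wcF_2$. If $L$ is bad, Lemma \ref{le:lambdaL} gives you an open interval $\Lambda_L\subset\cL_2$ of bad partners, but this is a ``vertical'' one-parameter set $\{x\}\times\Lambda_L$; the periodic points on the line $\{x\}\times\cL_2$ are only the discrete set of fixed points of the (cyclic) stabilizer of $x$, so density of periodic orbits in the two-dimensional $\Omega_0$ does not let you place a periodic point inside it, and minimality of $\cF_1$ or $\cF_2$ only propagates badness of individual leaves, not badness of a \emph{pair} at a prescribed periodic point --- which is what a contradiction with Lemma \ref{le.periodicisperiodic} requires. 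Your two fallback claims do not close this: Lemma \ref{le.uniqueperiodic} forbids two leaves of $\wcG$ with the same $\cD$-image only when both project to \emph{closed} leaves of $\cG$ (the components of a disconnected intersection $L\cap L'$ need not be closed downstairs; proving they are is the hard content of Lemma \ref{lemma.closed} in the next section, via Theorem \ref{nonsep} and Proposition \ref{prop-expAnosov}), and Lemma \ref{lem-hsdff} concerns the tori coming from $\cD^{-1}$ of invariant graphs, which by the standing hypothesis of this section do not meet the image of $\cD$ at all, so there is no ``Lemma \ref{lem-hsdff}-type reasoning'' available here.

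The missing idea, and the way the paper's proof proceeds, is a perturbation argument in \emph{both} parameters: starting from a single bad pair $(L_0,E_0)$ one builds, with transversals and a pushed family of arcs $\gamma^s$, a genuinely two-dimensional open set of pairs $(L_t,E_s)$ in $\cL_1\times\cL_2$ whose intersections are disconnected. Only then does density of the fixed points of deck transformations in $\Omega_0$ produce a periodic bad pair, contradicting Lemma \ref{le.periodicisperiodic}; this shows \emph{every} pair of leaves meets connectedly, after which Corollary \ref{cor.F1F2} and the identification of the image with $\Omega_0$ give the Hausdorffness and the homeomorphism exactly as in your final bookkeeping paragraph. So your proposal is recoverable in outline, but as written the construction of the ``good leaf'' is circular, and the open-in-two-parameters propagation of bad pairs is the essential ingredient you are missing.
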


\begin{proof}
In order to apply Corollary \ref{cor.F1F2} it is enough to show that every pair of leaves $L \in \cL_1$ and $E \in \cL_2$ intersect in a unique connected component. We will show that if there is a pair of leaves $L_0 \in \wcF_1$ and $E_0 \in \wcF_2$ so that the intersection $L_0\cap E_0$ has at least two connected components, then, there is an open set in $\cL_1 \times \cL_2$ with the same property. Since the set of points $(x,y) \in \Omega_0$ fixed by some deck transformation are dense in $\Omega_0$, showing this completes the proof because one can apply Lemma \ref{le.periodicisperiodic} to get a contradiction.

To prove our claim, 
we consider curves $\ell_1, \ell_2 \in \wcG$ which are different connected components of $L_0 \cap E_0$. We can assume that they are different boundary components of the same connected component of the complement of $L_0 \cap E_0$ in $E_0.$
We first choose a transversal $\tau_1: (-\eps,\eps) \to L_0$  to $\cG_{L_0}$ so that $\tau_1(0) \in \ell_1$. This gives a parametrization $E_s$ of leaves of $\wcF_2$ with $s \in (-\eps,\eps)$. Note that up to reparametrization we can assume that for $s<0$ we have that $\ell_1$ separates $\tau_1(s)$ from $\ell_2$. This implies that for small $s<0$ the leaf $E_s$ intersects $L_0$ in at least two components, the one intersecting $\tau_1(s)$ and the one close to $\ell_2$. So, by changing $E_0$ with some $E_{s_0}$ with small $s_0<0$ we can assume without loss of generality that $E_s$ intersects $L_0$ in two connected components $\ell_1^s$ and $\ell_2^s$ for all $s \in (-\eps,\eps)$. 

According to our choice of  $\ell_1, \ell_2 $ there is a curve $\gamma: [0,1] \to E_0$ which intersects $L_0$ only in the extremes, it is transverse to $\wcF_1$ for $t \in [0,\eps) \cup (1-\eps,1]$ and so that $\gamma(0)$ and $\gamma(1)$ belong to different connected components of $L_0 \cap E_0$. We can assume without loss of generality (up to changing the components) that we have have $\gamma(0) = \tau_1(0) \in \ell_1$ and $\gamma(1) \in \ell_2$. We consider the curves $\ell_1^s$ and $\ell_2^s$ to be the connected components of $E_s \cap L_0$ close to $\ell_1$ and $\ell_2$ respectively as above. Choose $\tau_2^0: (-\eps,\eps) \to E_0$ a transversal to $\wcF_1$ so that $\tau_2^0$ coincides with $\gamma$ for small $t \in [0,\eps)$. We can also assume that $\gamma$ is transverse to $\wcF_1$ for $t \in (1-\eps,1]$. This way, it follows that up to reparametrization, we can assume that $\gamma(t)$ and $\gamma(1-t)$ belong to the same leaf of $\wcF_1$. 
Indeed, since $\gamma$ does intersect $L_0$ only at its extremities, it has to come back in $\cL_1$ to $L_0$ from the same side it left it at the beginning.
In particular, if we consider a transversal $\tau_3^0: (-\eps,\eps) \to E_0$ to $\wcF_1$ so that $\tau_3^0(t)= \gamma(1-t)$ we get that $\ell_1$ and $\ell_2$ separate in $E_0$ the points $\tau_2^0(u)$ and $\tau_3^0(u')$ for $u,u' <0$.

By choosing a foliated chart around $\tau_2^0(0)$ and $\tau_3^0(0)$ and maybe reducing $\eps$ we can find, for $s \in (-\eps, \eps)$ a continuous family of curves  $\tau^s_2: (-\eps, \eps) \to E_s$ and $\tau^s_3: (-\eps,\eps) \to E_s$ transverse to $\wcF_1$ so that for $t\in (-\eps, \eps)$ we have that $\tau^s_2(t), \tau_3^s(t)$ and $\tau_2^0(t)$ belong to the same leaf of $\wcF_1$. Also, we can push the curve $\gamma$ to a family of curves $\gamma^s: [0,1] \to E_s$ which coincide with $\tau_2^s$ and $\tau_3^s$ in the extremes, and as above intersect $L_0$ only at the endpoints (which lie in different connected components of the intersection of $L_0 \cap E_s$, namely $\ell_1^s$ and $\ell_2^s$). Note that this can be done because at the extremes we have chosen $\gamma^s$ to coincide with $\tau_2^s$ and $\tau_3^s$ which can be constructed explicitly in the foliated chart, and then, $\gamma|_{[\eps, 1-\eps]}$ lies at positive distance to $L_0$ so for small $s$ one can push the curve to nearby $E_s$ leaves (one chooses a foliated chart of $\wcF_2$ containing $\gamma|_{[\eps, 1-\eps]}$ and defines $\gamma^s$ in these coordinates). 

By this construction it follows that $\tau_2^s(t)$ is separated from $\tau_3^s(t)$ in $E^s$ by $\ell_1^s$ (and $\ell_2^s$) for all small $t<0$. In particular for every $s,t$ small so that $t<0$ we have that $\ell_1^s$ separates $\tau_2^s(t)$ from $\tau_3^s(t)$ in $E_s$. We conclude that for small $t<0$ the intersection $L_t \cap E_s$ has two different connected components, one containing $\tau_2^s(t)$ and the other $\tau_3^s(t)$. 

We have obtained from the existence of a pair of leaves intersecting in two distinct connected components an open set of such leaves. As we explained at the beginning of the proof, this produces a contradiction with Lemma \ref{le.periodicisperiodic} and thus completes the proof. 
\end{proof}

The following proposition shows that in order to prove Theorem \ref{thm.three} it is enough to show that the image of $\cD$ does not intersect any invariant monotone graph. We could apply the main result of \cite{FP3} to obtain this due to the previous proposition, but we can give a shorter argument in this case taking advantage of the properties of the map $\cD$.

\begin{proposition}\label{prop-nointersectionAnosov}
 Let $\cF_1$ and $\cF_2$ be transverse foliations so that there is a transversally orientable skew-$\rrrr$-covered Anosov flow $\Phi$ with weak foliations $\cF^s, \cF^u$ and so that $\cF_1$ is isotopic to $\cF^s$ and $\cF_2$ isotopic to $\cF^u$. Assume that the image of $\cD$ does not intersect an invariant monotone graph, and that $\cF_1$ has no Reeb annulus. Then, $\cG = \cF_1 \cap \cF_2$ is topologically conjugate to $\Phi.$
\end{proposition}

\begin{proof}
    We have proved that $\nu \circ \cD_{\cG}$ is a $\pi_1(M)$-equivariant homeomorphism between the leaf space $\cL_{\cG}$ of $\cG$ and the orbit space $\Omega_0$ of the Anosov flow $\Phi.$ 

    Moreover, $(M, \Phi)$ and $(M, \cG)$ are both  classifying spaces of their holonomy groupo\"{\i}d (since all its leaves have contractible holonomy covering). Therefore, they are homotopically equivalent by \cite{Hae}. There is a continuous homotopy equivalence $h: M \to M$ mapping every orbit of $\Phi$ into leaves of $\cG$. 
    
   In addition this maps lifts to a map $\tilde h: \mt \to \mt$ mapping orbits of the lifted Anosov flow $\widetilde \Phi$ into leaves of $\wcG$, and inducing between the leaf spaces the inverse of the homeomorphism $\nu \circ \cD_{\cG}.$ It follows that $h$ has a ``transverse injectivity" property, and that it maps leaves of $\cF^s$ into leaves of $\cF$, and leaves of $\cF^u$ into leaves of $\cF_2$ (see e.g. \cite[Proposition 5.6]{BFP}).  

By our orientability assumptions, and the hypothesis that
leaves of $\cG$ are $C^1$, it follows that
leaves of $\cG$ are orbits of some flow $g^t$.

    There is a continuous map $u: \rrrr \times M \to M$ such that, for every real number $t$ and every $p$ in $M$ we have:
    $$h(\Phi^t(p)) = g^{u(t,p)}(h(p))$$

\begin{claim} 
    There exists $T > 0$ such that for every $p$ in $M$ we have $u(t,p) \neq 0$ for every $t>T$. 
\end{claim}

\begin{proof}
Let us prove this claim (a proof of a similar result has already been written in \cite{Ba1}). 

Assume by a way of contradiction that there is a sequence of points $p_n \in M$ so that $p_n \to p_\infty$ and a sequence $T_n \to +\infty$ such that $u(T_n, p_n) = 0$. We fix lifts $\tilde p_n \to \tilde p_\infty$ of the points in $\mt$. One can lift $h$ to a map $H$ at bounded distance from the identity that will continue to satisfy for every $t,x$: 

 $$H(\tilde \Phi^t(x)) = \tilde g^{u(t, \pi(x))}(H(x)), $$
 
 \noindent where $\pi: \mt \to M$ denotes the universal covering projection. 
  
   Note then that $H(\tilde \Phi^{T_n}(\tilde p_n)) = H(\tilde p_n) \to H(\tilde p_\infty)$. In particular, since $H$ is a bounded distance from identity, both $\tilde p_n$ and $\tilde  \Phi^{T_n}(\tilde p_n)$ belong to a uniform neighborhood of $\tilde p_\infty$ and thus we can assume, up to subsequence that $\tilde  \Phi^{T_n}(\tilde p_n)$ converges to some point $\tilde q_\infty$. We know that since the orbits of $\tilde \Phi$ are properly embedded we have that $\tilde q_\infty \neq \tilde p_\infty$ but also cannot belong to the same orbit. This contradicts the fact that $\cL_{\cG}$ is Hausdorff as proved in the previous proposition and gives a contradiction showing the claim. 
\end{proof}

The claim is enough to ensure that $h$ can be modified by an averaging method along orbits of $\Phi$ so that it becomes a homeomorphism, as explained for instance in  \cite[\S 5]{Ma-Ts}. This finishes the proof of the proposition. 
\end{proof}

\section{Non separated leaves and expansion of holonomy}\label{sec-nonseparatedleaves} 

The purpose of this section is to prove a couple of technical 
results, which will be used in the next section. This section is independent from the rest of the article, and it will have weaker assumptions on the foliations as we believe that the result may be interesting by itself and possibly useful in other contexts. After this is proved, we will see that for Anosov foliations, expansion in holonomy in one side, implies expansion on both sides, this will be useful also in the next section. 

\subsection{Simultaneous non-separation} 

Here the context is two transverse foliation $\cF_1$ and $\cF_2$ in a closed 3-manifold $M$, but we do not assume the foliations  to be $\rrrr$-covered. The result refines and gives a better understanding of the situation of two leaves in $\mt$ intersecting in more than one connected component (compare to \cite[\S 8]{FP2}). Recall that if we denote by $\cG$ the one dimensional intersection foliation $\cG = \cF_1 \cap \cF_2$ and we denote the lifts to the universal cover by $\wcG, \wcF_1, \wcF_2$, then, for a given leaf $L \in \wcF_i$ we denote by $\wcG_L$ the foliation $\wcG$ restricted to $L$. 

In particular in this section we do not assume that $\cF_1, \cF_2$
are transversely orientable, nor that $\cG$ is orientable.

\begin{theorem} \label{nonsep}
Suppose that $\cF_1, \cF_2$ are transverse two dimensional foliations in a closed 3-manifold $M$ and 
let $\cG = \cF_1 \cap \cF_2$. Suppose there are leaves $L \in \wcF_1$ and $E \in \wcF_2$ which intersect in at least two distinct leaves $c_1, c_2$ of $\wcG$. Assume moreover that $c_1, c_2$ are non-separated in $\wcG_{L}$. Then there is $L' \in \wcF_1$ so that the intersection $E \cap L'$ contains leaves $\ell_1, \ell_2 \in \wcG$ which are non separated in both $\wcG_{E}$ and $\wcG_{L'}$. 
\end{theorem}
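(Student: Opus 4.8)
The plan is to exploit the non-separation of $c_1, c_2$ in $\wcG_L$ to produce a sequence of leaves $L_n \in \wcF_1$ converging (in an appropriate sense inside a transversal) to both $c_1$ and $c_2$, and then transport this non-separation picture into $E$. First I would fix a transversal $c$ to $\wcG_L$ inside $L$ that crosses both $c_1$ and $c_2$; by non-separation there are leaves of $\wcG_L$ meeting $c$ on the side of $c_1$ facing $c_2$ and also on the side of $c_2$ facing $c_1$, whose images in $\cL_1$ (via $\mu_1$) — no wait, we want images of the $\wcF_2$-leaves through them — agree. More precisely, just as in the proof of Lemma~\ref{le:lambdaL}, there are connected components $c_1^-$ of $c \setminus c_1$ and $c_2^-$ of $c \setminus c_2$ with $\mu_2(c_1^-) = \mu_2(c_2^-)$; call this common set of $\wcF_2$-leaves $J$, an interval abutting $E$ in $\cL_2$. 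For each leaf $F$ in $J$, $F \cap L$ has a component near $c_1$ and a component near $c_2$, and these are distinct since $c_1, c_2$ are non-separated (hence have disjoint complementary components on the relevant sides). Thus every $F \in J$ already intersects $L$ in at least two components.

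Next I would pass to the foliation $\wcG_E$ and find, inside $E$, a transversal $c'$ to $\wcG_E$ and a point on it where non-separation of $\wcG_E$ occurs. The point is that the component of $F \cap L$ near $c_1$ determines a leaf $\ell_1^F$ of $\wcG$, and the one near $c_2$ a leaf $\ell_2^F$; as $F$ ranges over $J$ these sweep out two disjoint families of $\wcG$-leaves inside $E$ (they are disjoint in $E$ because they lie in the disjoint subsets $\theta_1^{\pm}$-type regions of $L$, hence have disjoint traces). Choosing $c'$ to be a transversal in $E$ crossing $c_1$ (which is a leaf of $\wcG_E$ since $c_1 \subset E$), the leaves $\ell_2^F$ for $F$ near $E$ accumulate on $E \cap $ (something) from the side; I expect $c_1$ and $c_2$ themselves, viewed as leaves of $\wcG_E$, to be non-separated in $\wcG_E$ — but this need not be literally true, so the safer route is to take a limit. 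Take a sequence $F_n \in J$ with $\mu_2(F_n) \to \mu_2(E)$; the leaves $\ell_1^{F_n}$ and $\ell_2^{F_n}$ of $\wcG$ both lie in $L$ and both lie in $F_n$, they are distinct, and I would argue (using properness of leaves and a local product chart) that $\ell_1^{F_n}$ converges to $c_1$ while $\ell_2^{F_n}$ converges to some leaf $\ell_2^\infty$ of $\wcG_E$ which is $\ne c_1$; then $c_1 = \ell_1^\infty$ and $\ell_2^\infty$ are non-separated in $\wcG_E$ by construction, and simultaneously non-separated in $\wcG_{L'}$ where $L'$ is the $\wcF_1$-leaf containing $\ell_2^\infty$, because the $F_n$-limit realizes non-separation on the $L'$ side too.

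The main obstacle I anticipate is ensuring the \emph{simultaneous} non-separation — that the \emph{same} pair $\ell_1, \ell_2$ fails to be separated both in $\wcG_E$ and in $\wcG_{L'}$ — rather than getting non-separation in each foliation but for different pairs. To handle this I would work with the limit leaves and show that the approximating sequence $\{\ell_2^{F_n}\}$ (all lying in leaves $L'_n$ of $\wcF_1$) can be arranged to lie in a \emph{single} leaf $L'$: this uses that the $L'_n$ themselves must accumulate, and a compactness/local-product argument (leaves of $\wcF_1$ through the convergent points $\ell_2^{F_n}(pt)$ converge to a single leaf $L'$ by properness in a foliation box). Once $\ell_2^{F_n} \to \ell_2^\infty \subset L' \cap E$ and $\ell_1^{F_n} \to c_1 \subset L' \cap E$ with $\ell_1^{F_n}, \ell_2^{F_n}$ being distinct leaves of $\wcG_{L'_n}$ realizing the approach, the non-separation of $c_1$ and $\ell_2^\infty$ holds in $\wcG_{L'}$; and the analogous statement with the roles of the two foliations swapped (using that $\ell_1^{F_n}, \ell_2^{F_n} \subset F_n$ and $F_n \to E$) gives non-separation in $\wcG_E$. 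Setting $\ell_1 = c_1$, $\ell_2 = \ell_2^\infty$ completes the proof. A secondary subtlety is checking $\ell_2^\infty \ne c_1$, which follows because $\ell_2^{F_n}$ stays in a region of $L'_n$ bounded away (in the leaf-space of $\wcG_{L'_n}$, on the $c_2$-side) from the region containing $\ell_1^{F_n}$, a separation that persists in the limit.
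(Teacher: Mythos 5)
Your argument has a genuine gap at its heart: the limiting step never produces what non-separation actually requires. To show that two leaves $a,b\subset E$ are non-separated in $\wcG_E$ you must exhibit a \emph{single} sequence of leaves of $\wcG_E$ --- curves lying inside $E$ itself --- accumulating on both $a$ and $b$; likewise for $\wcG_{L'}$ you need curves inside $L'$. Your approximating leaves $\ell_1^{F_n},\ell_2^{F_n}$ are components of $F_n\cap L$: they lie in $F_n\neq E$ and all in the \emph{fixed} leaf $L$ (not in varying leaves $L'_n$, contrary to what your last paragraph asserts), and they form two \emph{different} sequences, one tending to $c_1$ and the other to $\ell_2^\infty$. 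None of this witnesses non-separation in $\wcG_E$ or in $\wcG_{L'}$. Worse, since $\ell_2^{F_n}$ is by definition the component crossing the transversal at points converging to a point of $c_2$, its limit contains $c_2$; so if the scheme were valid it would prove that $c_1$ and $c_2$ themselves are always non-separated in $\wcG_E$ --- precisely the statement you (correctly) flag as possibly false, and precisely what the theorem does not claim: in general the conclusion forces a \emph{different} pair $\ell_1,\ell_2$ in a \emph{different} leaf $L'\neq L$, and your proposal contains no mechanism for locating such an $L'$. There are also two smaller but real errors in the setup: a single transversal in $L$ crossing both $c_1$ and $c_2$ generally does not exist (non-separated leaves do not meet a common transversal, e.g.\ when $L$ is a plane) --- what non-separation gives is two transversals $\tau_1,\tau_2$ with $\tau_1(t),\tau_2(t)$ on a common leaf; and on the side $J$ where the $\mu_2$-images agree, the nearby leaf of $\wcG_L$ meeting both transversals is one \emph{connected} component (that is exactly why the images agree), while the two-distinct-components phenomenon of Lemma~\ref{le:lambdaL} occurs on the opposite side; so your starting configuration is backwards.

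For comparison, the paper's proof shows what the missing mechanism must look like. One works with the chain $[c_1,c_2]$ in the (possibly non-Hausdorff) leaf space of $\wcG_E$, written as intervals $[x_i,y_i]$ with $y_i$ non-separated from $x_{i+1}$ in $\wcG_E$; one proves (Claim~\ref{i+1}, nontrivial without an $\rrrr$-covered hypothesis) that some $y_i,x_{i+1}$ lie in a common leaf $L'$ of $\wcF_1$; then the non-separation of $c_1,c_2$ in $\wcG_L$ is used to build compact discs $D_t$ inside nearby leaves $E_t$ of $\wcF_2$, bounded by a leaf segment $I_t\subset L\cap E_t$ and a pushed path $\alpha_t$, so that the compact arc of $L'\cap D_t$ lies in the single leaf $L'$ and its limits as $t\to 0$ produce a pair of leaves of $\wcG_{L'}$, namely $y_i$ and some $c_v\subset E$, that are non-separated in $\wcG_{L'}$. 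Simultaneous non-separation in $\wcG_E$ is then not automatic: it is obtained by an induction on the length of the chain (if $c_v\neq x_{i+1}$ one restarts with a strictly shorter chain). Your proposal replaces all of this by a single limit of leaves taken inside the wrong surfaces, so the simultaneity problem you identify is not actually resolved.
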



\begin{proof}

We first look at $\wcG_E$. We consider the \emph{pseudo-interval} from $c_1$ to $c_2$ in
the leaf space of $\wcG_E$, this is the following:

\begin{equation}\label{eq:pathleaf}
[c_1,c_2] \ \ := \ \ \bigcup \{ \ell \in \wcG_E \ : \  \ell \ {\rm separates} \ c_1 \ {\rm from} \ c_2  \ {\rm in } \ E \}  \cup \{ c_1 \} \cup \{ c_2 \}.
\end{equation}

\noindent
The pseudo-interval $[c_1,c_2]$ consists of a finite union
$\cup_{i=1}^n  [x_i,y_i]$,  where each $[x_i,y_i]$ is a closed interval,
$x_1 = c_1$, $x_2 = c_2$ and for any $1 \leq i \leq n-1$, the leaf
$y_i$ is non separated from $x_{i+1}$ in $\wcG_E$ (see \cite[Lemma 2.2]{Barb-Fe1} or \cite{Ba3}). 
The integer $n$ is the \emph{length} of the pseudo-interval. 
Note that since $c_1, c_2 \in L$ they cannot intersect a common transversal, thus, $n \geq 2$.
In addition $[c_1,c_2]$ is always compact. (See figure~\ref{fig.nhsdf}.) This pseudo-interval may intersect $L$ in leaves which are not $c_1, c_2$. This makes the visualization in $3$-dimensions a little harder, but
does not really affect the arguments.

Note here that if $c_1=x_1=y_1$ and $x_2=y_2=c_2$ then we already have that $c_1$ and $c_2$ are non separated in $\wcG_E$ and there is nothing to prove.

Our strategy will be to prove that if $n>2$, then we can find another pair $\{ c'_1, c'_2\}$ of leaves of $\wcG_E$ satisfying exactly the same hypothesis as $\{ c_1, c_2 \}$, but with length $<n.$ Therefore, 
iterating the process, we are led to the case $n=2$, and a similar argument will show that the pair $\{y_1, x_2\}$ satisfies the properties required by Theorem \ref{nonsep}.

We fix  points $p$ in $c_1$ and $q$ in $c_2$. Since $c_1,c_2$ are non separated in $L$, one can choose two transversals to $\cG_L$ in $L$ denoted by
$\tau_1,\tau_2: (-\eps, \eps) \to L$ to $\wcG_L$ so that $\tau_1(0)=p$, $\tau_2(0)=q$ and such that $\tau_1(t)$ and $\tau_2(t)$ belong to the same leaf $E_t$ of $\wcF_2$ if $t > 0$. Moreover, the non separation implies that, up to reparametrization, we can assume that for any given $t >0$ we have that the curve of $\wcG_{E_t}$ through $\tau_1(t)$ contains a compact segment $I_t$ joining $\tau_1(t)$ with $\tau_2(t)$. By non-separation, the intervals $I_t$ converge to rays $r_1, r_2$ of $c_1$ and $c_2$ respectively (they may also converge to other non-separated leaves of $\wcG_L$ different from $c_1$ and $c_2$). These rays $r_1,r_2$, starting at $p,q$ respectively will be called the non-separated rays. The endpoints of  $I_t$ are $p_t =\tau_1(t)$ and $q_t= \tau_2(t)$, notice that $\{ E_t \}$ and $E$ are pairwise disjoint as they intersect a tranversal to $\wcF_2$.

We can choose a continuous path $\curve_0: [0,1] \to E$ so that $\curve_0(0)=p$ and $\curve_0(1) = q$ which is  \emph{efficient}, that is, so that for $\ell$ 
in $[c_1,c_2]$, then $\curve_0$ intersects $\ell$ in a single point and
to go from $y_i$ to $x_{i+1}$ the path has finitely many tangency points 
with $\wcG_E$ and intersects leaves in uniformly finitely many points (this can be done by a standard general position argument, note that it is possible to construct paths so that for going from $y_i$ to $x_{i+1}$ there is a unique tangency and intersects each leaf in at most two points).

One crucial feature of this path $\curve_0$ is that it does not intersect
$c_1 \cup c_2$ in the interior.
But $\curve_0$ can intersect $L$ in the interior as explained previously.

\begin{figure}[ht]
\begin{center}
\includegraphics[scale=0.60]{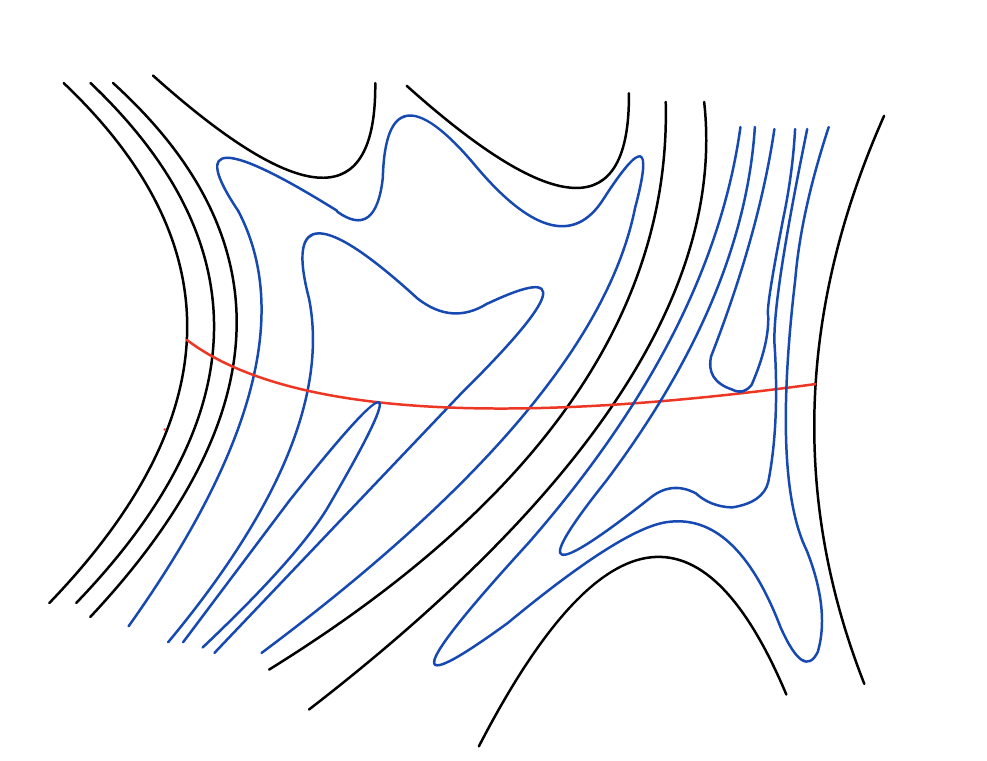}
\begin{picture}(0,0)
\put(-284,40){$c_1$}
\put(-40,190){$c_2$}
\put(-270,38){$y_1$}
\put(-220,21){$x_2$}
\put(-208,10){$y_2$}
\put(-188,110){$\curve_0$}
\end{picture}
\end{center}
\vspace{-0.5cm}
\caption{{\small The pseudo-interval between $c_1$ and $c_2$ consists of the curves which separate $c_1$ from $c_2$, in the picture, these are represented by the black curves intersected by the red curve (which represents $\curve_0$).}}\label{fig.nhsdf}
\end{figure}

We can construct a continuous family of transversals to $\wcF_2$
in $\mt$, denoted by  $a_s: [0, \eps) \to \mt$ to $\wcF_2$ for $s \in [0,1]$ so that $a_0= \tau_1|_{[0,\eps)}$ and $a_1= \tau_2|_{[0,\eps)}$ with the properties that:

\begin{itemize}
\item $a_s(0) = \curve_0(s)$, 
\item $a_s(t) \in E_t$
\end{itemize}

Up to reducing $\eps$, this can be done by considering a vector field transverse to $\wcF_2$ in a neighborhood\footnote{Here the assumption that the foliations are tangent to a continuous plane field is useful.} of $\curve_0$ and tangent at the extremes to the transversals $\tau_1$ and $\tau_2$.

Let $\curve_t: [0,1] \to E_t$ be equal to $\curve_t(s) = a_s(t)$ which is going from $p_t$ to $q_t$, which from how we chose $\curve_0$ we know intersects $I_t$ only at the endpoints, at least for small $t$.

Let $\theta_t = \curve_t \cup I_t$. This is a simple closed
curve in $E_t$, which therefore bounds a unique disk $D_t$
inside $E_t$. As $t \rightarrow 0$, then $D_t$ limits to a region in
$E$ whose boundary contains the rays $r_1, r_2$ (and maybe
other leaves of $\wcG_E$ as well).
In particular the disks $D_t$ have diameter that is going to
infinity.

\begin{claim}\label{i+1}
For some $i$, the points $y_i, x_{i+1}$ are in the same leaf of $\wcF_1$.
\end{claim}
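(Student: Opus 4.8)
The plan is to exploit the special position of the arc $I_t$. Since $\tau_1$ and $\tau_2$ are transversals to $\wcG_L$ lying \emph{inside} $L$, the endpoints $p_t=\tau_1(t)$ and $q_t=\tau_2(t)$ of $I_t$ lie in $L\cap E_t$; hence the leaf $g_t$ of $\wcG_{E_t}$ carrying the arc $I_t$ (the boundary edge $I_t$ of the disk $D_t$) is an entire connected component of $L\cap E_t$, so $g_t\subset L$ for every small $t>0$, while $p_t\to p\in c_1$ and $q_t\to q\in c_2$ as $t\to0$. The whole point will be that, as $t\to0$, the leaf $g_t$ degenerates onto the leaves of the path $[c_1,c_2]$, and since it stays in $L$, those leaves --- in particular the non-separated pairs $y_i,x_{i+1}$ --- must lie in $L$.

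To make this precise I would first reduce, using that $L$ is closed, to showing that every compact piece of every leaf of $\wcG_E$ lying in the path $[c_1,c_2]$ is contained in $L$; and since by definition~\eqref{eq:pathleaf} every leaf of $[c_1,c_2]$ other than $c_1,c_2$ either separates $c_1$ from $c_2$ in the plane $E$ or is a limit of such separating leaves, it suffices to treat a compact piece $P$ of a leaf $\ell$ of $\wcG_E$ that separates $c_1$ from $c_2$ in $E$. Fix a large compact region $K\subset\mt$ containing $P$ and $\alpha_0$. Taking a sequence $t_n\downarrow 0$, the sets $g_{t_n}\cap K$ accumulate (in the Hausdorff topology on $K$, using that $\cF_1$ and $\cF_2$ are tangent to continuous plane fields so that the local product structure is locally uniform) on a closed $\wcG$-saturated subset $G$ of $K$; one has $G\subset L$ (each $g_{t_n}\subset L$), $G\subset E$ on $K$ (the leaves $E_{t_n}$ converge to $E$ over $K$), and, since each $g_{t_n}$ runs from near $p$ to near $q$ while $\ell$ separates $c_1$ from $c_2$ in $E$, one verifies that $G$ contains $P$. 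Hence $P\subset L$; letting $P$ exhaust $\ell$ gives $\ell\subset L$.

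Putting this together, every leaf of the path $[c_1,c_2]=\bigcup_{i=1}^n[x_i,y_i]$ is contained in $L$; in particular, for each $i$ (and a fortiori for some $i$), the leaves $y_i$ and $x_{i+1}$ both lie in the single leaf $L$ of $\wcF_1$, which is Claim~\ref{i+1}.

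The main obstacle is exactly the limiting argument of the second paragraph: with the leaf spaces of $\wcF_1$, $\wcF_2$, $\wcG$ possibly non-Hausdorff, one must make sure that the part of $G$ over $K$ does not leak out of $E$ into leaves of $\wcF_2$ non-separated from $E$, and that $g_{t_n}$ really ``crosses'' the separating leaf $\ell$ inside the controlled region $K$ rather than escaping around it. I expect this to be handled by choosing $K$ large enough and using the locally uniform product boxes of the two transverse foliations, in the same spirit as the construction of the disks $D_t$ already carried out, since only finitely many compact pieces --- one for each $x_i,y_i$, together with $\alpha_0$ --- are ever involved.
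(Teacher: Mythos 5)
Your argument has a genuine gap at its central step, and in fact it aims at a much stronger statement than the claim. You try to show that \emph{every} leaf of the path $[c_1,c_2]$ is contained in the original leaf $L$, by taking Hausdorff limits of the curves $g_{t_n}\subset L\cap E_{t_n}$ and asserting that the limit set $G$ contains a compact piece $P$ of a separating leaf $\ell$ of $\wcG_E$. That assertion is unjustified: $\ell$ separates $c_1$ from $c_2$ only \emph{inside the two-dimensional leaf} $E$, not in $\mt$, and the curves $g_{t_n}$ (and the arcs $I_{t_n}$) live in the nearby but different leaves $E_{t_n}\neq E$, so nothing forces them to enter a fixed compact region around $\ell$. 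Indeed the expected behaviour is the opposite: as $t\to 0$ the arcs $I_t$ accumulate on rays of $c_1$, $c_2$ and possibly on other leaves of $\wcG_L$ non-separated from them, i.e.\ they escape toward the ends of $L$ rather than sweeping across the leaves of $\wcG_E$ that separate $c_1$ from $c_2$ in $E$. So the limiting argument cannot produce $\ell\subset L$, and the conclusion that all of $x_i,y_i$ lie in $L$ does not follow. (A secondary issue: in this section the foliations are not assumed $\rrrr$-covered or Reebless, so $L$ need not be closed in $\mt$, and even the step ``limits of subsets of $L$ stay in $L$'' requires justification.)

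Note also that the statement to be proved only asks for \emph{some} $i$ and \emph{some} leaf $L'$ of $\wcF_1$ (possibly different from $L$) containing both $y_i$ and $x_{i+1}$; the subsequent proof of the theorem then works with this new leaf $L'$ and iterates, decreasing the length of the path. The intended argument is of a different nature: one considers the set of pairs $(u,v)$ of distinct leaves in $[c_1,c_2]$ lying in a common leaf of $\wcF_1$ (nonempty because of $(c_1,c_2)$), takes a minimal such pair, and observes that if the leaves $L_i\ni y_i$ and $L_{i+1}\ni x_{i+1}$ were distinct they would be non-separated in the leaf space of $\wcF_1$, so the closures of $L_i^+$ and $L_{i+1}^+$ are disjoint, while minimality forces the common leaf through $u$ to lie in the closure of $L_i^+$ and the one through $v$ in the closure of $L_{i+1}^+$ --- a contradiction. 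If you want to salvage your approach you would need an argument that actually forces the curves $g_{t_n}$ to pass near the separating leaves of $\wcG_E$, and no such mechanism is available in this generality.
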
 

\begin{proof}
Note that this claim is immediate if $\wcF_1$ is $\rrrr$-covered. We give the proof in the general case. 

Consider the set $\mathcal P$ of pairs of different leaves $(u, v)$ of $\wcG_E$ in $[c_1, c_2]$ such that $u$ and $v$ are contained in the same leaf of $\wcF_1$ (for example, $(c_1, c_2)$ since $c_1$ and $c_2$ are contained in $L$).
Notice that $[u,v] \subset [c_1, c_2]$.
Observe that $u$ and $v$ cannot lie in the same component $[x_i, y_i]$ since they would be intersected by a common transversal. 
We choose $(u,v)$ in $\cP$ so that $(u,v)$ is a minimal element in the 
following way:
if $u \in [x_i,y_i]$, and $v \in [x_j,y_j]$ with
$i < j$ (which can be achieved by switching $u, v$) then $j - i$ is minimized
amongst all elements in $\cP$.

Fix now a minimal element $(u_0,v_0) \in \mathcal{P}$. Let $i$ be the index of the component $[x_i, y_i]$ containing $u_0$, and $j>i$ the index of the component $[x_j, y_j]$ containing $v_0$.
Select the transverse orientations so that, as leaves of $\wcG_E$, $x_{i+1}$ lies in the component $y_i^-$ and $y_i$ in $x_{i+1}^-.$ Then, $u_0$ lies in the closure of $y_i^+$ and $v_0$ lies in the closure of $x_{i+1}^+.$ 

Let $L_i$ and $L_{i+1}$ the leaves of $\wcF_1$ containing 
$y_i$ and $x_{i+1}$, respectively. 
Assume $L_i \neq L_{i+1}$. Then they are not separated in $\cL_1$, and it follows that the closures of $L_i^+$ and $L_{i+1}^+$ are disjoint in $\mt.$ On the other hand,  the leaf $L_u$ containing $u_0$, if not equal to $L_i$, is contained in the connected component $L_i^+$ of $\mt \setminus L_i$ since the part of the path $\curve_0$ between $u_0$ and $y_i$ does not intersect $L_i$ since it is transverse to $\wcF_1$ in this part. 
Suppose that the part of the path $\curve_0$ 
from $x_{i+1}$ to $v_0$ intersects 
$L_{i+1}$ in a point $b$ not in $x_{i+1}$. This point $b$ is
in a leaf $z$ of $\wcG_E$ with $z \in [c_1,c_2]$.
Then $z$ is in $[x_k,y_k]$ with
$i+1 < k \leq j$ $-$ again because $z$ cannot be in 
$[x_{i+1},y_{i+1}]$ by transversality of the path there.
In particular $(x_{i+1},z)$ is in $\cP$ and
their interval difference is $k - (i+1) < j - i$, contradiction
of minimality of $(u_0,v_0)$. 
It follows that the leaf $L_v$ of $\wcF_1$ containing $v_0$ lies in the closure of $L_{i+1}^+.$ But $L_u$ and $L_v$ were equal by hypothesis: contradiction.
\end{proof}

Let $i$ be given by the claim, and let $L'$ be the 
leaf of $\wcF_1$ containing $ y_i$ and $x_{i+1}$.
Since $y_i$ is in $[c_1,c_2]$ it follows that $\curve_0$
intersects $y_i$ only once. 
Given $t>0$ small, let $p_t$ be the unique point in $\curve_t \cap L'$
which is very close to $y_i$. The point $p_0 \in \curve_0$ is a point in $y_i$. Similarly, one can define $p'_t$ the unique point in  $\curve_t \cap L'$
that is very close to $x_{i+1}$, and which converges to a point $p'_0 \in \curve_0$ in $x_{i+1}$. 

Recall that the disc $D_t$ is contained inside the leaf $E_t$, hence transverse to $L'.$ The intersection $L' \cap D_t$ may not be connected,
but it has a unique compact component $\ell_t$ which is an arc with
an endpoint in $p_t$ and another endpoint $w_t$ in the boundary of $D_t.$ Both $p_t$ and $w_t$
are in $\curve_t$ because they cannot intersect $I_t \subset L \neq L_t$.
We do not know that $w_t$ is near $p'_t$, but notice that $w_t$ intersects $\curve_t$  near one of the finitely many points of $L' \cap \curve_0$. 

So there is a subsequence $t_n \to 0$ with $w_{t_n}$ all converging
to a point $v$ in $\curve_0 \cap L'$. Let $c_v$ be the leaf of $\wcG$ through $v$. Note that we have $v \neq p_0$, and 
since $y_i$ intersects $\curve_0$ in only $p_0$, we have $c_v \neq y_i.$ 
The $\ell_{t_n}$ are converging to $y_i$ and $c_v$.   It follows that the length
of $\ell_{t_n}$ converges to infinity, and that the leaves $y_i, c_v$ in $\wcG_{L'}$ are non separated
in $\wcG_{L'}$. 
Notice that $c_v$ is also a leaf in $[c_1,c_2]$,
because $v \in \curve_0$.

We can restart the analysis with $L'$, $y_i, c_v$, which are non 
separated from each other in $\wcG_{L'}$ and $y_i, v$ are
contained in $E$.

If $c_v = x_{i+1}$, then $y_i$ and $x_{i+1}$ are non separated
in both $\wcG_E$ and in $\wcG_{L'}$. In this case we are done.

Suppose that $c_v$ is not $x_{i+1}$.
We consider the pseudo-interval $[y_i, c_v]$ in the leaf space of $\wcG_E.$ It is contained in $[c_1, c_2]$. Its interval length could still be $n$, but this
would force $i = 1$, and $c_v$ would have to lie in the last interval $[x_n, y_n].$  
Since $i = 1$, and $y_1, x_2$ are in the same leaf $L'$ of $\wcF_1$,
then  $x_2$ and $c_v$ lie in the same leaf $L'$ of $\wcF_1$.
Now we apply Claim \ref{i+1} to the pseudo-interval $[x_2, c_v]$ (which
is contained in $[c_1,c_2]$ and has length $< n$). The claim
shows that there is an integer $j>1$ such that $y_j$ and $x_{j+1}$ 
that are in the same leaf of $\wcF_1$. 
The difference is that we have started with a pseudo-interval 
$[x_2,c_v]$ which now has length $< n$.

Iterating this process each time either produces the conclusion
of Theorem \ref{nonsep} or decreases the length of the pseudo-interval
in question. Therefore this procedure has to eventually 
terminate and completes the proof of Theorem \ref{nonsep}.
\end{proof}

\subsection{Expansion in Anosov foliations}

In this section, we will provide a general result which is a consequence of the existence of Margulis measures for Anosov flows that we believe can be useful in general. Recall that a foliation $\cF$ is said to be an \emph{Anosov foliation}, if it is homeomorphic to the weak stable foliation of an Anosov flow $\Phi$. Note that by structural stability, one can assume that the Anosov flow is smooth (in particular $C^2$ in order to be able to apply the results in \cite{Ha}). Note that non-separated leaves produce some expansion in holonomy in one side of the foliation, here, we show that when the foliation is Anosov, this forces the expansion to hold on both sides. 

To state the result, let us give a very general definition. Recall that if $\cF$ is a transversally orientable codimension one foliation of $M$ we can construct a continuous family of local transversals $\{\tau_x:(-\eps,\eps) \to M\}_{x\in M}$ at every point in $M$ that we will keep fixed (these can be constructed using foliation boxes and partitions of unity).  Given a continuous curve $h: [0,T] \to M$ whose image is contained in a leaf $L$ of $\cF$ we can define the holonomy over $h$ to be the map defined on a neighborhood $(-\delta_1,\delta_2)$ (which depends on $h$ up to homotopy with fixed endpoints) to $(-\eps,\eps)$ that denotes the holonomy of $\cF$ from $\tau_{h(0)}((-\delta,\delta))$ to $\tau_{h(T)}((-\eps,\eps))$ over the curve $h$ (see \cite{Ca-Co} for this standard definition, note that here we consider the holonomy from the domains of the transversals rather than from the transversals). Note that the values of $\delta_1$ and $\delta_2$ are given as the maximal values on which the holonomy is defined from $\tau_{h(0)}((-\delta_1, \delta_2))$ to $\tau_{h(T)}((-\eps,\eps))$ by curves homotopic with fixed endpoints to $h$. 

For any continuous curve $r: [0,\infty) \to M$ contained in a leaf $L$ of $\cF$ one can define functions $\delta_1: [0,\infty) \to (0,\eps]$ and $\delta_2: [0,\infty) \to (0,\eps]$ so that the holonomy from $r(0)$ to $r(T)$ over $r$ is defined in the interval $(-\delta_1(T), \delta_2(T))$ (using the usual definition of holonomy by covering $r([0,T])$ by foliation charts and lifting the curve to nearby leaves). We say that $r: [0,\infty) \to M$  has \emph{expanding holonomy on one side} if there exists $\eps>0$ so that either $\delta_1$ or $\delta_2$ tend to $0$ as $T \to \infty$ (note here that the functions $\delta_1$ and $\delta_2$ may not be decreasing, so this says that given $\delta>0$ if $T$ is large enough, the holonomy maps an interval of `length' $\delta$ from one side to an interval of length larger than $\eps$). It has \emph{expanding holonomy} if this holds for both $\delta_1$ and $\delta_2$. Note that these are topological properties.

\begin{proposition}\label{prop-expAnosov}
Let $\cF$ be a Anosov foliation and let $r: [0, \infty) \to M$ a properly embedded ray contained in a leaf $L$ of $\cF$ with the property that it has expanding holonomy in one side. Then, it has expanding holonomy on both sides. 
\end{proposition}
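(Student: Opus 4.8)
The strategy is to reduce the statement to a dynamical fact about Anosov flows, namely that the strong stable (resp. unstable) foliation is uniformly expanded (resp. contracted) in a quantitative, ergodic-theoretic sense coming from the Margulis measure. By structural stability we may assume $\cF$ is the weak stable foliation $\cF^s$ of a $C^2$ Anosov flow $\Phi$, and we transport the ray $r$ via the topological conjugacy to a ray in a leaf of $\cF^s$. Since holonomy of $\cF^s$ along a curve contained in a single weak stable leaf is determined, up to bounded distortion, by the behavior of the flow in the two transverse directions (the flow direction, along which holonomy is bounded, and the strong unstable direction, along which holonomy expands or contracts according to Lyapunov behavior), the ``expanding holonomy on one side'' hypothesis translates into: along the ray, the unstable holonomy cocycle grows without bound. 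The key point will then be to show this growth forces growth on the \emph{other} transverse side as well.

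First I would set up the dictionary between topological holonomy and the infinitesimal holonomy cocycle. A ray $r$ in a leaf $L$ of $\cF^s$ lifts to $\mt$ and projects, under the orbit map, to a quasigeodesic ray in the stable leaf of the orbit space; one side of the transversal corresponds to moving ``up'' the flow at the far endpoint and the other to moving ``down''. Holonomy of $\cF^s$ along $r$ is, up to uniformly bounded multiplicative error (controlled by the compactness of $M$ and continuity of the plane field — here the $C^1$ assumption on leaves and the continuous transverse structure are used), comparable to the return-time/reparametrization cocycle of $\Phi$ together with the strong unstable expansion. The hypothesis that $\delta_i(T)\to 0$ on one side says precisely that this cocycle is unbounded in one direction along $r$.

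Second, and this is the heart of the matter, I would invoke the Margulis measure of $\Phi$ (existence for $C^2$ Anosov flows; see \cite{Ha}): there is a measure on $M$ whose conditional measures on strong unstable leaves scale by $e^{h\cdot(\text{holonomy})}$ under unstable holonomy, and symmetrically on strong stable leaves under stable holonomy, with $h$ the topological entropy. The crucial rigidity is that these two conditional systems are \emph{linked} by the flow-invariance of the full measure: unbounded expansion of the unstable conditional along a stable-leaf ray cannot coexist with bounded behavior of the stable conditional along the ``opposite'' side, for otherwise one produces, by pushing forward under the holonomy maps associated to longer and longer initial segments of $r$, a sequence of transversals on which the Margulis measure of a fixed-size box shrinks to zero in one direction but stays bounded below in the other; a compactness/recurrence argument (the base points $r(T_n)$ subconverge in $M$) then yields a limiting transversal through a point of $M$ along which the local product measure is degenerate, contradicting that the Margulis measure has full support and local product structure.

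I expect the main obstacle to be making precise the passage from the purely topological notion of ``$\delta_1$ or $\delta_2 \to 0$'' (defined via the fixed family $\tau_x$ and maximal domains of holonomy) to a genuine quantitative statement about the Margulis cocycle, since the topological conjugacy between $\cF$ and $\cF^s$ is only a homeomorphism and need not respect any metric structure. The way around this is to observe that ``expanding holonomy in one side'' is a topological invariant of the ray (as noted in the text), so it suffices to detect it homeomorphism-invariantly: a ray has non-expanding holonomy on a given side if and only if there is a uniform $\delta>0$ and leaves $\cF^s_n$ of $\wcF^s$ on that side at definite transverse distance, whose intersections with $L$ accumulate onto the whole ray — equivalently, the ray is ``trapped'' between $L$ and a leaf on that side; translating back through the conjugacy, this is exactly the statement that the corresponding orbit ray in $\Omega$ stays at bounded distance from another orbit on that side, which by the structure of skew $\rrrr$-covered (or more generally, any) Anosov flows, together with the Margulis-measure expansion estimate, cannot happen on one side without happening on the other. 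Once this equivalence is established the conclusion is immediate; I would spend most of the write-up carefully proving the topological characterization and the no-bounded-companion lemma, and cite \cite{Ha} and \cite{Ca-Co} for the measure-theoretic and holonomy background respectively.
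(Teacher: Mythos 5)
Your reduction to a smooth Anosov flow via homeomorphism invariance and your appeal to the Margulis measure point in the right direction (this is also the paper's setting), but the heart of your argument --- deducing expansion on the second side from expansion on the first --- has a genuine gap. First, there is a conceptual slip in how you set up the two sides: a transversal to the weak stable foliation at a point of $L$ is locally a strong unstable arc, and the ``two sides'' are the two halves of that arc on either side of $L$; both halves are measured by the \emph{same} unstable conditional of the Margulis measure. There is no ``stable conditional along the opposite side,'' so the rigidity you invoke, linking an unstable conditional on one side with a stable conditional on the other, does not make sense as stated. Second, the contradiction you sketch does not follow: if the holonomy fails to expand on one side, nothing forces any Margulis measure to ``shrink to zero in one direction'' --- non-expansion only says that the maximal domain on that side stays of definite size along a sequence of times, and the holonomy images there may also stay of definite size; consequently no degenerate limiting transversal is produced, and full support together with local product structure of the Margulis measure is never contradicted.

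The missing idea is the side-independence of the holonomy scaling factor, which is how the paper argues. Since two points of the same weak stable leaf converge under the forward flow, for every $T$ there are times $t_1,t_2$ with $\Phi_{t_1}(r(0))$ and $\Phi_{t_2}(r(T))$ on a strong stable arc of length at most $1$, so $r|_{[0,T]}$ is homotopic rel endpoints inside the leaf to the concatenation of a forward flow segment of length $t_1$, a strong stable arc of length at most $1$, and a backward flow segment of length $t_2$. Parametrizing unstable transversals by the Margulis measure, holonomy along flow lines multiplies this measure by exactly $e^{ht}$ --- the same factor on both halves of the transversal --- while holonomy along strong stable arcs of bounded length has uniformly bounded distortion. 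Hence the holonomy from $r(0)$ to $r(T)$ is, up to a uniform constant, multiplication by $e^{h(t_1-t_2)}$ on both sides simultaneously; expansion on one side forces $t_1-t_2\to+\infty$, which forces expansion on the other. Your proposal contains the ingredients (Margulis measure, $e^{ht}$ scaling, bounded distortion, topological invariance of the notion of expanding holonomy), but it never isolates this symmetric scaling of the cocycle, and without it the limiting argument you propose does not yield a contradiction.
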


\begin{proof} 
The conclusion is unchanged by taking any finite cover, so we 
may assume that the foliation $\cF$ is transversely orientable.

Up to homeomorphism, we can assume that $\cF$ is the weak stable foliation of a smooth Anosov flow $\Phi$. Note that the statement is invariant under homeomorphisms, so it is enough to work in this setting. 

The statement is independent of the choice of transversals and metric so, we choose to use the local unstable manifolds at each point, and we use the Margulis measure (see \cite{Ha}, this is the reason we assumed the flow was $C^2$) to put a metric on these transversals. It follows that for this metric, the holonomy along flowlines is exactly multiplying by $e^{ht}$ for a flowline of length $t$ where $h$ is the entropy of the flow $\Phi$. See in particular \cite[Lemma 3]{Ha}. 

Note also that inside a given weak stable leaf, we can also put a metric on strong stable leaves, and by continuity it follows that the distorsion of holonomy along strong stable leaves of length $\leq 1$ is bounded by a uniform constant. 

Given a ray $r: [0,\infty) \to M$ lying in a leaf $L$ as in the statement, we claim that the domain of definition of the holonomy in each side differs at most by some uniform constant. For this, note that for every $T>0$ there are times $t_1,t_2 >0$ so that $\Phi_{t_1}(r(0))$ and $\Phi_{t_2}(r(T))$ belong to a strong stable manifold of length $\leq 1$. Thus, $r|_{[0,T]}$ is homotopic with fixed endpoints to a concatenation of arcs $\alpha_1= \Phi_{[0,t_1]}(r(0))$ and arc $\alpha_2$ in the strong stable manifold of length $\leq 1$ and the arc $\alpha_3$ which is the arc $\Phi_{[0,t_2]}(r(T))$ oriented backwards. It follows from the considerations above, that up to a bounded constant, the holonomy from $r(0)$ to $r(T)$ is multiplying by $e^{h(t_1-t_2)}$ on both sides. In particular, if it expands in one side (which in particular means that $t_1-t_2 \to +\infty$ as $T \to \infty$) this implies that it must expand on both sides as claimed.  
\end{proof}

\section{Intersecting invariant graphs produces Reeb annuli}
\label{sec.intersect}

This section completes the proof of Theorem \ref{thm.three},
under the orientability assumptions on $\cG$ and
transverse orientability of $\cF_1, \cF_2$.
We then assume that $\cF_i$ are skew Anosov 
foliations which are uniformly equivalent, thus, we can assume that $\cF_1$ equal to the weak  stable foliation $\cF^s$ of an Anosov flow $\Phi$, and $\cF_2$ also a skew Anosov foliation, which
is isotopic (in the sense of  \S~\ref{sub.homotopic}) to the unstable  foliation $\cF^u$ of  $\Phi$.

Since $\cF_1, \cF_2$ are isotopic to $\cF^s, \cF^u$ respectively,
and $\Phi$ is a skew Anosov flow, it follows that a leaf
of $\wcF_1$  does not  intersect every leaf of $\wcF_2$.
In particular $\alpha$ and  $\beta$ (cf. Definition \ref{def.alphabeta}) are finite functions.

The purpose of this section is to prove the following
result which thanks to Proposition \ref{prop-nointersectionAnosov} completes the proof of Theorem \ref{thm.three}. 

\begin{theorem} \label{nointersect}
Suppose that the image of $\cD$ in the phase space $\cL_1 \times \cL_2$ intersects a $\pi_1(M)$ invariant
monotone graph. Then $\cF_1$ and $\cF_2$ 
both contain a Reeb annulus of $\cG.$  
\end{theorem}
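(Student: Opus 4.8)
\textbf{Proof plan for Theorem \ref{nointersect}.}

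The plan is to exploit the tori produced by Proposition \ref{prop.closed}. Suppose for contradiction that $\cF_1$ has no Reeb annulus of $\cG$ (then neither does $\cF_2$, by the footnote argument recalled at the start of \S~\ref{sec.nointersect}). Since the image of $\cD$ meets a $\pi_1(M)$-invariant monotone graph $\eta$, pick a connected component $\wT V$ of $\cD^{-1}(\eta)$ and set $V = \pi(\wT V)$. By Proposition \ref{prop.closed}, $V$ is an embedded torus tangent to $\cG$ containing a closed leaf of $\cG$; by Lemma \ref{lem-hsdff}, $\cG|_V$ is $\rrrr$-covered; and by Proposition \ref{prop-toriprops} the restriction $\cD|_{\wT V}: \wT V \to \eta$ is a locally trivial fibration with fibers the leaves of $\wcG$ in $\wT V$, the stabilizer $H$ of $\wT V$ is $\cong \zzzz^2$, and those elements of $H$ fixing points of $\eta$ form an infinite cyclic subgroup $H_0 = \langle \gamma_0 \rangle$. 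Since $\cG|_V$ is an $\rrrr$-covered foliation of a torus and $H \cong \zzzz^2$, the foliation $\cG|_V$ is (topologically) a linear foliation; its closed leaves are exactly the fibers of $\wcG|_{\wT V}$ over the $\gamma_0$-fixed points of $\eta$, which form a nonempty $\gamma_0$-invariant discrete set.

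The heart of the argument is to analyze the holonomy of $\cG$ transverse to $V$, inside a leaf of $\cF_1$. Fix a point $x = (x_1,x_2) \in \eta$ fixed by $\gamma_0$, let $L \in \wcF_1$ be the leaf $x_1$ and let $\wT\theta$ be the (unique, by Lemma \ref{le.uniqueperiodic}) leaf of $\wcG$ in $\wT V$ over $x$; then $\theta = \pi(\wT\theta)$ is a closed leaf of $\cG$, contained in an annular leaf $F$ of $\cF_1$ with $\gamma_0$ a generator of $\pi_1(F)$. Because $\cF_1$ is a skew Anosov foliation and $\theta$ corresponds to a periodic orbit, the holonomy of $\cG_F$ around $\theta$ is expanding on (at least) one side: this is the topological translate of the fact that, for a skew $\rrrr$-covered Anosov flow, the one-step-up map $\tau_1$ has no fixed points, so the two consecutive fixed points of $\gamma_0$ on $\cL_1$ between which $x_1$ sits are of opposite attracting/repelling type (Lemma \ref{le.fixedpoints}); more precisely I would read off from the fibration structure that on the side of $\eta$ where $\cD(\wT V)$ accumulates to the next invariant graph, the holonomy contracts, and hence on the other side it expands. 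Now apply Proposition \ref{prop-expAnosov}: since a ray in $F$ circling $\theta$ has expanding holonomy on one side, it has expanding holonomy on both sides. So the holonomy of $\cG_F$ around $\theta$ is expanding on both sides of $\theta$ in $F$.

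But two-sided expanding holonomy of $\cG_F$ around a closed leaf $\theta$ means $\theta$ is a two-sided attracting (or repelling) closed leaf of $\cG_F$ inside $F$; taking a nearby closed leaf $\theta'$ of $\cG$ in $F$ (which exists since there are infinitely many $\gamma_0$-fixed points of $\eta$ on each side of $x$, giving closed leaves of $\cG$ in $V$ hence, after translating into $F$, closed leaves of $\cG_F$ on both sides of $\theta$) the annulus $A \subset F$ bounded by $\theta$ and $\theta'$ has the property that there is \emph{no} transversal to $\cG$ in $A$ joining $\theta$ to $\theta'$: on a putative transversal the holonomy would give a contradiction with the expansion, since the boundary leaves $\theta,\theta'$ are limits of leaves spiralling toward them. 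Equivalently, lifting $A$ to $\mt$, the boundary leaves are non-separated in $\wcG$ restricted to the lifted annulus. That is exactly the definition of a Reeb annulus of $\cG$ in the leaf $F$ of $\cF_1$, contradicting our assumption. Hence $\cF_1$ — and therefore $\cF_2$ — contains a Reeb annulus of $\cG$, which proves the theorem.

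\textbf{Main obstacle.} The delicate point is step two: correctly translating "the periodic orbit $\theta$ corresponds to a $\gamma_0$-fixed point of $\eta$ sitting between consecutive fixed points of $\gamma_0$ on $\cL_1$" into the statement "the holonomy of $\cG_F$ around $\theta$ is expanding on one side", and pinning down \emph{which} side. One must carefully relate the dynamics of $\gamma_0$ on $\cL_1$ (attracting vs.\ repelling fixed points, via Lemma \ref{le.fixedpoints}) to the dynamics of $\gamma_0$ on the fiber direction of the fibration $\cD|_{\wT V}$, and to the actual germ of holonomy of the three-dimensional foliation $\cG$ transverse to $V$ inside $F$; here the fact that $\cD(\wT V)$ need not be all of $\eta$, and that $\wT V$ may not be simply connected, requires care. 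Once expansion-on-one-side is established, Proposition \ref{prop-expAnosov} does the rest cleanly, and producing the Reeb annulus from two-sided expansion is routine.
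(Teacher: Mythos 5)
Your reduction to the torus $V$ of Proposition \ref{prop.closed}, the closed leaf $\theta$ in an annular leaf $F$ of $\cF_1$, and the observation that the holonomy of $\cG$ around $\theta$ is attracting or repelling on \emph{both} sides are essentially sound (this is the paper's Lemma \ref{lemma.same}: since the invariant graph is the graph of a conjugacy, a $\gamma_0$-fixed point on it is a sink or a source in $\cL_1\times\cL_2$; your detour through Proposition \ref{prop-expAnosov} is not needed for this, and your "read off from the fibration structure" justification of one-sided expansion is not really an argument). The genuine gap is the final step, where you claim to exhibit a Reeb annulus in $F$. First, the existence of closed leaves of $\cG_F$ on both sides of, and near, $\theta$ \emph{inside the same leaf} $F$ is unjustified: the other $\gamma_0$-fixed points of $\eta$ have different $\cL_1$-coordinates, so the corresponding closed leaves of $\cG$ in $V$ lie in other leaves of $\cF_1$, and "translating into $F$" has no meaning; within $F$, closed leaves of $\cG$ correspond to $\gamma_0$-fixed points of $\{x_1\}\times\cL_2$ hit by $\cD$, and these are discrete (Lemma \ref{le.fixedpoints}), so there is no closed leaf "nearby" $\theta$. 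Second, and decisively, even granting an adjacent closed leaf $\theta'$, the non-existence of a transversal from $\theta$ to $\theta'$ does not follow from two-sided expansion at $\theta$. By Lemma \ref{le.fixedpoints} consecutive fixed points of $\gamma_0$ on $\cL_2$ alternate attracting/repelling, so the germ of holonomy at $\theta'$ has the opposite type from that at $\theta$ (consistently with Lemma \ref{lemma.saddle}, since $(x_1,y_j)$ is then a saddle off the graphs); an annulus bounded by an attracting and a repelling closed leaf with no closed leaves in between is the \emph{spiral} annulus, whose interior leaves run from one boundary to the other, which does admit a boundary-to-boundary transversal and whose lifted boundary leaves are separated. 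This is precisely the non-Reeb configuration, so your construction yields no contradiction.

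For comparison, the paper's proof does not attempt a local argument around $V$ at all. Assuming no Reeb annulus, it first proves Proposition \ref{prop.closedorbits}: the leaf space of $\wcG$ is Hausdorff and $\cD$ induces an equivariant homeomorphism onto $\Omega=\cD(\mt)$; this is the technical heart and uses Theorem \ref{nonsep} to get leaves simultaneously non-separated in both foliations, Claim \ref{claim.limit} (where Proposition \ref{prop-expAnosov} is genuinely used) to show the associated rays can only accumulate on the tori $\pi(\cD^{-1}(\cC))$, and Lemma \ref{lemma.saddle} to exclude periodic orbits off those tori. The contradiction is then global: since $\Omega$ meets a graph, it contains a whole region $\Omega_i$ between consecutive graphs, equivariantly a copy of the orbit space of $\Phi$, where periodic points are dense; hence closed orbits of $\cG$ accumulate on the attracting closed orbit $\ell$ coming from the graph, which is impossible for an attracting orbit. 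Some input of this global nature (injectivity of $\cD_{\cG}$ on the leaf space plus density of periodic orbits in the adjacent region) is what your proposal is missing, and the purely local picture around $\theta$ in $F$ cannot replace it.
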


We argue by a way of contradiction: we assume that the image of $\cD$ intersects a $\pi_1(M)$ invariant
monotone graph $\eta$, and that there are no Reeb annuli in $\cG$ restricted
to leaves of one of the foliations, let us say, $\cF_1$,
and we eventually arrive at  a contradiction.

Since the set of  invariant monotone graphs is discrete
and  there  are at least two of them (the graphs of
$\alpha$ and $\beta$),  then there is
a  $\mathbb{Z}$ worth of  them, which will be denoted
by $\{ \eta_k, k  \in  \mathbb{Z} \}$.  We choose them
nested with $k$, with  increasing $\cL_2$ coordinates with
$k$ increasing.
The fact that there are infinitely many follows  from
the  fact  that  $\cH$ is a free abelian group by Lemma \ref{graphgroup}. 

The following lemma describes fixed points of deck translations  in an  invariant monotone graph:

\begin{lemma} \label{lemma.same}
Let $\gamma$ in $\pi_1(M) \setminus \{ \mathrm{id}\}$
which fixes  a point $(x,y)$ in the $\pi_1(M)$  invariant
 monotone  graph $\eta$. Then $x, y$ are either
both contracting fixed points or  both repelling fixed points
for the action of $\gamma$ on $\cL_1$ and $\cL_2$
respectively.
\end{lemma}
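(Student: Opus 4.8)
The plan is to analyze the local dynamics of $\gamma$ on $\cL_1$ at $x$ and on $\cL_2$ at $y$ and show the two cannot have opposite ``type''. First I would recall that $\gamma$ has a fixed point $x$ on $\cL_1 \cong \cL^s$, so by Lemma \ref{le.fixedpoints} the fixed point set of $\gamma$ on $\cL_1$ is an infinite set indexed by $\zzzz$, alternating attracting and repelling, with $x_{i+2} = \tau_s(x_i)$; the same holds for $\gamma$ acting on $\cL_2 \cong \cL^u$ (using that $\gamma$ also fixes $y$). In particular $x$ is either an attracting or a repelling fixed point on $\cL_1$ (it cannot be neither), and likewise $y$ on $\cL_2$. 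So the statement amounts to ruling out the ``mixed'' case, say $x$ attracting and $y$ repelling (the other mixed case being symmetric, obtained by replacing $\gamma$ with $\gamma^{-1}$).

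The key point is that $\eta$ is the graph of an increasing homeomorphism $h \colon \cL_1 \to \cL_2$ commuting with $\gamma$, so $h$ carries the fixed points of $\gamma$ in $\cL_1$ bijectively and order-preservingly onto the fixed points of $\gamma$ in $\cL_2$, i.e. $h(x_i) = y_{j+i}$ for some fixed integer shift $j$. Moreover $h$ carries the local dynamical type of a fixed point to the corresponding dynamical type: if $x_i$ is attracting for $\gamma$ on $\cL_1$ then, since $h$ conjugates the $\gamma$-action near $x_i$ to the $\gamma$-action near $h(x_i)$, the point $h(x_i)$ is attracting for $\gamma$ on $\cL_2$. (Here I use that $h$ is a genuine homeomorphism, hence conjugates germs.) Thus the alternating pattern attracting/repelling along the $x_i$'s must match up with the alternating pattern along the $y_j$'s under the shift by $j$. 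Since in both $\cL_1$ and $\cL_2$ the pattern is: even index attracting, odd index repelling (from Lemma \ref{le.fixedpoints}), matching them forces $j$ to be even. In particular $h(x) = h(x_0) = y_j$ with $j$ even, so $y = y_j$ is attracting — contradicting the assumption that $y$ is repelling. Hence the mixed case is impossible, and $x, y$ are simultaneously attracting or simultaneously repelling.

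A subtlety I would be careful about: one must ensure that $x$ really is one of the indexed fixed points $x_i$ of Lemma \ref{le.fixedpoints}, i.e. that $x$ is hyperbolic (attracting or repelling) rather than, say, a semi-stable or otherwise degenerate fixed point. For skew $\rrrr$-covered Anosov foliations this is exactly the content of Lemma \ref{le.fixedpoints}: every fixed point of a nontrivial $\gamma$ on $\cL^s$ is either attracting or repelling, and similarly on $\cL^u$ — there are no other possibilities, because the induced homeomorphism on the circle $\cL^s/\tau_s$ has exactly one attracting and one repelling fixed point. So the dichotomy ``attracting or repelling'' is automatic, and the argument above then closes the case.

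I expect the main obstacle to be purely bookkeeping: correctly tracking the index shift between the $\gamma$-fixed points on $\cL_1$ and on $\cL_2$ under $h$, and verifying that $h$ respects the ``one step up'' structure (i.e. $h \circ \tau_s = \tau_u \circ h$, which follows from Proposition \ref{pro:alphaLL} since $h$ commutes with $\pi_1(M)$ and hence with the canonical commuting maps, or can be arranged by the classification of such $h$). Once the parity of the shift is pinned down, the conclusion is immediate. If one prefers to avoid the index bookkeeping entirely, an alternative is a direct continuity/monotonicity argument: since $h$ is increasing and $\gamma$-equivariant, near $x$ the map $h$ conjugates $\gamma|_{\cL_1}$ to $\gamma|_{\cL_2}$ near $y$, so $x$ is attracting for $\gamma$ iff $y$ is; this is really the heart of the matter and sidesteps Lemma \ref{le.fixedpoints} altogether, though invoking the lemma makes the ``no degenerate fixed points'' point cleanest.
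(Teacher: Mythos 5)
Your proposal is correct and, at its heart, is the same argument as the paper's: the invariant graph $\eta$ is the graph of an increasing $\pi_1(M)$-equivariant homeomorphism $\cL_1 \to \cL_2$, which conjugates the germ of $\gamma$ at $x$ to the germ at $y$, so the two fixed points have the same type (the paper states this in two lines, using the conjugacy given by the graph). Your index-matching via Lemma \ref{le.fixedpoints} is superfluous once the germ-conjugacy observation is made — as you yourself note at the end — though invoking that lemma to rule out degenerate fixed points is a reasonable extra precision.
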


Equivalently, this lemma states that the fixed points in the invariant graphs are either sinks or sources, while the fixed points outside the graphs are known to be saddle fixed points. 

\begin{proof}
Suppose that  $x$ is an attracting fixed point of $\gamma$
acting on $\cL_1$. 
Then $y = \beta(x)$ is 
 an attracting point for the  action of $\gamma$ on
$\cL_2$, since $\beta$ is a conjugacy, this completes the proof. 
\end{proof}

We will assume that: 

\begin{enumerate}
\item The image of $\cD$ intersects the graph $\eta_0$. 
\item There are no Reeb annuli of $\cG$ inside $\cF_1$. 
\end{enumerate}

Let $V$ be a torus obtained from $\cD^{-1}(\eta_0)$ as
in Proposition \ref{prop.closed}. The same proposition shows
that $\cG$ has a closed leaf in $V$. Consider a closed leaf $\ell$ of
$\cG$ in $V$ and associated deck transformation $\gamma_0$.
Then $\gamma_0$ has a fixed point $(x_0,y_0)$ in $\eta_0$.
Up to taking the inverse of $\gamma_0$ assume that 
$x_0$ is an attracting fixed point of $\gamma_0$ acting on
$\cL_1$.
By the previous lemma $y_0$ is also an attracting fixed point
of $\gamma_0$ acting on $\cL_2$.
In particular $\ell$ is either a repelling or an attracting orbit
for $\cG$.

Let $\tau_i$ be the ``one  step up" homeomorphisms
from  $\cL_i$ to themselves which are described in
subsection \ref{sub.defanosov}. Notice that each $\tau_i$ commutes
with any $\gamma$ in  $\pi_1(M)$.
According to Lemma \ref{le.fixedpoints}, the fixed points of $\gamma_0$ in $\cL_1$ form an increasing sequence $(x_i)_{i \in \zzzz}$ so that $x_i$ is attracting if $i$ is even, and repelling if $i$ is odd, and such that $x_{i+2}=\tau_1(x_i).$ Here $x_0$ is the original fixed point of $\gamma_0$ obtained
in the previous paragraph. We index similarly the fixed points of $\gamma_0$ in $\cL_2$ as a sequence $(y_j)_{j \in \zzzz}$.

We now describe all invariant graphs.
Let $f: \cL_1 \to \cL_2$ whose graph  is  $\eta_0$.

\begin{lemma} \label{lem.invariantgraphs}
The set of all invariant graphs is the collection:
 $\cC:=\{ \eta_k, k \in {\mathbb{Z}} \}$,
where $\eta_k = \{ (x, \tau^k_2(y)), \ {\rm with} \ 
(x,y) \in \eta_0 \}$.
\end{lemma}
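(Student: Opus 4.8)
The plan is to reduce the statement to Proposition \ref{pro:alphaLL}, which identifies the centralizer of the $\pi_1(M)$-action on the leaf space of a skew $\rrrr$-covered Anosov foliation. The guiding observation is that any two $\pi_1(M)$-invariant monotone graphs differ by post-composition with a $\pi_1(M)$-equivariant homeomorphism of $\cL_2$, so classifying all of them amounts to classifying such homeomorphisms.

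First I would verify that each $\eta_k$ is an invariant monotone graph. Writing $\eta_0$ as the graph of the increasing, $\pi_1(M)$-equivariant homeomorphism $f\colon \cL_1 \to \cL_2$, the set $\eta_k = \{(x, \tau_2^k(f(x))) : x \in \cL_1\}$ is precisely the graph of $\tau_2^k \circ f$. Since $\tau_2$ is an increasing homeomorphism of $\cL_2$ commuting with every element of $\pi_1(M)$ (see \S~\ref{sub.defanosov}), the composition $\tau_2^k \circ f$ is again an increasing $\pi_1(M)$-equivariant homeomorphism, so its graph $\eta_k$ lies in $\cC$. I would also record that the $\eta_k$ are pairwise distinct and nested, since $\tau_2$ has no fixed point ($y <_2 \tau_2(y)$ for every $y$), hence $\tau_2^{k} \neq \mathrm{id}$ for $k \neq 0$; this in particular reproves that there are infinitely many invariant graphs.

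Conversely, let $\eta$ be an arbitrary invariant monotone graph, say the graph of an increasing $\pi_1(M)$-equivariant homeomorphism $g\colon \cL_1 \to \cL_2$. Set $h := g \circ f^{-1}\colon \cL_2 \to \cL_2$. A one-line computation using the equivariance of $g$ and of $f$ (hence of $f^{-1}$) shows that $h \circ \gamma = \gamma \circ h$ for every $\gamma \in \pi_1(M)$, i.e.\ $h$ commutes with the $\pi_1(M)$-action on $\cL_2$. Because $\cF_2$ is a skew $\rrrr$-covered Anosov foliation, Proposition \ref{pro:alphaLL} yields $h = \tau_2^k$ for some $k \in \mathbb{Z}$, whence $g = \tau_2^k \circ f$ and $\eta = \eta_k$. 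Together with the previous paragraph this gives the claimed equality $\cC = \{\,\text{all } \pi_1(M)\text{-invariant monotone graphs}\,\}$.

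The only point requiring a little care --- and the closest thing to an obstacle in what is otherwise a routine unwinding of definitions --- is that Proposition \ref{pro:alphaLL} is stated for the leaf spaces $\cL^s$, $\cL^u$ of the model Anosov flow $\Phi$. Here $\cF_1 = \cF^s$, so $\cL_1 = \cL^s$ and $\tau_1 = \tau_s$, while $\cF_2$ is only isotopic to $\cF^u$; but that isotopy supplies a $\pi_1(M)$-equivariant identification $\cL_2 \cong \cL^u$ carrying $\tau_2$ to $\tau_u$, so the classification of equivariant self-homeomorphisms of $\cL_2$ transfers verbatim, and the argument above goes through.
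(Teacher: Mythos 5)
Your proof is correct, but it handles the hard direction ("there are no other invariant graphs") by a genuinely different route than the paper. You reduce to the centralizer: writing $\eta$ as the graph of $g$ and forming $h=g\circ f^{-1}$, you invoke Proposition \ref{pro:alphaLL} (quoted from Barbot--Fenley) to conclude $h=\tau_2^k$, hence $\eta=\eta_k$. The paper instead argues dynamically and self-containedly: using Lemma \ref{lemma52} a putative extra invariant graph is disjoint from all $\eta_k$, hence trapped between two consecutive ones, and then the sink/source dichotomy of Lemma \ref{lemma.same} together with the alternation of fixed points in Lemma \ref{le.fixedpoints} (consecutive fixed points of $\gamma$ in $(y,\tau_2(y))$ have opposite character) shows such a graph would carry a fixed point $(x,y)$ with $x$ attracting and $y$ repelling, a contradiction. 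What each approach buys: yours is shorter and conceptually identifies the set of invariant monotone graphs with the coset $\{\text{centralizer of the action on }\cL_2\}\circ f$; its one delicate point, which you rightly flag, is transferring Proposition \ref{pro:alphaLL} from $\cL^u$ to $\cL_2$ and knowing that the one-step map $\tau_2$ of $\cF_2$ corresponds to a \emph{generator} $\tau_u$ of the centralizer under the equivariant identification --- this is true (and can be extracted from the relation $x_{i+2}=\tau(x_i)$ in Lemma \ref{le.fixedpoints}, since a commuting homeomorphism permutes the fixed points of $\gamma$ preserving order and type), but it is exactly the content the paper's fixed-point argument establishes directly, so a careful write-up of your version should include that half-line. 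The paper's route has the added benefit that the attracting/repelling analysis it uses is needed again immediately afterwards (Lemma \ref{lemma.saddle}), while your route is consistent with how the paper itself describes invariant graphs in $\cL^s\times\cL^u$ via Proposition \ref{pro:alphaLL} in Remark \ref{rem-omegakl}. The easy direction (each $\eta_k$ is an invariant monotone graph, and the family is infinite and nested because $\tau_2$ is fixed-point free) is the same computation in both.
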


\begin{proof}
Notice that  
$f \circ  \gamma = \gamma \circ f$ for any $\gamma$ in $\pi_1(M)$.
In  addition for any $\gamma$ in $\pi_1(M)$ then
$\gamma \circ  \tau_2 = \tau_2 \circ   \gamma$.
Therefore $\gamma \circ  \tau^k_2 \circ f =
\tau_2^k  \circ \gamma \circ f = \tau^k_2 \circ f \circ  \gamma$.
This implies that

$$\gamma(x,\tau^k_2 \circ f(x)) \ = \ (\gamma(x), \gamma \circ 
\tau^k_2 \circ f(x)) \ = \ (\gamma(x), \tau^k_2 \circ  f(\gamma(x))),$$

\noindent 
so it  is  on  the  graph  of $\tau^k_2 \circ f$,
and hence $\eta_k$ is $\pi_1(M)$ invariant. 
Also $\eta_k$ is  obviously a monotone graph so $\eta_k$ is
a  $\pi_1(M)$ invariant monotone  graph.

Finally, to  see that these are all  the invariant monotone   graphs,
suppose  there is an invariant monotone graph $\eta$ between
$\eta_0$ and $\eta_1$. Let $x$ in $\cL_1$ fixed by a non
trivial element $\gamma$ of $\pi_1(M)$, and so that $x$
is an  attracting fixed point of $\gamma$ acting on $\cL_1$. 
Then there is a fixed point $y$ of $\gamma$ acting on $\cL_2$
so that $y$ is in $(f(x),\tau_2(f(x)))$.   Notice that  $y$  is
repelling for $\gamma$ acting  on $\cL_2$. Now $(x,y)$ is in $\eta$ and $x$
is attracting for  $\gamma$ and $y$ is repelling. This 
contradicts the previous lemma.
This suffices to finish the proof.
\end{proof}

We have a converse to Lemma \ref{lemma.same}.

\begin{lemma} \label{lemma.saddle}
Let $\gamma$ be a non trivial deck transformation with a fixed
point  $(x,y)$ in  $\cL_1 \times \cL_2$ so that
$(x,y)$ is not in an invariant  monotone graph.
Then $(x,y)$ is a fixed point of saddle type of $\gamma$.
\end{lemma}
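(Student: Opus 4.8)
The plan is to argue by contradiction, ruling out in turn each non-saddle possibility for the fixed point $(x,y)$. Since $\gamma$ acts on $\cL_1 \cong \rrrr$ and on $\cL_2 \cong \rrrr$ fixing $x$ and $y$ respectively, and since the only elements of $\pi_1(M)$ with an open set of fixed points on a leaf space are trivial (because $\cF_1,\cF_2$ are classifying spaces, see \S~\ref{sub.homotopic}), the fixed point $x$ is an isolated fixed point of $\gamma|_{\cL_1}$ of either attracting or repelling type, and similarly for $y$. Hence there are exactly four combinations of local behaviour at $(x,y)$: $(+,+)$, $(-,-)$ (both attracting or both repelling; this is the "sink/source" case), and $(+,-)$, $(-,+)$ (the "saddle" cases). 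I would state that ruling out the first two combinations gives the lemma.

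First I would invoke Lemma \ref{le.fixedpoints}: since $\gamma$ fixes $x$ in $\cL_1$, it fixes infinitely many points $(x_i)_{i\in\zzzz}$, alternating attracting/repelling, with $x_{i+2}=\tau_1(x_i)$; similarly the fixed points of $\gamma$ in $\cL_2$ form an alternating sequence $(y_j)_{j\in\zzzz}$ with $y_{j+2}=\tau_2(y_j)$. Now suppose for contradiction that $x$ and $y$ are \emph{both} attracting for $\gamma$ (the both-repelling case being symmetric, or handled by replacing $\gamma$ by $\gamma^{-1}$). Then $x = x_i$ for some even $i$ and $y = y_j$ for some even $j$. Consider the monotone graph passing through $(x_i, y_j)$: I claim it is $\pi_1(M)$-invariant. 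Indeed, by Lemma \ref{le.fixedpoints} the attracting fixed point $x_i$ lies in an interval $(x_{i-1}, x_{i+1})$ bounded by repelling fixed points, and the homeomorphism $\beta$ (or $\alpha$) conjugating the $\pi_1(M)$-actions on $\cL_1$ and $\cL_2$ must send attracting fixed points of $\gamma$ to attracting fixed points and repelling to repelling; combined with Proposition \ref{pro:alphaLL} (which says the only equivariant maps $\cL^s\to\cL^u$ are the $\tau_u^k\circ\alpha_0 = \beta_0\circ\tau_s^k$), the value $\beta(x_i)$ is forced to be one of the $y_j$'s, and conversely every invariant graph $\eta_k = \{(x,\tau_2^k\circ f(x))\}$ from Lemma \ref{lem.invariantgraphs} passes through a pair of attracting (or a pair of repelling) fixed points of any given $\gamma$. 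Since the invariant graphs are nested and exhaust, between two consecutive ones $\eta_k, \eta_{k+1}$ the $\cL_2$-coordinate of the attracting fixed point $y_j$ sitting over $x_i$ on $\eta_k$ and the repelling fixed point sitting over $x_i$ on — actually, I would instead argue directly: the point $(x_i, y_j)$ with $i,j$ both even lies on $\eta_{(j-j_0)/2}$ where $\eta_0$ was normalised so that $y_0 = f(x_0)$; in any case $(x_i,y_j)$ with both coordinates attracting lies on one of the $\eta_k$.

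The cleanest way to finish: by Lemma \ref{lemma.same}, the fixed points of $\gamma$ lying \emph{on} the invariant graphs $\{\eta_k\}$ are precisely the sink/source pairs, i.e. the pairs $(x_i,y_j)$ with $i\equiv j \pmod 2$ appropriately aligned with the graph. Conversely, given \emph{any} sink $x_i$ of $\gamma$ in $\cL_1$, the fiber $\{x_i\}\times\cL_2$ meets each $\eta_k$ in exactly one point, and for the (unique, by discreteness and Lemma \ref{lem.invariantgraphs}) graph $\eta_k$ whose $\cL_2$-coordinate over $x_i$ is a fixed point of $\gamma$, that fixed point is a sink by Lemma \ref{lemma.same}; since the fixed points of $\gamma$ in $\cL_2$ alternate and $\tau_2$-translates of $y_0$ land on successive $\eta_k$'s, \emph{every} sink–sink pair $(x_i,y_j)$ of $\gamma$ lies on some $\eta_k$, and likewise every source–source pair. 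Therefore a fixed point $(x,y)$ of $\gamma$ that does \emph{not} lie on any invariant graph cannot be of sink–sink or source–source type, so it is of saddle type. This is exactly the contrapositive of what we wanted.

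\medskip

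The main obstacle I anticipate is the bookkeeping in the middle paragraph: making rigorous the claim that \emph{every} sink–sink (resp. source–source) fixed pair of a given $\gamma$ actually lies on one of the invariant graphs $\eta_k$, rather than merely that the fixed points on the graphs are of this type. This requires combining the explicit description of the $\eta_k$ from Lemma \ref{lem.invariantgraphs} (namely $\eta_k = \{(x,\tau_2^k\circ f(x))\}$) with the fact that $\tau_2$ advances the index of fixed points in $\cL_2$ by $2$ (Lemma \ref{le.fixedpoints}) and that $f$, being an equivariant conjugacy, matches the fixed-point structures of $\gamma$ on the two leaf spaces, sending the $i$-th fixed point to the $i$-th fixed point up to a global even shift. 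Once this alignment is pinned down, the sink–sink and source–source pairs are exactly $\bigcup_k (\eta_k \cap \mathrm{Fix}(\gamma))$, and the lemma follows immediately. I would present this alignment carefully but without belabouring the index arithmetic.
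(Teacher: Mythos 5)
Your proposal is correct and is essentially the paper's own argument: both rest on the alternation of fixed points from Lemma \ref{le.fixedpoints}, the description of the invariant graphs $\eta_k=\{(x,\tau_2^k\circ f(x))\}$ from Lemma \ref{lem.invariantgraphs}, and Lemma \ref{lemma.same}, with the key observation that the graph points over a fixed $x$ pick out every other fixed point of $\gamma$ in $\cL_2$, all of the same type as $x$. The paper just runs this directly (the fixed point off the graphs lies between two consecutive graph fixed points, which share the type of $x$, so by alternation it has the opposite type), whereas you package the same counting as the contrapositive that every sink--sink or source--source pair lies on some $\eta_k$.
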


\begin{proof}
Let $(y_i)_{i \in  {\zzzz}}$ be an increasing sequence, which is
the collection of fixed points
of $\gamma$ in $\cL_2$. Using the description of the
previous lemma, we index the $y_i$ so that
$(x,y_{2i})$ is in $\eta_i$ for all $i$. 
Up to taking an inverse of $\gamma$  if 
necessary assume that $y$ is a repelling fixed point of 
$\gamma$ acting on $\cL_2$.
Let $i$ so that $(x,y)$ is between $\eta_i$ and $\eta_{i+1}$.
Again, by the description of the previous lemma, the
only fixed point of $\gamma$ in $(y_{2i}, y_{2i+2})$
is $y$. Since $y$ is repelling then $y_{2i}, y_{2i+2}$
are attracting for the action of $\gamma$ on $\cL_2$.
Notice that $(x,y_{2i})$ is in $\eta_i$, so Lemma \ref{lemma.same}
implies that $x$ is also
attracting for $\gamma$ acting on $\cL_1$. But 
$y$ is repelling for  $\gamma$ acting on $\cL_2$.
This proves the lemma.
\end{proof}

Denote by $\Omega = \cD(\mt)$,  the image of $\cD$. We now prove the following result which complements Proposition \ref{prop-hsdff}:

\begin{proposition} \label{prop.closedorbits}
Suppose that there are no Reeb annuli of $\cG$ in $\cF_1$.
Then the leaf space of $\wcG$ is Hausdorff and it is equivariantly
homeomorphic to $\Omega$.
\end{proposition}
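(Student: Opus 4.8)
The plan is to deduce Proposition~\ref{prop.closedorbits} from Corollary~\ref{cor.F1F2}, following the same scheme as the proof of Proposition~\ref{prop-hsdff}; the one genuinely new point will concern where the periodic orbits sit inside $\Omega$ when $\Omega$ meets the invariant graphs $\eta_k$. Since $\cF_1$ is minimal, Corollary~\ref{cor.F1F2} applies, so it is enough to produce a single leaf $L$ of $\wcF_1$ for which $\wcG_L$ is $\rrrr$-covered, equivalently (Proposition~\ref{pro.OK}) for which $L$ meets every leaf of $\wcF_2$ in at most one leaf of $\wcG$. Granting this, $\cL_{\cG}$ is Hausdorff and $\cD_{\cG}$ is a homeomorphism onto its image $\cD(\mt)=\Omega$, and this homeomorphism is $\pi_1(M)$-equivariant because $\cD$ is. Thus the whole proof reduces to exhibiting such an $L$.

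Arguing by contradiction, I would suppose that no leaf of $\wcF_1$ has this property, so that there exist $L_0\in\wcF_1$ and $E_0\in\wcF_2$ with $L_0\cap E_0$ disconnected. At this point I would reuse, essentially word for word, the purely local construction from the proof of Proposition~\ref{prop-hsdff}: starting from one such pair of leaves one builds an open set $U\subseteq\cL_1\times\cL_2$ all of whose points $(x,y)$ have $x\cap y$ disconnected (in particular $U\subseteq\Omega$). That construction only manipulates transversals and foliated charts near $L_0,E_0$ and nowhere uses the non-intersection hypothesis special to \S\ref{sec.nointersect}, so it transfers verbatim.

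The new ingredient is the density of the right kind of periodic points inside $U$. Under the equivariant homeomorphism $\nu\colon\cL_1\times\cL_2\to\cL^s\times\cL^u$ of \S\ref{sec.nointersect}, the invariant graphs go to the graphs $\zeta_k$, which form a discrete family; their complement is the disjoint union of the open regions $\Omega_k$, each a copy of the orbit space of $\Phi$ (Remark~\ref{rem-omegakl}). Since $U$ is open and $\bigcup_k\eta_k$ is nowhere dense, $\nu(U)$ meets some $\Omega_k$, and inside $\Omega_k$ the lifts of periodic orbits of $\Phi$ are dense (the usual density of closed orbits of an Anosov flow, transported from $\Omega_0$ to $\Omega_k$ by a power of the one step up map). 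I would pick such a lift $(x,y)\in\nu(U)\cap\Omega_k$; it is fixed by a nontrivial $\gamma\in\pi_1(M)$ and has discrete $\pi_1(M)$-orbit by Lemma~\ref{le:periodicdiscrete}. Now the proof of Lemma~\ref{le.periodicisperiodic} applies unchanged — it uses only discreteness of the orbit, the fact that $\cD_{\cG}$ is a local homeomorphism, Lemma~\ref{le.uniqueperiodic}, and the hypothesis that $\cG$ has no Reeb annulus in $\cF_1$ — and gives that $\cD^{-1}(\nu^{-1}(x,y))$ is a single leaf of $\wcG$. This contradicts $\nu^{-1}(x,y)\in U$, which forces the intersection of the corresponding leaves of $\wcF_1$ and $\wcF_2$ to be disconnected. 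Hence some leaf of $\wcF_1$ does satisfy the condition of Proposition~\ref{pro.OK}, and Corollary~\ref{cor.F1F2} finishes the proof.

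The main obstacle I anticipate is handling the periodic points correctly near the invariant graphs: on $\eta_k$ itself the deck-transformation-fixed points are the sinks and sources of Lemma~\ref{lemma.same}, whose $\pi_1(M)$-orbits need not be discrete, so Lemma~\ref{le.periodicisperiodic} cannot be invoked at those points. This is exactly why it is essential that $U$, being open, meets the complement of the graphs, where the fixed points are of saddle type and the discreteness required by Lemma~\ref{le.periodicisperiodic} is available. A second, more routine, check is that the open-set construction imported from the proof of Proposition~\ref{prop-hsdff} is genuinely independent of the hypotheses of that section; inspecting that argument shows that it is, since it only concerns the local behaviour of $\wcF_1,\wcF_2$ and $\wcG$ along $L_0$ and $E_0$.
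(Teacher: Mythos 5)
Your reduction to Corollary~\ref{cor.F1F2} and the transfer of the open-set construction from the proof of Proposition~\ref{prop-hsdff} are fine, but the pivotal step --- invoking Lemma~\ref{le.periodicisperiodic} ``unchanged'' at a periodic point of $\Omega_k$ --- has a genuine gap. The proof of that lemma does not merely use abstract discreteness of the orbit: it uses that the $\pi_1(M)$-orbit of $\nu^{-1}(x,y)$ is discrete \emph{in the image of $\cD$}, so that its preimage under $\cD_{\cG}$ is discrete in $\cL_{\cG}$, hence the saturated preimage is closed in $\mt$ and every connected component of $\cD^{-1}(\nu^{-1}(x,y))$ projects to a \emph{closed} curve of $\cG$; only then can Lemma~\ref{le.uniqueperiodic} be applied. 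In \S~\ref{sec.nointersect} this is legitimate because the image of $\cD$ is exactly $\nu^{-1}(\Omega_0)$, so discreteness in $\Omega_0$ (Lemma~\ref{le:periodicdiscrete}) is discreteness in the image. In the setting of Proposition~\ref{prop.closedorbits} the image of $\cD$ is strictly larger than $\nu^{-1}(\Omega_k)$: it is the region between the graphs of $\alpha$ and $\beta$, and by the standing hypothesis it contains an invariant graph (the graph $\eta_0$ it meets is entirely contained in the image, by Lemma~\ref{lemma52} and equivariance), hence it contains at least one of the two graphs bounding $\Omega_k$. Lemma~\ref{le:periodicdiscrete} only gives discreteness inside $\Omega_k$, and the $\pi_1(M)$-orbit of a saddle point of $\Omega_k$ does accumulate on the adjacent graphs: iterating a deck transformation whose sink (or source), necessarily located on a graph by Lemmas~\ref{lemma.same} and~\ref{lemma.saddle}, has the chosen point in its basin, the orbit converges to that graph point, which lies in the image of $\cD$. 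So the required closedness fails, the components of the fiber need not project to closed curves, and Lemma~\ref{le.uniqueperiodic} cannot be brought to bear. Your own caveat only addresses fixed points \emph{on} the graphs; the problem is the accumulation of saddle orbits \emph{onto} the graphs, which is precisely the phenomenon exhibited by the Matsumoto--Tsuboi examples, where $\cD_{\cG}$ is not injective and non-closed leaves of $\cG$ spiral onto the tori $\pi(\cD^{-1}(\cC))$.

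This is not a removable technicality within your scheme: proving that the relevant leaves of $\cG$ are closed is the actual content of this proposition, and the paper attacks it by a different route. It assumes $\cL_{\cG}$ is non-Hausdorff, applies Theorem~\ref{nonsep} to obtain leaves $f_1,f_2$ non-separated simultaneously in $\wcG_L$ and $\wcG_E$, uses two-sided expansion of holonomy (Proposition~\ref{prop-expAnosov}, via the Margulis measure) together with Lemma~\ref{lemma.saddle} to show that the relevant rays can only accumulate on the tori $\pi(\cD^{-1}(\cC))$ (Claim~\ref{claim.limit}), deduces that $\pi(f_i)$ are closed leaves (Lemma~\ref{lemma.closed}), and then manufactures a Reeb annulus in a leaf of $\cF_1$, contradicting the hypothesis; the identification of $\cL_{\cG}$ with $\Omega$ is then a short separate argument. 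To make your approach work you would need a replacement for Lemma~\ref{le.periodicisperiodic} of comparable strength to this chain, valid when the image of $\cD$ meets the invariant graphs.
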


The same result holds for $\cF_2$ in place of $\cF_1$. The proof is somewhat long and requires some preparation and a key lemma (Lemma \ref{lemma.closed} below). 

We assume that the leaf space of $\wcG$ is not Hausdorff
and produce a Reeb annulus of $\cG$ in a leaf of $\cF_1$. 

Suppose that $g_1, g_2$ are leaves of $\wcG$ which are not
separated from each other. Then there are $h_n$  leaves  of
$\wcG$ converging to $g_1 \cup g_2$.  Since $\cF_1$ is
$\rrrr$-covered it  follows that $g_1, g_2$ belong to the
same leaf $L_0$ of $\wcF_1$ (and similarly they belong 
to the same leaf of $\wcF_2$).
It follows that $g_1, g_2$ cannot be connected by a transversal
to $\wcG$ in $L_0$. Therefore there are $g'_1, g'_2$ leaves of $\wcG$
in $L_0$ which are non  separated from each other in the leaf
space of $\cG_{L_0}$. Since $\cF_2$ is $\rrrr$-covered,
then $g'_1, g'_2$ are in the same leaf of $\wcF_2$.
Theorem \ref{nonsep} implies that there are $f_1, f_2$ leaves of $\wcG$
which are in the same leaf $L$ of $\wcF_1$, and in the same leaf
$E$ of $\wcF_2$,  and so that $f_1, f_2$ are non  separated
from each other in $\wcG_L$ and non separated from
each other in $\wcG_E$.


The main step is proving the following:

\begin{lemma} \label{lemma.closed}
$\pi(f_i)$ is a closed leaf of $\cG$.
\end{lemma}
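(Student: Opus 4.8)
The goal is to show that the leaves $f_1,f_2$ of $\wcG$ — which we have arranged to be non-separated from each other both inside their common leaf $L$ of $\wcF_1$ and inside their common leaf $E$ of $\wcF_2$ — project to a closed leaf of $\cG$. My strategy is to show that the element(s) of $\pi_1(M)$ coming from the non-separation structure actually stabilize $f_1$ (and $f_2$), which forces $\pi(f_1)$ to be a closed curve.

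\textbf{First step: producing a deck transformation from the non-separation.} Because $f_1,f_2$ are non-separated in $\wcG_L$, there is a sequence of leaves $h_n$ of $\wcG_L$ converging simultaneously to rays $r_1\subset f_1$ and $r_2\subset f_2$; the same leaves $h_n$, viewed in $E$, exhibit the non-separation there. The leaves $f_1,f_2$ bound a region in $L$ (a ``bigon-like'' region since $\cF_1$ is $\rrrr$-covered and $f_1,f_2$ lie in the same leaf of $\wcF_2$ as well), and the $h_n$ crowd into this region. I would first observe that the stabilizer of the pair $\{f_1,f_2\}$ in $\pi_1(M)$ is nontrivial: this is the standard mechanism — a configuration of non-separated leaves that persists under a sequence $h_n$ which itself is recurrent under the $\pi_1(M)$-action forces a nontrivial $\gamma$ with $\gamma(\{f_1,f_2\})=\{f_1,f_2\}$, using a limiting/compactness argument on the leaf space (as in \cite[Lemma 2.2]{Barb-Fe1} or analogous arguments in \cite{Ba2,Fe1}, and echoing how Proposition \ref{prop.closed} extracted a fixed point). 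Passing to $\gamma^2$ if necessary, I get $\gamma$ fixing $f_1$ and $f_2$ individually.

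\textbf{Second step: ruling out that $\gamma$ ``swaps sides'' or acts trivially.} Once $\gamma$ stabilizes $f_1$, it stabilizes the leaf $L\in\wcF_1$ containing $f_1$, hence fixes the point $\mu_1(L)=x\in\cL_1$; likewise it fixes $\mu_2(E)=y\in\cL_2$. So $(x,y)$ is a fixed point of $\gamma$ in the phase space. Now I invoke the dichotomy of Lemma \ref{lemma.same} versus Lemma \ref{lemma.saddle}: either $(x,y)$ lies on an invariant monotone graph $\eta_k$, in which case $x,y$ are both sinks or both sources; or $(x,y)$ is a saddle. The key point is that $f_1$ and $f_2$ lie in the \emph{same} leaf $L$ of $\wcF_1$ and the \emph{same} leaf $E$ of $\wcF_2$ but are distinct leaves of $\wcG$ — this means $\cD(f_1)=\cD(f_2)=(x,y)$, so $(x,y)\in\Omega=\cD(\mt)$. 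Combined with the non-separation, I want to conclude $(x,y)$ lies on an invariant graph: this is exactly the configuration analyzed in Lemma \ref{le.uniqueperiodic} and Lemma \ref{lem-hsdff}, where non-separated leaves with the same $\cD$-image in a leaf of $\cF_1$ contradict the absence of Reeb annuli \emph{unless} they project to closed leaves. Here, rather than an immediate contradiction, the non-separation in \emph{both} $L$ and $E$ together with Theorem \ref{nonsep}'s output is the tool that lets me push forward.

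\textbf{Third step: closing the argument.} Having $\gamma$ fix $f_1$, the projection $\pi(f_1)$ is the image of a $\gamma$-invariant leaf of $\wcG$, hence a closed leaf of $\cG$ (its holonomy-covering argument: $\gamma$ generates a $\zzzz$-action on $f_1\cong\rrrr$, and $f_1/\langle\gamma\rangle$ is a circle mapping to $M$). The same applies to $f_2$. \textbf{The main obstacle} I anticipate is the first step — extracting the stabilizing deck transformation $\gamma$ purely from the simultaneous non-separation, since the leaves involved are only continuous, the leaf space $\cL_{\cG}$ is not assumed Hausdorff, and the relevant region in $L$ may be noncompact; I expect to need a careful recurrence argument (the $h_n$ accumulate on $f_1\cup f_2$, and cocompactness of the $\pi_1(M)$-action on $\mt$ forces a nontrivial coincidence $\gamma h_n = h_m$ in the limit) together with properness of the leaves, much as in the proof of Proposition \ref{prop.closed} and in \cite{Fe1}. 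I would also double-check that the two non-separation directions (in $L$ and in $E$) are compatibly oriented so that the same $\gamma$ serves both, using that $\cF_1,\cF_2$ are transversely oriented and $\cG$ is oriented as in the standing assumptions of \S\ref{sec.intersect}.
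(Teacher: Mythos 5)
There is a genuine gap, and it sits exactly where you anticipated: your first step. You assert that the simultaneous non-separation of $f_1,f_2$ forces, ``by a standard mechanism'' of recurrence plus cocompactness, a nontrivial $\gamma\in\pi_1(M)$ with $\gamma(\{f_1,f_2\})=\{f_1,f_2\}$. No such general mechanism exists: the leaves $h_n$ accumulating on $f_1\cup f_2$ together with cocompactness of the $\pi_1(M)$-action only produce deck translates moving points of the $h_n$ back into a fundamental domain; they do not produce an element preserving the non-separated pair. The results you gesture at (periodicity of non-separated orbits, as in the Anosov/pseudo-Anosov literature) are genuine theorems with substantial proofs, and they concern the orbit foliation of a flow already known to be (pseudo-)Anosov --- which is precisely what $\cG$ is not yet known to be at this stage; assuming such a statement for $\wcG$ is essentially assuming the lemma. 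Proposition \ref{prop.closed} does not help either: its fixed point came from the $\zzzz^2$ stabilizer of a compact torus in $\pi(\cD^{-1}(\cC))$, and $f_1,f_2$ are not assumed to lie in $\cD^{-1}(\cC)$. Your steps 2 and 3 are fine \emph{given} such a $\gamma$ (indeed $\cD(f_1)=\cD(f_2)$ and a $\gamma$-invariant leaf projects to a closed leaf), but they are the easy part.

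The paper's proof supplies exactly the missing content by a different route, and it uses the hypotheses your sketch never touches. From the non-separation one gets one-sided expansion of the holonomy of $\wcF_1$ and $\wcF_2$ along $f_1$; Proposition \ref{prop-expAnosov} (the Margulis-measure argument, which is where the Anosov hypothesis on $\cF_1,\cF_2$ enters) upgrades this to two-sided expansion. Then one argues on the projected ray $\hat o_1$: if it accumulated anywhere off the finite family of tori $\pi(\cD^{-1}(\cC))$, the two-sided expansion would produce a closed orbit of $\cG$ that is attracting or repelling, whose associated deck transformation would have a non-saddle fixed point in $\cL_1\times\cL_2$ off the invariant graphs, contradicting Lemma \ref{lemma.saddle}. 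Hence $\hat o_1$ accumulates only on those tori, where the closed leaves of $\cG$ are sinks or sources (Proposition \ref{prop.closed}, Lemma \ref{lemma.same}); asymptoticity to an attracting one is incompatible with the expansion along $f_1$, and asymptoticity to a repelling one is possible only if $\pi(f_1)$ \emph{is} that closed leaf. To repair your proposal you would have to prove your Step 1, which in this setting amounts to redoing this analysis; as written, the proposal does not establish the lemma.
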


\begin{proof}
Let $r_i$ be the rays of $f_i$ such that $r_1$ and $r_2$ are not separated. Let $o_i = \pi(f_i)$ and  $\hat o_i = \pi (r_i)$. 

Assume by way of contradiction that $o_1$ is not
closed. Let $\cC$ be the collection of invariant graphs as in Lemma \ref{lem.invariantgraphs}. 

\begin{claim} \label{claim.limit}
The ray 
$\hat o_1$ can only limit in the tori obtained from
$\pi(\cD^{-1}(\cC))$.
\end{claim}

\begin{proof} 
Proposition \ref{prop.closed} 
shows that $\pi(\cD^{-1}(\cC))$ is a disjoint, finite
union of tori, which are foliated by $\cG$. 

Since $f_1$ is non separated from $f_2$, it follows that holonomy
of $\wcF_2$ is expanding along $f_1$ in the direction of the
non separated ray from $f_2$ and on the side that contains
$f_2$. This means the following: there is $\eps_0 > 0$ fixed,
so that given $q$ in $f_1$, and any leaf $\ell$ of $\wcG$ in
$L$ passing near $q$ and in the component of $L - f_1$ containing
$f_2$, then holonomy of $\wcG$ starting at $q$ and following $\ell$
eventually sends
$\ell$ farther than $\eps_0$ from $f_1$ and it never returns $\eps_0$ 
close to $f_1$. This is because $\ell$ eventually has to get close
to $f_2$.
The same happens for holonomy of $\cF_1$ since $f_1, f_2$
are also non separated in their $\wcF_2$ leaves.

This means that holonomy of $\wcF_2$ along $f_1$ is expanding
on one side of $f_1$. Using Proposition \ref{prop-expAnosov} we deduce that the holonomy is expanding on both sides. 

Let now $p$ be an accumulation point of the ray of $\hat o_1$.
Given two returns which are $\ll \eps$ from each other,
the fact that the holonomy of $\wcF_1$ and $\wcF_2$ along
$f_1$ is expansive, implies that the return map of a small
disk transverse to $\cG$ sends it to a strictly bigger 
transverse disk. In other words, there is a periodic
orbit of $\cG$ crossing this disk.

Assume now that $\hat o_1$ does not limit only on $\pi(\cD^{-1}(\cC))$.
Then since this is a finite union of disjoint tori,
it follows that $\hat o_1$ limits on a point that is $> a_0 > 0$ away from
this set. Using the above procedure we can find a periodic orbit
of $\cG$ which is not in $\pi(\cD^{-1}(\cC))$. But this periodic
orbit is expanding, so the deck transformation associated to it,
has a corresponding fixed point in $\cL_1 \times \cL_2$ which
is contracting. This contradicts Lemma \ref{lemma.saddle}.

This proves the claim.
\end{proof}

We now know that $\hat o_1$ can only 
limit on $\pi(\cD^{-1}(\cC))$. In particular there is a fixed
surface $S$ in $\pi(\cD^{-1}(\cC))$ so that 
$\hat o_1$ can only limit on $S$. 

Recall the dynamics of $\cG$ on $S$ which
was described in Proposition \ref{prop.closed} and Lemma \ref{lemma.same}: there are finitely many
closed leaves of $\cG$ on $S$ and each one is either attracting
or repelling. In addition each closed leaf is attracting or
repelling not just restricted to $S$, but in $M$.

Since $\hat o_1$ only limits on $S$, the above facts imply that
$\hat o_1$ only limits on a closed curve of $\cG$ in $S$.
In other words $\hat o_1$ is asymptotic to this curve. 
Suppose first that this closed curve is attracting for $\cG$.
Then $\hat o_1$ is in the attracting set for this curve,
and $f_1$ cannot be expanding in either $L$ or $E$.
This contradiction shows that the closed curve has
to be repelling. But then $\hat o_1$ is asymptotic to this curve and this
can only happen if $o_1$ is this closed repelling curve.  

This finishes the proof of Lemma \ref{lemma.closed}.
\end{proof}

\begin{proof}[Proof of Proposition \ref{prop.closedorbits}]
The lemma shows that $\pi(f_1), \pi(f_2)$ are closed
curves, and they are both in $\pi(F)$. 
Since $\cF_1$ and $\cF_2$ are Anosov foliations, this implies that $\pi(F)$ is an annulus.
Since $\pi(f_2)$ is also closed, then $\pi(f_1), \pi(f_2)$
are isotopic in $\pi(F)$.
Let $\delta$ be
the deck transformation associated to $\pi(f_1)$, this
is  a generator of $\pi_1(\pi(F))$. In particular
$\pi(f_1), \pi(f_2)$ are distinct (and hence disjoint)
curves in $\pi(F)$. 
If there is  a transversal to $\cG$ in $\pi(F)$ from
$\pi(f_1)$ to $\pi(f_2)$, it now follows that 
there is a transversal from $f_1$ to $f_2$, a contradiction.
Therefore there is no transversal from $\pi(f_1)$ to $\pi(f_2)$
in $\pi(F)$ and since $\pi(F)$ is an annulus, it now
follows that there is a Reeb annulus of $\cG$ in $\pi(F)$.

This finishes the proof that if the leaf space of $\wcG$ is not
Hausdorff, then there is a Reeb annulus of $\cG$ in a leaf
of $\cF_1$.

From this, it is easy to get the other statement of the Proposition:
suppose that $\cD$ does not induce an injective map 
from the leaf space of
$\wcG$ to $\Omega$. Therefore there are two leaves
$f_1, f_2$ of $\wcG$ which map to the same point in $\Omega$,
and hence to the same point in $\cL_1 \times \cL_2$.
In particular $f_1, f_2$ are in the same leaf $L$ of $\wcF_1$.
If there is a transversal to $\wcG$ in $L$ from $f_1$ to
$f_2$, then there is a transversal to $\wcF_2$
from $f_1$ to $f_2$. In particular $f_1, f_2$ cannot
be in the same $\wcF_2$ leaf. This contradicts the fact
that $f_1, f_2$ map to the same point in $\cL_1 \times \cL_2$.
This finishes the proof. 
\end{proof}

Recall that $f:  \cL_1 \to \cL_2$ is the homeomorphism
whose graph is  $\eta_0$.
We follow the setup as in  \S~\ref{sec.nointersect} as  follows:
we use that $\cF_1$ is
the weak stable foliation of an Anosov flow  $\Phi$.
Let $\cL^s, \cL^u$ be the leaf spaces of the weak stable and weak unstable foliations
of $\Phi$. Let $v_1: \cL_1 \to  \cL^s$  be the map given by identifying $\cF_1$ with $\cF_s$.
Define $v_2: \cL_2 \to \cL^u$ by $v_2(E) = \alpha_0 \circ
\nu_1 \circ f^{-1}(E)$, where $\alpha_0$ is the lower
bound map associated  with $\Phi$. Finally let 
$\nu(x,y) = (\nu_1(x), \nu_2(y))$. As in \S~\ref{sec.nointersect} this map is $\pi_1(M)$ equivariant. 
If we let $\Omega_i$ be the region in $\cL_1 \times \cL_2$
between  $\eta_i$ and  $\eta_{i+1}$ then the action of
$\pi_1(M)$ on $\Omega_i$ is  equivariantly homeomorphic
to the action  of  $\pi_1(M)$ on the orbit space of $\Phi$.
We remark that in  this analysis it is not relevant whether
$\alpha$ is finite or not. All that is used is that there
is an  invariant  graph.

\begin{proof}[ Proof of Theorem \ref{nointersect}]
Now we will arrive at a contradiction, which stems from
the assumption that $\cF_1$ has no Reeb annuli of $\cG$ and
this will finish the proof.

Proposition \ref{prop.closedorbits} implies  that the leaf
space of $\wcG$ is equivariantly homeomorphic to $\Omega = \cD(\mt)$,
and each $\cD^{-1}(p)$ is a  single leaf of $\wcG$ for any 
$p$ in $\Omega$.  
By assumption $\Omega$ intersects an invariant monotone  graph,
let this be $\eta_i$, hence $\Omega$  contains $\Omega_i$.   
Let $p$ in $\Omega$ so  that  $\ell = \pi(\cD^{-1}(p))$ is  a closed
orbit of $\cG$.  By Lemma \ref{lemma.same}  we can choose
$\ell$ so that it is  an  
attracting 
orbit of $\cG$. 
Here $\Omega_i$ is equivariantly homeomorphic to the 
orbit space of $\Phi$, and the periodic orbits of  $\Phi$
are dense in $M$. Hence there are $p_k$ in  $\Omega_i$,
$p_k  \to p$, which are fixed by non  trivial elements
$\gamma_k$ of $\pi_1(M)$. Then $\ell_k = \pi(\cD^{-1}(p_k))$
are periodic orbits which are converging to $\ell$. 
But this is impossible since $\ell$ is an attracting
periodic orbit.

This is a contradiction, and finishes the proof of Theorem
\ref{nointersect}.
\end{proof}

\section{The non orientable cases} \label{s.lastone}

\subsection{Theorem \ref{thm.three}}

First we prove Theorem \ref{thm.three} without
the orientation conditions.

\begin{proof}[Proof of Theorem \ref{thm.three}.]
First we lift to a finite cover $N$ of $M$ so that
the lifts $\cF^N_1, \cF^N_2$ of $\cF_1, \cF_2$ are transversely orientable,
and the lift $\cG^N$ of $\cG$ is orientable.
If $\cF^N_1$ or $\cF^N_2$ contain a Reeb annulus
of $\cG^N$, then this annulus projects to a Reeb surface
of $\cG$ in $\cF_1$ or $\cF_2$. In this case we are finished.

Otherwise $\cG^N$ is homeomorphic to the flow foliation
of an Anosov flow $\Phi^N$. The stable and unstable foliations of
$\Phi^N$ are (up to switching) $\cF^N_1, \cF^N_2$, and they
project to $\cF_1, \cF_2$ in $M$. 

Now consider a double cover $M_2$ of $M$ so that $\cG$ lifts
to an orientable foliation $\cG^2$, and $\cF_1, \cF_2$
lift to $\cF^2_1, \cF^2_2$ respectively. 
This cover is normal so there is a deck transformation
$\zeta$ of $M_2$ so that $M = M_2/_{\langle \zeta \rangle}$.
In addition $\zeta$ preserves leaves of $\cF^2_i$.
Orient $\cG^2$ so that in a leaf of $\cF^2_1$ the orbits
of $\cG^2$ are forward asymptotic. 
Now $\zeta$ sends a leaf of $\cF^2_1$ to another leaf of $\cF^2_1$
and sends the leaves of $\cG^2$ to leaves of $\cG^2$, in particular
it preserves forward asymptotic behavior, and therefore 
preserves the orientation of curves of $\cG^2$.

We conclude that $\cG$ is orientable. It is also tangent
to two foliations $\cF_1, \cF_2$ and orbits of $\cG$ are
forward asymptotic in leaves of $\cF_1$ and backwards
asymptotic in leaves of $\cF_2$. It follows that
$\cG$ is the flow foliation of a topological Anosov flow.
By work of Shannon \cite{Sh} this topological Anosov
flow is orbitally equivalent to an Anosov flow.

This finishes the proof of the Theorem \ref{thm.three}.
\end{proof}

\subsection{Product Anosov foliations again}

Here we generalize the result of \S~\ref{s.prod} to the
non orientable case and use the same notation as in that section. Let $(\cF_1, \cF_2)$ be a pair of minimal transverse foliations on $M(A)$. 

Lift to a finite normal cover where the foliations
are transversely orientable. This finite normal cover
is homeomorphic to $M(A')$ (on which $A'$ is just a power of $A$). Let $\Phi'$ be the suspension
Anosov flow in $M(A')$. In particular it is the lift
of the suspension flow $\Phi$ in $M(A)$. Let $\cF'_i$ be the respective
lifts to $M(A')$. 
Proposition \ref{productproduct} has 3 options.
In option (1), $\cF'_1$ is isotopic to say the stable
foliation of $\Phi'$ and $\cF'_2$ is isotopic to the unstable
foliation of $\Phi'$. It follows that $\cF_1$ is isotopic
to the stable foliation of $\Phi$, and $\cF_2$ is isotopic
to the unstable foliation of $\Phi$. It follows that $\cG$
is isotopic to the orbit foliation of $\Phi$.

In case (2), both $\cF'_1, \cF'_2$ are isotopic to the stable
foliation of $\Phi'$. So both $\cF_1, \cF_2$ are isotopic to the
stable foliation of $\Phi$. This implies that $\cG$ is isotopic to
the strong stable foliation $\cF^{ss}_A$. Similarly
one deals with case (3).

\section{Analysis in Seifert manifolds}\label{s.circlebundle}

Here  we prove Theorem \ref{thm.four}
which implies Theorem \ref{thm.two}. We first restate Theorem \ref{thm.four}:

\begin{theorem}
Suppose  that  $\cF_1, \cF_2$ are transverse foliations in
a Seifert manifold. 
Suppose that $\cF_1, \cF_2$ are Anosov  foliations
and that $\cF_1$ does not contain Reeb surfaces
of  the intersection foliation $\cG$. Then $\cG$ is 
topologically equivalent to the flow foliation 
of an Anosov flow.
\end{theorem}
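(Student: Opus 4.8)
The plan is to deduce Theorem \ref{thm.four} from Theorem \ref{thm.three} by showing that a pair of transverse Anosov foliations on a circle bundle $M$ over a hyperbolic surface $S$ automatically satisfies the hypotheses of Theorem \ref{thm.three}; the only non-immediate point is that the actions of $\pi_1(M)$ on $\cL_1$ and $\cL_2$ are conjugate. Granting this, and using that $\cF_1$ contains no Reeb surface of $\cG$, Theorem \ref{thm.three} (whose non transversely orientable case is treated in \S~\ref{s.lastone}, where a topological Anosov flow is upgraded to a genuine Anosov flow by Shannon's theorem \cite{Sh}) yields that $\cG$ is topologically equivalent to the orbit foliation of an Anosov flow.

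First I would record the structure of Anosov foliations on such an $M$. By Ghys' classification of Anosov flows on circle bundles over hyperbolic surfaces, any Anosov flow on $M$ is orbitally equivalent to a geodesic flow; this flow is $\rrrr$-covered and, since $M$ is not a torus bundle, it is not a suspension, hence it is skew. An orbital equivalence carries any flow-saturated $2$-dimensional foliation to one of the two weak foliations of the target flow, and the weak foliations of a geodesic flow are minimal; consequently each $\cF_i$ is minimal, $\rrrr$-covered and skew. This verifies conditions (1)--(3) preceding the statement of Theorem \ref{thm.three}. Passing, if necessary, to a finite cover that is still a circle bundle over a hyperbolic surface --- exactly as in \S~\ref{s.lastone} --- we may moreover assume $\cF_1,\cF_2$ transversely orientable and $\cG$ orientable.

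It remains to produce a $\pi_1(M)$-equivariant homeomorphism $\cL_1\to\cL_2$, and this is where the Seifert structure of $M$ enters. Being minimal and $\rrrr$-covered over a hyperbolic base, $\cF_i$ has no compact leaf and carries no vertical torus, so (Brittenham) it can be isotoped to be transverse to the circle fibration. A foliation transverse to the fibers of a circle bundle is the suspension of a representation $\rho_i:\pi_1(S)\to\mathrm{Homeo}^+(\mathbb{S}^1)$ whose Euler number equals the Euler number of $M\to S$; concretely $\cL_i\cong\rrrr$ is the cyclic cover of this circle associated to the central subgroup generated by a regular fiber $\phi$, with $\phi$ acting by the generating deck translation and $\pi_1(M)/\langle\phi\rangle\cong\pi_1(S)$ acting through $\rho_i$. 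Since $\cF_i$ is minimal, $\rho_i$ is a minimal action, and two minimal actions of a surface group on the circle with the same Euler number are conjugate (Matsumoto \cite{Mat}, together with the standard promotion of a semiconjugacy to a conjugacy in the minimal case). Lifting the conjugacy between $\rho_1$ and $\rho_2$ to the cyclic covers gives a homeomorphism $\cL_1\to\cL_2$ commuting with $\phi$ and with $\pi_1(S)$, hence with all of $\pi_1(M)$. This is precisely the hypothesis needed to invoke Theorem \ref{thm.three}, which completes the proof.

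The step I expect to be most delicate is the middle paragraph: one must check carefully that being an Anosov foliation really forces horizontality with the correct Euler number, that the surface-group rigidity applies in the generality needed (including over orbifold bases and after the orientation reduction), and that the lift of the conjugacy to the cyclic covers produces an honestly $\pi_1(M)$-equivariant map rather than one equivariant only up to an automorphism of $\pi_1(M)$.
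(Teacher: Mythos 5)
You reduce Theorem \ref{thm.four} to Theorem \ref{thm.three} by asserting that the $\pi_1(M)$-actions on $\cL_1$ and $\cL_2$ are automatically conjugate, and that is exactly where the argument has a genuine gap. What your horizontality/suspension argument can give (after repairing the appeal to Matsumoto: his rigidity theorem concerns representations of \emph{maximal} Euler number and yields only a semiconjugacy; equal Euler number alone does not determine a minimal circle action, though here one can instead use Ghys' theorem, since each $\cF_i$ is an Anosov foliation) is a conjugacy of the two induced actions of $\pi_1(M)/\langle\phi\rangle$ on the circles $\cL_i/\langle\phi\rangle$, where $\phi$ is the fiber class. But the action of $\pi_1(M)$ on the line $\cL_i$ is a \emph{lift} of that circle action to the infinite cyclic cover, and two lifts of the same circle action differ by a homomorphism $h\colon\pi_1(M)\to\zzzz$, twisting $\rho_1(\gamma)$ by central powers $\delta^{h(\gamma)}$ of the deck translation. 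Such a twist is invisible to the Euler number and to the circle actions (it satisfies $h(\phi)=0$ and lives on the homology of the base), yet for $h\neq 0$ the line actions are not conjugate: a lift $\rho_1(\gamma)$ with fixed points is replaced by the fixed-point-free lift $\rho_1(\gamma)\delta^{h(\gamma)}$. Concretely, if $f\colon\cL_1\to\cL_2$ is a lift of your conjugacy of circle actions, then $f\rho_1(\gamma)f^{-1}$ and $\rho_2(\gamma)$ only agree up to a deck power $\delta^{c(\gamma)}$, and nothing forces $c\equiv 0$; your phrase ``commuting with $\phi$ and with $\pi_1(S)$, hence with all of $\pi_1(M)$'' also implicitly uses a splitting $\pi_1(S)\hookrightarrow\pi_1(M)$, which does not exist since the Euler number of the bundle is nonzero. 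You flagged the right step as delicate, but the failure mode is this central twist, not an outer automorphism.

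This possibility is precisely what \S~\ref{s.circlebundle} of the paper is built to handle: there one only obtains $\rho_2(\gamma)=\rho_1(\gamma)\delta^{h(\gamma)}$, so no single monotone graph is invariant; instead the countable family $\eta_k=\{(x,\delta^k(x))\}$ is permuted according to $\gamma(\eta_k)=\eta_{k+h(\gamma)}$ (Lemma \ref{le.h}). The paper then splits into two cases: if $\alpha$ or $\beta$ is finite, the actions on $\cL_1,\cL_2$ are conjugate and Theorem \ref{thm.three} applies --- this is the only case your proposal covers; if instead the image of $\cD$ is all of $\cL_1\times\cL_2$, the $\pi_1(M)$-invariant \emph{family} of graphs is used (via Proposition \ref{prop.closed}, the analogue of Lemma \ref{lemma.saddle}, and the arguments of \S~\ref{sec.intersect} and Proposition \ref{prop.closedorbits}) to produce a Reeb annulus, contradicting the hypothesis. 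Unless you prove independently that $h=0$ for every transverse pair of Anosov foliations on such a circle bundle --- which your Euler-number and suspension argument cannot detect --- the non-conjugate case remains open and the proposed proof is incomplete.
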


Theorem \ref{thm.two} follows because, as remarked
before,  in $T^1 S$ a minimal foliation is an Anosov foliation.

First we observe that according to \cite{Ba2}, any Anosov flow on a Seifert manifold, lifted in some finite cover $M'$, is the pull-back by a finite covering of the geodesic flow on the unit tangent bundle of some surface. Hence, lifting everything in $M'$, the intersection foliation $\cG'$ is the lift of $\cG$. The lifted foliation $\cF'_1$ does not contain Reeb surfaces of $\cG'$ since we assume that $\cF_1$ does not contain Reeb surfaces. And if $\cG'$ is topologically equivalent to the flow foliation of an Anosov flow, the same holds for $\cG.$

Therefore, up to a finite cover, it suffices to prove Theorem \ref{thm.four} in the case where $M$ is a circle bundle over a closed oriented hyperbolic surface $\Sigma$, and where $\cF_1$, $\cF_2$ are transversely orientable.





\subsection{Preliminaries} 
For more details on this part, we refer to  \cite{Barb-Fe2}.
Suppose that $\cF_1$  is
the weak stable foliation of an  Anosov flow $\Phi_1$ and
$\cF_2$ is the weak unstable foliation of an Anosov flow $\Phi_2$.
Let  $\Sigma$ be the base surface  of a fibration by circles
in $M$. Notice that the fibration by circles in $M$ is unique
up to  isotopy.  We fix one such Seifert fibration $p: M \to \Sigma$.
Fix  a hyperbolic metric in $\Sigma$ and let $M_1(\Sigma)$ be
the unit tangent bundle of $\Sigma$, with projection 
$p^0: M_1(\Sigma) \to  \Sigma$.
Let $\Phi_0$ be the geodesic flow in $M_1(\Sigma)$.

Ghys \cite{Gh} proved that for $i=1,2$, the flow
 $\Phi_i$ is a  reparametrization of
the pullback of 
$\Phi_0$ by some finite cover $q_i: M \to M_1(\Sigma)$
which is along the fibers, that is, $p$ is isotopic to $p^0 \circ q_i.$

It means that the action of $\pi_1(M)$ on $\cL_i$ is topologically conjugate to the action on the universal covering $\widetilde{\mathrm{RP}}^1$ of the projective line 
given by a representation $\rho_i: \pi_1(M) \to \widetilde{\mathrm{PSL}}(2, \rrrr)$ which is faithful, discrete and with cocompact image.

There is an exact sequence:
$$1 \to \zzzz \to \widetilde{\mathrm{PSL}}(2, \rrrr) \to \mathrm{PSL}(2,\rrrr) \to 1$$
where the cyclic kernel $\zzzz$ is the group of deck transformations of the universal covering $\widetilde{\mathrm{PSL}}(2, \rrrr) \to \mathrm{PSL}(2,\rrrr)$.
We denote by $\delta$ a generator of this kernel. We orient $\widetilde{\mathrm{RP}}^1$ so that $\delta$ is an increasing homeomorphism of
$\widetilde{\mathrm{RP}}^1$.

The action of $\pi_1(M)$ on the phase space $\cL_1 \times \cL_2$ is topologically conjugate to the diagonal action on $\widetilde{\mathrm{RP}}^1 \times \widetilde{\mathrm{RP}}^1$ defined
by the pair $(\rho_1, \rho_2).$

Now consider the projections of these representations into $\mathrm{PSL}(2, \rrrr).$ They induce Fuchsian representations of $\pi_1(\Sigma)$ into $\mathrm{PSL}(2, \rrrr).$
Therefore, the induced actions on $\mathrm{RP}^1$ are topologically conjugated. It means that modifying the identification between $\cL_2$ and $\widetilde{RP}^1$ if necessary, we
can assume that there is a morphism $h: \pi_1(M) \to \zzzz$ such that, for every $\gamma$ in $\pi_1(M)$ we have:
$$\rho_2(\gamma) = \rho_1(\gamma)\delta^{h(\gamma)}.$$

Therefore, we have identified $\cL_1 \times \cL_2$ with $\widetilde{\mathrm{RP}}^1 \times \widetilde{\mathrm{RP}}^1$ so that the action of an element $\gamma$ on a element $(x,y)$ of $\widetilde{\mathrm{RP}}^1 \times \widetilde{\mathrm{RP}}^1$ is given by:
$$\gamma.(x,y) = (\rho_1(\gamma), \rho_1(\gamma)\delta^{h(\gamma)}).$$

%

\subsection{The invariant family of monotone graphs} 

Now consider the family of curves in $\widetilde{\mathrm{RP}}^1 \times \widetilde{\mathrm{RP}}^1$  given by

$$\eta_k \ := \ 
\{ \ (x,\delta^k(x)),  \ x \in\widetilde{\mathrm{RP}}^1 \ \}.$$

\noindent
These are the graphs of the powers $\delta^k.$ Every curve $\eta_k$ is not individually preserved by $\pi_1(M).$ But:

\begin{lemma}\label{le.h}
The family $\cC := \{ \eta_k, \ k \in {\bf Z} \}$ 
is discrete, and invariant under the action of $\pi_1(M)$.
\end{lemma}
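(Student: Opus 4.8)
The plan is to verify the two assertions separately, both being essentially book-keeping with the exact sequence $1 \to \zzzz \to \widetilde{\mathrm{PSL}}(2,\rrrr) \to \mathrm{PSL}(2,\rrrr) \to 1$ and the formula $\gamma.(x,y) = (\rho_1(\gamma)(x), \rho_1(\gamma)\delta^{h(\gamma)}(y))$.

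First I would check invariance. Take a point $(x, \delta^k(x))$ on $\eta_k$ and apply $\gamma$. Its image is $(\rho_1(\gamma)(x),\ \rho_1(\gamma)\delta^{h(\gamma)}\delta^k(x))$. The key algebraic point is that $\delta$ is central in $\widetilde{\mathrm{PSL}}(2,\rrrr)$ — it generates the kernel of a central extension — so $\delta^{h(\gamma)}$ and $\delta^k$ commute with $\rho_1(\gamma)$ and with each other. Writing $x' = \rho_1(\gamma)(x)$, the second coordinate becomes $\delta^{k+h(\gamma)}(\rho_1(\gamma)(x)) = \delta^{k+h(\gamma)}(x')$, so $\gamma\cdot(x,\delta^k(x)) = (x', \delta^{k+h(\gamma)}(x'))$, which lies on $\eta_{k+h(\gamma)}$. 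Hence $\gamma(\eta_k) = \eta_{k+h(\gamma)} \in \cC$, giving $\pi_1(M)$-invariance of the family $\cC$.

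Next I would check discreteness of $\cC$ in $\widetilde{\mathrm{RP}}^1 \times \widetilde{\mathrm{RP}}^1 \cong \rrrr^2$. Since $\delta$ is the generator of the deck group of $\widetilde{\mathrm{RP}}^1 \to \mathrm{RP}^1$, with the chosen orientation $\delta$ is a fixed-point-free increasing homeomorphism of $\widetilde{\mathrm{RP}}^1 \cong \rrrr$ satisfying $\delta(x) > x$ for all $x$, and the quotient $\widetilde{\mathrm{RP}}^1/\langle\delta\rangle$ is the circle $\mathrm{RP}^1$. The graphs $\eta_k$ are therefore nested: $\eta_k$ lies strictly below $\eta_{k+1}$ (because $\delta^k(x) < \delta^{k+1}(x)$ for every $x$), and for each fixed $x$ the values $\delta^k(x)$ form a sequence tending to $\pm\infty$ as $k \to \pm\infty$ (if $\delta^k(x)$ stayed bounded it would converge to a fixed point of $\delta$, contradicting fixed-point-freeness). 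Consequently, given any compact rectangle $[a,b] \times [c,d]$, only finitely many of the strictly increasing-in-$k$ values $\{\delta^k(a)\}_{k}$ can lie in $[c,d]$, so only finitely many $\eta_k$ meet the rectangle; this is the desired discreteness.

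I do not expect a serious obstacle here: the only point requiring a moment's care is the commutation used in the invariance computation, i.e.\ that $\delta$ is genuinely central in $\widetilde{\mathrm{PSL}}(2,\rrrr)$ (it is, being the generator of the kernel of a central extension of a centerless group), together with the elementary dynamical facts about the translation-like behaviour of $\delta$ on $\widetilde{\mathrm{RP}}^1$ that make the nesting and properness of the family transparent. Both are standard and the verification is routine once these observations are in place.
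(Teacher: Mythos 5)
Your proposal is correct and follows essentially the same route as the paper: the invariance computation is the paper's verbatim calculation $\gamma\cdot(x,\delta^k(x)) = (\rho_1(\gamma)(x), \delta^{k+h(\gamma)}\rho_1(\gamma)(x))$, with the centrality of $\delta$ (used tacitly in the paper) made explicit, and the discreteness claim, which the paper dismisses as obvious, is justified by the fixed-point-freeness of the deck transformation $\delta$ exactly as one should. No gaps.
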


\begin{proof}
The discrete property is obvious since $\delta$ is strictly increasing. 

Consider $\eta_k$ for some $k$. Let $\gamma$ be an element of $\pi_1(M)$.
For every $x$ in $\widetilde{\mathrm{RP}}^1$ we have:
\begin{eqnarray*}
\gamma.(x, \delta^k(x)) & = & (\rho_1(\gamma)(x), \rho_1(\gamma)\delta^{h(\gamma)}\delta^k(x)) \\
  & = &  (\rho_1(\gamma)(x), \delta^{h(\gamma) + k} \rho_1(\gamma)(x))
\end{eqnarray*}

therefore, $\gamma(\eta_k) = \eta_{k + h(\gamma)}$. The lemma follows.
\end{proof}

\subsection{Using the invariant collection}

We will now we  use the invariant collection $\{ \eta_k, k \in {\mathbb{Z}} \}$
of the previous 
subsection to obtain a proof of  
Theorem \ref{thm.four}. We hence assume that $M$ is a Seifert manifold, and that that  $\cF_1, \cF_2$ are transverse Anosov foliations
and that $\cF_1$ does not contain Reeb surfaces
of  the intersection foliation $\cG$. Since $M$ contains an Anosov foliation, some finite cover of $M$ is a circle bundle. Since a finite quotient of an Anosov is an Anosov flow, we can assume without loss 
of generality that $M$ is a circle bundle.

\vskip .1in
\noindent
{\bf  {Case 1}} $-$ Suppose that $\alpha$ (or $\beta$)  is finite.

It follows that the actions  of $\pi_1(M)$ on $\cL_1, \cL_2$
are  conjugate by a   homeomorphism that commutes  with
the action by $\pi_1(M)$. It follows that $\cF_1, \cF_2$
are isotopically topologically conjugate. The proof now follows from
Theorem \ref{thm.three}.

\vskip .1in
\noindent
{\bf {Case 2}} $-$ Suppose that  the image of   $\cD$ is 
all of $\cL_1 \times \cL_2$.

In particular it follows that $\cD(\mt)$ intersects
all the curves in $\cC$ and $\cD^{-1}(\cC)$ is non empty.

Since $\cC$ is a discrete, $\pi_1(M)$ invariant collection of
monotone graphs, we can use Proposition \ref{prop.closed}:
each component of $\cD^{-1}(\cC)$ 
projects to $M$ into a torus, which
is saturated by flow lines of $\cG$. Each such torus has
finitely many closed leaves of $\cG$, each closed leaf
is an attracting or repelling orbit of $\cG$.

We need the result of Lemma \ref{lemma.saddle} in our 
case:

\begin{lemma} \label{lemma.saddle2}
Let $\gamma$ be a non trivial element in $\pi_1(M)$ which
fixes a point $p$ in $\cL_1 \times \cL_2$, so that $p$ is
not in $\cC$. Then $p$ is a saddle fixed point
of $\gamma$ acting on $\cL_1 \times \cL_2$.
\end{lemma}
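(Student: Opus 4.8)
The goal is to show that a non-trivial $\gamma \in \pi_1(M)$ fixing a point $p = (x,y) \in \cL_1 \times \cL_2 = \widetilde{\mathrm{RP}}^1 \times \widetilde{\mathrm{RP}}^1$ \emph{not} lying on any $\eta_k$ is a saddle fixed point, i.e. one coordinate is attracting and the other repelling. The plan is to reduce everything to explicit $\widetilde{\mathrm{PSL}}(2,\rrrr)$-dynamics, using the identification from the preliminaries: $\gamma$ acts on $\cL_1$ by $\rho_1(\gamma)$ and on $\cL_2$ by $\rho_1(\gamma)\delta^{h(\gamma)}$, where $\delta$ is the generator of the center $\zzzz$ and $h: \pi_1(M) \to \zzzz$ is the morphism from Lemma \ref{le.h}. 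Since $\rho_1$ is faithful, discrete, and cocompact, $\rho_1(\gamma)$ is either (the lift of) a hyperbolic, parabolic, or elliptic element of $\mathrm{PSL}(2,\rrrr)$, possibly composed with a power of $\delta$.

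First I would dispose of the cases where $\rho_1(\gamma)$ has no fixed point on $\widetilde{\mathrm{RP}}^1$: if $\gamma$ projects to an elliptic or parabolic element, or if $\rho_1(\gamma) = \delta^m \cdot (\text{parabolic or elliptic})$ with the rotation part nontrivial, then $\gamma$ acts on $\cL_1$ with no fixed point (for a discrete cocompact Fuchsian group there are no elliptics of infinite order, and a genuine rotation composed with $\delta^m$ still has no fixed point on the line). Since $p$ is fixed by $\gamma$, the first coordinate $x$ must be a fixed point of $\gamma$ on $\cL_1$, so $\rho_1(\gamma)$ \emph{must} have a fixed point; hence $\gamma$ projects to a hyperbolic element $A \in \mathrm{PSL}(2,\rrrr)$ and $\rho_1(\gamma) = \widetilde{A}\,\delta^m$ for some lift $\widetilde{A}$ and some $m \in \zzzz$. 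A hyperbolic element has exactly two fixed points on $\mathrm{RP}^1$, one attracting and one repelling; lifting, $\rho_1(\gamma)$ has a $\zzzz$-periodic array of fixed points on $\widetilde{\mathrm{RP}}^1$, alternating attracting/repelling, with consecutive attracting (resp. repelling) ones differing by $\delta$ — this is exactly the structure already recorded in Lemma \ref{le.fixedpoints} for skew Anosov foliations, applied here. The same holds for the action on $\cL_2$ via $\rho_2(\gamma) = \rho_1(\gamma)\delta^{h(\gamma)} = \widetilde{A}\,\delta^{m + h(\gamma)}$, which has the \emph{same} underlying fixed-point set (shifting the $\delta$-power is an overall rescaling in the affine-like chart near infinity and does not move the fixed points, it only reindexes them) with the same attracting/repelling labels.

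The heart of the argument is then bookkeeping the index shift. Let the attracting fixed points of $\rho_1(\gamma)$ on $\cL_1$ be $\dots < a_{-1} < a_0 < a_1 < \dots$ and the repelling ones interlaced, $a_i < b_i < a_{i+1}$, with $\delta(a_i) = a_{i+1}$, $\delta(b_i) = b_{i+1}$ (say). The curves $\eta_k$ pass through the pairs $(a_i, \delta^k(a_i))$ and $(b_i, \delta^k(b_i))$: indeed $\eta_k$ is $\gamma$-periodic-ish (it is permuted, $\gamma(\eta_k) = \eta_{k+h(\gamma)}$), but each individual $\eta_k$ contains fixed points of $\gamma$ only when... — more precisely, a point $(u, \delta^k u)$ is fixed by $\gamma$ iff $u$ is fixed by $\rho_1(\gamma)$ and $\delta^k u$ is fixed by $\rho_2(\gamma)$, which happens for a full $\zzzz$-family of $u$'s once it happens for one, and the \emph{type} (attracting/repelling in the product) is constant along that family because $\delta$ commutes with $\rho_1(\gamma)$; by Lemma \ref{lemma.same} (or rather its analogue, since $\eta_k$ here is not individually invariant but $\gamma$ maps it to another $\eta_{k'}$, so I would instead argue directly) points of $\eta_k$ fixed by some power-conjugate of $\gamma$ are sinks or sources. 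Now: the fixed points of $\gamma$ on the product are exactly the points $(a_i, a_j)$ and $(b_i, b_j)$ over all $i,j$ — but wait, one also needs the $\delta$-indices to match up, i.e. the fixed point on $\cL_2$ corresponding to index shift. Let me restate cleanly: the fixed points of $\gamma$ on $\cL_1 \times \cL_2$ are the points $(c, c')$ where $c$ is fixed by $\rho_1(\gamma)$ and $c'$ is fixed by $\rho_2(\gamma)$; writing $c = a_i$ or $b_i$ and $c'$ likewise in its own indexing, the point $(c,c')$ is attracting in the product iff both $c$ is attracting for $\rho_1(\gamma)$ and $c'$ is attracting for $\rho_2(\gamma)$ (same for repelling), and is a saddle otherwise. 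The claim to prove is: $(c,c')$ lies on some $\eta_k$ $\iff$ both coordinates have the same type. One direction is Lemma \ref{lemma.same}-type reasoning; the other direction: if $c = a_i$ is attracting and $c' = a_j$ is attracting, I must show $(a_i, a_j)$ lies on an $\eta_k$, i.e. $a_j = \delta^k(a_i)$ for some $k$ — and since the attracting fixed points of $\rho_2(\gamma) = \widetilde A \delta^{m+h(\gamma)}$ are $\{\delta^\ell(a_0) : \ell \in \zzzz\}$ (same orbit under $\delta$!) this is automatic. So the saddle points are precisely the $(c,c')$ with $c$ attracting, $c'$ repelling (or vice versa), and these are exactly the ones missing from $\bigcup_k \eta_k$. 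This finishes the lemma.

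\textbf{Main obstacle.} The real subtlety I anticipate is making the claim "$\rho_1(\gamma)$ and $\rho_2(\gamma) = \rho_1(\gamma)\delta^{h(\gamma)}$ have the same fixed-point set on $\widetilde{\mathrm{RP}}^1$ with matching dynamical types" fully rigorous: composing a hyperbolic lift with a central $\delta^m$ does \emph{not} in general fix the same points of $\widetilde{\mathrm{RP}}^1$ — it shifts them! (E.g. $\delta$ itself has no fixed points.) So one must be careful: the correct statement is that $\rho_1(\gamma)$ has fixed points iff the central part is "compatible", and when both $\rho_1(\gamma)$ and $\rho_2(\gamma)$ have fixed points (which we get for free since $p$ is fixed), their fixed-point sets are two (a priori different) $\delta$-orbits in $\widetilde{\mathrm{RP}}^1$, each projecting to the same pair of points in $\mathrm{RP}^1$ (the fixed points of the hyperbolic $A$), hence they coincide as sets and the attracting/repelling labels agree because both lift the same $A \in \mathrm{PSL}(2,\rrrr)$. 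I would handle this by working downstairs in $\mathrm{PSL}(2,\rrrr)$ first, establishing that $\gamma$ must project to a hyperbolic $A$, then lifting; the alternative, cleaner route is to not re-derive this at all but invoke Lemma \ref{le.fixedpoints} and Lemma \ref{lemma.same} as black boxes applied to each foliation $\cF_i$ separately — since $\cF_1, \cF_2$ are skew Anosov foliations, Lemma \ref{lemma.same}'s proof (which only used that $\beta$ or the relevant conjugacy is a homeomorphism commuting with $\pi_1(M)$) and Lemma \ref{lemma.saddle}'s proof transfer essentially verbatim, with $\cC = \{\eta_k\}$ playing the role the earlier $\{\eta_k\}$ played. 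That is the shortest path: observe that the structural facts used in the proofs of Lemmas \ref{lemma.same} and \ref{lemma.saddle} — existence of the maps $\alpha_0, \beta_0$, the one-step map $\delta$, Lemma \ref{le.fixedpoints} — all hold in the circle-bundle case, so the proof of Lemma \ref{lemma.saddle2} is literally the proof of Lemma \ref{lemma.saddle} with $\tau_2$ replaced by $\delta$, and I would write it that way.
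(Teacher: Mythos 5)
Your argument is correct and essentially the paper's: the paper obtains the same structural fact by noting that $\gamma$ shifts the discrete family $\{\eta_k\}$ by $h(\gamma)$, so fixing $p$ strictly between two consecutive graphs forces $h(\gamma)=0$, after which the action is the diagonal action of the fixed-point-bearing lift of a hyperbolic element and the conclusion follows exactly as in your finish (two fixed coordinates of the same type lie in a single $\delta$-orbit, hence on some $\eta_k$, so a fixed point off $\cC$ must be a saddle). Your variant — deducing from the fact that both coordinates of $p$ are fixed that $\rho_1(\gamma)$ and $\rho_2(\gamma)=\rho_1(\gamma)\delta^{h(\gamma)}$ are both the unique fixed-point-bearing lift of the same hyperbolic element, so their fixed-point sets and attracting/repelling labels coincide — is an equivalent way of ruling out the central shift, and the earlier slip about $\delta^k$ ``not moving fixed points'' is fixed by your own correction; only the closing suggestion that the proof of Lemma \ref{lemma.saddle} transfers verbatim should be discarded, since here the graphs $\eta_k$ are merely permuted by $\pi_1(M)$ rather than individually invariant.
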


\begin{proof}
Since $p$ is not in $\cC$ there is a unique $k$ so that
$p$ is between $\eta_k$ and $\eta_{k+1}$.
Let $p = (x,y)$. Recall that $\eta_k$ is the
graph of $\delta^k$.
Since $\gamma$ fixes $p$ then it fixes both $\eta_k$ and $\eta_{k+1}$.
Therefore, according to the arguments in the proof 
of Lemma \ref{le.h} we have $h(\gamma)=0.$ Therefore, the action of
$\gamma$ on $\widetilde{\mathrm{RP}}^1 \times \widetilde{\mathrm{RP}}^1$ is the diagonal action of $\rho_1(\gamma).$ The lemma follows.
\end{proof}

The proof now proceeds as in section \S~\ref{sec.intersect}.
First we consider the analogue of Proposition \ref{prop.closedorbits}:
suppose that the leaf space of $\wcG$ is not Hausdorff, then 
we find $f_1, f_2$ leaves of $\wcG$ which are in the same leaf
$L$ of $\wcF_1$ and the same leaf $E$ of $\wcF_2$ and which
are non separated in $\wcG_L$ and $\wcG_E$. This just uses
Theorem \ref{nonsep}. Then we prove the analogue of
Lemma \ref{lemma.closed} that $\pi(f_i)$ is a closed leaf
of $\cG$. The key step is the analogue of Claim \ref{claim.limit}
which shows that an appropriate ray of $\pi(f_i))$ (the projection
of a ray of $f_i$ non separated from a ray of $f_j$ ($j \not = i$))
can only limit on $\pi(\cD^{-1}(\cC))$. We have established that
in our case also $\pi(\cD^{-1}(\cC))$ is a finite collection
of tori, saturated by leaves of $\cG$,
and that in each component there are finitely many closed
leaves of $\cG$, each of which is either attracting or
repelling. The rest of the proof is exactly as in the proof
of Claim \ref{claim.limit}. Once the claim is proved, then
the proof of the analogous statement to
Lemma \ref{lemma.closed} is exactly the same as in
Lemma \ref{lemma.closed}. Once that is done, the rest of the
proof is exactly as in the proof of Proposition \ref{prop.closedorbits}.

This shows that, as in section \S~\ref{sec.intersect},
the leaf space of
$\wcG$ is Hausdorff, and $\cD$ induces a homeomorphism from
the leaf space of $\wcG$ to $\cD(\mt)$. 
Using Lemma \ref{lemma.saddle2} we obtain 
a contradiction as in section \S~\ref{sec.intersect}.
The contradiction is obtained
by assuming that there are no Reeb annuli of $\cG$ in
leaves of $\cF_1$.
This contradiction shows that if there are no Reeb annuli,
then Case 2 cannot happen. 

This finishes the analysis. In particular this proves
Theorem \ref{thm.four} under the orientation conditions.




\begin{thebibliography}{2}



	
\bibitem[Ba$_1$]{Ba1} T. Barbot, {\em Caract\'{e}risation des flots d'Anosov
 	pour les feuilletages faibles}, Erg. Th. Dyn. Sys. {\bf 15} (1995) 247-270.

\bibitem[Ba$_2$]{Ba2} T. Barbot, {\em Flots d'Anosov sur les vari\'et\'es graph\'ees au sens de Waldhausen}, Ann. Inst. Fourier Grenoble {\bf 46} (1996) 1451-1517.

\bibitem[Ba$_3$]{Ba3} T. Barbot, {\em Actions de groupes sur les 1-vari\'{e}tes non s\'{e}par\'{e}es et feuilletages de condimension un},
Ann. Fac. Sci. Toulose Math. {\bf 7} (1998) 559-597.



\bibitem[Bar-Fe$_1$]{Barb-Fe1} T. Barbot and S. Fenley, {\em Pseudo-Anosov flows in toroidal 
	manifolds}, Geom. Topol. {\bf 17} (2013) 1877-1954.

\bibitem[Bar-Fe$_2$]{Barb-Fe2} T. Barbot and S. Fenley, {\em Orbital equivalence classes of finite coverings of geodesic flows}, 
math.arxiv: 2205.02495.



\bibitem[Bar-Ma]{Ba-Ma}  T.  Barthelme and  K. Mann,  {\em Orbit equivalences
	of  $\rrrr$-covered Anosov flows  and hyperbolic actions on the line},
	Geom. Topol. 28 (2024), no. 2, 867--899.

\bibitem[Ba-Fe-Po]{BFP} T. Barthelme, S. Fenley, R. Potrie, {\em Collapsed
	Anosov flows and self orbit equivalences},  Comm. Math. Helv.,  Volume 98, Issue 4 (2023) 771--875 



	


\bibitem[Bo]{Bowden} J. Bowden, {\em Approximating $C^0$-foliations by contact structures}, Geom. Funct. Anal. 26 (2016), no. 5, 1255--1296.


\bibitem[BT]{BT} E. Buckminster, S. Taylor, Universal circles for Anosov foliations, arXiv:2512.10107.



\bibitem[Cal$_1$]{Cal1} D. Calegari, {\em The geometry of $\rrrr$-covered foliations}, Geometry and Topology {\bf 4} (2000) 457-515.
	
\bibitem[Cal$_2$]{Cal2} D. Calegari, {\em Foliations and the geometry of $3$-manifolds}, Oxford University Press, (2007).


\bibitem[Cand]{Cand} A. Candel, {\em Uniformization of surface laminations}, Ann. Sci. \'{E}cole Norm. Sup. {\bf 26} (1993) 489-516.

\bibitem[Ca-Co]{Ca-Co} A. Candel, L. Conlon, {\em Foliations I and II}, Graduate Studies in Mathematics, 60. American Mathematical Society, Providence, RI, (2000) xiv.402pp and (2003) xiv.545pp.

\bibitem[Car]{Car} 
Y. Carri\`ere, {\em Flots Riemanniens, Structure tranverse de feuilletages}, Ast\'erique {\bf 116} (1984) 31-52.





\bibitem[Fe$_1$]{Fe1} S. Fenley, {\em Incompressible tori transverse to 
	Anosov  flows in $3$-manifolds}, Erg. Th. Dyn. Sys. {\bf 17}
	(1997) 105--121.

\bibitem[Fe$_2$]{Fe2} S. Fenley, {\em Foliations, topology and geometry of 
$3$-manifolds: $\rrrr$-covered foliations and transverse pseudo-Anosov flows}, Comm. Math. Helv. {\bf 77} (2002) 415-490.


\bibitem[Fe-Po$_1$]{FP1} S. Fenley and R. Potrie, {\em Minimality of the action on the universal circle of uniform foliations}, Groups, Geom.Dyn. {\bf 15} (2021) 1489-1521.


\bibitem[Fe-Po$_2$]{FP3} S. Fenley and R. Potrie, {\em Intersection of transverse foliations in $3$-manifolds: Hausdorff leaf space implies leafwise quasigeodesic}, J. Reine Angew. Math. 822 (2025), 1--48.
	
\bibitem[Fe-Po$_3$]{FP2} S. Fenley and R. Potrie, {\em Transverse minimal foliations on unit tangent bundles and applications}, Pacific J. Math. 343 (2026), no. 1, 39--118.	
	
\bibitem[Fe-Po$_4$]{FP4} S. Fenley and R. Potrie, {\em Partial hyperbolicity, ergodicity, and transverse foliations in 3-manifolds}  arXiv:2510.15176













\bibitem[Gh$_1$]{Gh} E. Ghys, {\em Flots d'Anosov sur les vari\'{e}t\'{e}s 
	fibr\'{e}es en cercles}, Erg. Th. Dyn. Sys. {\bf 4} (1984) 67-80.

\bibitem[Gh$_2$]{Ghfeuillete} E. Ghys, {\em Flots transversalement affines et tissus feuillet\'es}, M\'emoires de la SMF deuxi\`eme s\'erie {\bf 46} (1991) 123-150.

\bibitem[Gh-Se]{Gh-Se} E. Ghys and V. Sergiescu, {\em Stabilit\'e et conjugaison diff\'erentiable pour certains feuilletages}, Topology {\bf 19} (1980) 179--197.



\bibitem[Ha]{Ha} B. Hasselblatt, {\em A new construction of the Margulis
	measure for Anosov flows}, Erg. Th. Dyn. Sys. {\bf 9} (1989) 465-468.

\bibitem[He-Hi]{hector} G. Hector and U. Hirsch, {\em Geometry of foliations, Part B}, Aspects of Math. (1987).



\bibitem[Hae]{Hae} A. Haefliger, {\em Groupo\"{\i}des d'holonomie et classifiant}, Ast\'erisque {\bf 116} (1984), 70--97.



\bibitem[Ho]{Ho} S. Hozoori, {\em Symplectic geometry of Anosov flows in dimension 3 and bi-contact topology}, Adv. Math. 450 (2024), Paper No. 109764, 41 pp.
	



\bibitem[Ka-Ro]{Ka-Ro} W. Kazez, R. Roberts, {\em $C^0$ approximations of foliations, }
Geom. Topol. 21(2017), no.6, 3601--3657.

\bibitem[Mass]{Massoni} T. Massoni, Taut foliations and contact pairs in dimension three, arXiv:2405.15635. 




\bibitem[Mat$_1$]{Mat} S. Matsumoto, {\em Some remarks on foliated 
	$S^1$-bundles}, Invent. Math. {\bf 90} (1987) 343-358.

\bibitem[Mat$_2$]{Mat2} S. Matsumoto, {\em Codimension one foliations on solvable manifolds}, 
Comment. Math. Helv. {\bf 68} (1993), no. 4, 633--652.

\bibitem[Ma-Ts]{Ma-Ts} S. Matsumoto and T. Tsuboi, {\em Transverse 
	intersections  of foliations   in three manifolds}, Monographie
	de l'Enseignement Math\'{e}matique {\bf 38} (2001) 503-525.


\bibitem[No]{No} S.P.Novikov, {\em Topology of foliations}, Trans. Moscow Math. Soc., {\bf 14} (1963) 268-305.





\bibitem[Sh]{Sh}  M. Shannon, {\em Hyperbolic models for transitive
topological Anosov flows in dimension three}, Ann. Sci. \'{E}cole Norm. Sup.  {\bf 58} (2025) 1243--1318.




\bibitem[Th$_1$]{Th4}  W. Thurston, {\em On the geometry and dynamics of diffeomorphisms of surfaces}, Bull. Amer. Math. Soc. {\bf 19} (1998) 417-431.
	
\bibitem[Th$_2$]{Th5}  W. Thurston, {\em Three manifolds, foliations and circles I} arxiv.math.9712268.



\bibitem[Wald]{Wald}  F. Waldhausen, {\em On irreducible 3-manifolds which are sufficiently large},  Ann. of Math.
 {\bf 87} (1968) 56--88.

\end{thebibliography}
\end{document}